\newtheorem{theorem}{Theorem}[section]
\newtheorem{proposition}[theorem]{Proposition}
\newtheorem{corollary}[theorem]{Corollary}
\newtheorem{lemma}[theorem]{Lemma}
\newtheorem{definition}[theorem]{Definition}
\newtheorem{remark}[theorem]{Remark}
\theoremstyle{plain}
\newtheorem{notation}[theorem]{Notation}
\newcommand{\purge}[1]{} 
\newcommand{\vungoc}{V\~u Ng\d{o}c}
\def\epsilon{\varepsilon}
\def\phi{\varphi}
\def\theta{\vartheta}
\newcommand{\ga}{\gamma}
\newcommand{\de}{\delta}
\newcommand{\ze}{\zeta}
\newcommand{\ka}{\kappa}
\newcommand{\lam}{\lambda}
\newcommand{\si}{\sigma}
\newcommand{\om}{\omega}
\newcommand{\De}{\Delta}
\newcommand{\ati}{{\ti{a}}}
\newcommand{\bti}{{\ti{b}}}
\newcommand{\cti}{{\ti{c}}}
\newcommand{\Fti}{{\ti{F}}}
\newcommand{\Lti}{{\ti{L}}}
\newcommand{\zbar}{\bar{z}}
\newcommand{\zhat}{{\hat{z}}}
\def\C{{\mathbb C}}
\def\N{{\mathbb N}}
\def\R{{\mathbb R}}
\def\mbS{{\mathbb S}} 
\def\T{{\mathbb T}}
\def\Z{{\mathbb Z}}
\newcommand{\mcF}{\mathcal F}
\newcommand{\mcK}{\mathcal K}
\newcommand{\mcL}{\mathcal L}
\newcommand{\mcM}{\mathcal M}
\newcommand{\mcP}{\mathcal P}
\newcommand{\mcX}{\mathcal X}
\newcommand{\mfI}{\mathfrak I}
\newcommand{\mfR}{\mathfrak R}
\newcommand{\mff}{\mathfrak f}
\newcommand{\mfg}{\mathfrak g}
\newcommand{\mfp}{\mathfrak p}
\newcommand{\mfs}{\mathfrak s}
\newcommand{\ti}{\tilde}
\newcommand{\x}{\times}
\newcommand{\del}{\partial}
\newcommand{\beq}{\begin{equation}}
\newcommand{\eeq}{\end{equation}}
\newcommand{\beqs}{\begin{equation*}}
\newcommand{\eeqs}{\end{equation*}}
\DeclarePairedDelimiter{\abs}{\lvert}{\rvert}
\DeclareMathOperator{\Id}{Id}
\DeclareMathOperator{\Mod}{mod}
\DeclareMathOperator{\sign}{sign}
\DeclareMathOperator{\Span}{Span}
\def\slashii#1{\setbox0=\hbox{$#1$}             
\dimen0=\wd0                                 
\setbox1=\hbox{\sl/} \dimen1=\wd1            
\ifdim\dimen0>\dimen1                        
\rlap{\hbox to \dimen0{\hfil\sl/\hfil}}   
#1                                        
\else                                        
\rlap{\hbox to \dimen1{\hfil$#1$\hfil}}   
\hbox{\sl/}                               
\fi}                                         %
\def\slashiii#1{\setbox0=\hbox{$#1$}#1\hskip-\wd0\hbox to\wd0{\hss\sl/\/\hss}}
\newcommand{\refintroToric}{Theorem \ref{introToric}}
\newcommand{\refintroSemitoric}{Theorem \ref{introSemitoric}} 
\newcommand{\refintroPinchedTorus}{Proposition \ref{introPinchedTorus}}
\newcommand{\refthmEliasson}{Theorem \ref{thmEliasson}}
\newcommand{\refpropEigValues}{Proposition \ref{propEigValues}}
\newcommand{\refnondeg}{Lemma \ref{nondeg}}
\newcommand{\refmarsdenWeinstein}{Theorem \ref{marsdenWeinstein} (Marsden-Weinstein)}
\newcommand{\refredRankOne}{Lemma \ref{redRankOne}}
\newcommand{\refnondegRankOne}{Definition \ref{nondegRankOne}}
\newcommand{\refparamU}{Lemma \ref{paramU}}
\newcommand{\refthOctagon}{Theorem \ref{thOctagon}}
\newcommand{\refcoordEEPoints}{Proposition \ref{coordEEPoints}}
\newcommand{\refdegPoint}{Proposition \ref{degPoint}}
\newcommand{\refzeroToOne}{Proposition \ref{zeroToOne}}
\newcommand{\refXY}{Lemma \ref{XY}}
\newcommand{\refmjred}{Lemma \ref{mjred}}
\newcommand{\refXJH}{Lemma \ref{XJH}}
\newcommand{\refHtFt}{Proposition \ref{HtFt}}
\newcommand{\refmainTheorem}{Theorem \ref{mainTheorem}}
\newcommand{\refdoublePinchParam}{Proposition \ref{doublePinchParam}}
\newcommand{\refrankOpoints}{Proposition \ref{rankOpoints}}
\newcommand{\refAeeffee}{Proposition \ref{Aeeffee}}
\newcommand{\refeeFixedPoints}{Proposition \ref{eeFixedPoints}}
\newcommand{\refentriesRkOne}{Lemma \ref{entriesRkOne}}
\newcommand{\refellRegVertical}{Proposition \ref{ellRegVertical}}
\newcommand{\refellRegHorizontal}{Proposition \ref{ellRegHorizontal}}
\begin{document}

\title[Semitoric systems with four focus-focus singularities]{A family of semitoric systems with four focus-focus singularities and two double pinched tori}

\author{Annelies De Meulenaere $\&$ Sonja Hohloch}

\date{\today}

\begin{abstract}
We construct a 1-parameter family $F_t=(J, H_t)_{0 \leq t \leq 1}$ of integrable systems on a compact $4$-dimensional symplectic manifold $(M, \om)$ that changes smoothly from a toric system $F_0$ with eight elliptic-elliptic singular points via toric type systems to a semitoric system $F_t$ for $ t^- < t < t^+$. These semitoric systems $F_t$ have precisely four elliptic-elliptic and four focus-focus singular points. Moreover, at $t= \frac{1}{2}$, the system has precisely two focus-focus fibres each of which contains exactly two focus-focus points, giving these fibres the shape of double pinched tori. We exemplarily parametrise one of these fibres explicitly.
\end{abstract}

\maketitle


\section{{\bf Introduction}}

Integrable systems lie at the intersection of many areas in mathematics and physics like, for example, dynamical systems, ODEs, PDEs, symplectic geometry, Lie theory, classical mechanics, mathematical physics etc. Integrable systems display a `certain amount of order' due to being `not chaotic' and having their flow lines stay in the fibres of the momentum map. Their global behaviour, nevertheless, can be very intricate. When it comes to their singularities, nondegenerate singularities can be classified by means of a local normal form that splits a singularity into hyperbolic, elliptic, regular, and focus components where in fact the latter always comes as a pair, usually thus referred to as focus-focus block. 

In this paper, we will construct a special 1-parameter family of integrable systems of two degrees of freedom on a $4$-dimensional manifold which displays an interesting bifurcation behaviour and where all mentioned types of components will appear except for the hyperbolic ones. Since a completely integrable system gives rise to a Lagrangian fibration this explicit family may also become of interest for the study of low dimensional singular Lagrangian fibrations and, for instance, the (non)displaceability of its fibers under Hamiltonian isotopies, i.e., questions of symplectic rigidity.

Let us now be more precise.
Given a $4$-dimensional, connected, symplectic manifold $(M, \om)$, a \emph{semitoric system} is a completely integrable system $F:=(J, H): M \to \R^2$ where $F$ only has nondegenerate singularities that have no hyperbolic components and where $J: M \to \R$ is proper and induces an effective Hamiltonian $\mbS^1$-action. For instance, coupled spin oscillators and coupled angular momenta are semitoric.

Contrary to toric systems on $4$-dimensional manifolds that give rise to an $\mbS^1 \times \mbS^1$-action and only admit elliptic-elliptic and elliptic-regular singular points, semitoric systems give rise to an $\mbS^1 \times \R$-action and admit in addition focus-focus singularities. 

Whereas toric systems are classified (up to equivariant symplectimorphism) by precisely one invariant, namely by the image of their momentum map (cf.\ Delzant \cite{delzant}), the classification of semitoric systems is more involved: apart from one invariant based on a `straightened' image of the momentum map, Pelayo $\&$ \vungoc\ \cite{pelayoVungocInv, pelayoVungocActa} showed that there are four other invariants necessary for a symplectic classification, namely the total number of focus-focus points, the position of their images in the  straightened image of the momentum map, a Birkhoff normal form type invariant (called `Taylor series invariant') that describes the impact a focus-focus point has on the neighbourhood of its fibre, and, last but not least, the so-called twisting index invariant that describes the dynamical interaction between the different focus-focus fibres. Originally this classification applied only to semitoric systems with maximally one focus-focus point per fibre. Recently, Palmer $\&$ Pelayo $\&$ Tang \cite{ppt} generalised the invariants to allow for more than one focus-focus point per fibre.

During the last decade, semitoric systems gained more and more attention and popularity. This is due to the fact that, although their classification is more involved than the one of toric systems, it is still `doable and constructive': Pelayo $\&$ \vungoc\ \cite{pelayoVungocActa} showed, how to construct for given data that are admissible as invariants, a semitoric system having these data as invariants. They constructed the systems by gluing together coordinate patches containing the necessary singular points etc. But patching together plus smoothing along the seams of the patches is not very suitable for {\em explicit observations} how a semitoric system and its invariants change under variation of parameters. 

For this, it is much more practical to have systems that are defined globally by one explicit formula on an explicitly given symplectic manifold. Recently, there has been progress in this direction:
Alonso $\&$ Dullin $\&$ Hohloch \cite{adh1, adh2} completed the computation of the classifying invariants for coupled spin oscillators with varying radius and coupled angular momenta with two varying radii and a coupling parameter. Hohloch $\&$ Palmer \cite{hohlochPalmer} generalised the coupled angular momenta to an explicit semitoric family with two coupling parameters that has two distinct focus-focus points for certain intervals of the parameters. Le Floch $\&$ Palmer \cite{leFlochPalmer} pushed this further to explicit families of semitoric systems on Hirzebruch surfaces. By using invariant functions to perturb a given toric system, they came up with a method that seemed to work for more general situations than Hirzebruch surfaces.

This technique plus the question how difficult it is to construct {\em explicit} semitoric systems with {\em more than two} focus-focus points motivated the present paper: We construct a 1-parameter family of integrable systems $F_t$ that is at time $t=0$ a toric system, then, as soon as $t>0$, it becomes a system of toric type until some $0< t^- <\frac{1}{2}$. At $t=t^-$, four singular points undergo a Hamiltonian-Hopf bifurcation, changing from elliptic-elliptic to focus-focus. As soon as $t>t^-$, the system is semitoric with four focus-focus points until some $t^+$ with $\frac{1}{2}< t^+<1$ where the focus-focus points undergo again a Hamiltonian-Hopf bifurcation that renders them elliptic-elliptic. Moreover, at time $t= \frac{1}{2}$, the focus-focus points team up in pairs of two that lie in the same focus-focus fibre, effectively creating thus two singular fibres that look like double pinched tori. For $t^+<t \leq 1$, the system is again of toric type. 

The precise construction of $F_t$ goes as follows: First, starting with the octagon in Figure \ref{fig_octagon}, we follow the steps of Delzant's \cite{delzant} construction as described in detail in Cannas da Silva \cite{cannasDaSilva} to obtain a $4$-dimensional, compact, connected, symplectic manifold $(M, \om):=(M_\De, \om_\De)$ by using symplectic reduction by a Hamiltonian $\T^6$-action of the $10$-dimensional preimage of a certain map from $\C^8$ to $\R^6$ (the details are given in Section \ref{section toric}). Points on $(M, \om)$ are usually written as equivalence classes of the form $[z] = [z_1, \dots, z_8]$ with $z_k = x_k +i y_k \in \C$ for $1 \leq k \leq 8$.
The momentum map of the toric system on $(M, \om)$ is in fact surprisingly simple:

\begin{figure}[h]
\centering
\includegraphics[scale=.18]{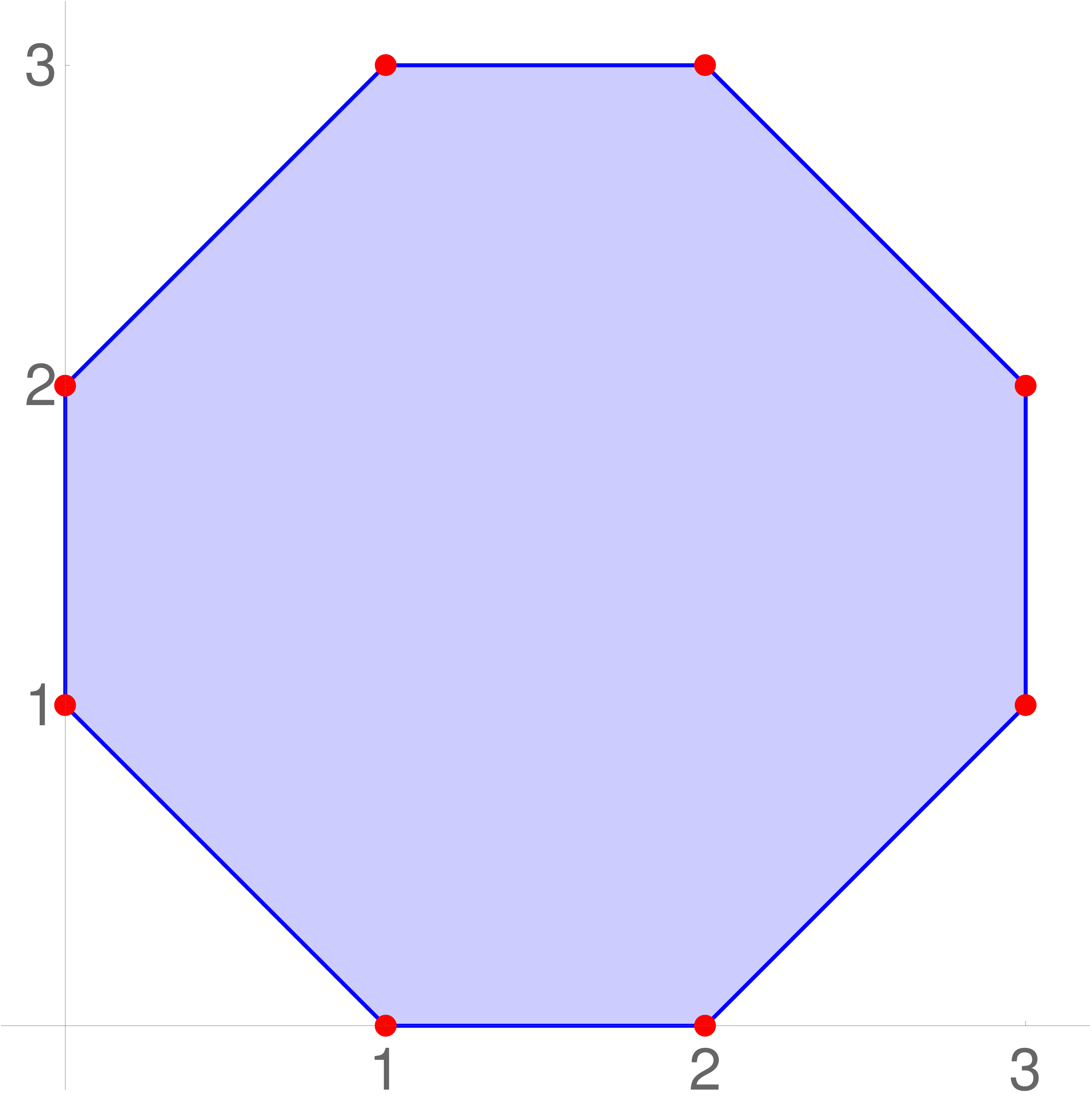}
\caption{The octagon $\De \subset \mathbb{R}^2$, obtained by chopping off the corners of the square $[0,3] \times [0,3]$, is a Delzant polytope.
}
\label{fig_octagon}
\end{figure}

\begin{theorem}
\label{introToric}
Let $\De$ be the octagon displayed in Figure \ref{fig_octagon}.
Then $F = (J,H): (M, \om) \to  \mathbb{R}^2$ given by
\begin{equation*} 
J([z_1, \ldots, z_8]) = \frac{1}{2}\vert z_1 \vert^2, \qquad  H([z_1, \ldots, z_8]) = \frac{1}{2}\vert z_3 \vert^2
\end{equation*}
is a momentum map of an effective Hamiltonian 2-torus action satisfying $F(M)= \De$. Thus, in particular, $F$ has precisely eight elliptic-elliptic singular points. 
\end{theorem}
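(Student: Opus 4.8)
The plan is to obtain the statement directly from the Delzant construction that defines $(M,\om)=(M_\De,\om_\De)$, reading off the explicit formulas for $(J,H)$ from the reduction data. First I would record this data. Writing the octagon as $\De=\{x\in\R^2:\langle x,u_i\rangle\le\lambda_i,\ 1\le i\le 8\}$ with primitive outward normals $u_i$, chopping the corners of $[0,3]\times[0,3]$ produces the four axis normals $(\pm1,0),(0,\pm1)$ and the four diagonal normals $(\pm1,\pm1)$; in particular the edge in $\{x_1=0\}$ carries $(u,\lambda)=((-1,0),0)$ and the edge in $\{x_2=0\}$ carries $(u,\lambda)=((0,-1),0)$, and, matching the numbering fixed in Section \ref{section toric}, I take these to be the edges $i=1$ and $i=3$. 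With $\beta\colon\R^8\to\R^2$, $e_i\mapsto u_i$, kernel $\mfk$ and subtorus $K=\exp(\mfk)\cong\T^6\subset\T^8$, one has $(M,\om)=\mu_K^{-1}(c)/K$, where $\mu(z)=\tfrac{1}{2}(|z_1|^2,\dots,|z_8|^2)$ is the standard $\T^8$-moment map on $\C^8$, $\mu_K=\iota^*\circ\mu$, and $c$ is the level determined by $\lambda=(\lambda_1,\dots,\lambda_8)$.

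The second step is to identify the residual action together with its moment map. By Delzant's theorem (in the form of \cite{delzant,cannasDaSilva}) the quotient torus $\T^8/K$ acts effectively and Hamiltonianly on $M$ with a moment map $\bar\mu\colon M\to\R^2$ whose image is exactly $\De$, so an effective Hamiltonian $2$-torus action with the required image already exists; it remains to check $F=\bar\mu$. For this I would use that each $\T^8$-invariant function $\tfrac{1}{2}|z_i|^2$ descends to $M$ and, in the conventions of \cite{cannasDaSilva}, satisfies $\tfrac{1}{2}|z_i|^2=\lambda_i-\langle\bar\mu,u_i\rangle$, the facet $\{z_i=0\}$ being the preimage of $\{\langle x,u_i\rangle=\lambda_i\}$. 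Evaluating this at $i=1$ and $i=3$ with the data above gives $\tfrac{1}{2}|z_1|^2=-\langle\bar\mu,(-1,0)\rangle=\bar\mu_1$ and $\tfrac{1}{2}|z_3|^2=-\langle\bar\mu,(0,-1)\rangle=\bar\mu_2$, hence $F=(J,H)=\bar\mu$ and $F(M)=\De$. Effectiveness, and the fact that $J,H$ generate the full residual $\T^2$ rather than a proper subtorus, follow because $u_1=(-1,0)$ and $u_3=(0,-1)$ form a $\Z$-basis of $\Z^2$: thus $e_1,e_3$ project to a basis of $\R^8/\mfk$, and the $2\pi$-periodic rotations of the $z_1$- and $z_3$-slots descend to two independent circle actions generating $\T^2$ with moment map components $J$ and $H$.

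For the singularity count I would invoke the standard structure of toric $\T^2$-systems on compact $4$-manifolds: the rank-$0$ singular points are precisely the fixed points, i.e.\ the preimages of the vertices of $\De$, and the equivariant Darboux/normal-form theorem puts $(J,H)$ near such a point into the model $(\tfrac{1}{2}|w_1|^2,\tfrac{1}{2}|w_2|^2)$ on $(\C^2,\om_0)$ up to an integral linear change. This is a pair of elliptic blocks, so every such point is nondegenerate elliptic-elliptic, while the remaining singular points, lying over the open edges, are elliptic-regular of rank $1$. Since the toric moment map restricts to a bijection from the fixed-point set onto the vertex set and the octagon has exactly eight vertices, $F$ has precisely eight elliptic-elliptic singular points.

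The one step I expect to require real care is the middle one: fixing the edge numbering so that the two coordinate-axis edges are genuinely the indices $1$ and $3$ appearing in the formulas, and then tracking the sign and constant conventions of the reduction so that the two descended invariants come out as exactly $\tfrac{1}{2}|z_1|^2$ and $\tfrac{1}{2}|z_3|^2$ with vanishing $\lambda$, together with the verification that these generate the full effective torus. Once this dictionary between facets and coordinates is in place, existence, compactness, effectiveness and the image statement are immediate from \cite{cannasDaSilva}, and the elliptic-elliptic count reduces to the vertex--fixed-point correspondence.
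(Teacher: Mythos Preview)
Your proposal is correct and follows the same overarching route as the paper, namely Delzant's construction applied to the octagon $\De$. The main difference is one of explicitness: where you invoke the standard facet--distance dictionary $\tfrac{1}{2}|z_i|^2 = \lambda_i - \langle\bar\mu,u_i\rangle$ and then read off $F=\bar\mu$ and $F(M)=\De$ directly from the general theory, the paper instead carries out the concatenation $\sigma^*\circ L\circ\ka$ by hand (with $\sigma=\tfrac{1}{6}\theta^*$) and simplifies the resulting expression using the six manifold equations~\eqref{manifold eqn} to arrive at $\tfrac{1}{2}(|z_1|^2,|z_3|^2)$, and then re-derives the eight defining inequalities of $\De$ from those same equations. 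Your shortcut is perfectly valid and cleaner; the paper's explicit computations have the advantage that they set up notation (the manifold equations, the charts $U_\nu$) needed repeatedly in the later semitoric analysis, and they make the edge-numbering and sign bookkeeping you flag as the delicate point completely transparent. For the elliptic-elliptic count both you and the paper ultimately appeal to the general vertex--fixed-point correspondence for toric systems, though the paper goes further in \refcoordEEPoints\ to write down explicit coordinates of the eight fixed points.
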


This theorem is restated as \refthOctagon\ and proven throughout Section \ref{section toric}. In addition, in \refcoordEEPoints, we compute the precise coordinates of the eight elliptic-elliptic fixed points of $F=(J, H)$.

\vspace{3mm}

Now we look for a suitable function to perturb the integral $H$ of $F=(J, H)$ with. To this aim, consider the function $Z: \C^8 \to \C$ given by $Z(z_1, \ldots, z_8) :=\overline{z_2} \: \overline{z_3} \:\overline{z_4}\: z_6 \: z_7 \: z_8$. We will see in \refXY\ that it descends to a function $Z:(M, \om) \to \C$ and so do its real part $\mfR(Z)=:X $ and imaginary part $\mfI(Z)=:Y$. Both functions are in addition invariant under $J$ so that they also pass to the reduced spaces $M^{red, j}$ for $j \in J(M)$. Now we vary $H$ via linear combination $(1-2t) H + t\ga X $ for $\ga>0 $ sufficiently small and obtain 

\begin{theorem}
\label{introSemitoric}
 Let $(M, \om, F=(J, H))$ be the toric system from \refintroToric\ and let $ 0 < \gamma < \frac{1}{48}$ and set $F_t :=(J, H_t):= (J, \ (1-2t) H + t\ga X) : (M, \om) \to \R^2$. 
Then $(M, \omega, F_t)_{0 \leq t \leq 1}$ is toric for $t=0$, of toric type for $0<t<t^-$, semitoric for $t^- < t < t^+$, and again of toric type for $t^+< t\leq 1$ where 
$$ 0  \ < \ t^- := \frac{1}{2(1+24\gamma)} \ < \ \frac{1}{2} \ < \  t^+ := \frac{1}{2(1-24\gamma)} \ < \  1. $$
For all $t \in [0,1]$, the system $F_t$ has precisely eight fixed points of which four are always elliptic-elliptic. The other four pass at $t=t^-$ from elliptic-elliptic via a Hamiltonian-Hopf bifurcation to focus-focus. At $t=t^+$, these four focus-focus points turn again back into elliptic-elliptic points via a Hamiltonian-Hopf bifurcation. For the special situation occuring at $t=\frac{1}{2}$, see \refintroPinchedTorus.
\end{theorem}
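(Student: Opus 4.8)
The plan is to view $F_t$ as a smooth deformation of the toric system of \refintroToric\ and to read off the singularity types from local symplectic data at the fixed points, using reduction by $J$ to control the rank-one locus. First I would record that $F_t$ is completely integrable for every $t\in[0,1]$: since the first component is the unchanged $J$, it suffices that $\{J,H_t\}=0$, and because $\{J,H\}=0$ while $X$ descends from a $J$-invariant function on $\C^8$ by \refXY, one has $\{J,X\}=0$, hence $\{J,H_t\}=(1-2t)\{J,H\}+t\ga\{J,X\}=0$. Properness of $J$ and effectiveness of the $\mbS^1$-action it generates are inherited unchanged from the toric system, so $J$ continues to generate an effective Hamiltonian $\mbS^1$-action for all $t$ and only the second integral is being deformed.

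Next I would locate and pre-classify the rank-zero singularities. A rank-zero point satisfies $dJ=0$, so it is a fixed point of the $\mbS^1$-action, and reading off the $\mbS^1$-weights from the toric model at the eight vertices of \refcoordEEPoints\ splits them into two groups. Four of the vertices (those with $z_1=0$ or $z_5=0$) lie on a two-dimensional fixed surface on which $J$ is extremal; there the transverse direction is automatically elliptic, while $H_t$ restricts to a function whose nondegenerate critical point merely shifts with $t$, so after completing the square these four remain a nondegenerate combination of harmonic-oscillator Hamiltonians and stay elliptic-elliptic for all $t$. The remaining four are \emph{isolated} fixed points whose two $\mbS^1$-weights have opposite signs, the only weight configuration compatible with a focus-focus block; these are exactly the vertices at which two of the six factors of $Z=\overline{z_2}\,\overline{z_3}\,\overline{z_4}\,z_6 z_7 z_8$ vanish, so that $dX=0$ and the points stay fixed for every $t$. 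This isolates the four candidates for the bifurcation and already accounts for the four always elliptic-elliptic points.

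The heart of the argument is the classification of these four stationary fixed points as $t$ varies. At such a point, say the one where $z_3=z_4=0$, the local symplectic coordinates are $(z_3,z_4)$, in which $J$ and $H$ are affine in $\tfrac12|z_3|^2$ and $\tfrac12|z_4|^2$ while $Z$ linearises to $C\,\overline{z_3}\,\overline{z_4}$ with $C$ the product of the remaining nonzero coordinates; evaluating the octagon data gives $|C|=24$. Writing $z_3=q_1+ip_1,\ z_4=q_2+ip_2$ and setting $a=1-2t,\ b=24\ga t$, the quadratic part of $H_t$ becomes $\tfrac a2(q_1^2+p_1^2)+b(q_1 q_2-p_1 p_2)$ after a phase rotation absorbing $\arg C$, and its linearised Hamiltonian vector field has eigenvalues governed by $\mu^2+ia\mu-b^2=0$. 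For $|a|>2b$ the four eigenvalues are purely imaginary (elliptic-elliptic), for $|a|<2b$ they form a complex quadruple $\pm\alpha\pm i\beta$ (focus-focus), and the transition $|a|=2b$, i.e.\ $|1-2t|=48\ga t$, has exactly the two roots $t^-=\tfrac1{2(1+24\ga)}$ and $t^+=\tfrac1{2(1-24\ga)}$. Invoking \refnondeg, the points are nondegenerate for $t\ne t^\pm$, the crossing is transversal, and $t^\pm$ are genuine Hamiltonian-Hopf values; the constraint $0<\ga<\tfrac1{48}$ guarantees $0<t^-<\tfrac12<t^+<1$.

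Finally I would rule out hyperbolic and degenerate behaviour on the rank-one locus by Marsden--Weinstein reduction (\refmarsdenWeinstein): each rank-one point descends to a nondegenerate critical point of the reduced Hamiltonian, and the computations behind \refredRankOne, \refellRegVertical\ and \refellRegHorizontal\ show these are all transversally elliptic for every $t$, so no hyperbolic component ever appears. Assembling the pieces, $F_0$ is the toric system of \refintroToric; for $0<t<t^-$ and $t^+<t\le1$ all eight rank-zero points are elliptic-elliptic and the rank-one locus is elliptic-regular, so $F_t$ is of toric type; and for $t^-<t<t^+$ exactly the four stationary points are focus-focus while the other four are elliptic-elliptic and no hyperbolic components occur, so $F_t$ is semitoric, with the special coincidence at $t=\tfrac12$ treated in \refintroPinchedTorus. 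I expect the main obstacle to be the core rank-zero computation: carrying the Hessians cleanly through the coordinate description of $(M,\om)$ so that the perturbation reduces to the single coupling $C\,\overline{z_3}\,\overline{z_4}$ with $|C|=24$, and then verifying that the eigenvalue collision at $t^\pm$ is transversal, i.e.\ a genuine Hamiltonian-Hopf bifurcation with a nondegenerate focus-focus block on $(t^-,t^+)$ rather than a higher-order degeneracy.
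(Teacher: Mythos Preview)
Your approach is essentially the paper's: verify Poisson commutativity, split the $J$-fixed locus into the two extremal spheres and the four isolated interior fixed points via the $\mbS^1$-weights, compute Hessians at the four isolated points to extract $t^\pm$, and control rank one by reduction. Your factorisation of the characteristic quartic as $(\mu^2+ia\mu-b^2)(\mu^2-ia\mu-b^2)$ is tidier than the paper's direct discriminant manipulation and yields the same $t^\pm$.

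Two places need more than you have written. First, at $t=\tfrac12$ your formula gives $a=0$, so both quadratic factors degenerate to $\mu^2-b^2$ and the four eigenvalues collapse to the real pair $\pm b$, each doubled; the criterion of \refnondeg\ then fails for $\om^{-1}d^2H_t$ alone. The paper resolves this (and you must too) by taking a linear combination with $\om^{-1}d^2J$ to split the eigenvalues and confirm focus-focus at $t=\tfrac12$. Second, and more substantially, your sentence on the four fixed-surface points is where most of the paper's labour actually sits, and ``after completing the square'' undersells it. You need two things: (i) that $H_t$ restricted to each fixed sphere has \emph{exactly two} critical points for every $t\in[0,1]$ (so the fixed-point count stays at eight), which is the content of \refrankOpoints\ and requires analysing the zero set of an implicit function $\mcF(t,u)$ on $[0,1]\times[-\sqrt2,\sqrt2]$; and (ii) that those critical points are nondegenerate as fixed points of $F_t$, which is \refeeFixedPoints\ and comes down to showing that two functions $b(u),c(u)$ built from a degree-twenty polynomial keep the same sign on $(-\sqrt2,\sqrt2)$. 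Neither reduces to completing a square. Your outline is sound, but the genuine obstacle is the ``always elliptic-elliptic'' verification, not the Hamiltonian--Hopf computation you flagged.
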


\begin{figure}[h!]
\centering
\begin{subfigure}{0.33\textwidth}
\centering
\caption*{$t = 0$}
\includegraphics[width=.9\linewidth]{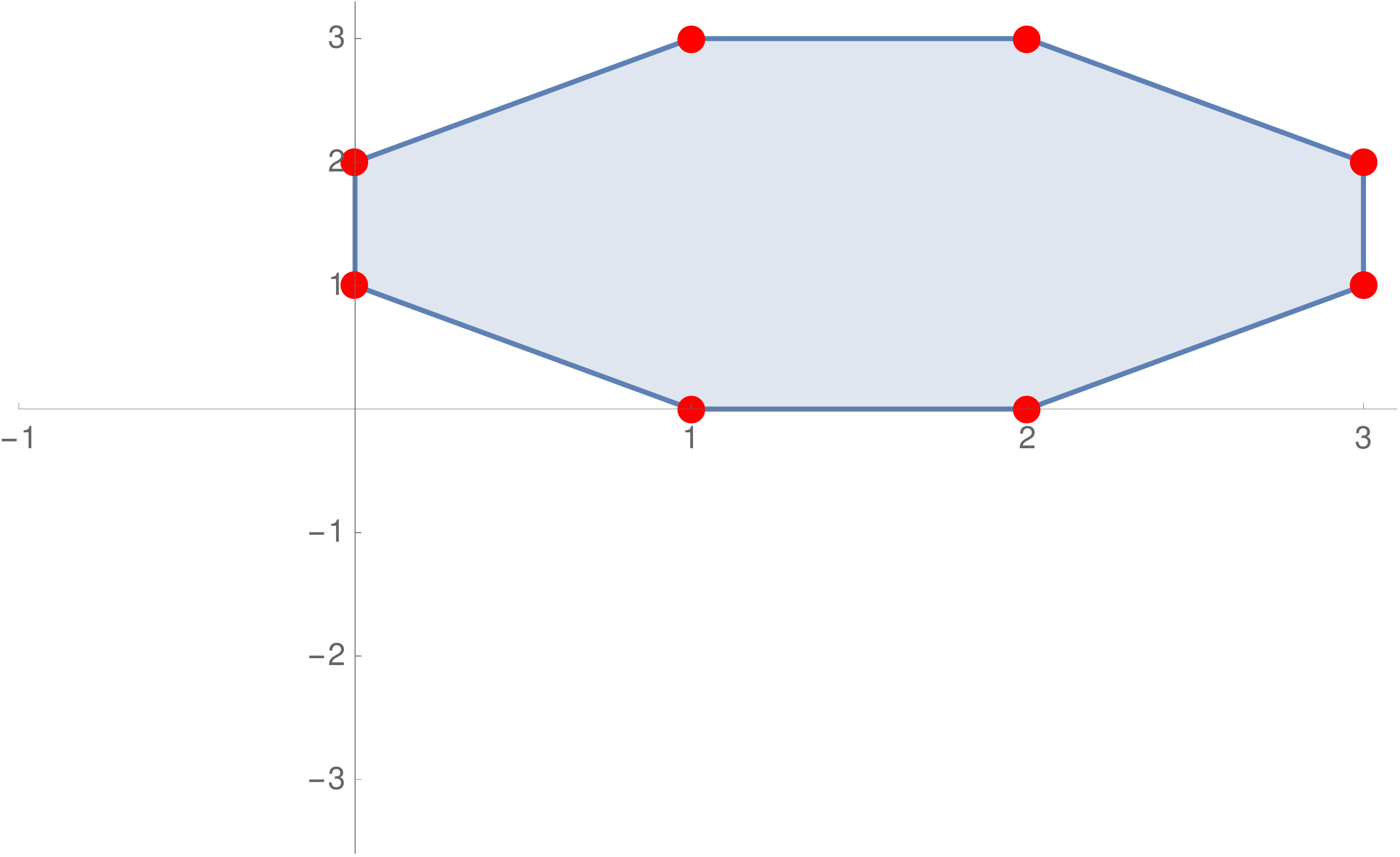}
\end{subfigure}%
\begin{subfigure}{0.33\textwidth}
\centering
\caption*{$t = 0.07$}
\includegraphics[width=.9\linewidth]{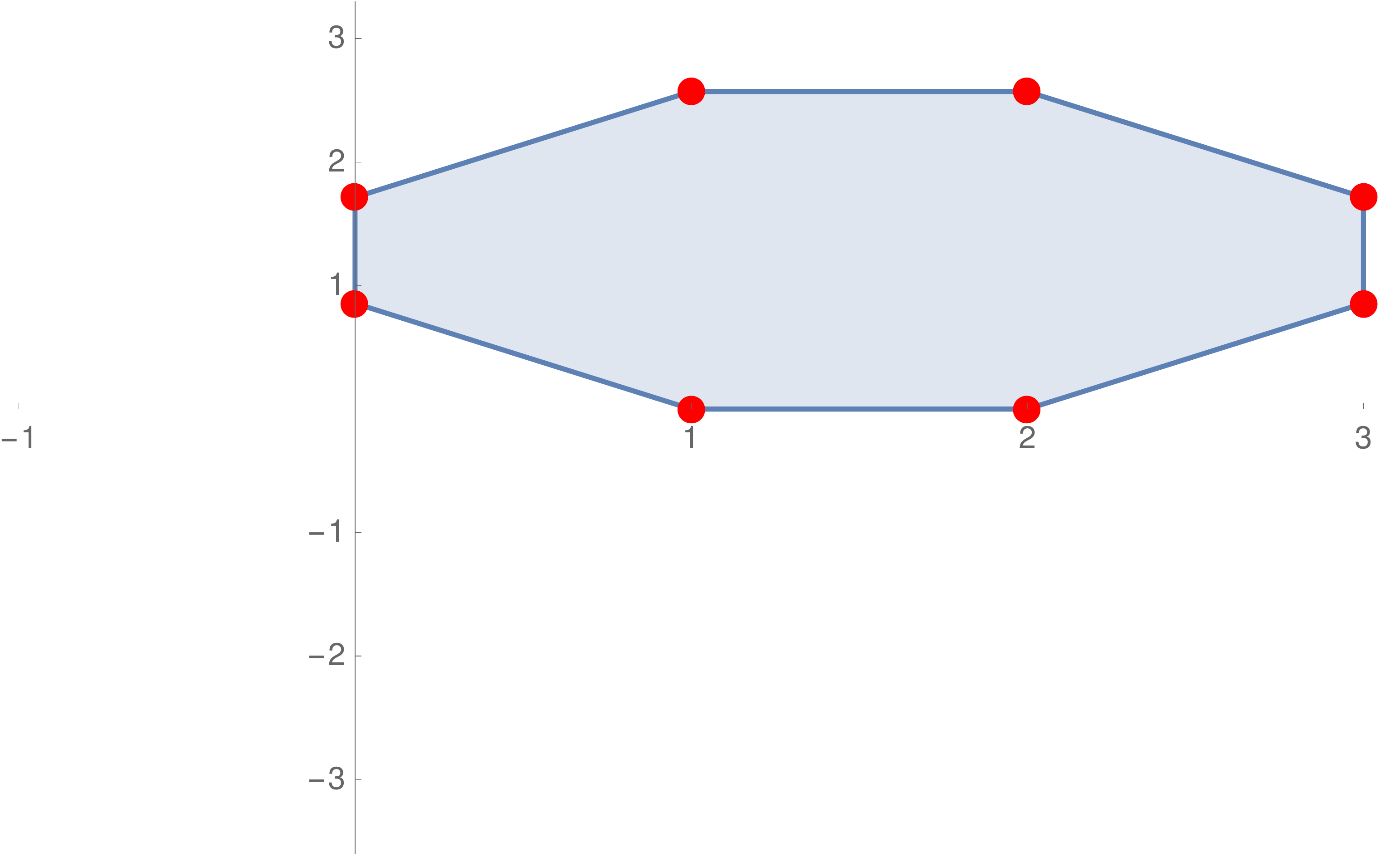}
\end{subfigure}%
\begin{subfigure}{0.33\textwidth}
\centering
\caption*{$t = 0.14$}
\includegraphics[width=.9\linewidth]{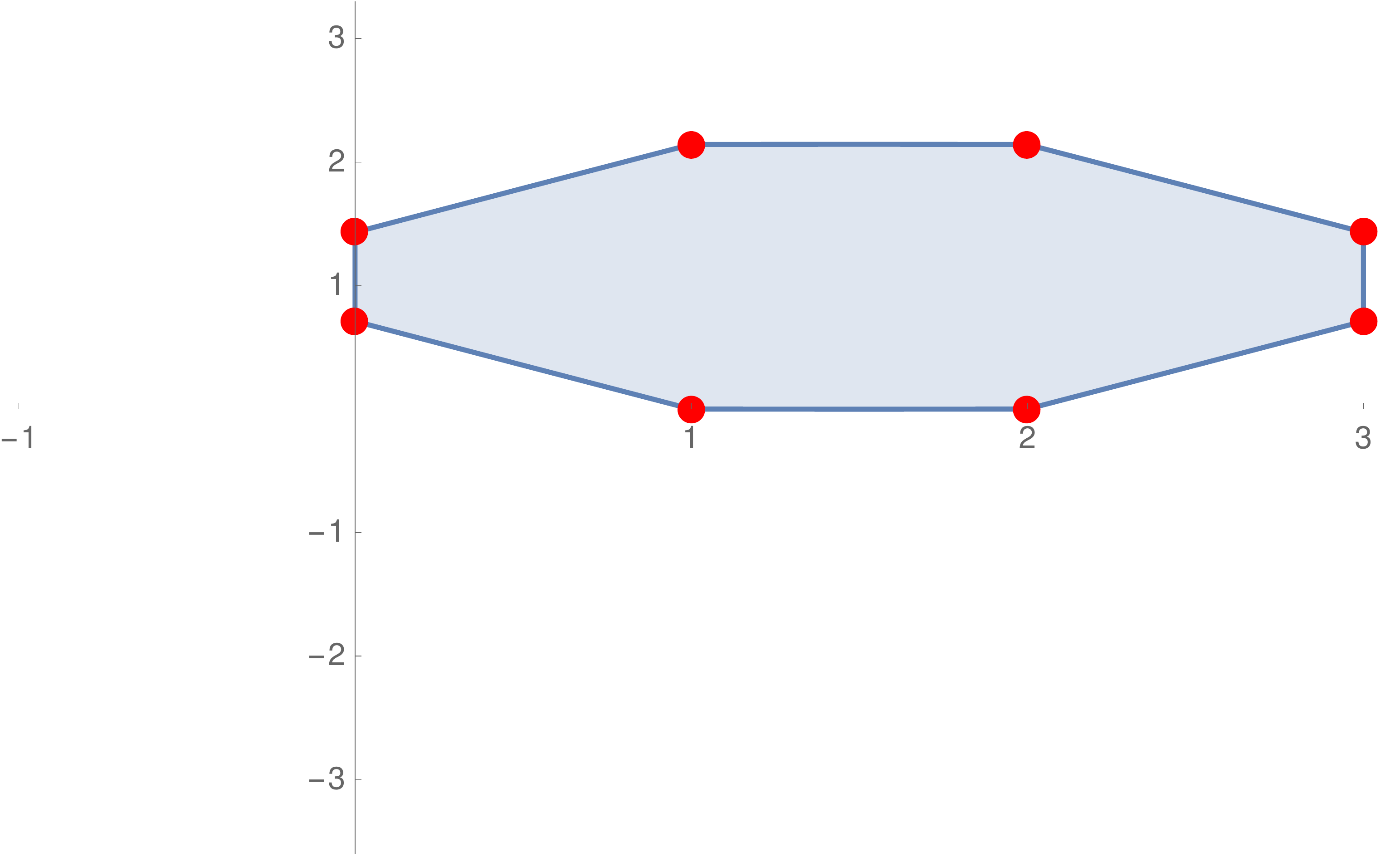}
\end{subfigure}

\begin{subfigure}{0.33\textwidth}
\centering
\caption*{$t = 0.21$}
\includegraphics[width=.9\linewidth]{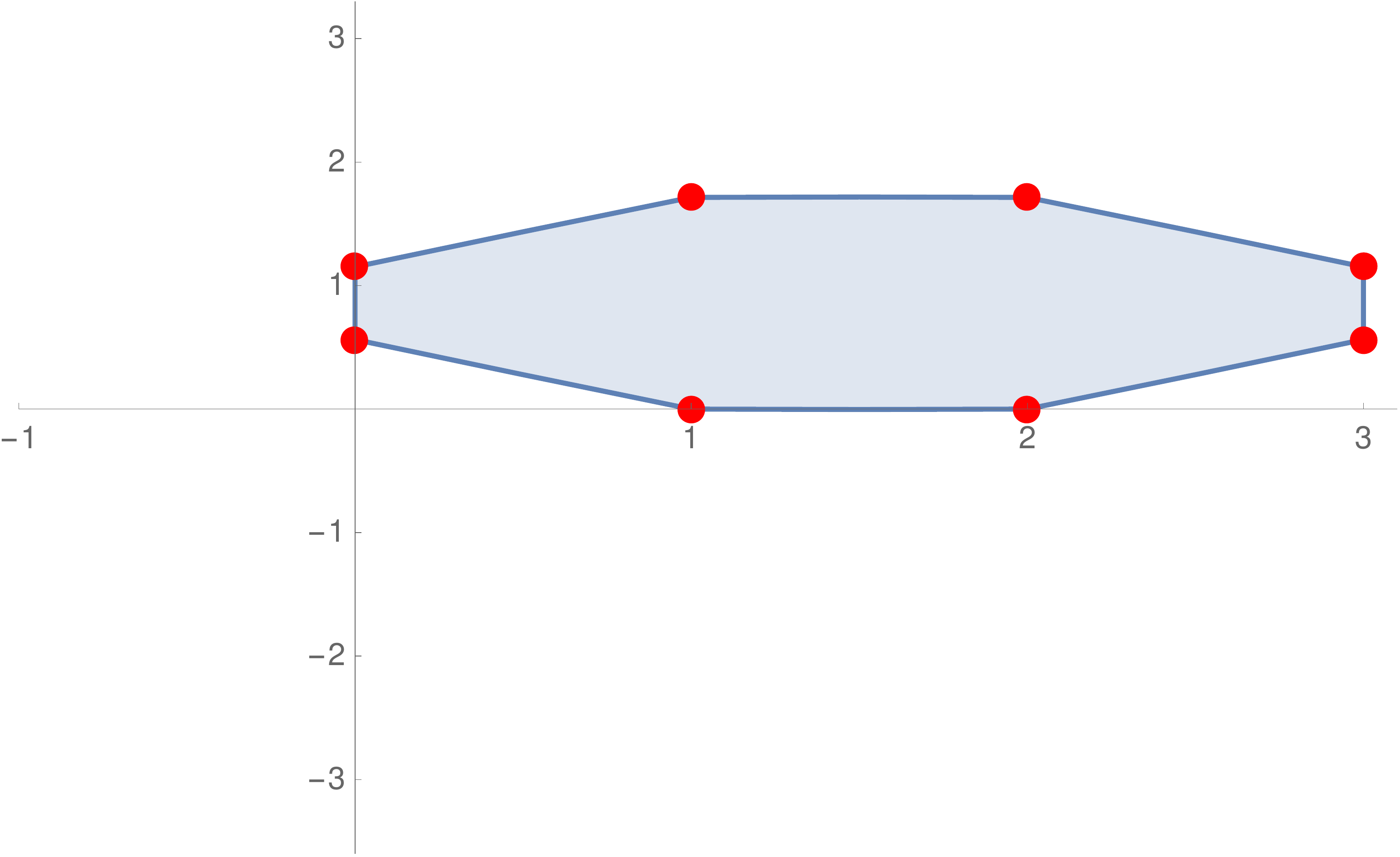}
\end{subfigure}%
\begin{subfigure}{0.33\textwidth}
\centering
\caption*{$t = 0.29$}
\includegraphics[width=.9\linewidth]{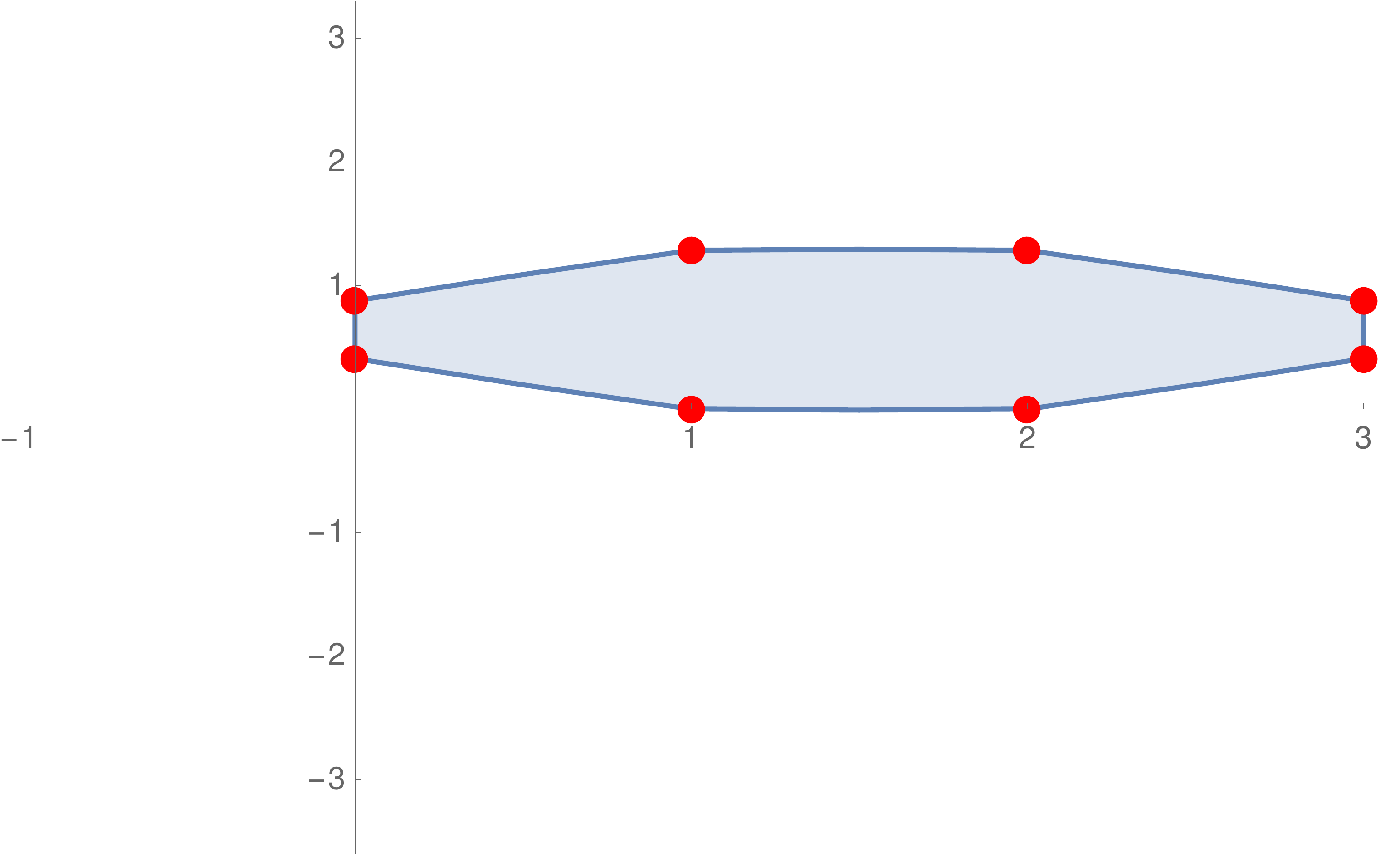}
\end{subfigure}%
\begin{subfigure}{0.33\textwidth}
\centering
\caption*{$t = 0.36$}
\includegraphics[width=.9\linewidth]{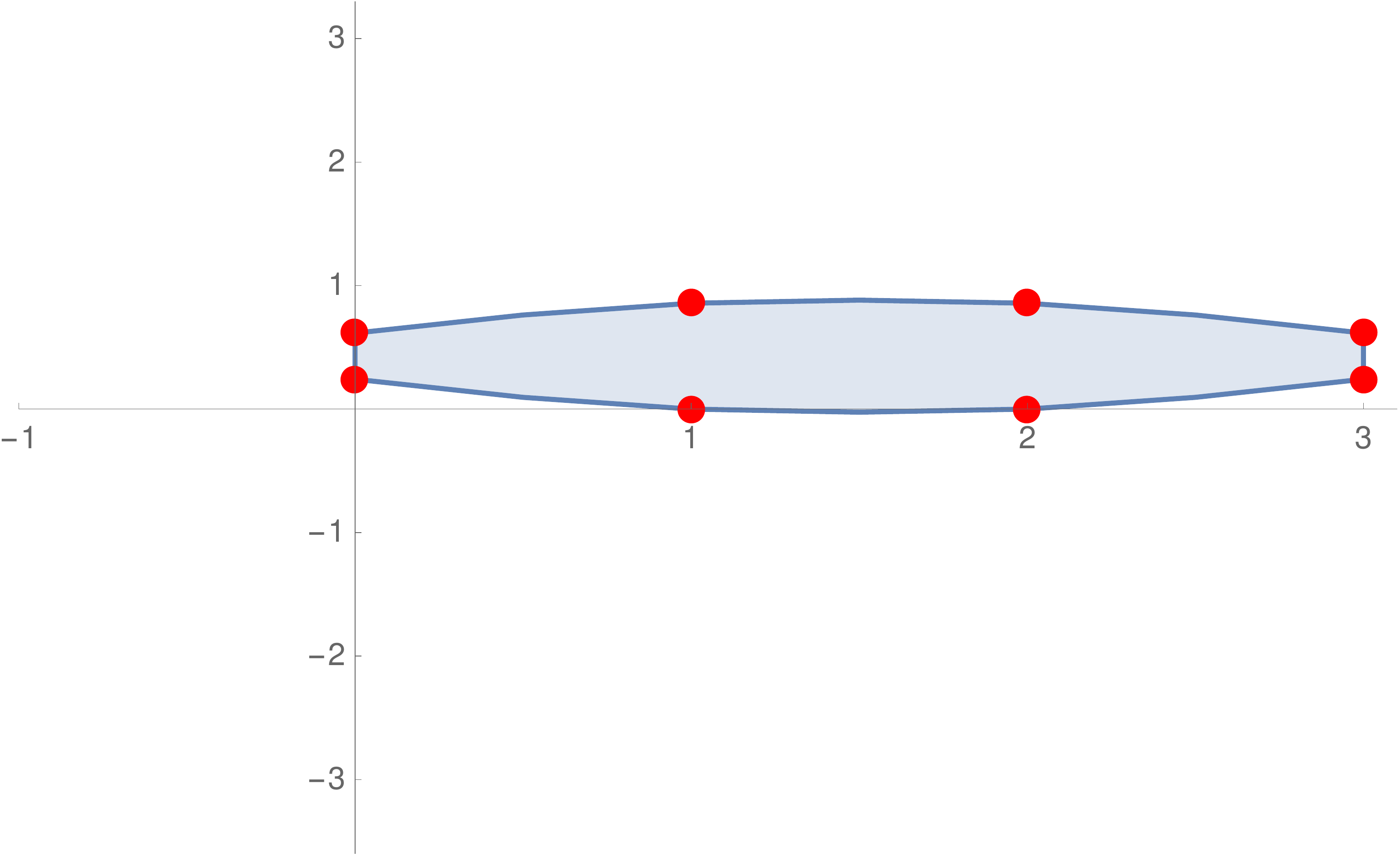}
\end{subfigure}

\begin{subfigure}{0.33\textwidth}
\centering
\caption*{$t = 0.43$}
\includegraphics[width=.9\linewidth]{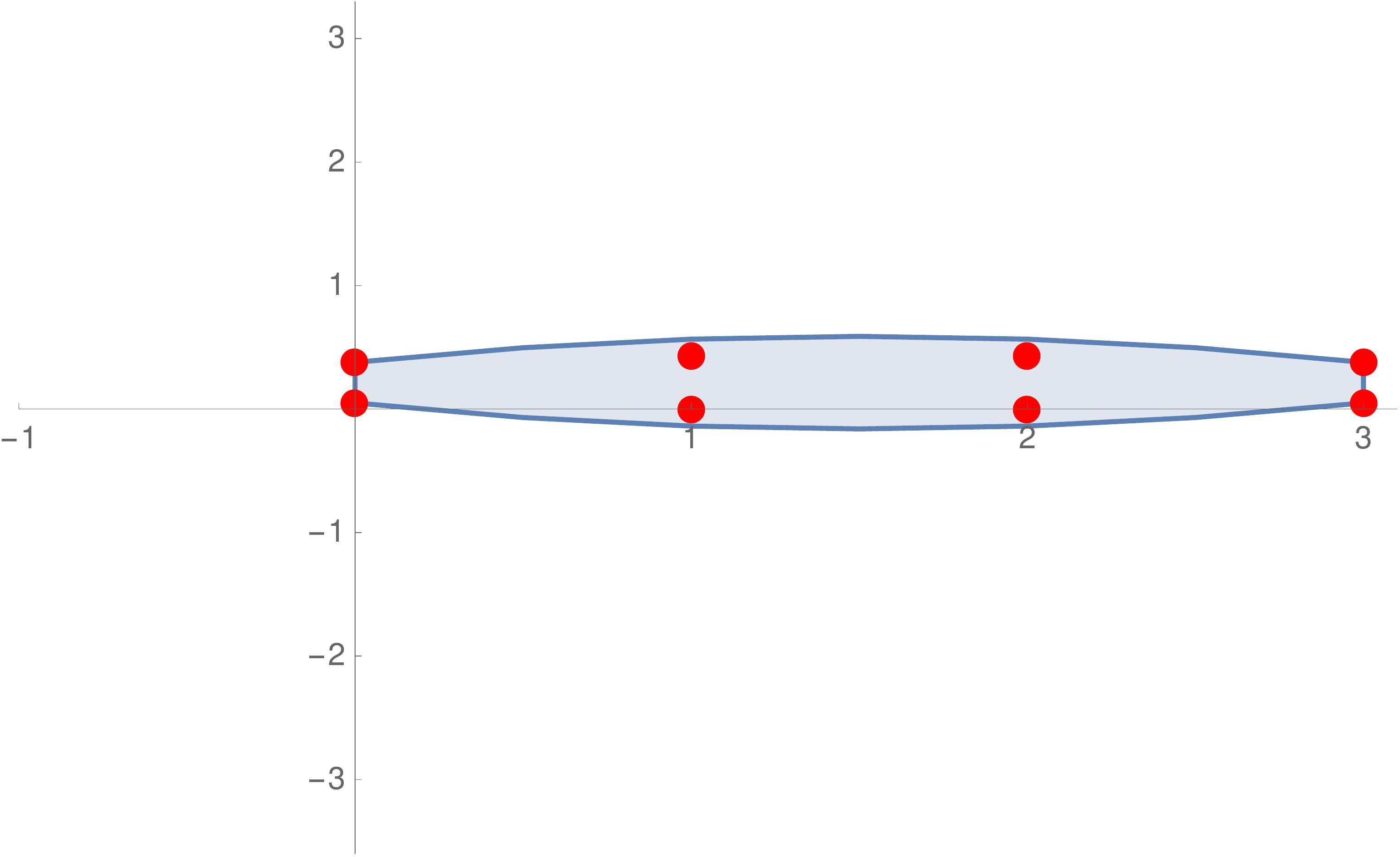}
\end{subfigure}%
\begin{subfigure}{0.33\textwidth}
\centering
\caption*{$t = 0.5$}
\includegraphics[width=.9\linewidth]{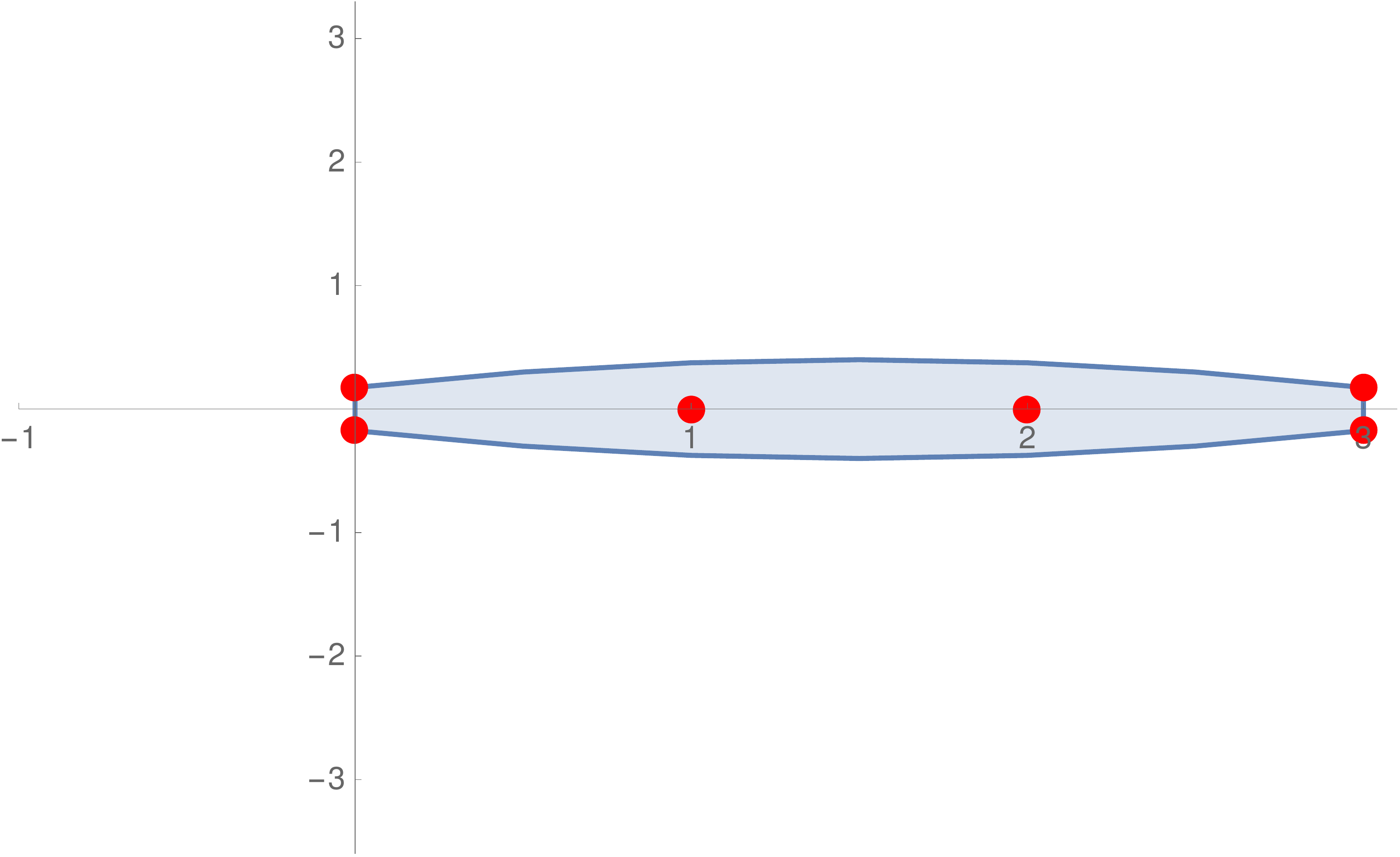}
\end{subfigure}%
\begin{subfigure}{0.33\textwidth}
\centering
\caption*{$t = 0.57$}
\includegraphics[width=.9\linewidth]{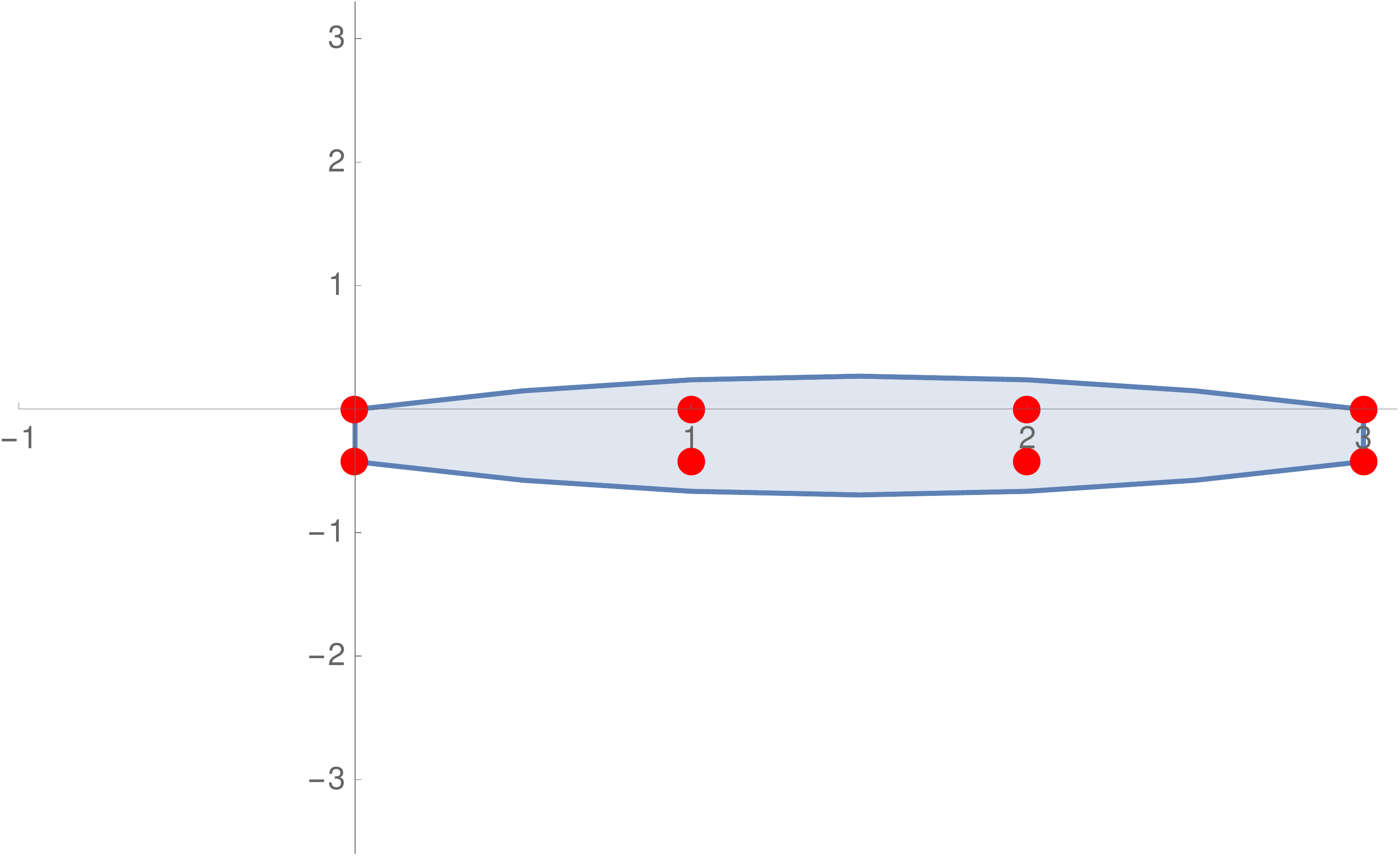}
\end{subfigure}

\begin{subfigure}{0.33\textwidth}
\centering
\caption*{$t = 0.64$}
\includegraphics[width=.9\linewidth]{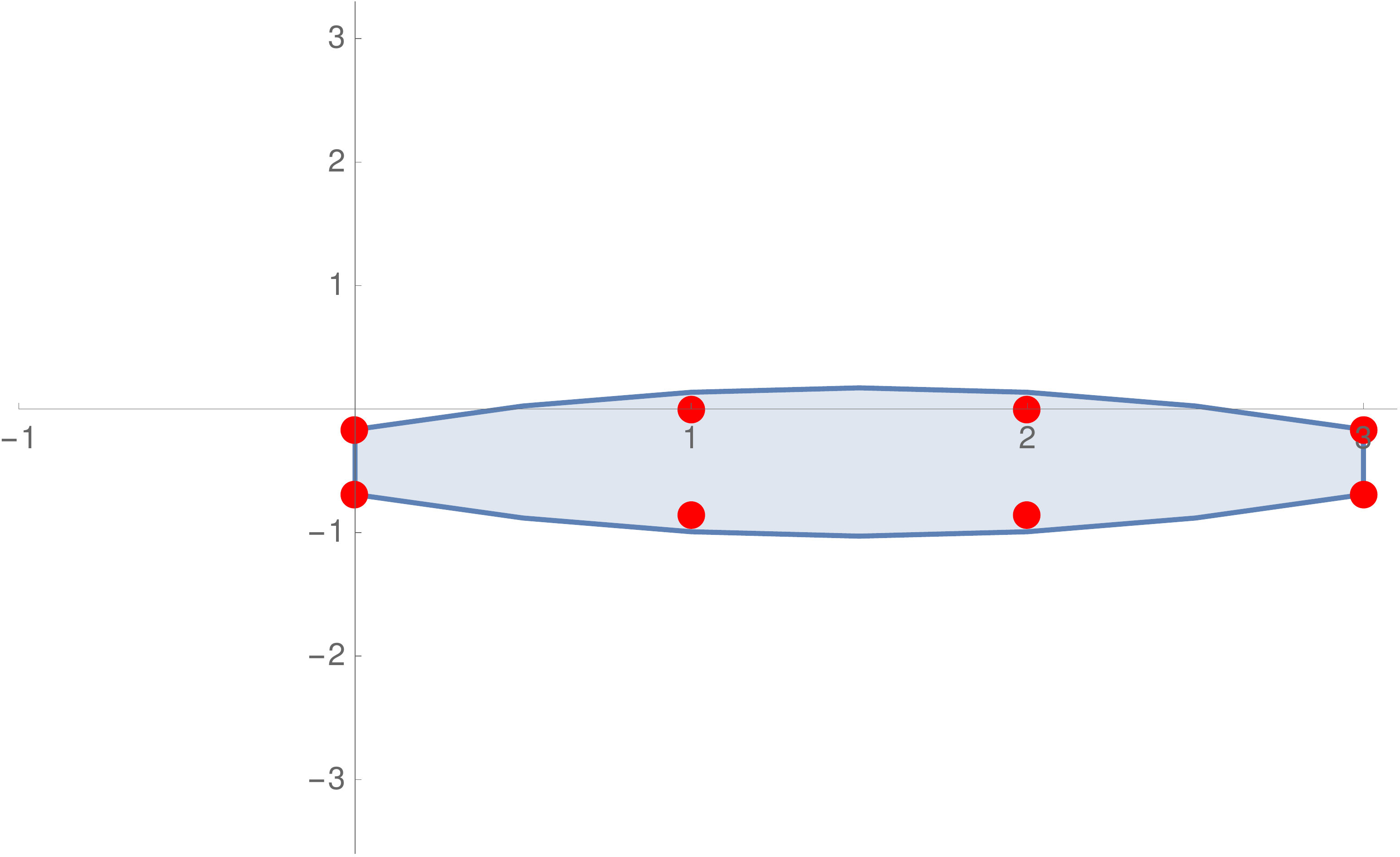}
\end{subfigure}%
\begin{subfigure}{0.33\textwidth}
\centering
\caption*{$t = 0.71$}
\includegraphics[width=.9\linewidth]{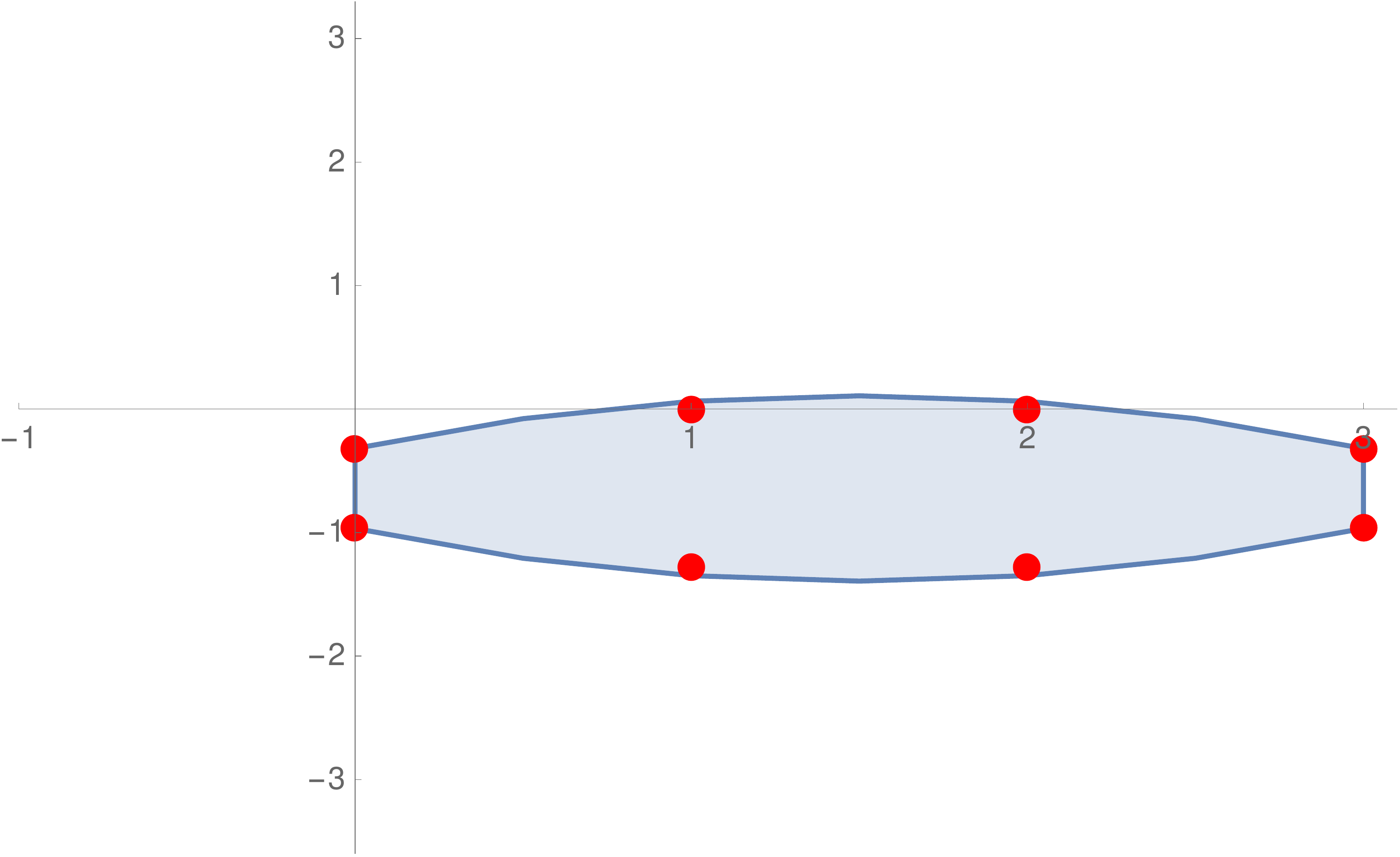}
\end{subfigure}%
\begin{subfigure}{0.33\textwidth}
\centering
\caption*{$t = 0.79$}
\includegraphics[width=.9\linewidth]{Nfig-MMI-11.pdf}
\end{subfigure}

\begin{subfigure}{0.33\textwidth}
\centering
\caption*{$t = 0.86$}
\includegraphics[width=.9\linewidth]{Nfig-MMI-11.pdf}
\end{subfigure}%
\begin{subfigure}{0.33\textwidth}
\centering
\caption*{$t = 0.93$}
\includegraphics[width=.9\linewidth]{Nfig-MMI-11.pdf}
\end{subfigure}%
\begin{subfigure}{0.33\textwidth}
\centering
\caption*{$t = 1$}
\includegraphics[width=.9\linewidth]{Nfig-MMI-11.pdf}
\end{subfigure}

\caption{The image of the momentum map $F_t(M)$ of the family $(M, \om, F_t)$ from \refintroSemitoric\ plotted with {\em Mathematica} for fifteen time steps between $t=0$ and $t=1$ with $\gamma = \frac{1}{60}$.
}
\label{Fig_maintheorem}
\end{figure}

The momentum map image $F_t(M)$ is plotted for various values of $t$ in Figure \ref{Fig_maintheorem}. 
\refintroSemitoric\ is restated as \refmainTheorem\ in Section \ref{section semitoric}. Moreover, \refrankOpoints\ computes the explicit coordinates of the eight fixed points of $F_t$ for time $0 \leq t \leq 1$, thus extending \refcoordEEPoints\ from the toric situation at $t=0$ to $t \in [0, 1]$.

Let us remark that, in the terminology of Kane $\&$ Palmer $\&$ Pelayo \cite{kpp}, the systems $(F_t)_{t^-< t < t^+ }$ are minimal of type (6) with $k=-2$ and $c=4$ and $d=4$ (see \cite[Theorem 2.4 and the table in Theorem 4.15]{kpp}).

The proof of \refintroSemitoric\ resp.\ \refmainTheorem\ is done in several steps spread over the following sections: 
\begin{itemize}
 \item 
 In Section \ref{section posRankZero}, we show the existence and determine the positions of the fixed points. 
 \item
 In Section \ref{section typeRankZero}, we show that the fixed points are nondegenerate and we determine their type.
 \item
 In Section \ref{section rankOneType}, we determine the rank one points and show that they are nondegenerate and of elliptic-regular type. 
 \item
 In Section \ref{section pinched}, we summarise all steps and prove \refmainTheorem\ and hereby also \refintroSemitoric.
\end{itemize}
At $t=\frac{1}{2}$, the fibres over $(1,0)$ and $(2, 0)$ contain precisely two focus-focus points each. These fibres can be thought of as double pinched tori as sketched in Figure \ref{Fig_double_pinched} and we exemplarily parametrise the one over $(1, 0)$ explicitly.

\begin{figure}[h]
\centering
\includegraphics[scale=.3]{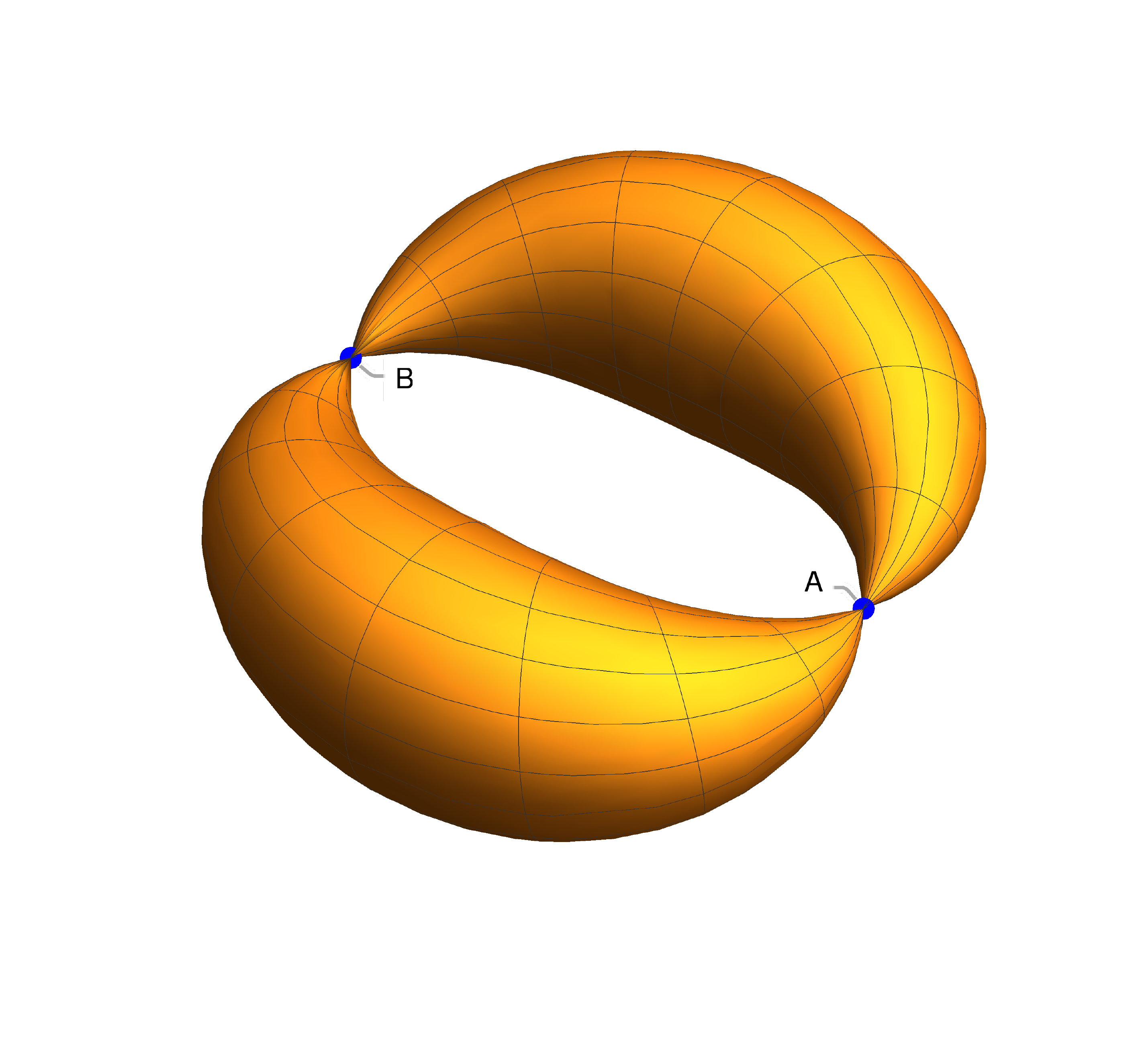}
\caption{The fibre $F_\frac{1}{2}^{-1}(1,0)$ seen as double pinched torus. The parametrisation with $r$ and $\theta$ in \refintroPinchedTorus\ attains $A$ for $r=0$ and $B$ for $r=\sqrt{6}$. The sign in front of $r$ tells the two `bulges' of the double pinched torus apart. $\theta$ describes the rotational coordinate around each bulge. The plot is done with {\em Mathematica}.
}
\label{Fig_double_pinched}
\end{figure}

\begin{proposition}
\label{introPinchedTorus}
At $t=\frac{1}{2}$, the system $F_{\frac{1}{2}}$ has precisely two focus-focus fibres, each of which contains precisely two focus-focus points so that each of these two fibres has the shape of a double pinched torus (see Figure \ref{Fig_double_pinched}). Exemplarily, $ F_{\frac{1}{2}}^{-1}\left(1,0\right)$ can be parametrised as
$$
\left\{ \left. \left[\sqrt{2}, \sqrt{6 - r^2}, \sqrt{6 - r^2}, \sqrt{8 - r^2}, 2, \sqrt{2 + r^2}, \pm r i e^{-i\theta}, r e^{i\theta}\right] \in M \ \right| \theta \in [0, 2\pi[, \ r \in \left[0, \sqrt{6} \right]\right\}.
$$
\end{proposition}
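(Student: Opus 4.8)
The plan is to prove the two set-inclusions separately, treating only the fibre over $(1,0)$; the fibre over $(2,0)$ will then follow from the evident $x\mapsto 3-x$ symmetry of the octagon and of the construction. First I would record what the fibre is. Since $H_t=(1-2t)H+t\ga X$, at $t=\tfrac12$ we have $H_{\frac12}=\tfrac\ga2 X$ with $\ga>0$, so, using \refXY\ to regard $X=\mfR(Z)$ with $Z=\overline{z_2}\,\overline{z_3}\,\overline{z_4}\,z_6z_7z_8$ as a function on $M$,
$$
F_{\frac12}^{-1}(1,0)=\left\{[z]\in M \mid \tfrac12\abs{z_1}^2=1,\ X([z])=0\right\}.
$$
Throughout I would use the Delzant description of $(M,\om)$ from Section \ref{section toric} (i.e.\ \refthOctagon): the six reduction constraints are equivalent to saying that the numbers $\tfrac12\abs{z_k}^2$ are the nonnegative affine facet-coordinates $\langle v_k,\mu\rangle-\lam_k$ of a single momentum point $\mu=(J,H)\in\De$, so the eight moduli $\abs{z_k}$ are completely determined by $\mu$.

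For the inclusion ``$\supseteq$'' I would substitute the parametrised point. A direct check shows that the eight numbers $\tfrac12\abs{z_k}^2$ read off from the statement are exactly the affine facet-coordinates of $\mu=(1,3-\tfrac{r^2}2)$, which lies on the vertical segment $\{1\}\times[0,3]\subset\De$ for $r\in[0,\sqrt6]$; hence the point satisfies the reduction constraints and lies in $M$, and $\tfrac12\abs{z_1}^2=1$ gives $J=1$. Finally $z_7z_8=(\pm rie^{-i\theta})(re^{i\theta})=\pm r^2 i$, so
$$
Z=(6-r^2)\sqrt{(8-r^2)(2+r^2)}\,(\pm r^2 i)
$$
is purely imaginary and $X=\mfR(Z)=0$. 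Thus the parametrised set is contained in $F_{\frac12}^{-1}(1,0)$.

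The inclusion ``$\subseteq$'' is the substantive direction. Given $[z]$ in the fibre, $J=1$ fixes $\tfrac12\abs{z_1}^2=1$, and the only remaining modulus is $H=\tfrac12\abs{z_3}^2\in[0,3]$; setting $r:=\sqrt{6-\abs{z_3}^2}\in[0,\sqrt6]$ makes all eight moduli agree with those in the statement (by the computation just done). It then remains to normalise the phases, and here I would use the residual torus $K\cong\T^6$ of the reduction, with Lie algebra $\mfk=\ker\pi\subset\R^8$ for $\pi(e_k)=v_k$. Since the two normals $v_7,v_8$ are linearly independent, the projection $\mfk\to\R^6$ that forgets the last two coordinates is an isomorphism, so $K$ acts transitively on $(\arg z_1,\dots,\arg z_6)$ and one may arrange $z_1,\dots,z_6\in\R_{\ge0}$. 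With these phases fixed, $Z=z_2z_3z_4z_6\,z_7z_8$ is a \emph{positive} multiple of $e^{i(\arg z_7+\arg z_8)}$ whenever $z_2z_3z_7z_8\neq0$, so $X=0$ forces $\arg z_7+\arg z_8=\pm\tfrac\pi2$; writing $\theta:=\arg z_8$ and using $\abs{z_7}=\abs{z_8}=r$ this is precisely $z_8=re^{i\theta}$, $z_7=\pm rie^{-i\theta}$, i.e.\ the claimed form.

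The main obstacle will be the bookkeeping at the two loci where $Z$ vanishes, namely $r=0$ (where $z_7=z_8=0$) and $r=\sqrt6$ (where $z_2=z_3=0$): there $X=0$ holds automatically and the phase argument above is empty, so one must instead check that the whole $\theta$-circle collapses to a single point of $M$. At $r=0$ this is trivial, since $z_7,z_8$ vanish. At $r=\sqrt6$ I would argue that the vanishing of $z_2,z_3$ enlarges the effective $K$-freedom: because $v_2,v_3$ are linearly independent, the system $\sum_{k\in\{2,3,7,8\}}a_kv_k=0$ admits a solution with $(a_7,a_8)$ any prescribed nonzero multiple of $(-1,1)$, and the corresponding one-parameter subgroup of $K$ moves $(\arg z_7,\arg z_8)$ along $(-1,1)$ while fixing the nonzero coordinates $z_1,z_4,z_5,z_6$; hence all $\theta$ (and both signs) give the same class. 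These two collapse points are $A$ (at $r=0$) and $B$ (at $r=\sqrt6$), which by \refrankOpoints\ are exactly the two focus-focus points over $(1,0)$; the two signs $\pm$ then produce the two spheres glued along $A$ and $B$, i.e.\ the double pinched torus. Together with the symmetric fibre over $(2,0)$ and the fact that \refrankOpoints\ exhibits no further focus-focus points, this would also establish the opening assertion of the proposition.
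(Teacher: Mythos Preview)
Your approach is essentially the same as the paper's: normalise six of the eight phases to be real nonnegative (the paper does this via the chart $(U_7,\psi_7)$, you via the $K$-action, which amounts to the same thing), read off the moduli from the manifold equations, and solve $X=0$ for the remaining two phases. Your handling of the point $r=\sqrt{6}$ via enlarged torus freedom is a slightly cleaner substitute for the paper's case-by-case check of which $z_k$ may vanish outside $U_7$.

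There is one small gap at $r=\sqrt{6}$. You only exhibit a $K$-direction with $(a_7,a_8)$ proportional to $(-1,1)$; this moves $\theta$ but keeps $\arg z_7+\arg z_8$ fixed, so it collapses the $\theta$-circle \emph{within each sign} but does not identify the $+$ and $-$ sheets with one another. The fix is immediate: since $v_2,v_3$ are linearly independent, the system $\sum_{k\in\{2,3,7,8\}}a_kv_k=0$ is solvable for \emph{any} $(a_7,a_8)$, not just multiples of $(-1,1)$; taking for instance $(a_7,a_8)=(1,0)$ (with $a_3=1$, $a_2=0$) rotates $z_7$ alone, hence changes $\arg z_7+\arg z_8$ and identifies both signs with the single class $B$. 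For the companion fibre over $(2,0)$ you appeal to an octagon symmetry; the paper instead simply computes $F_{\frac12}(C)=F_{\frac12}(D)=(2,0)$ and invokes Zung's structural result for focus-focus fibres, which is quicker than verifying that the symmetry intertwines the full construction.
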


\noindent
This statement is reformulated as \refdoublePinchParam\ and proven in Section \ref{section pinched}.

\vspace{3mm}

The successful construction of the 1-parameter family $F_t$ suggests that Le Floch $\&$ Palmer's \cite{leFlochPalmer} method of interpolation with invariant functions may work in more generality.
In particular, it should make the construction of semitoric systems possible with any wished number of focus-focus points and with a {\em globally defined, explicitly given} momentum map. 
In practice, the verification of such examples by hand may be lengthy and awkward since the number of equations defining the symplectic manifold grows.

\vspace{3mm}

Control of the Taylor series invariant or twisting index is unfortunately not (yet) possible with these techniques. Nevertheless, constructing focus-focus fibres containing two focus-focus points seemed easy enough. But since these focus-focus points originated from elliptic-elliptic points underlying vertices of the momentum polytope, one needs to employ additional methods if one wants to obtain fibres containing more than two focus-focus points.

\subsection*{Acknowledgements}
We wish to thank Jaume Alonso, Yohann Le Floch, and Joseph Palmer for useful comments and helpful discussions.
The second author was partially supported by the FWO-EoS project G0H4518N and the UA-BOF project with Antigoon-ID 31722. 

\subsection*{Figures}
The figures are either done with {\em Mathematica} or with {\em Xfig}.


\section{{\bf Fundamental definitions, facts, and conventions}}


\subsection{Symplectic conventions}

Within this paper, we usually consider $\R^{2n}$ equipped with the coordinates $(x_1, y_1, x_2, y_2, \dots, x_n, y_n)$. Sometimes we identify $ (x_k, y_k) = x_k +i y_k = z_k$ for $1 \leq k \leq n$ and consider $\R^{2n} \simeq \C^n$ with coordinates $(z_1, \dots, z_n)$. The standard symplectic form $\om_{st}$ on $\R^{2n}$ is thus represented by a $(2n \times 2n)$-matrix having submatrices 
$
\left(
\begin{smallmatrix}
0 & -1 \\ 1 & 0 
\end{smallmatrix}
\right)
$
along the diagonal and zeros otherwise.

Given a symplectic manifold $(M, \om)$ and a smooth function $f: M \to \R$, we define the Hamiltonian vector field $\mcX^f$ of $f$ via $\om(\mcX^f, \cdot ) =  df$. When we consider $\mcX^f$ at a point $p \in M$, we usually write $\mcX^f(p)$ and, in case of the symplectic form, $\om_p$. 

The Poisson bracket of two functions $f, g: M \to \R$  induced by $\om$ is given by 
$$\{ f, g\} := - \om( \mcX^f, \mcX^g) = -df (\mcX^g) = dg(\mcX^f).$$


\subsection{Completely integrable systems}

A \textit{$2n$-dimensional completely integrable system} is a triple $(M, \omega, f)$ where $(M, \omega)$ is a $2n$-dimensional symplectic manifold and $f=(f_1, \dots, f_n) \in C^{\infty}(M, \mathbb{R}^n)$ is a map that satisfies
\begin{itemize}
\item $\lbrace f_j, f_k \rbrace = 0$ for all $ 1 \leq j, k \leq  n. $
\item $\mathcal{X}^{f_i}(p), \ldots, \mathcal{X}^{f_n}(p)$ are linearly independent for almost all $p \in M$.
\end{itemize}
The function $f$ is called the \textit{momentum map} of the integrable system $(M, \omega, f)$.

Let $(M, \om, f)$ be a $2n$-dimensional completely integrable system. A point $p \in M$ is called \textit{regular} for $f$ if $\mathcal{X}^{f_1}(p), \ldots, \mathcal{X}^{f_n}(p)$ are linearly independent. Otherwise $p$ is said to be \textit{singular}. The \textit{rank} of the singular point $p$ is the rank of $(\mathcal{X}^{f_1}(p), \ldots, \mathcal{X}^{f_n}(p))$ or, equivalently, the rank of the Jacobian $df(p)$. A singular point of rank zero is usually called a \textit{fixed point}. A value $r \in \R^n$ is called {\it regular} if the whole {\it fibre} or {\it level set} $f^{-1}(r)$ only contains regular points. Otherwise $r \in \R^n$ is called {\it singular}. A fibre is said to be {\em regular} if it contains only regular points and otherwise {\em singular}.


\subsection{Nondegeneracy and local normal form in $2n$ dimensions}

A singular point $p \in M$ of rank zero of a completely integrable system $(M, \om, f=(f_1, \dots, f_n))$ is \textit{nondegenerate} if the Hessians $d^2{f_1}(p), \ldots, d^2f_n(p)$ span a Cartan subalgebra of the Lie algebra of quadratic forms on $T_pM$. We refer to Bolsinov $\&$ Fomenko~\cite{bolsinov-fomenko} for the precise definition of nondegenerate points of higher rank. For the case $\dim M=4$, we recall it below.

Williamson~\cite{williamson} classified Cartan subalgebras of $\mfs \mfp (n,\R)$. This in turn yields a classification of the possible subalgebras generated by the Hessians in $T_pM$ seen as $\mfs \mfp(n,\R)$. Vey \cite{vey}, Eliasson~\cite{eliassonThesis, eliassonPaper}, Miranda $\&$ Zung~\cite{mirandaZung}, \vungoc\ $\&$ Wacheux \cite{vungocWacheux}, Chaperon \cite{chaperon}, and others extended Williamson's pointwise classification to the following local classification and even more general versions.

\begin{theorem}[Local normal form]
\label{thmEliasson}
 Let $p\in M$ be a nondegenerate singular point of a $2n$-dimensional completely
 integrable system $(M,\om,f=(f_1, \dots, f_n))$. Then there exist local symplectic coordinates
 $(x,y):=(x_1, \dots, x_n, y_1, \dots, y_n)$ around $p$ such that there exists
 $q=(q_1, \dots, q_n) :  M\to\R^n$ where each $q_j$ is given by one of 
 \begin{enumerate}[nosep]
  \item elliptic: $q_j(x,y) = \frac{1}{2}(x_j^2+y_j^2)$,
  \item hyperbolic: $q_j (x,y)= x_j y_j$,
  \item focus-focus: $\begin{cases}  q_j (x,y)  = x_j y_{j +1}-x_{j+1} y_j,\\ q_{j+1}(x,y)  = x_j y_j +x_{j+1}y_{j+1},\end{cases}$
  \item non-singular: $q_j (x,y)= y_j$,
 \end{enumerate}
 such that $\{f_j, q_k\}=0$ for all $1 \leq j,k \leq n$.
\end{theorem}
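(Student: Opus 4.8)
The plan is to split the argument into a pointwise linear-algebraic normalization, a reduction to the rank-zero case, and finally a smooth symplectic straightening of the higher-order terms; the last of these is the analytic heart of the theorem. First I would reduce to a fixed point. If $p$ has rank $r$, then after a linear change of the target coordinates I may assume $df_1(p), \dots, df_r(p)$ are linearly independent while $df_{r+1}(p)=\cdots=df_n(p)=0$. The $r$ functions with independent differentials generate a local free Hamiltonian $\R^r$-action near $p$, and a relative Darboux argument lets me choose symplectic coordinates in which these are the $q_j(x,y)=y_j$ of type (4). Symplectic reduction by this action (a local Marsden--Weinstein quotient) then produces a $(2n-2r)$-dimensional system for which $p$ descends to a singular point of rank zero, and the commutation $\{f_j,q_k\}=0$ survives the reduction. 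It therefore suffices to normalize the rank-zero situation.

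Second, at the fixed point I would pass to the linear model. Identifying each Hessian $d^2 f_j(p)$ with an element of $\mfs\mfp(2n,\R)$ via $\om_p$, nondegeneracy asserts that these span a Cartan subalgebra. Williamson's \cite{williamson} classification of Cartan subalgebras of $\mfs\mfp(2n,\R)$ then supplies a symplectic basis of $T_pM$ splitting it into two-dimensional (elliptic, hyperbolic) and four-dimensional (focus-focus) symplectic blocks, in which the subalgebra is generated exactly by the model forms in (1)--(3). This gives a linear symplectomorphism after which the quadratic parts of the $f_j$ are linear combinations of the desired $q_k$.

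Third, I would upgrade this linear normalization to a genuine local symplectomorphism, following the Miranda and Zung \cite{mirandaZung} strategy. The elliptic blocks together with the rotational component of each focus-focus block generate the linearized flow of a torus $\T^s$ acting symplectically near $p$. By a Bochner-type equivariant linearization of this action combined with Moser's path method carried out $\T^s$-equivariantly, one obtains equivariant Darboux coordinates in which $\om$ is standard and the compact (elliptic and focus-rotation) directions already sit in normal form. Reducing by $\T^s$ and exploiting $\{f_j,f_k\}=0$, the Cartan condition forces the surviving components to be functions of the model $q_k$, which are then straightened; composing $f$ with a local diffeomorphism of $\R^n$ delivers the $q$ of the statement.

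The hard part will be the hyperbolic directions and the non-compact (radial) component of the focus-focus blocks, where there is no compact symmetry to average over, so linearization is genuine hard analysis rather than formal algebra. Here one must invoke a smooth linearization theorem for the commuting flows and control flat remainder terms by division lemmas, as in Chaperon \cite{chaperon}, the \vungoc\ and Wacheux \cite{vungocWacheux} argument, or Eliasson's original estimates \cite{eliassonThesis, eliassonPaper}; in the real-analytic category this becomes a Birkhoff-type convergence statement. This is the step where the classical proofs were historically delicate. For the four-dimensional systems studied in this paper only elliptic and focus-focus blocks occur, so in the present application the torus-averaging argument of the third step suffices and the hyperbolic analysis can be bypassed entirely.
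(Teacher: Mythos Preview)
The paper does not prove this theorem at all: it is stated as a known result and attributed to Vey~\cite{vey}, Eliasson~\cite{eliassonThesis, eliassonPaper}, Miranda $\&$ Zung~\cite{mirandaZung}, \vungoc\ $\&$ Wacheux~\cite{vungocWacheux}, and Chaperon~\cite{chaperon}, with no argument supplied. Your sketch is therefore not competing against anything in the paper; rather, it is a summary of the very literature the paper cites, and as such it is broadly accurate --- reduction to rank zero, Williamson's linear classification, equivariant Darboux via torus averaging for the compact directions, and hard analysis for the non-compact ones is indeed the architecture of the Miranda--Zung/Chaperon/\vungoc--Wacheux proofs.

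One correction to your final sentence: it is not quite true that for the systems in this paper ``the torus-averaging argument of the third step suffices.'' Even without hyperbolic blocks, a focus-focus component still carries a non-compact radial flow in addition to its $\mbS^1$-rotation, and that radial direction cannot be handled by averaging alone; it is precisely what \cite{vungocWacheux} and \cite{chaperon} address. So the hyperbolic analysis per se is bypassed, but some genuine smooth-linearization work for the focus-focus block remains.
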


To detect the type of a nondegenerate singular point, it is sufficient to check the eigenvalues:

\begin{proposition}[\vungoc {\cite[Chapter 3]{vungocBook}}]
\label{propEigValues}
Let $C$ be a regular element in the Cartan subalgebra generated by the Hessians of the components of
the momentum map (i.e., $C$ has $2n$ distinct eigenvalues) at a fixed point. Then there appear three distinct types of groups of eigenvalues of $C$:
\begin{enumerate}[nosep]
\item \emph{elliptic block}: a pair of imaginary eigenvalues $\pm \mathrm{i}\beta$,
\item \emph{hyperbolic block}: a pair of real eigenvalues $\pm \alpha$,
\item \emph{focus-focus block}: a quadruple of complex eigenvalues $\pm \alpha \pm \mathrm{i}\beta$,
\end{enumerate}
where $\alpha,\beta\in\R^{\neq 0}$.
\end{proposition}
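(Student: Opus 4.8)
The plan is to reduce the statement to a purely linear-symplectic fact — that an element of the symplectic Lie algebra has a spectrum symmetric under both negation and complex conjugation — and then to let the regularity hypothesis rule out the degenerate possibilities. The whole content is essentially Williamson's pointwise classification repackaged in spectral language, so the argument should be short once the algebraic setting is fixed.

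First I would set up the ambient algebra. Since $p$ is a fixed point, $df_j(p)=0$, so each Hessian $d^2f_j(p)$ is a well-defined quadratic form on $T_pM$. Identifying $(T_pM,\om_p)$ with $(\R^{2n},\om_{st})$ and writing $\Om$ for the matrix of $\om_{st}$, the Lie algebra of quadratic forms on $T_pM$ is isomorphic to $\mfs\mfp(n,\R)=\{A\in\R^{2n\times 2n}:A^{\mathsf T}\Om+\Om A=0\}$, the isomorphism sending a quadratic form to the matrix of its (linear) Hamiltonian vector field. Under this identification the Cartan subalgebra generated by $d^2f_1(p),\dots,d^2f_n(p)$ sits inside $\mfs\mfp(n,\R)$, and the regular element $C$ corresponds to a real matrix $A\in\mfs\mfp(n,\R)$ with $2n$ pairwise distinct eigenvalues.

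Next I would establish the spectral symmetry. From $A^{\mathsf T}\Om+\Om A=0$ one gets $A^{\mathsf T}=-\Om A\Om^{-1}$, so $A^{\mathsf T}$ is conjugate to $-A$; as $A$ and $A^{\mathsf T}$ share the same characteristic polynomial, $\Spec(A)$ is invariant under $\lambda\mapsto-\lambda$. Because $A$ is real, $\Spec(A)$ is also invariant under $\lambda\mapsto\bar\lambda$. Hence $\Spec(A)$ is stable under the Klein four-group generated by negation and conjugation, whose orbits in $\C$ are exactly: the singleton $\{0\}$; a real pair $\{\pm\alpha\}$ with $\alpha\neq 0$; an imaginary pair $\{\pm i\beta\}$ with $\beta\neq 0$; and a genuinely complex quadruple $\{\pm\alpha\pm i\beta\}$ with $\alpha,\beta\neq 0$. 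These are precisely the hyperbolic, elliptic, and focus-focus blocks of the statement.

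Finally I would invoke regularity to conclude. Pairwise distinctness forces every eigenvalue to have multiplicity one, so the $2n$ eigenvalues split into a disjoint union of orbits of the four kinds above; it only remains to exclude $\{0\}$. If $0$ were an eigenvalue it would occur once, leaving $2n-1$ nonzero distinct eigenvalues to be partitioned into orbits of size $2$ or $4$; but $2n-1$ is odd while any such partition has even cardinality, a contradiction. Thus $0\notin\Spec(A)$ and the $2n$ eigenvalues decompose exactly into elliptic pairs, hyperbolic pairs, and focus-focus quadruples, as claimed. I do not anticipate a genuine obstacle here; the only point demanding care is the bookkeeping that makes regularity do its work — confirming that distinctness both keeps each block clean (no coincidences merging blocks) and, via the parity count, forbids a zero eigenvalue.
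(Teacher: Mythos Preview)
The paper does not supply a proof of this proposition; it is quoted directly from \vungoc's book as background material, so there is no authorial argument to compare against. That said, your proof is correct and is essentially the standard one: the identification of the Cartan subalgebra with matrices in $\mfs\mfp(n,\R)$, the spectral symmetry under $\lambda\mapsto-\lambda$ (from $A^{\mathsf T}\Om+\Om A=0$) and $\lambda\mapsto\bar\lambda$ (from reality), and the parity argument ruling out a zero eigenvalue are exactly the ingredients behind Williamson's classification in spectral form.
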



\subsection{Nondegeneracy and local normal form in $4$ dimensions}

In this paper, we mainly work on 4-dimensional symplectic manifolds, i.e., there are only singular points of rank zero or rank one possible. In this situation, nondegeneracy of rank zero points (= fixed points) can be verified as follows.

\begin{lemma}[Bolsinov $\&$ Fomenko~\cite{bolsinov-fomenko}]
\label{nondeg}
 Let $(M, \om, f=(f_1, f_2))$ be a $4$-dimensional completely integrable system having a fixed point $p \in M$.
 Let $\om_p$ be the matrix of the symplectic form with respect to a
 basis of $T_p M$ and let $d^2f_1(p)$ and $d^2f_2(p)$ be the matrices of the Hessians
 of $f_1$ and $f_2$ with respect to the same basis.
 Then the fixed point $p$ is nondegenerate if and only if $d^2f_1(p)$ and
 $d^2f_2(p)$ are linearly independent and there exists a linear
 combination of $\om_p^{-1}d^2 f_1(p)$ and $\om_p^{-1}d^2 f_2(p)$ which 
 has four distinct eigenvalues. 
\end{lemma}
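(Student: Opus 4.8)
The plan is to transport the whole statement into the Lie algebra $\mfs\mfp(4,\R)$ and there reduce nondegeneracy to a regular-semisimplicity condition. \textbf{Setup and commutation.} Since $p$ is a fixed point, $df_1(p)=df_2(p)=0$, so the Hessians $d^2f_1(p),d^2f_2(p)$ are well-defined symmetric bilinear forms, i.e.\ quadratic forms on the symplectic vector space $(T_pM,\om_p)$. The space of quadratic forms equipped with the Poisson bracket is a Lie algebra isomorphic to $\mfs\mfp(4,\R)$, where a form with matrix $Q$ corresponds to the infinitesimally symplectic matrix $\om_p^{-1}Q$ and the Poisson bracket of forms goes to the commutator of matrices. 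Writing $A_i:=\om_p^{-1}d^2f_i(p)$, the definition of nondegeneracy says precisely that $p$ is nondegenerate iff $A_1,A_2$ span a Cartan subalgebra of $\mfs\mfp(4,\R)$. The key structural input is $\{f_1,f_2\}\equiv 0$: expanding this identity to lowest order at $p$ (where the linear parts vanish) shows the Poisson bracket of the two quadratic Hessian forms vanishes, which under the isomorphism reads $[A_1,A_2]=0$. Thus $A_1,A_2$ always commute.

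\textbf{Direction ($\Rightarrow$).} If the $A_i$ span a Cartan subalgebra $\mfh$, then $\dim\mfh=2$ (the rank of $\mfs\mfp(4,\R)$ is $2$) forces $A_1,A_2$, and hence $d^2f_1(p),d^2f_2(p)$, to be linearly independent. By Williamson's classification \cite{williamson} every Cartan subalgebra of $\mfs\mfp(4,\R)$ contains regular elements (they are the dense complement of finitely many root hyperplanes), and by \refpropEigValues\ the regular elements are exactly those with four distinct eigenvalues. Writing such a regular $C\in\mfh$ as $C=\lambda A_1+\mu A_2$ produces the required linear combination.

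\textbf{Direction ($\Leftarrow$).} Conversely, suppose $A_1,A_2$ are linearly independent and some $C=\lambda A_1+\mu A_2$ has four distinct eigenvalues. Then $C$ is regular semisimple, so by the standard structure theory its centralizer $\mathfrak z(C)$ is a Cartan subalgebra whose dimension equals the rank, namely $2$. Because $A_1,A_2$ commute, each commutes with their combination $C$, so $A_1,A_2\in\mathfrak z(C)$; being linearly independent in a $2$-dimensional space, they span $\mathfrak z(C)$. Hence $A_1,A_2$ span a Cartan subalgebra, and $p$ is nondegenerate. It is exactly the commutation $[A_1,A_2]=0$ (i.e.\ the integrability hypothesis) that makes this direction work: without it a linear combination with distinct eigenvalues would not force $A_1,A_2$ into a common Cartan subalgebra.

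\textbf{Main obstacle.} The delicate point is reconciling the abstract definition of a Cartan subalgebra (a self-normalizing nilpotent subalgebra) with the concrete eigenvalue bookkeeping: one must check that in $\mfs\mfp(4,\R)$ an element with four distinct eigenvalues is genuinely regular and that its centralizer is exactly the $2$-dimensional abelian Cartan subalgebra, tracking the $\pm$-symmetric and complex-conjugate-closed eigenvalue patterns so that the centralizer does not accidentally grow. This is precisely the content of Williamson's classification, on which I would lean rather than reprove it.
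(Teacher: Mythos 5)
Your proof is correct. The paper does not actually prove this lemma---it is quoted from Bolsinov $\&$ Fomenko with a citation and no argument---so there is no in-paper proof to compare against; your route (extracting $[A_1,A_2]=0$ from the quadratic term of $\{f_1,f_2\}\equiv 0$, then using that an element of $\mfs\mfp(4,\R)$ with four distinct eigenvalues is regular semisimple whose centralizer is a $2$-dimensional Cartan subalgebra, which the linearly independent $A_1,A_2$ must then span) is precisely the standard argument underlying the cited source, and each step, including the forward direction via density of regular elements in any real Cartan subalgebra, is sound.
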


Denote by $\lam_1, \lam_2, \lam_3, \lam_4$ the distinct eigenvalues of a nondegenerate fixed point $p$ as considered in \refnondeg. Then \refthmEliasson\ and \refpropEigValues\ imply that the type of $p$ is one of the following:
\begin{itemize}
\item \textit{elliptic-elliptic} if $\lambda_1, \lambda_2 = \pm i \alpha$ and $\lambda_3, \lambda_4 = \pm i\beta$,
\item \textit{elliptic-hyperbolic} if $\lambda_1, \lambda_2 = \pm i \alpha$ and $\lambda_3, \lambda_4 = \pm \beta$,
\item \textit{hyperbolic-hyperbolic} if $\lambda_1, \lambda_2 = \pm \alpha$ and $\lambda_3, \lambda_4 = \pm \beta$,
\item \textit{focus-focus} if $\lambda_1 = \alpha + i\beta$, $\lambda_2 = \alpha - i \beta$, $\lambda_3 = -\alpha + i\beta$ and $\lambda_4 = -\alpha - i\beta$,
\end{itemize}
where $\alpha, \beta \in \mathbb{R}^{\neq 0}$ and $\alpha \neq \beta$ for the elliptic-elliptic and hyperbolic-hyperbolic cases.

Nondegeneracy of rank one points can be characterised as follows on $4$-dimensional symplectic manifolds $(M, \om)$, see Bolsinov $\&$ Fomenko~\cite[Section 1.8.2]{bolsinov-fomenko}.
Let $p$ be a singular point of rank one of a $4$-dimensional completely integrable system $\bigl(M,\om,f=(f_1, f_2)\bigr)$.
Then there are $\mu,\lam\in\mathbb{R}$ such that $\mu\ d f_1(p) + \lam\ df_2(p) = 0$ 
and $L_p:=\Span\{\mcX^{f_1}(p), \mcX^{f_2}(p)\} \subset T_pM$ is the tangent line in $p$ of the orbit generated by the $\mathbb{R}^2$-action. Denote by $L^\perp_p$ the symplectic orthogonal complement to $L_p$ in $T_pM$.
Note that $L_p\subset L_p^\perp$. The Poisson commutativity $\{f_1, f_2\}=0$ implies that they are invariant under the $\R^2$-action. Therefore $\mu\ d^2f_1(p) + \lam\ d^2f_2(p)$
descends to the quotient $L^\perp_p/L_p$. This allows to define

\begin{definition}[Bolsinov $\&$ Fomenko~\cite{bolsinov-fomenko}]
\label{nondegRankOne}
 A rank one critical point $p$ of a $4$-dimensional completely integrable system $\bigl(M, \om, f=(f_1, f_2)\bigr)$ is \emph{nondegenerate} if $\mu\ d^2f_1(p) + \lam\ d^2f_2(p)$
 is invertible on $L^\perp_p/L_p$.
\end{definition}

The possible types of nondegenerate rank one points on $4$-dimensional manifolds are
\begin{itemize}
\item \textit{elliptic-regular} if the eigenvalues of $\om_p^{-1}(\mu d^2f_1 (p)+ \lam d^2f_2(p))$ on $L^\perp_p \slash L_p$ are of the form $\pm i\alpha, \: \alpha \in \mathbb{R}^{\neq 0}$,
\item \textit{hyperbolic-regular} if the eigenvalues of $\om_p^{-1}(\mu d^2f_1(p) + \lam d^2f_2(p))$ on $L^\perp_p \slash L_p$ are of the form $\pm \alpha, \: \alpha \in \mathbb{R}^{\neq 0}$.
\end{itemize}

If one of the integrals has a periodic flow, the search for rank one points can be done by means of reduced spaces as described in \refredRankOne\ later on.


\subsection{Symplectic reduction}

Let us recall the following important notations and results:

\begin{theorem} [Marsden $\&$ Weinstein \cite{marsdenWeinstein}]
\label{marsdenWeinstein}
Let $G$ be a compact Lie group with Lie algebra $\mfg$ inducing a Hamiltonian action on a symplectic manifold $(M, \omega)$. Let $f: M \to \mfg^*$ be the momentum map of this action. Let $r \in \mfg^* $ be a regular value of $f$ that is fixed by the coadjoint action and denote by $\ka: f^{-1}(r) \hookrightarrow M$ the inclusion. Assume that $G$ acts freely and properly on $f^{-1}(r)$. Then
\begin{itemize}
\item 
$M^{red, r} := f^{-1}(r)/G$ is a manifold of dimension $\dim M - 2 \dim G$.
\item
The quotient map $\tau: f^{-1}(r) \to f^{-1}(r)/G = M^{red, r} $ is a principal $G$-bundle.
\item 
Then there is a unique symplectic structure $\omega^{red, r}$ on $M^{red, r}$ such that $ \tau^{*}\omega^{red, r} = \ka^{*}\omega$.
\end{itemize}
The symplectic manifold $(M^{red, r}, \om^{red, r})$ is called the \emph{symplectic reduction} of $M$ at level $r$ by the action generated by $G$, also known as \emph{symplectic quotient} or \emph{Marsden-Weinstein quotient}.
\end{theorem}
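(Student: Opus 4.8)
The plan is to establish the three asserted properties in turn, treating the manifold structure and the bundle structure as essentially formal and reserving the real work for the reduced symplectic form. First I would note that equivariance of $f$ together with the hypothesis that $r$ is coadjoint-fixed forces $G$ to preserve the level set: for $g \in G$ and $p \in f^{-1}(r)$ one has $f(g \cdot p) = \mathrm{Ad}^*_{g^{-1}} f(p) = \mathrm{Ad}^*_{g^{-1}} r = r$. Since $r$ is a regular value, the regular value theorem makes $f^{-1}(r)$ an embedded submanifold of codimension $\dim \mfg^* = \dim G$, so $\dim f^{-1}(r) = \dim M - \dim G$. As $G$ acts freely and properly on $f^{-1}(r)$ by assumption, the quotient manifold theorem endows $M^{red,r} = f^{-1}(r)/G$ with a smooth structure for which $\tau$ is a submersion, and subtracting the orbit dimension gives $\dim M^{red,r} = \dim M - 2\dim G$; the slice theorem furthermore supplies equivariant local sections, hence local trivialisations $\tau^{-1}(U) \cong U \times G$, so $\tau$ is a principal $G$-bundle. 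This disposes of the first two bullets.

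The crux is the \emph{reduction lemma}: at each $p \in f^{-1}(r)$ one has
$$ T_p(G \cdot p) = \bigl( T_p f^{-1}(r) \bigr)^{\om_p}, $$
the $\om_p$-orthogonal complement of the tangent to the level set. I would prove this from $T_p f^{-1}(r) = \ker df_p$ and the momentum map identity $\langle df_p(v), \xi \rangle = \om_p(\xi_M(p), v)$ for $\xi \in \mfg$ with fundamental vector field $\xi_M$: thus $v \in \ker df_p$ if and only if $v$ is $\om_p$-orthogonal to the orbit tangent space $\mfg \cdot p = T_p(G \cdot p)$, and a second application of $(\cdot)^{\om_p}$ (using nondegeneracy of $\om$) yields the stated equality. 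In particular $T_p(G \cdot p) \subset T_p f^{-1}(r)$, so the identification $T_{[p]} M^{red,r} \cong T_p f^{-1}(r)/T_p(G \cdot p)$ is legitimate and I would set $\om^{red,r}_{[p]}([v],[w]) := \om_p(v,w)$ for $v, w \in T_p f^{-1}(r)$. The reduction lemma is exactly what makes this well defined — altering $v$ by an orbit vector $u \in (T_p f^{-1}(r))^{\om_p}$ leaves $\om_p(v,w)$ unchanged — and nondegenerate — if $\om_p(v, \cdot)$ vanishes on $T_p f^{-1}(r)$ then $v \in (T_p f^{-1}(r))^{\om_p} = T_p(G \cdot p)$, so $[v] = 0$.

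It then remains to assemble these pointwise forms into a smooth closed form and to argue uniqueness. Since the action is Hamiltonian it is by symplectomorphisms, so $\om$, and hence $\ka^* \om$, is $G$-invariant; by the reduction lemma $\ka^* \om$ is also horizontal, i.e.\ basic for $\tau$, so it descends to a unique smooth form $\om^{red,r}$ on $M^{red,r}$ with $\tau^* \om^{red,r} = \ka^* \om$, and smoothness transfers through any local section. Closedness follows because $\tau^* d\om^{red,r} = d\ka^*\om = \ka^* d\om = 0$ and $\tau^*$ is injective on forms ($\tau$ being a surjective submersion); the same injectivity gives uniqueness. I expect the reduction lemma, and the attendant verification that $\om^{red,r}$ is well defined and nondegenerate, to be the main obstacle: the dimension count and bundle structure are formal, but identifying the orbit directions with precisely the $\om$-degenerate directions of $\ka^*\om$ is where the Hamiltonian hypothesis genuinely enters.
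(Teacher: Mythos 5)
The paper offers no proof of this statement: it is quoted as a classical theorem with a citation to \cite{marsdenWeinstein}, so there is no internal argument to compare against. Your proposal is the standard proof from the literature (Marsden--Weinstein's original article, or Cannas da Silva's lectures, which the paper follows elsewhere), and it is correct: you rightly isolate the reduction lemma $T_p(G\cdot p) = \bigl(T_p f^{-1}(r)\bigr)^{\om_p}$ as the crux, deriving it from $\ker df_p = \bigl(\mfg\cdot p\bigr)^{\om_p}$ together with the finite-dimensional identity $(W^{\om_p})^{\om_p}=W$ (where freeness guarantees $\dim \mfg\cdot p = \dim G$, so the dimension count closes), and your treatment of well-definedness, nondegeneracy, closedness, and uniqueness --- via $G$-invariance of $\ka^*\om$, its basicness for $\tau$, and injectivity of $\tau^*$ on forms for the surjective submersion $\tau$ --- is exactly how the cited sources argue it.
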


We will use \refmarsdenWeinstein\ in particular in the situation of a $4$-dimensional integrable systems where one of the integrals induces an $\mbS^1$-action:

\begin{corollary}
\label{redHam}
 Let $(M, \om, F=(J, H))$ be a $4$-dimensional completely integrable system and let $J$ have a periodic flow. Let $j \in \R$ be a regular value of $J$ and assume that the $\mbS^1$-action induced by $J$ is free and proper on $J^{-1}(j)$. Then the symplectic reduction of $M$ at level $j$ by the $\mbS^1$-action is given by the Marsden-Weinstein quotient
 $$
 M^{red,j}:= J^{-1}(j) \slash \mbS^1
 $$
 and the Poisson commutativity of $J$ and $H$ assures that $H$ descends to a smooth function $H^{red, j}: M^{red, j} \to \R$, called {\em reduced Hamiltonian of $H$}.
\end{corollary}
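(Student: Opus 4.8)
The plan is to recognise the corollary as a direct specialisation of \refmarsdenWeinstein\ to the group $G = \mbS^1$, whose Lie algebra $\mfg$ and dual $\mfg^*$ are both canonically identified with $\R$, followed by a short argument that descends $H$ using only the Poisson commutativity of the integrable system.

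First I would set up the circle action. Since $J$ has periodic flow, its Hamiltonian vector field $\mcX^J$ generates a Hamiltonian $\mbS^1$-action on $(M, \om)$ whose momentum map, under the identification $\mfg^* \cong \R$, is $J$ itself (after the standard rescaling normalising the period). I would then verify each hypothesis of \refmarsdenWeinstein: $\mbS^1$ is compact; $j$ is a regular value of $J$ by assumption; the coadjoint action of the abelian group $\mbS^1$ is trivial, so every $j \in \R$ is automatically fixed by it; and the action is free and proper on $J^{-1}(j)$ by hypothesis. \refmarsdenWeinstein\ then yields directly that $M^{red,j} = J^{-1}(j)/\mbS^1$ is a smooth manifold (of dimension $\dim M - 2\dim \mbS^1 = \dim M - 2$), that the projection $\tau \colon J^{-1}(j) \to M^{red,j}$ is a principal $\mbS^1$-bundle, and that $M^{red,j}$ carries the unique symplectic form $\om^{red,j}$ satisfying $\tau^* \om^{red,j} = \ka^* \om$. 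This settles the first assertion.

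For the descent of $H$, the key observation is that $H$ is invariant along the $\mbS^1$-action. Indeed, the derivative of $H$ along the flow of $\mcX^J$ equals $dH(\mcX^J) = \{J, H\} = 0$, using the convention $\{J,H\} = dH(\mcX^J)$ fixed in the symplectic preliminaries together with the Poisson commutativity of the integrable system. Hence $H$ is constant on each $\mbS^1$-orbit. Moreover $J^{-1}(j)$ is itself $\mbS^1$-invariant, since $J$ is preserved by its own flow ($\mcX^J(J) = \{J,J\} = 0$ by antisymmetry). Thus the restriction $H \circ \ka$ is a smooth $\mbS^1$-invariant function on $J^{-1}(j)$, and the universal property of the quotient by a free and proper action furnishes a unique function $H^{red,j} \colon M^{red,j} \to \R$ with $H^{red,j} \circ \tau = H \circ \ka$.

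The only point requiring a word of care, and the closest thing to an obstacle in an otherwise routine corollary, is the smoothness of $H^{red,j}$. This follows because $\tau$ is a surjective submersion, indeed a principal bundle projection: an $\mbS^1$-invariant smooth function on the total space descends to a smooth function on the base, as one checks locally in bundle trivialisations, or equivalently via the standard fact that the smooth invariants of a free and proper action are precisely the pullbacks of smooth functions on the orbit space. This completes the proof.
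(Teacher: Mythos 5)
Your proposal is correct and is exactly the argument the paper leaves implicit: the corollary is stated as an immediate specialisation of \refmarsdenWeinstein\ to $G=\mbS^1$, with $H$ descending because $dH(\mcX^J)=\{J,H\}=0$ (the paper's sign convention) makes $H$ invariant on the $\mbS^1$-invariant level set $J^{-1}(j)$, and smoothness of $H^{red,j}$ follows from $\tau$ being a principal-bundle projection. Your verification of the hypotheses (compactness of $\mbS^1$, trivial coadjoint action, freeness and properness by assumption) fills in precisely the routine details the paper omits.
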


The following statement can be found in Le Floch $\&$ Palmer \cite[Lemma 2.6]{leFlochPalmer} who in turn refer to Toth $\&$ Zelditch \cite[Definition 3]{tothZelditch} and Hohloch $\&$ Palmer \cite[Corollary 2.5]{hohlochPalmer}. It means that the reduced Hamiltonian function is particularly useful to study singular points of rank one of the original system.

\begin{lemma}
\label{redRankOne}
Let $(M, \om, F=(J, H))$ be a completely integrable system on a $4$-dimensional symplectic manifold and let $J$ have a periodic flow. Let $j \in \R$ be a regular value of $J$ and assume that the $\mbS^1$-action induced by $J$ is free and proper on $J^{-1}(j)$. Let $p \in J^{-1}(j)$ and set $[p]:=\tau(p) \in M^{red, j}$. Then
\begin{enumerate}
\item 
$p$ is a singular point of rank one of $F = (J,H)$. 

$\Leftrightarrow$ \ $\left[ p \right]$ is a singular point of $H^{red,j}$, i.e., $dH^{red,j}([p]) = 0$.
\item 
$p$ is a nondegenerate singular point of rank one of $F$.

$\Leftrightarrow$ \  $\left[ p \right]$ is a nondegenerate singular point of $H^{red,j}$.

$\Leftrightarrow$ \ $dH^{red,j}([p]) = 0$ and $d^2H^{red,j}$ is invertible at $\left[ p \right]$ in $M^{red, j}$.

\item
$p$ is an elliptic-regular (respectively hyperbolic-regular) point of $F$.

$\Leftrightarrow$ \   $\left[ p \right]$ is an elliptic (respectively hyperbolic) singular point of $H^{red,j}$.
\end{enumerate}
\end{lemma}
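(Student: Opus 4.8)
The plan is to pass to the $2$-dimensional reduced space and match, term by term, every object attached to $p$ as a rank one point of $F=(J,H)$ with the corresponding object attached to $[p]$ as a critical point of $H^{red,j}$. Throughout write $L_p := \Span\{\mcX^J(p)\}$ for the tangent line to the $\mbS^1$-orbit and recall from the discussion preceding \refnondegRankOne\ that, since $\om(\mcX^J(p),\cdot)=dJ(p)$, the symplectic orthogonal is $L_p^\perp = \ker dJ(p) = T_p J^{-1}(j)$, and that $\tau$ identifies $T_{[p]}M^{red,j}\cong L_p^\perp/L_p$. The single identity I would differentiate is the defining relation of the reduced Hamiltonian, $\tau^* H^{red,j} = \ka^* H$ (see \refredHam).

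For part (1), differentiating $\tau^*H^{red,j}=\ka^*H$ at $p$ gives $dH^{red,j}([p])\circ d\tau(p) = dH(p)|_{\ker dJ(p)}$. As $d\tau(p)\colon \ker dJ(p)\to T_{[p]}M^{red,j}$ is surjective, $dH^{red,j}([p])=0$ is equivalent to $dH(p)$ vanishing on $\ker dJ(p)$, i.e.\ to $dH(p)\in\Span\{dJ(p)\}$ (the annihilator of $\ker dJ(p)$, spanned by $dJ(p)$ because $j$ is regular, so $dJ(p)\neq 0$). This linear dependence of $dJ(p)$ and $dH(p)$ says precisely that $dF(p)$ has rank one — not zero, again since $dJ(p)\neq 0$. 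That $dH(p)$ already annihilates $\mcX^J(p)\in\ker dJ(p)$ is automatic from $\{J,H\}=0$.

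For parts (2) and (3) I would install an adapted normal form. Because $j$ is regular and the $\mbS^1$-action is free and proper on $J^{-1}(j)$, there are local symplectic coordinates $(\theta,I,x,y)$ near $p$ with $\om = dI\wedge d\theta + dx\wedge dy$, in which the action is translation in $\theta$ with $\mcX^J = -\partial_\theta$ and $J = I + \text{const}$; this is the standard normal form for a free Hamiltonian circle action near a regular level. Invariance $\{J,H\}=0$ forces $\partial_\theta H\equiv 0$, so $H=H(I,x,y)$ and $H^{red,j}(x,y)=H(j,x,y)$. Then the rank one condition from part (1) reads $\partial_x H(p)=\partial_y H(p)=0$, the $\theta$-row and $\theta$-column of $d^2H(p)$ vanish, and $d^2J\equiv 0$. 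For $(\mu,\lam)$ with $\mu\,dJ(p)+\lam\,dH(p)=0$ one gets $\mu\,d^2J(p)+\lam\,d^2H(p)=\lam\,d^2H(p)$, whose descent to $L_p^\perp/L_p=\Span\{[\partial_x],[\partial_y]\}$ is $\lam$ times $\bigl(\begin{smallmatrix} H_{xx} & H_{xy}\\ H_{xy} & H_{yy}\end{smallmatrix}\bigr)(p)=\lam\,d^2H^{red,j}([p])$. Since $\lam\neq 0$, invertibility of one side matches the other, giving the first equivalence in (2) (the second being the definition of a nondegenerate critical point of $H^{red,j}$). For (3) I would rerun the computation with $\om_p^{-1}$ in front: a direct calculation shows $\om_p^{-1}\bigl(\mu\,d^2J(p)+\lam\,d^2H(p)\bigr)$ preserves $L_p^\perp$ and $L_p$ and descends on $L_p^\perp/L_p$ to a nonzero multiple of $(\om^{red,j})^{-1}d^2H^{red,j}([p])$, whose eigenvalues classify $[p]$ as elliptic or hyperbolic via \refthmEliasson\ and \refpropEigValues\ in one degree of freedom. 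As the forms $\pm i\alpha$ and $\pm\alpha$ are each preserved under nonzero scaling, elliptic-regular corresponds to elliptic and hyperbolic-regular to hyperbolic.

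The technical linchpin is the existence and use of this adapted normal form; once it is in place, parts (2) and (3) are bookkeeping. An intrinsic alternative is available — the Hessian $\mu\,d^2J(p)+\lam\,d^2H(p)$ descends (vanishes on $L_p$) because $H|_{J^{-1}(j)}$ is $\mbS^1$-invariant with a critical point at $p$, while $d^2J$ descends to zero because $J|_{J^{-1}(j)}$ is constant — but tracking $\om_p^{-1}(\cdots)$ on the quotient in part (3) is cleanest in coordinates, so the normal form is where I would concentrate the effort.
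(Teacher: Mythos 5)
Your proof is correct, but there is nothing in the paper to compare it against line by line: the paper states Lemma~\ref{redRankOne} as a quoted result, citing Le Floch $\&$ Palmer \cite[Lemma 2.6]{leFlochPalmer} (who in turn point to Toth $\&$ Zelditch and Hohloch $\&$ Palmer), and gives no proof of its own. Your argument is essentially the standard one found in those references: part (1) by differentiating $\tau^*H^{red,j}=\ka^*H$ along $\ker dJ(p)$ and using that the annihilator of $\ker dJ(p)$ is $\Span\{dJ(p)\}$ at a regular value, and parts (2)--(3) via the equivariant local model $\om = dI\wedge d\theta + dx\wedge dy$, $J=I+\mathrm{const}$, in which $\{J,H\}=0$ kills the $\theta$-dependence of $H$ so that $\mu\,d^2J(p)+\lam\,d^2H(p)=\lam\,d^2H(p)$ descends on $L_p^\perp/L_p\cong\Span\{[\partial_x],[\partial_y]\}$ to $\lam\,d^2H^{red,j}([p])$. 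You handle the fine points correctly: $\lam\neq 0$ (else $\mu\,dJ(p)=0$ forces $\mu=0$), the vanishing $\theta$-row of the Hessian making the descent well defined, the $I$-row of $\om_p^{-1}(\lam\,d^2H(p))$ vanishing so that $L_p^\perp$ and $L_p$ are preserved, and the invariance of the eigenvalue patterns $\pm i\alpha$ and $\pm\alpha$ under scaling by the nonzero real $\lam$, which settles part (3). Two small polish items a careful write-up should include: a citation or short proof of the normal form for a free Hamiltonian $\mbS^1$-action near a regular level (Marsden--Weinstein slice/equivariant Darboux), together with the remark that freeness on $J^{-1}(j)$ propagates to a neighbourhood of $p$ in $M$ so the model applies on an open set and not just on the level set; and an explicit check that the reduced form in the model is $dx\wedge dy=\om^{red,j}$, which is what lets you read the eigenvalues on the quotient as those of $(\om^{red,j})^{-1}d^2H^{red,j}([p])$. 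Neither is a gap in substance; your proposal fills in precisely the proof the paper outsources.
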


So far, we considered symplectic reduction of regular values. If the value is singular one encounters stratified spaces and needs to work with singular reduction, see Sjamaar $\&$ Lerman \cite{sjamaarLerman} or, for an overview, Alonso \cite[pp. 26-28]{alonso}, or do it by hand for simple examples as in \refmjred.


\subsection{Toric systems and Delzant's construction}


\label{subsection Delzant}

Let $(M, \om, f)$ be a $2n$-dimensional completely integrable system. If $f$ is in fact the momentum map of a Hamiltonian $n$-torus action, we call $(M, \om, f)$ a {\em toric manifold}. According to the convexity theorem by Atiyah \cite{atiyah} and Guillemin $\&$ Sternberg \cite{guilleminSternberg}, $f(M)$ is a convex polytope spanned by the images of the fixed points. It is often referred to as {\it momentum polytope}. 
A convex polytope in $\R^n$ is {\em Delzant} if (i) its edges have rational slope, (ii) at each vertex, precisely $n$ edges meet, (iii) at each vertex, the $n$ tangent directions considered as vectors in $\Z^n$ span $\Z^n$.

\begin{theorem}[Delzant \cite{delzant}]
A $2n$-dimensional toric manifold having an effective Hamiltonian torus action is classified up to equivariant isomorphism by its momentum polytope which is in fact Delzant. Conversely, for each Delzant polytope, there exists, up to equivariant symplectomorphism, a toric manifold having the Delzant polytope as momentum polytope. 
\end{theorem}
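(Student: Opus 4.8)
The plan is to prove Delzant's theorem in its two directions: first the classification statement, that the momentum polytope of a compact connected symplectic toric manifold is Delzant and determines the manifold up to equivariant symplectomorphism, and second the converse, that every Delzant polytope is realised by a toric manifold. The two central tools are the equivariant Darboux normal form for Hamiltonian torus actions near a fixed point and symplectic reduction (\refmarsdenWeinstein).

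First I would check that the polytope $\De = f(M)$ is Delzant. By the Atiyah--Guillemin--Sternberg convexity theorem, $\De$ is the convex hull of the images of the fixed points, so its vertices are images of fixed points $p$. Near such a $p$ the equivariant Darboux theorem linearises the action: there are symplectic coordinates in which $\T^n$ acts on $\C^n$ with integral weights $w_1, \dots, w_n \in \Z^n$ and moment map $f(z) = f(p) + \frac12 \sum_k |z_k|^2 w_k$. Effectiveness forces $w_1, \dots, w_n$ to be a $\Z$-basis of $\Z^n$; equivalently, exactly $n$ edges meet at the vertex $f(p)$ and their primitive tangent vectors span $\Z^n$. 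This is precisely the Delzant condition, and rationality of the edge slopes follows along the way.

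For the converse I would carry out the Delzant construction. Writing $\De = \{x \in \R^n : \langle x, u_i\rangle \geq \lam_i, \ 1 \leq i \leq d\}$ with primitive inward normals $u_i \in \Z^n$, the Delzant condition guarantees that the linear map $\pi: \R^d \ra \R^n$, $e_i \mapsto u_i$, is surjective and descends to a surjection of tori $\T^d \ra \T^n$. Let $K := \ker(\T^d \ra \T^n)$, a subtorus of dimension $d-n$ with Lie algebra $\mfk$. The standard $\T^d$-action on $\C^d$ with moment map $\phi(z) = \frac12(|z_1|^2, \dots, |z_d|^2)$ restricts to a Hamiltonian $K$-action, and I would reduce $\C^d$ by $K$ at the level $c$ determined by the $\lam_i$ via \refmarsdenWeinstein. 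The quotient $M_\De := \phi_K^{-1}(c)/K$ is then a compact symplectic manifold of dimension $2d - 2(d-n) = 2n$; it inherits an effective Hamiltonian action of the residual torus $\T^d/K \cong \T^n$, and a direct computation identifies its momentum polytope with $\De$.

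The remaining and hardest task is uniqueness. Given two toric manifolds $M, M'$ with the same polytope $\De$, I would build an equivariant symplectomorphism. Over $\Int(\De)$ the Arnol'd--Liouville action--angle theorem trivialises each manifold as $\Int(\De) \times \T^n$ with a symplectic form pinned down by the affine structure of $\De$, yielding a canonical equivariant symplectomorphism on the regular part. The difficulty is extending this identification across the singular strata of $\partial\De$: near each face one compares the two actions using the local normal form, observes that the models agree because the combinatorial data of $\De$ agree, and then glues the local equivariant symplectomorphisms to the one on the regular part, upgrading the resulting equivariant diffeomorphism to a symplectomorphism by a Moser deformation argument. I expect this simultaneous gluing across all boundary strata into a single globally defined map to be the main obstacle; it is here that compactness and connectedness of $M$, together with the combinatorial rigidity forced by the Delzant condition, are essential.
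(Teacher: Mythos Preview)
The paper does not prove this theorem at all: it is stated as a cited result of Delzant \cite{delzant}, and immediately afterwards the authors only \emph{sketch} the constructive direction (existence of a toric manifold for a given Delzant polytope), following Cannas da Silva \cite{cannasDaSilva}, because they need that explicit construction in Section~\ref{section toric}. There is no argument in the paper for why the polytope is Delzant, and none for uniqueness.

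Your proposal, by contrast, outlines a genuine proof of the full theorem. The existence part of your sketch matches the construction the paper recalls (symplectic reduction of $\C^d$ by the kernel torus $K$), so on that piece you and the paper agree. The other two parts --- the local normal form argument that the polytope is Delzant, and the uniqueness via action--angle coordinates on the interior plus gluing of local models across the boundary strata --- are entirely your own addition and go well beyond what the paper attempts. As a sketch of Delzant's original argument your outline is reasonable; the step you flag as the main obstacle (globalising the local equivariant symplectomorphisms and upgrading to a symplectomorphism) is indeed where the real work lies, and in a full proof one typically organises this via a sheaf or \v{C}ech-type argument over the polytope rather than a direct Moser deformation, but the overall strategy is sound.
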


Crucial for the present paper is the fact that Delzant's proof is constructive, i.e., given a Delzant polytope $\De$, there is an explicit way how to construct a toric manifold $(M, \om, f)$ with $f(M)= \De$. We briefly sketch the construction as outlined in Cannas da Silva's book \cite{cannasDaSilva} since we will make ample use of it in Section \ref{section toric}: 
\begin{enumerate}
 \item 
 Given a Delzant polytope $\De \subset \R^n$ with $k$ edges, write it as intersection of $k$ half\-spaces (whose  boundaries lies on the edges of $\De$) by means of the primitive, integral, outer normal vectors.
 \item
 Define a map $\theta: \R^k \to \R^n$ by sending the standard basis of $\R^n$ to the $k$ outer normal vectors of the edges. This map is also welldefined as map $\theta: \Z^k \to \Z^n$ and descends to $ \theta: \R^k \slash \Z^k = \T^k \to\R^n \slash \Z^n = \T^n $.
 \item
 The kernel $N:= \ker(\theta)$ of $\theta : \T^k \to \T^n$ is isomorphic to an $(k-n)$-torus. Consider the inclusion $N \hookrightarrow \T^k$ and extend it to a map $\ell: \R^{k-n} \to \R^n$. Denote its dual map by $\ell^*: \R^n \to \R^{k-n}$.
 \item
 Let $c \in \Z^k$ be the vector whose entries are given by the minimal distance to the origin of the boundaries of the halfspaces defining $\De$. Consider the standard $k$-torus action on $\C^k$ with momentum map 
 $$
 L:=L_c: \C^k \to \R^k, \qquad  L_c(z_1, \dots, z_k):=- \frac{1}{2} \left(\abs{z_1}^2, \dots, \abs{z_k}^2 \right) + c
 $$
 and define the map
 $$
 \Lti:= \ell^* \circ L: \C^k \to \R^{k-n}.
 $$
 \item
 Note that $0 \in \R^{k-n}$ is a regular value of $\Lti$ and that $N$ acts freely and properly on $\Lti^{-1}(0)$. According to \refmarsdenWeinstein, $M^{red, 0}:= \Lti^{-1}(0) \slash N$ is a symplectic manifold with symplectic form $\om^{red, 0}$ which satisfies $\tau^* \om^{red, 0} = \ka^*\om_{st} $ where $\tau:\Lti^{-1}(0) \to\Lti^{-1}(0) \slash N$ is the quotient map and $\ka: \Lti^{-1}(0) \hookrightarrow (\C^k, \om_{st})$ the inclusion and $\om_{st}$ the standard symplectic form on $\C^k$.
 \item
 Find a right inverse $\si: \R^n \to \R^k$ of the map $\theta: \R^k \to \R^n$ and consider the concatenation
 $$
 \Fti:= \si^* \circ L \circ\ka : \Lti^{-1}(0) \to \R^n.
 $$
 Since $\Fti$ is invariant under the action of $N$ it descends to a map $F: M^{red, 0} \to \R^n$. Then $F: (M^{red, 0}, \om^{red, 0}) \to \R^n$ is the momentum map of a Hamiltonian $\T^n$-action satisfying $F(M^{red, 0}) = \De$.
\end{enumerate}


\subsection{Systems of toric type}

The following notion appeared first in \vungoc\ \cite[Definition 2.1]{vungocPolytope} for systems that are `toric up to a diffeomorphism of the momentum polytope'.

\begin{definition}
A completely integrable system $(M, \om, F)$ is {\em of toric type} if $F: M \to \R^2$ is proper and if there exists an effective, Hamiltonian $\T^2$-action on $M$ whose momentum map is of the form $f \circ F$ where $f: F(M) \to f(F(M))$ is a diffeomorphism.
\end{definition}

Properness of $F$ is automatically satisfied if the underlying manifold $M$ is compact.
Systems of toric type are a special case of so-called {\em weakly toric systems} introduced in Hohloch $\&$ Sabatini $\&$ Sepe $\&$ Symington \cite{hsss}.


\subsection{Semitoric systems and semitoric transition families}

Toric systems only admit elliptic-elliptic or elliptic-regular points. In order to admit more types of singular points while keeping the class of systems as accessible as possible \vungoc\ \cite{vungocTaylorSeries, vungocPolytope} began to focus on the following class of systems:

\begin{definition}
A $4$-dimensional completely integrable system $(M, \omega, F = (J,H))$ is \emph{semitoric} if
\begin{enumerate}[label=\arabic*)]
\item 
$J: M \to \R$ is proper;
\item 
$J$ is the momentum map of an effective Hamiltonian $\mathbb{S}^1$-action (i.e. the flow of $\mathcal{X}^J$ is periodic, more precisely, $2\pi$-periodic in our convention);
\item 
all singular points of $F$ are nondegenerate and have no hyperbolic components.
\end{enumerate}
\end{definition}

A semitoric system is called \textit{simple} if each fibre of $J$ contains at most one focus-focus point. Simple semitoric systems where studied and classified by Pelayo and V\~u Ng\d{o}c in \cite{pelayoVungocInv, pelayoVungocActa} by means of five invariants. 

The semitoric systems constructed later in Section \ref{section semitoric} have always two focus-focus points in a fibre of $J$ whenever the fibre contains focus-focus points, so they are not simple. However, had we started the construction with a Delzant octagon where the vertices of the horizontal edges do not lie on the same vertical line, then we would expect to get a simple semitoric system.

We will study the transition of elliptic-elliptic points to focus-focus points and back. Note that, at the moment of transition, the singular points have to be degenerate, i.e., the system is at this very moment {\em not} semitoric. The following definition is adapted from Le Floch $\&$ Palmer \cite{leFlochPalmer} who study transitions in parameter depending semitoric families over Hirzebruch surfaces.

\begin{definition} 
Let $(M, \om)$ be a $4$-dimensional symplectic manifold. Let $J: (M, \om) \to \R$ be smooth with periodic Hamiltonian flow and let $H: [0, 1] \times M \rightarrow \mathbb{R}$ be smooth with $H_t:=H(t, \cdot)$ for $t \in [0,1]$ such that $F_t:=(J, H_t): (M, \om) \to \R^2$ is a completely integrable system.
We call $(M, \om, F_t=(J, H_t))$ a {\em semitoric family with fixed $\mbS^1$-action and $k \in \N_0$ degenerate times $t_1, \dots, t_k$} if $(M, \om, F_t=(J, H_t))$ is semitoric for all $t \in [0,1] \setminus \{t_1, \dots, t_n\}$. 
\end{definition}

Coupled angular momenta are an example for such a family. Hohloch $\&$ Palmer \cite{hohlochPalmer} generalized them to a 2-parameter family admitting 2 focus-focus points while leaving the $S^1$-action unchanged. Le Floch $\&$ Palmer \cite{leFlochPalmer} studied semitoric families with fixed $\mbS^1$-action on Hirzebruch surfaces.

Here we are in particular interested in fixed points changing from elliptic-elliptic to focus-focus and back (so-called {\em Hamiltonian-Hopf bifurcations}). The following definition is inspired by Le Floch $\&$ Palmer \cite{leFlochPalmer}.

\begin{definition}
A \emph{semitoric family with transition points} $p_1, \ldots, p_n \in M$ and \emph{transition times} $0< t^- < t^+<0 $ is a semitoric family with fixed $\mbS^1$-action $\left(M, \omega, F_t \right)_{0 \leq t \leq 1}$ and degenerate times $t^-$ and $t^+$, such that
\begin{enumerate}[label=\arabic*)]
\item 
for $0< t < t^-$ and $t^+<t<1$, the points $p_1, \ldots, p_n$ are elliptic-elliptic;
\item 
for $t^- < t < t^+$, the points $p_1, \ldots, p_n$ are focus-focus;
\item 
for $t = t^-$ and $t = t^+$, the points $p_1, \ldots, p_n$ are degenerate and there are no degenerate singular points in $M \setminus \{ p_1, \ldots, p_n \}$;
\item 
if $p_i$ is a maximum (resp. minimum) of $H_0\vert_{J^{-1}(J(p_i))}$ then $p_i$ is a minimum (resp. maximum) of $H_1\vert_{J^{-1}(J(p_i))}$.
\end{enumerate}
Briefly, we just speak of a \emph{semitoric transition family} in such a situation.
\end{definition}

The following statements describe what happens at the transition times.

\begin{proposition}[Hohloch $\&$ Palmer \cite{hohlochPalmer}]
\label{degPoint}
Let $\left(M, \omega, F_t \right)_{0 \leq t \leq 1}$ be a semitoric transition family with fixed $\mbS^1$-action. Let $p \in M$ and $t_0 \in \: ]0,1[$ be such that $p$ is a fixed point for all $t$ in a neighbourhood of $t_0$ where $p$ is of focus-focus type for $t > t_0$ and of elliptic-elliptic type for $t < t_0$. Then $p$ is a degenerate fixed point for $t = t_0$.
\end{proposition}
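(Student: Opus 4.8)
The plan is to argue by contradiction, using continuity of the spectrum together with the detection criteria \refnondeg\ and \refpropEigValues. Since the $\mbS^1$-action (hence $J$) is fixed and $p$ is a fixed point of $F_t$ for all $t$ near $t_0$, we have $dJ(p)=0$ and $dH_t(p)=0$ there, so the Hessians $d^2J(p)$ and $d^2H_t(p)$ are well defined, with $d^2J(p)$ independent of $t$ and $d^2H_t(p)$ depending smoothly on $t$. I would study the pencil $C_t(\mu,\lam):=\om_p^{-1}\bigl(\mu\, d^2J(p)+\lam\, d^2H_t(p)\bigr)$. The key structural fact is that each such matrix is infinitesimally symplectic, so its spectrum is invariant under $\zeta\mapsto-\zeta$ and under complex conjugation; eigenvalues therefore occur in quadruples $\{\pm\alpha\pm i\beta\}$, degenerating to imaginary pairs $\{\pm i\beta\}$ or real pairs $\{\pm\alpha\}$. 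This is precisely what makes ``purely imaginary spectrum'' (elliptic-elliptic) and ``spectrum with nonzero real part'' (focus-focus) the mutually exclusive alternatives distinguished in \refpropEigValues.

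Suppose, for contradiction, that $p$ is nondegenerate at $t_0$. By \refnondeg\ there are fixed coefficients $(\mu_0,\lam_0)$ for which $C_{t_0}(\mu_0,\lam_0)$ has four distinct eigenvalues. Having four distinct eigenvalues is an open condition, and $t\mapsto C_t(\mu_0,\lam_0)$ is continuous, so $C_t:=C_t(\mu_0,\lam_0)$ has four distinct eigenvalues for all $t$ in a neighbourhood of $t_0$. For $t\neq t_0$ in that neighbourhood the hypotheses already tell us that $p$ is nondegenerate (elliptic-elliptic for $t<t_0$, focus-focus for $t>t_0$), so the pencil spans the Cartan subalgebra generated by the Hessians and $C_t$ is a regular element of it; hence \refpropEigValues\ applies and the spectrum of $C_t$ reflects the type of $p$.

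Now I would compare the two one-sided limits. For $t<t_0$ the four distinct eigenvalues of $C_t$ split into two elliptic blocks $\{\pm i\al(t)\}$ and $\{\pm i\be(t)\}$, all purely imaginary, so as $t\to t_0^-$ they converge to four distinct purely imaginary values. For $t>t_0$ they form a single focus-focus block $\{\pm\al(t)\pm i\be(t)\}$ with $\al(t)\neq0$, so as $t\to t_0^+$ their real parts converge to $\pm\al_0$. By continuity both one-sided limits equal the spectrum of $C_{t_0}$. If $\al_0\neq0$ the right-hand limit lies off the imaginary axis, contradicting the purely imaginary left-hand limit; if $\al_0=0$ the right-hand quadruple collapses to the two values $\pm i\be_0$, contradicting the existence of four distinct left-hand limits. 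Either way $C_{t_0}$ fails to have four distinct eigenvalues, contradicting the choice of $(\mu_0,\lam_0)$. Therefore $p$ must be degenerate at $t_0$.

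The crux --- and the only genuinely delicate point --- is the eigenvalue collision forced at $t_0$: this is exactly the Hamiltonian-Hopf mechanism, in which two imaginary eigenvalue pairs must merge before leaving the imaginary axis as a complex quadruple, and the merging moment necessarily destroys the four-distinct-eigenvalue property underlying nondegeneracy. Making this rigorous requires only continuity of the spectrum in $t$, openness of the regularity condition, and the symplectic quadruple symmetry; no explicit computation of $\al(t)$, $\be(t)$ is needed.
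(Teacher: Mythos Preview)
The paper does not give its own proof of this proposition: it is stated with attribution to Hohloch $\&$ Palmer \cite{hohlochPalmer} and then simply used. So there is no proof in the paper to compare against.

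Your argument is correct and is essentially the standard one. The key points --- that $C_t(\mu_0,\lam_0)$ lies in the Cartan subalgebra generated by the Hessians at each nearby $t$, that having four distinct eigenvalues is open so the fixed coefficients $(\mu_0,\lam_0)$ give a regular element for all $t$ near $t_0$, and that the symplectic spectral symmetry forces the focus-focus quadruple to collapse to a double pair on the imaginary axis in the limit --- are all in order. The dichotomy you set up at the end ($\al_0\neq 0$ versus $\al_0=0$) is exactly the eigenvalue-collision mechanism of the Hamiltonian-Hopf bifurcation, and it cleanly contradicts the assumed regularity of $C_{t_0}(\mu_0,\lam_0)$.

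One small point of presentation: when you say ``the pencil spans the Cartan subalgebra generated by the Hessians and $C_t$ is a regular element of it,'' what you actually need (and what you have) is only that $C_t(\mu_0,\lam_0)$ belongs to that Cartan subalgebra and has distinct eigenvalues; you do not need to argue that the pencil spans it. Since $C_t(\mu_0,\lam_0)$ is by construction a linear combination of $\om_p^{-1}d^2J(p)$ and $\om_p^{-1}d^2H_t(p)$, membership is automatic, and regularity is the open condition you already secured. This is a cosmetic tightening, not a gap.
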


Note that singular points might change from rank zero to rank one points:

\begin{proposition}[Le Floch $\&$ Palmer \cite{leFlochPalmer}] 
\label{zeroToOne}
Let $(M, \omega, (J,H_t))$ be a semitoric family with fixed $\mathbb{S}^1$-action and suppose that $p \in M$ is a rank zero singular point for some $t_0 \in [0,1]$. Then $p$ is a singular point for all $t \in [0,1]$ but not necessarily of rank zero. If $p$ does not belong to a fixed surface of $J$, then $p$ is a fixed point for all $t \in [0,1]$.
\end{proposition}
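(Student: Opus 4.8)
The plan is to exploit the fact that the first component $J$ does not depend on $t$. Since $p$ is a rank zero singular point of $F_{t_0}=(J,H_{t_0})$, both Hamiltonian vector fields vanish there, i.e.\ $\mcX^J(p)=0$ and $\mcX^{H_{t_0}}(p)=0$, equivalently $dJ(p)=0$ and $dH_{t_0}(p)=0$. As $J$ is independent of $t$, the condition $dJ(p)=0$ holds for \emph{every} $t\in[0,1]$, so the Jacobian $dF_t(p)=\bigl(dJ(p),dH_t(p)\bigr)$ has rank at most one for all $t$. Hence $p$ is a singular point of $F_t$ for every $t$, and it can fail to be of rank zero only if $dH_t(p)\neq 0$ for some $t$. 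This settles the first assertion.

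For the second assertion I would first observe that $dJ(p)=0$ says exactly that $p$ is a fixed point of the Hamiltonian $\mbS^1$-action generated by $J$; let $\phi_\theta$ denote its time-$\theta$ flow and $\rho_\theta:=(d\phi_\theta)_p$ the induced (isotropy) representation on $T_pM$. By the standard structure theory of symplectic circle actions, $\Fix(\mbS^1)$ is a disjoint union of symplectic submanifolds and, at each fixed point, the tangent space to the component through $p$ equals the invariant subspace $(T_pM)^{\mbS^1}$. On the $4$-manifold $M$ these components are either isolated points or surfaces, and the hypothesis that $p$ does not lie on a fixed surface translates precisely into $(T_pM)^{\mbS^1}=\{0\}$.

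The key step is then to use that each $H_t$ is $\mbS^1$-invariant: the Poisson relation $\{J,H_t\}=0$ gives $\tfrac{d}{d\theta}(H_t\circ\phi_\theta)=\{J,H_t\}\circ\phi_\theta=0$, so $H_t\circ\phi_\theta=H_t$ for all $\theta$. Differentiating this identity in the base point at $p$ and using $\phi_\theta(p)=p$ yields $dH_t(p)\circ\rho_\theta=dH_t(p)$, i.e.\ $dH_t(p)\in(T_p^*M)^{\mbS^1}$. For a representation of the compact group $\mbS^1$ one has $(T_p^*M)^{\mbS^1}\cong\bigl((T_pM)^{\mbS^1}\bigr)^*$, since the nontrivial weight spaces of $T_pM$ dualise again to nontrivial weight spaces and contribute no invariant covectors. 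As $(T_pM)^{\mbS^1}=\{0\}$, we conclude $dH_t(p)=0$, hence $\mcX^{H_t}(p)=0$, for every $t$; together with $\mcX^J(p)=0$ this means $p$ is a fixed point (rank zero) of $F_t$ for all $t\in[0,1]$.

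The main obstacle is the identification, at the fixed point $p$, of the tangent space to the fixed component with the invariant subspace $(T_pM)^{\mbS^1}$, together with the translation of ``$p$ lies on a fixed surface'' into the linearised action having a nonzero fixed vector. Both are standard consequences of the equivariant (Bochner) linearisation of $\mbS^1$-actions near a fixed point: in suitable equivariant symplectic coordinates the action reads $(z_1,z_2)\mapsto(e^{ia\theta}z_1,e^{ib\theta}z_2)$, and $p$ is isolated exactly when both weights $a,b$ are nonzero. In the write-up I would simply cite this linearisation and then run the short invariance argument above.
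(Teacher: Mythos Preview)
The paper does not supply its own proof of this proposition: it is quoted verbatim from Le~Floch $\&$ Palmer \cite{leFlochPalmer} and used as a black box. So there is no in-paper argument to compare against.

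Your argument is correct and is the natural one. The first assertion is immediate from $dJ(p)=0$ being $t$-independent. For the second, the essential mechanism is exactly what you identify: $\{J,H_t\}=0$ forces $H_t$ to be $\mbS^1$-invariant, hence $dH_t(p)$ lies in the invariant part of $T_p^*M$, and the hypothesis that $p$ is an isolated fixed point of the $\mbS^1$-action (equivalently, not on a fixed surface) means this invariant subspace is trivial. The appeal to equivariant linearisation to identify ``not on a fixed surface'' with ``both isotropy weights nonzero'' is standard and appropriate here; in the write-up a reference to Karshon \cite{karshon} or to the Bochner linearisation would suffice.
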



\section{{\bf The toric system constructed from the octagon in Figure \ref{fig_octagon}}}

\label{section toric}

Using Delzant's construction sketched in Section \ref{subsection Delzant}, we will construct in this section the toric system associated to the polytope $\De$ in Figure \ref{fig_octagon}.


\subsection{The symplectic manifold associated to the octagon $\De$} 

\label{subsection_Delzant_construction}

Consider the octagon $\De$ in Figure \ref{fig_octagon}. A brief look at the coordinates of the vertices 
$$
(0,2) \quad (0,1) \quad (1, 0) \quad (2, 0) \quad (3, 1) \quad (3, 2) \quad (2, 3) \quad (1, 3)
$$
of $\De$ yields for the edges, listed counterclockwise, as spanning vectors
$$
(-1,0) \quad (-1,-1) \quad  (0,-1) \quad (1,-1) \quad (1,0) \quad (1,1) \quad (0,1) \quad (-1,1) 
$$
which in turn yields that $\De$ is a Delzant polygon. Thus we may use Delzant's construction (see Section \ref{subsection Delzant}) to obtain a symplectic manifold $(M, \omega)$ and momentum map $F:(M, \omega) \to \R^2 $ with $F(M)= \De$. 
To this aim, write $\De$ as intersection of eight halfplanes:
\begin{equation}
 \label{eightHalfPlanes}
 \De := \quad 
\left\lbrace (x,y) \in \mathbb{R}^2 \  \middle| \
\begin{aligned}
 \langle (x,y), (-1,0) \rangle \: & \leq  \; 0, \qquad & \langle (x,y), (1,0) \rangle \: & \leq   \; 3 \\
 \langle (x,y), (-1,-1) \rangle \: & \leq  \; -1, \qquad & \langle (x,y), (1,1) \rangle \: & \leq  \; 5 \\
 \langle (x,y), (0,-1) \rangle \: & \leq   \; 0, \qquad & \langle (x,y), (0,1) \rangle \: & \leq   \; 3 \\
 \langle (x,y), (1,-1) \rangle \: & \leq   \; 2, \qquad & \langle (x,y), (-1,1) \rangle \: & \leq   \; 2
\end{aligned}
 \ \right\rbrace
\end{equation} 
Note that all appearing outer-pointing normal vectors of the half planes are primitive.
Denote by $e_1, \ldots, e_8$ the standard basis of $\mathbb{R}^8$. Consider the linear map $\theta: \mathbb{R}^8 \to \mathbb{R}^2$ that assigns to each basis vector an outer normal vector as follows (starting at the left edge and going counter-clockwise):
\begin{equation}
\label{theta}
\begin{aligned}
 & e_1 \mapsto (-1,0) &\qquad&   e_3 \mapsto (0,-1) & \qquad  & e_5 \mapsto (1,0) & \qquad &  e_7 \mapsto (0,1) \\
 & e_2 \mapsto (-1,-1) & \qquad &   e_4 \mapsto (1,-1) & \qquad &   e_6 \mapsto (1,1) & \qquad & e_8 \mapsto (-1,1) \\
\end{aligned}
\end{equation}
The map $\theta : \mathbb{R}^8 \to \mathbb{R}^2$ is surjective and induces a surjective linear map $\theta : \mathbb{Z}^8 \to \mathbb{Z}^2$. Therefore passing to the quotients $\R^8 \slash \Z^8 =:\mathbb{T}^8$ and $\R^2 \slash \Z^2=: \T^2$ yields a welldefined quotient map
$$ \theta: \mathbb{T}^8 \to \mathbb{T}^2 $$
that is linear.
Its kernel is given by
\begin{align*}
N := \ker(\theta) &= \lbrace (x_1, \ldots, x_8) \in \mathbb{T}^8 \mid x_1\cdot(-1,0) + \ldots + x_8 \cdot (-1,1) = (0,0) \ \rbrace \\
&= \left\lbrace (x_1, \ldots, x_8) \in \mathbb{T}^8 \quad \middle| \ 
\begin{aligned}
x_5 = x_1 + x_2 -x_4 -x_6 + x_8 \ \\
x_7  = x_2 + x_3 +x_4 -x_6 -x_8 \
\end{aligned}
\right\rbrace \ \simeq\ \mathbb{T}^6
\end{align*}
which yields an inclusion $ N \hookrightarrow \mathbb{T}^8$. By renaming two variables, we obtain the (injective) linear map 
\begin{gather*}
\ell: \mathbb{R}^6 \to \mathbb{R}^8, \\
 (x_1, \ldots, x_6) \mapsto (x_1, \; x_2, \; x_3, \; x_4, \; x_1 + x_2 - x_4 - x_5 + x_6, \; x_5, \; x_2+x_3+x_4-x_5-x_6, \; x_6).
\end{gather*}
To simplify notation, identify in the following the dual vector space $(\R^m)^*$ with $\R^m$ for all $m \in \N_0$. Recall that the dual map $f^*: \R^m \to \R^n$ of a linear map $f: \R^n \to \R^m$ is defined via the standard dual pairing
$$ < y, f(x) > \ \ = \ \ < f^*(y), x > \quad \forall x \in \mathbb{R}^n,\ \forall\ y \in \mathbb{R}^m. $$
Thus, if $f(x)= C x$ for an $(m \x n)$-matrix $C$ then $f^*(y) = C^Ty$.
Therefore, we find
$$
\ell^*: \mathbb{R}^8 \to \mathbb{R}^6, \qquad (y_1, \ldots, y_8) \mapsto C^T  
\begin{pmatrix}
y_1 \\ \vdots \\ y_8
\end{pmatrix} 
\qquad \mbox{with} \quad 
C = \begin{pmatrix}
1 & 0 & 0 & 0 & 0 & 0 \\
0 & 1 & 0 & 0 & 0 & 0 \\
0 & 0 & 1 & 0 & 0 & 0 \\
0 & 0 & 0 & 1 & 0 & 0 \\
1 & 1 & 0 & -1 & -1 & 1 \\
0 & 0 & 0 & 0 & 1 & 0 \\
0 & 1 & 1 & 1 & -1 & -1 \\
0 & 0 & 0 & 0 & 0 & 1
\end{pmatrix} 
$$
i.e., explicitly
$$
\ell^*(y) = (y_1+y_5, \; y_2+y_5+y_7, \; y_3+y_7, \; y_4-y_5+y_7, \; -y_5+y_6-y_7,\; y_5-y_7+y_8).
$$
Note that, since $\ell$ is injective, $\ell^*$ is surjective.

The goal is to consider $N$ as an action on $\mathbb{C}^8$ and then to pass to the quotient. As a first step, we note

\begin{remark}
Consider the symplectic manifold $(\mathbb{C}^8,\om_{st}:= -\frac{i}{2} \sum_{k = 1}^8 dz_k \wedge d \zbar_k)$ with the standard $\T^8$-action
$$ \T^8 \x \C^8 \to \C^8, \qquad  (t, z):=(t_1, \dots, t_8, z_1, \dots, z_8) \mapsto (e^{it_1}z_1, \ldots, e^{it_8}z_8) .$$
This action is Hamiltonian and admits the family of momentum maps 
\begin{align*}
L_c: \mathbb{C}^8 \to \mathbb{R}^8, \qquad L_c(z)= -\frac{1}{2} (\vert z_1 \vert ^2, \ldots, \vert z_8 \vert^2) + c
\end{align*}
where $c \in \R^8$.
\end{remark}

Now choose $c= (0,-1,0,2,3,5,3,2)$ which are the distances of the eight half planes in \eqref{eightHalfPlanes} to the origin and set $L:=L_{(0,-1,0,2,3,5,3,2)} : \mathbb{C}^8 \to \mathbb{R}^8$.

Recall $\T^6 \simeq N \subset \T^8$ and use the inclusion $ N \hookrightarrow \mathbb{T}^8$ as motivation to define the action 
\begin{equation} \label{Action of N}
\begin{gathered}
 (\T^6 \simeq N) \x  \mathbb{C}^8 \to \C^8, \\
 (t, z) \mapsto(e^{it_1}z_1, e^{it_2}z_2, e^{it_3}z_3, e^{it_4}z_4, e^{i(t_1+t_2-t_4-t_5+t_6)}z_5, e^{it_5}z_6, e^{i(t_2+t_3+t_4-t_5-t_6)}z_7, e^{it_6}z_8).
\end{gathered}
\end{equation}
The corresponding momentum map is given by
$$ \widetilde{L}:= \ell^* \circ L: \mathbb{C}^8 \overset{L}{\to} \mathbb{R}^8 \overset{\ell^*}{\to} \mathbb{R}^6 $$
which is in coordinates given by
\begin{eqnarray*}
(z_1, \ldots, z_8) &\overset{L}{\mapsto}& -\frac{1}{2} (\vert z_1 \vert ^2, \ldots, \vert z_8 \vert^2) + (0,-1,0,2,3,5,3,2) \\
&\overset{\ell^*}{\mapsto}& -\frac{1}{2} (\vert z_1 \vert ^2 + \vert z_5 \vert^2, \vert z_2 \vert^2 + \vert z_5 \vert^2 + \vert z_7 \vert^2, \vert z_3 \vert^2 + \vert z_7 \vert^2, \vert z_4 \vert^2 - \vert z_5 \vert^2 + \vert z_7 \vert^2, \\
&& -\vert z_5 \vert^2 + \vert z_6 \vert^2 - \vert z_7 \vert^2, \vert z_5 \vert^2 - \vert z_7 \vert^2 + \vert z_8 \vert^2) + (3,5,3,2,-1,2)
\end{eqnarray*}
Note that, for the action of $N$ on $(\mathbb{C}^8, \omega_0)$ with momentum map $\widetilde{L}: \mathbb{C}^8 \to \mathbb{R}^6$, the value $0 \in \mathbb{R}^6$ is regular since $\ell^*$ is surjective. The regular level set $\widetilde{L}^{-1}(0) $ is described by six equations:
\begin{equation} 
\label{manifold eqn}
\widetilde{L}^{-1}(0) 
= \left\lbrace (z_1, \ldots, z_8) \in \mathbb{C}^8 \quad  \middle| \quad 
\begin{aligned}
(A) & \quad && \vert z_1 \vert ^2 + \vert z_5 \vert^2 = 6 \\
(B) &&& \vert z_2 \vert^2 + \vert z_5 \vert^2 + \vert z_7 \vert^2 = 10 \\
(C) &&& \vert z_3 \vert^2 + \vert z_7 \vert^2 = 6 \\
(D) &&& \vert z_4 \vert^2 - \vert z_5 \vert^2 + \vert z_7 \vert^2 = 4 \\
(E) &&& \vert z_5 \vert^2 - \vert z_6 \vert^2 + \vert z_7 \vert^2 = 2 \\
(F) &&& \vert z_5 \vert^2 - \vert z_7 \vert^2 + \vert z_8 \vert^2 = 4
\end{aligned}
\right\rbrace 
\end{equation}
The six equations (A) -- (F) in \eqref{manifold eqn} are referred to as the \textbf{manifold equations} of $M$.
Consider the inclusion $\ka: \widetilde{L}^{-1}(0) \hookrightarrow \mathbb{C}^8$ and note that $N$ acts freely and properly on $\widetilde{L}^{-1}(0)$ (for a proof, see Cannas da Silva \cite{cannasDaSilva}).
Therefore we may apply \refmarsdenWeinstein\ to this situation and conclude that 
$$ M :=M^{red, 0} = \widetilde{L}^{-1}(0) / N $$  
is a $4$-dimensional symplectic manifold, denoting the quotient map by $\tau: \widetilde{L}^{-1}(0) \to M$, On the quotient, we have the symplectic form $\om:= \om^{red, 0} $ which satisfies 
\begin{equation}
\label{omega}
 \tau^* \omega = \ka^* \omega_{st}ṡ
\end{equation}


\subsection{Explicit charts of the manifold $\mathbf M$} 

\label{Parametrisation}

For the constructions later on, we need explicit charts for $M$ and some additional notation.

\begin{notation}
 The notation $\Mod$ $8$ means that integers are counted modulo $8$ and that they are considered as element of the set $\{1, 2, 3, 4 ,5, 6, 7, 8\}$. Note that this set starts with $1$ instead of $0$.
\end{notation}

To define suitable charts for the manifold $M$, we will make use of the following technical observation.

\begin{lemma} 
\label{paramU}
Let $[z]=[z_1, \ldots, z_8] \in M = \widetilde{L}^{-1}(0) / N$ with $z_k = 0$ for a given $1 \leq k \leq 8$. Then $z_j \neq 0$ for $j \notin \{k-1, k+1\} \Mod 8$. This means that maximally two coordinates of $z$ may vanish and, if so, these must be consecutive ones.
\end{lemma}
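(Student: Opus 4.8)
The plan is to pass from the complex coordinates to the nonnegative real quantities $a_k := \vert z_k \vert^2$ for $1 \le k \le 8$. Because the $N$-action defined in \eqref{Action of N} only multiplies each $z_k$ by a phase, every $a_k$ is $N$-invariant and thus descends to a well-defined function on $M$; moreover the condition $z_k = 0$ is equivalent to $a_k = 0$ and independent of the chosen representative. Hence it suffices to prove the corresponding statement for the $a_k$.

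First I would rewrite the manifold equations (A)--(F) of \eqref{manifold eqn} in terms of the $a_k$ and solve the linear system for six of them in terms of the two remaining ones, say $a_5$ and $a_7$:
\begin{align*}
a_1 &= 6 - a_5, & a_2 &= 10 - a_5 - a_7, & a_3 &= 6 - a_7, \\
a_4 &= 4 + a_5 - a_7, & a_6 &= a_5 + a_7 - 2, & a_8 &= 4 - a_5 + a_7.
\end{align*}
The requirement $a_k \ge 0$ for all $k$ then turns into eight linear inequalities in $(a_5, a_7)$, and a short computation shows that they cut out a convex octagon with vertices $(6,4), (4,6), (2,6), (0,4), (0,2), (2,0), (4,0), (6,2)$ whose edges, read counterclockwise, are precisely the segments $\{a_k = 0\}$ for $k = 1, \dots, 8$ in this cyclic order. (Under $a_5 = 6 - 2J$, $a_7 = 6 - 2H$ this octagon is affinely identified with $\De$ and $\{a_k = 0\}$ becomes the $k$-th edge of $\De$ in the labelling of \eqref{theta}.)

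The assertion then follows from the elementary geometry of a convex polygon, applied to the point $(a_5, a_7)$ attached to $[z]$ (which lies in the octagon since the $a_k$ satisfy the manifold equations and are nonnegative): such a point lies on at most two bounding edges, it lies on two of them exactly when it is a vertex, and the two edges through a vertex are adjacent. Translating back, at most two of the $a_k$ vanish, and if $a_k = a_j = 0$ with $j \ne k$ then $j \in \{k-1, k+1\} \Mod 8$; in particular no three of the coordinates can vanish simultaneously. This is the assertion of the lemma.

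The only delicate part is the bookkeeping: correctly matching each inequality $a_k \ge 0$ to its edge and verifying that the edges appear in the cyclic order $1, \dots, 8$ with the octagon nondegenerate (no three of the eight bounding lines concurrent, so that no third edge passes through any vertex). This is routine but easy to get wrong. Should one wish to bypass the convexity picture, the same conclusion can be obtained by brute force, checking the $\binom{8}{2} - 8 = 20$ non-consecutive index pairs $(k,j)$ and observing in each case that $a_k = a_j = 0$ forces some $a_m < 0$, contradicting $a_m = \vert z_m \vert^2 \ge 0$.
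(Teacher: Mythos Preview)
Your argument is correct and takes a genuinely different, more conceptual route than the paper. The paper proceeds by direct case analysis: it fixes $k=1$, reads off $\vert z_5\vert^2 = 6$ from equation~(A), and then for each $j \in \{3,4,6,7\}$ derives a contradiction $\vert z_m\vert^2 < 0$ by chasing through the remaining manifold equations; the other seven values of $k$ are declared analogous. You instead linearise by setting $a_k = \vert z_k\vert^2$, solve (A)--(F) to express six of the $a_k$ affinely in $(a_5,a_7)$, and observe that the nonnegativity constraints cut out a convex octagon whose edges are precisely the loci $\{a_k=0\}$ in cyclic order. The statement then reduces to the trivial fact that a point of a convex polygon lies on at most two edges, necessarily adjacent.

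Your approach has the virtue of treating all eight indices uniformly and of making transparent \emph{why} the lemma holds: under the affine identification $a_5 = 6-2J$, $a_7 = 6-2H$ your octagon is exactly the Delzant polytope $\De$, and the loci $\{z_k=0\}$ are the preimages of its edges---so the lemma is really a shadow of $F(M)=\De$ proved later in \refthOctagon. The paper's approach, by contrast, is self-contained and requires no geometric picture, at the cost of eight implicit repetitions. Your closing remark that the brute-force check of the $20$ non-adjacent pairs is available as a fallback is apt, and is essentially what the paper does (one eighth of it, anyway).
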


\begin{proof}
We show the assertion for $k = 1$. The other cases follow similarly. Let $z_1 =0$ and consider the manifold equations (A) -- (F) from \eqref{manifold eqn}. Equation (A) leads to $\vert z_5 \vert^2 = 6 \neq 0$ implying $z_5 \neq 0$. That $z_3$, $z_4$, $z_6$, $z_7$ must be nonzero is proven by contradiction:
\begin{align*}
\mbox{If } z_3 = 0 \quad &\overset{(C)}{\Rightarrow} \quad \vert z_7 \vert^2 = 6 \quad\ \ \overset{(B)}{\Rightarrow} \quad 0 \leq \vert z_2 \vert^2 = -2 \quad \lightning \\
\mbox{If } z_4 = 0 \quad &\overset{(D)}{\Rightarrow} \quad \vert z_7 \vert^2 = 10 \quad \overset{(C)}{\Rightarrow} \quad  0 \leq \vert z_3 \vert^2 = -4 \quad \lightning \\
\mbox{If } z_6 = 0 \quad &\overset{(E)}{\Rightarrow} \quad  0 \leq \vert z_7 \vert^2 = -4 \quad  \lightning \\
\mbox{If } z_7 = 0 \quad &\overset{(F)}{\Rightarrow} \quad  0 \leq \vert z_8 \vert^2 = -2 \quad \lightning 
\end{align*}
Thus, if $z_1 = 0$, only $z_2$ or $z_8$ may vanish, too.
Moreover, since the action of $N$ on $\widetilde{L}^{-1}(0)$ preserves the norm $\vert z_j \vert$ for each entry $1 \leq j   \leq 8$, the action does not affect the property of being zero.
\end{proof}

This motivates us to define, for $1 \leq \nu \leq 8$, the open sets
$$ 
U_\nu := \left\lbrace \ [z_1, \ldots, z_8] \in M \mid z_k \neq 0 \mbox{ for all } k \in \{\nu+2, \ldots, \nu+7\} \Mod 8 \ \right\rbrace 
$$
In other words, $U_\nu$ is the only subset of $M$ where $z_\nu$ and $z_{\nu+1}$ may vanish. This is independent of the chosen representative for the point $[z_1, \ldots, z_8] \in M$. In particular, we have an open covering
$
M = \bigcup_{\nu=1}^8 U_\nu. 
$

Now we will construct explicit charts $ \psi_\nu: U_\nu \to \mathbb{R}^4$ for all $1 \leq \nu \leq 8$. Given $[z] \in U_\nu$, there are at least six variables nonzero among $z_1, \dots, z_8$. Now use the $N \simeq \T^6$-action to pick strictly positive real numbers as representatives for them.
Writing $z_k=x_k+i y_k$ for $1 \leq k \leq 8$, this means $y_k=0 $ and $x_k >0$ for these six representatives. Therefore, for $\nu=1$, we represent $U_1$ by points of the form
$$(z_1, \dots, z_8) = (x_1, y_1, x_2, y_2, x_3, 0, x_4, 0, x_5, 0, x_6, 0, x_7, 0, x_8, 0).$$
Using the manifold equations from \eqref{manifold eqn}, the variables $x_3, \dots, x_8$ can be written in $U_1$ as functions of $x_1$, $y_1$, $x_2$, $y_2$ as follows: abbreviating $\vert z_1 \vert^2 = x_1^2 + y_1^2$ and $\vert z_2 \vert^2 = x_2^2 + y_2^2$, we obtain
\begin{equation}
 \label{sixVarEq}
\begin{aligned}
x_3 &= \sqrt{2 - \vert z_1 \vert^2 + \vert z_2 \vert^2},  &&&   x_5 &= \sqrt{6 - \vert z_1 \vert^2}, &&&  x_7 &= \sqrt{4 + \vert z_1 \vert^2 - \vert z_2 \vert^2}, \\ 
x_4 &= \sqrt{6 - 2\vert z_1 \vert^2 + \vert z_2 \vert^2}, &&& x_6 & = \sqrt{8 - \vert z_2 \vert^2},  &&& x_8 &= \sqrt{2 + 2\vert z_1 \vert^2 - \vert z_2 \vert^2}.
\end{aligned}
\end{equation}
Therefore
$$
\psi_1: U_1 \to \R^4, \quad [x_1, y_1, x_2, y_2, x_3, 0, x_4, 0, x_5, 0, x_6, 0, x_7, 0, x_8, 0] \mapsto (x_1, y_1, x_2, y_2)
$$
is a chart for $U_1$ with inverse
$$
\psi_1^{-1}: \R^4 \to U_1, \quad (x_1, y_1, x_2, y_2) \mapsto [x_1, y_1, x_2, y_2, x_3, 0, x_4, 0, x_5, 0, x_6, 0, x_7, 0, x_8, 0]
$$
where $x_3, \dots, x_8$ in $\psi_1^{-1}$ are determined by \eqref{sixVarEq}. Applying analogous reasoning to the remaining indices, we obtain, for all $1 \leq \nu \leq 8$, charts of the form
\begin{gather*}
 \psi_\nu:  U_\nu \to V_\nu:=\psi_\nu ( U_\nu) \subseteq \mathbb{R}^4, \\
[x_1, 0, x_2, 0, \ldots, x_\nu, y_\nu, x_{\nu+1}, y_{\nu+1}, \ldots, x_8, 0]  \mapsto  (x_\nu, y_\nu, x_{\nu+1}, y_{\nu+1}).
\end{gather*}


\subsection{The symplectic form $\mathbf \om$ on $\mathbf M$ in local coordinates} 


In this subsection, we will compute the symplectic form $\om$ on $M$ constructed in Section \ref{subsection_Delzant_construction} in the local charts $(U_\nu, \psi_\nu)$ defined in Section \ref{Parametrisation}.

Let $z \in \widetilde{L}^{-1}(0)$. Then there is (at least one) index $1 \leq \nu \leq 8$ such that $\tau(z)=[z] \in U_\nu$. Over $U_\nu$, the quotient map $\tau$ has the right inverse
\begin{align*}
 \tau^{-1}_\nu: U_\nu \to \widetilde{L}^{-1}(0) , \quad [z] \mapsto (x_1, 0, x_2, 0, \ldots, x_\nu, y_\nu, x_{\nu+1}, y_{\nu+1}, \ldots, x_8, 0)
\end{align*}
defined analogously to the chart $\psi_\nu$, $\psi_\nu^{-1}$ in Section \ref{Parametrisation}, i.e., we have $\tau \circ \tau^{-1}_\nu=\Id_{U_\nu}$ and thus in turn $\psi_\nu \circ \tau \circ \tau^{-1}_\nu \circ \psi_\nu^{-1}=\Id_{V_\nu}$ where $\psi_\nu(U_\nu)=:V_\nu$. 
Denote by $\omega_\nu:= (\psi_\nu^{-1})^* \om$ the symplectic form in local coordinates on $V_\nu \subseteq \R^4$. Then the relation $\tau^* \omega = \ka^* \omega_{st}$ in \eqref{omega} becomes 
$$
 \ka^* \omega_{st} = \tau^* \omega = \tau^* \psi_v^* \om_\nu = (\psi_\nu \circ \tau)^*\om_\nu
$$
and, pulling $  \ka^* \omega_{st}$ back with $\tau_\nu^{-1} \circ \psi_\nu^{-1}$ to $V_\nu$, we obtain
\begin{align*}
(\ka \circ \tau_\nu^{-1} \circ \psi_\nu^{-1})^*\omega_{st} = (\tau_\nu^{-1} \circ \psi_\nu^{-1})^* \ka^* \omega_{st}  = (\tau_\nu^{-1} \circ \psi_\nu^{-1})^*(\psi_\nu \circ \tau)^*\om_\nu = \om_\nu
\end{align*}
Consider the symplectic form $\omega_{st}$ on $\mathbb{C}^8 \simeq \R^{16}$ as represented by the $(16 \times 16)$-matrix with submatrices
$
\left( 
\begin{smallmatrix}
 0 & -1 \\1 & 0
\end{smallmatrix}
\right) 
$
on the diagonal and everywhere else zero.
For $\nu=1$, we now compute the pullback $\om_1$ of the matrix of $\om_{st}$ explicitly. The cases $\nu=2, \dots, 8$ are similar. We find
\begin{gather*}
\ka \circ \tau_1^{-1} \circ \psi_1^{-1}: V_1 \subseteq \R^4 \to \C^8 \simeq \R^{16},  \\
(x_1, y_1, x_2, y_2) \ \mapsto\ (x_1, y_1, x_2, y_2, x_3, 0, x_4, 0, x_5, 0, x_6, 0, x_7, 0,  x_8, 0)
\end{gather*}
and calculate the Jacobian
$$
d(\ka \circ \tau_1^{-1} \circ \psi_1^{-1}) \ = \ 
\begin{pmatrix}
1 & 0 & 0 & 0 \\
0 & 1 & 0 & 0 \\
0 & 0 & 1 & 0 \\
0 & 0 & 0 & 1 \\
\frac{-x_1}{\sqrt{2 - x_1^2 + x_2^2 - y_1^2 + y_2^2}} & \frac{-y_1}{\sqrt{2 - x_1^2 + x_2^2 - y_1^2 + y_2^2}} & \frac{x_2}{\sqrt{2 - x_1^2 + x_2^2 - y_1^2 + y_2^2}} & \frac{y_2}{\sqrt{2 - x_1^2 + x_2^2 - y_1^2 + y_2^2}} \\
0 & 0 & 0 & 0 \\
\frac{- 2 x_1}{\sqrt{6 + x_2^2 - 2 (x_1^2 + y_1^2) + y_2^2}} & \frac{-2 y_1}{\sqrt{6 + x_2^2 - 2 (x_1^2 + y_1^2) + y_2^2}} & \frac{x_2}{\sqrt{6 + x_2^2 - 2 (x_1^2 + y_1^2) + y_2^2}} & \frac{y_2}{\sqrt{6 + x_2^2 - 2 (x_1^2 + y_1^2) + y_2^2}} \\
0 & 0 & 0 & 0 \\
\frac{-x_1}{\sqrt{6 - x_1^2 - y_1^2}} & \frac{-y_1}{\sqrt{6 - x_1^2 - y_1^2}} & 0 & 0 \\
0 & 0 & 0 & 0 \\
0 & 0 & \frac{-x_2}{\sqrt{8 - x_2^2 - y_2^2}} & \frac{-y_2}{\sqrt{8 - x_2^2 - y_2^2}} \\
0 & 0 & 0 & 0 \\
\frac{x_1}{\sqrt{4 + x_1^2 - x_2^2 + y_1^2 - y_2^2}} & \frac{y_1}{\sqrt{4 + x_1^2 - x_2^2 + y_1^2 - y_2^2}} & \frac{-x_2}{\sqrt{4 + x_1^2 - x_2^2 + y_1^2 - y_2^2}} & \frac{-y_2}{\sqrt{4 + x_1^2 - x_2^2 + y_1^2 - y_2^2}} \\
0 & 0 & 0 & 0 \\
\frac{2 x_1}{\sqrt{2 - x_2^2 + 2 (x_1^2 + y_1^2) - y_2^2}} & \frac{2 y_1}{\sqrt{2 - x_2^2 + 2 (x_1^2 + y_1^2) - y_2^2}} & \frac{-x_2}{\sqrt{2 - x_2^2 + 2 (x_1^2 + y_1^2) - y_2^2}} & \frac{-y_2}{\sqrt{2 - x_2^2 + 2 (x_1^2 + y_1^2) - y_2^2}} \\
0 & 0 & 0 & 0
\end{pmatrix}$$
which, considering $\om_{st}$ and $\om_1$ as matrices, yields
\begin{equation}
 \label{omIsStandard}
\omega_1
= 
\bigl(d(\ka \circ \tau_1^{-1} \circ \psi_1^{-1})\bigr)^T \ \om_{st} \ d(\ka \circ \tau_1^{-1} \circ \psi_1^{-1})
=
\begin{pmatrix}
0 & -1 & 0 & 0 \\
1 & 0 & 0 & 0 \\
0 & 0 & 0 & -1 \\
0 & 0 & 1 & 0
\end{pmatrix} 
=
\om_{st}
\end{equation}
Analogous calculations show $\om_\nu= \om_{st}$ on $V_\nu \subseteq \R^4$ for all $1 \leq \nu \leq 8$.


\subsection{Momentum map of the toric system on $(M, \om)$}


Now let us resume Delzant's construction (see Section \ref{subsection Delzant} for an overview of the steps) to obtain a momentum map $F$ on $M$ satisfying $F(M) = \Delta$. First, we have to find a map $\si:\R^2 \to \R^8$ that is a right inverse of the map $\theta: \R^8 \to \R^2$ defined in \eqref{theta}. 

\begin{lemma}
The map $\sigma := \frac{1}{6}\theta^*: \R^2 \to \R^8$ is a right inverse of $\theta$.
\end{lemma}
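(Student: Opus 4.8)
The plan is to verify directly that $\theta \circ \sigma = \mathrm{id}_{\R^2}$ by a short matrix computation; once this identity holds, $\sigma$ is a right inverse of $\theta$ by definition. First I would record the matrix of $\theta$ with respect to the standard bases of $\R^8$ and $\R^2$. Reading off the assignment \eqref{theta}, the images of $e_1, \dots, e_8$ are the columns, so that $\theta$ is represented by
$$
\Theta := \begin{pmatrix} -1 & -1 & 0 & 1 & 1 & 1 & 0 & -1 \\ 0 & -1 & -1 & -1 & 0 & 1 & 1 & 1 \end{pmatrix}.
$$
By the duality convention recalled above (if $f(x) = Cx$ then $f^*(y) = C^T y$), the map $\theta^* : \R^2 \to \R^8$ is represented by $\Theta^T$, and hence $\sigma = \frac{1}{6}\theta^*$ is represented by $\frac{1}{6}\Theta^T$.

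Next I would compute the composite $\theta \circ \sigma$, whose matrix is $\frac{1}{6}\,\Theta\Theta^T$. The two diagonal entries of $\Theta\Theta^T$ are the squared Euclidean norms of the two rows of $\Theta$: summing the squared entries of $(-1,-1,0,1,1,1,0,-1)$ and of $(0,-1,-1,-1,0,1,1,1)$ gives $6$ in each case. The off-diagonal entries equal the Euclidean inner product of the two rows, namely $0+1+0-1+0+1+0-1 = 0$. Therefore $\Theta\Theta^T = 6\,I_2$, and consequently
$$
\theta \circ \sigma = \tfrac{1}{6}\,\Theta\Theta^T = \tfrac{1}{6}\,(6\,I_2) = \mathrm{id}_{\R^2},
$$
which is exactly the claim.

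The computation is entirely routine, so there is no genuine obstacle here. The only substantive point — the reason the clean scalar $\frac{1}{6}$ works rather than a more complicated matrix — is that the two rows of $\Theta$ are orthogonal and share the same squared length $6$. This is a manifestation of the symmetry of the octagon $\De$, whose normal fan is invariant under the evident dihedral symmetry; it is precisely this balance of the outer normal vectors that forces $\Theta\Theta^T$ to be a scalar multiple of the identity. Were the polytope less symmetric, one would still obtain a right inverse via $\sigma = \theta^*(\theta\theta^*)^{-1}$, but the explicit constant would no longer be uniform.
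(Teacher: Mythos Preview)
Your proof is correct and follows essentially the same approach as the paper: both represent $\theta$ by the $2\times 8$ matrix of outer normals, invoke the duality convention to identify $\theta^*$ with the transpose, and verify $\Theta\Theta^T = 6\,I_2$ to conclude. Your extra remark explaining why the scalar $\tfrac{1}{6}$ arises from the orthogonality and equal lengths of the rows is a nice addition but not in the paper's version.
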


\begin{proof}
Written in matrix form, $\theta: \R^8 \to \R^2$ is given by 
$$
\theta (x) = 
\begin{pmatrix}
 -1 & -1 & 0 & 1 & 1 & 1 & 0 & -1 \\
 0 & -1 & -1 & -1 & 0 & 1 & 1 & 1 
\end{pmatrix}
x
=:C x.
$$
Therefore the dual map is given by $\theta^*: \R^2 \to \R^8$, $\theta^*(y)=C^T y$. Since $C C^T = 6 \Id_{\R^2}$, we find $\Id_{\R^2} = \frac{1}{6}\theta\circ \theta^* = \theta \circ \left(\frac{1}{6} \theta^*\right)$
such that $\si:= \frac{1}{6} \theta^*$ is a right inverse of $\theta$.
\end{proof}

The next step in Delzant's construction is to compute the concatenation
$$ \widetilde{L}^{-1}(0) \quad \overset{\ka}{\hookrightarrow} \quad \mathbb{C}^8 \quad \overset{L}{\longrightarrow} \quad \mathbb{R}^8 \quad \overset{\si^*}{\longrightarrow} \quad \mathbb{R}^2 $$
where we remark  that $\si^* = \frac{1}{6} \theta$. 

\begin{lemma}
 The map $\Fti:=\frac{1}{6} \theta \circ L \circ \ka: \widetilde{L}^{-1}(0) \to \R^2$ is explicitly given by
 $\Fti (z_1, \ldots, z_8) = \frac{1}{2} (\vert z_1 \vert^2, \vert z_3 \vert^2)$ and is invariant under the action of $N$. Thus $\Fti$ decends to a map $F: M \to \R^2$.
\end{lemma}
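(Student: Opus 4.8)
The plan is to handle the two assertions separately: first establish the explicit formula, then the $N$-invariance and descent. Since the text already records that $\si^* = \tfrac{1}{6}\theta$, the map under consideration is exactly the concatenation $\si^* \circ L \circ \ka$ appearing in step (6) of Delzant's construction. To extract the formula I would first write $L(z) = -\tfrac{1}{2}(|z_1|^2, \dots, |z_8|^2) + c$ with $c = (0,-1,0,2,3,5,3,2)$ and then apply $\tfrac16 C$, where $C$ is the $(2\times 8)$-matrix of $\theta$ from the previous lemma. Writing $a_k := |z_k|^2$ for brevity, this produces two components, each an affine function of the $a_k$: the first is $\tfrac{1}{6}\bigl(\tfrac12(a_1 + a_2 - a_4 - a_5 - a_6 + a_8) + 9\bigr)$ and the second is $\tfrac{1}{6}\bigl(\tfrac12(a_2 + a_3 + a_4 - a_6 - a_7 - a_8) + 9\bigr)$, the additive constant $9$ in each slot arising from $Cc = (9,9)$.

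The heart of the argument is to collapse these two affine expressions to $\tfrac12 a_1$ and $\tfrac12 a_3$. This is precisely where the inclusion $\ka$ is used: on $\widetilde{L}^{-1}(0)$ the six manifold equations (A)--(F) of \eqref{manifold eqn} hold, and I would solve them to express $a_2, a_5, a_6, a_7, a_8$ in terms of $a_1$ and $a_3$ (for instance $a_5 = 6 - a_1$, $a_7 = 6 - a_3$, and $a_2 = a_1 + a_3 - 2$, with $a_4, a_6, a_8$ following similarly). Substituting these relations into the two components, I expect all occurrences of $a_2, a_4, \dots, a_8$ together with the additive constants to cancel, leaving exactly $\tfrac12 a_1$ and $\tfrac12 a_3$, that is $\Fti(z) = \tfrac12(|z_1|^2, |z_3|^2)$.

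For the second assertion I would observe that the $N \simeq \T^6$-action of \eqref{Action of N} acts on each coordinate $z_k$ only by a phase $e^{i(\cdots)}$ and therefore preserves every modulus $|z_k|$; this is the same remark already invoked at the end of the proof of \refparamU. Since $\Fti = \tfrac12(|z_1|^2, |z_3|^2)$ depends only on $|z_1|^2$ and $|z_3|^2$, it is constant on the $N$-orbits in $\widetilde{L}^{-1}(0)$, hence $N$-invariant. Consequently $\Fti$ factors through the quotient map $\tau : \widetilde{L}^{-1}(0) \to M = \widetilde{L}^{-1}(0)/N$, producing a well-defined smooth $F : M \to \R^2$ with $F \circ \tau = \Fti$, as claimed.

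The step I expect to be the main obstacle is purely computational rather than conceptual: the exact cancellation in the second paragraph requires invoking all six manifold equations simultaneously and tracking signs meticulously, since a single sign error would destroy the collapse to $\tfrac12(|z_1|^2,|z_3|^2)$. The invariance and descent, by contrast, are immediate once the formula is in hand.
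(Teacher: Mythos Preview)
Your proposal is correct and follows essentially the same approach as the paper: compute $\tfrac{1}{6}\theta\circ L$ as an affine map in the $|z_k|^2$, then use the manifold equations \eqref{manifold eqn} to collapse the result to $\tfrac12(|z_1|^2,|z_3|^2)$, and finally observe that the $N$-action preserves moduli. The only cosmetic difference is that the paper reduces first to $|z_5|^2,|z_7|^2$ and then converts via (A), (C), whereas you eliminate directly in favour of $|z_1|^2,|z_3|^2$; this is an inessential reordering of the same linear algebra.
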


\begin{proof}
 Let $z = (z_1, \ldots, z_8) \in \widetilde{L}^{-1}(0)$ and compute
 $$
  L ( \ka(z_1, \ldots, z_8)) = L(z_1, \ldots, z_8) = -\frac{1}{2}(\vert z_1 \vert^2, \ldots, \vert z_8 \vert^2) + (0,-1,0,2,3,5,3,2) 
 $$
 and 
 \begin{gather*}
 \frac{1}{6} \theta( L (z)))  = \frac{1}{6} \theta\left( -\frac{1}{2}\left(\vert z_1 \vert^2, \ldots, \vert z_8 \vert^2 \right) + (0,-1,0,2,3,5,3,2) \right) \\
  =  \frac{1}{12} \left(\abs{ z_1}^2 + \abs{ z_2}^2 - \abs{ z_4 }^2 - \abs{ z_5}^2 - \abs{ z_6}^2 + \abs{ z_8}^2 \;, \; \abs{ z_2}^2 + \abs{ z_3 }^2 + \abs{ z_4}^2 -\abs{z_6}^2 - \abs{ z_7}^2 - \abs{ z_8}^2 \right) 
+ \frac{1}{6}(9,9) 
\end{gather*}
and now we use twice the manifold equations from \eqref{manifold eqn} to obtain
 \begin{align*}
& =  \frac{1}{12} \left(6 - \abs{ z_5} ^2 + 10 - \abs{ z_5}^2 - \abs{ z_7}^2 - 4 - \abs{ z_5}^2 + \abs{ z_7 }^2 - \abs{ z_5 }^2 + 2 - \abs{ z_5 }^2 - \abs{ z_7}^2 + 4 - \abs{ z_5}^2 + \abs{ z_7}^2 + 18, \right. \\
& \left. \ \ \qquad 10 - \abs{ z_5}^2 - \abs{ z_7}^2 + 6 - \abs{ z_7}^2 + 4 + \abs{ z_5}^2 -\abs{ z_7 }^2 + 2 - \abs {z_5 }^2 - \abs{ z_7}^2 - \abs{ z_7}^2 - 4 + \abs{ z_5}^2 - \abs{ z_7}^2 + 18\right) \\
&= \frac{1}{12} \; \left(-6 \: \abs{ z_5}^2 + 36, \; -6 \: \abs{ z_7}^2 + 36 \right) \\
&= \frac{1}{12} \; \left(-6 \: \left(6-\abs{ z_1}^2 \right) + 36, \: -6 \: \left(6- \abs{ z_3}^2 \right) + 36 \right) \\
&= \frac{1}{2} \left(\abs{ z_1}^2, \; \abs{ z_3}^2 \right).
\end{align*}
Since the action of $N$ given in \eqref{Action of N} does not change the absolute value of $z_1$ and $z_3$, the map $F(z_1, \ldots, z_8) = \frac{1}{2} \left(\abs{ z_1}^2, \; \abs{ z_3}^2 \right)$ is invariant under the action of $N$ and passes thus to the quotient $M=\widetilde{L}^{-1}(0)/N$. 
\end{proof}

According to Delzant's construction, the map $F: (M, \om) \to \R^2$ is the desired toric momentum map having the original polytope as image of the momentum map:

\begin{theorem} 
\label{thOctagon}
Let $\De$ be the octagon displayed in Figure \ref{fig_octagon}.
Then $F = (J,H): (M, \om) \to  \mathbb{R}^2$ with
\begin{equation*} 
J([z_1, \ldots, z_8]) = \frac{1}{2}\vert z_1 \vert^2, \qquad  H([z_1, \ldots, z_8]) = \frac{1}{2}\vert z_3 \vert^2
\end{equation*}
is a momentum map of an effective Hamiltonian 2-torus action and satisfies $F(M)= \De$. The $\mbS^1$-action of $J$ is given by 
\begin{align*}
 \mbS^1 \times M \to M, \qquad (t,[z_1, \ldots, z_8]) \mapsto  [e^{-it}z_1, z_2, \ldots, z_8]
\end{align*}
and the $\mbS^1$-action of $H$ by
\begin{align*}
 \mbS^1 \times M \to M, \qquad (t,[z_1, \ldots, z_8]) \mapsto  [z_1, z_2, e^{-it}z_3, z_4, \ldots, z_8].
\end{align*}
\end{theorem}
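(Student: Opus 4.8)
The first assertion of the theorem requires essentially no new computation: the lemma immediately preceding the statement has already shown that the Delzant datum $\Fti = \frac{1}{6}\theta \circ L \circ \ka$ equals $\frac{1}{2}(|z_1|^2, |z_3|^2)$ on $\Lti^{-1}(0)$ and descends to $F$ on $M$. Since we have already verified that $\De$ is a Delzant polytope, that $0$ is a regular value of $\Lti$ (because $\ell^*$ is surjective), and, citing Cannas da Silva, that $N$ acts freely and properly on $\Lti^{-1}(0)$, the final step (item (7)) of Delzant's construction recalled in Section \ref{subsection Delzant} applies verbatim and yields that $F \colon (M, \om) \to \R^2$ is the momentum map of an effective Hamiltonian $\T^2$-action with $F(M) = \De$. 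Thus the only genuinely new content to establish is the explicit form of the two circle actions.

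My plan for the $\mbS^1$-action of $J$ is to lift the flow computation to $\C^8$ and then descend. On $(\C^8, \om_{st})$ the function $g_1 := \frac{1}{2}|z_1|^2$ has Hamiltonian vector field $y_1\,\partial_{x_1} - x_1\,\partial_{y_1}$ (with respect to the convention $\om(\mcX^{g_1}, \cdot) = dg_1$), whose flow is the rotation $\phi_s(z) = (e^{-is}z_1, z_2, \ldots, z_8)$; an entirely analogous computation for $g_3 := \frac{1}{2}|z_3|^2$ produces $z \mapsto (z_1, z_2, e^{-is}z_3, z_4, \ldots, z_8)$. Because $\phi_s$ leaves every modulus $|z_k|$ unchanged, it preserves the level set $\Lti^{-1}(0)$ cut out by the manifold equations \eqref{manifold eqn}, so $\mcX^{g_1}$ is tangent to $\Lti^{-1}(0)$; and because $\phi_s$ is itself a coordinatewise phase rotation, it commutes with the $N$-action \eqref{Action of N}. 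Hence $\phi_s$ descends to a well-defined flow $\bar\phi_s$ on $M = \Lti^{-1}(0)/N$ given by $[z] \mapsto [e^{-is}z_1, z_2, \ldots, z_8]$, generated by the $\tau$-pushforward $V := \tau_*(\mcX^{g_1})$.

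It then remains to certify that $\bar\phi_s$ really is the Hamiltonian flow of $J$ for $\om$, and not merely some circle action preserving $J$. For $w \in \Lti^{-1}(0)$ and any $u \in T_w \Lti^{-1}(0)$, combining the defining relation $\tau^*\om = \ka^*\om_{st}$ from \eqref{omega} with the identity $g_1 \circ \ka = J \circ \tau$ gives
$$
\om(V, \tau_* u) = (\tau^*\om)(\mcX^{g_1}, u) = \om_{st}(\mcX^{g_1}, u) = dg_1(u) = d(J \circ \tau)(u) = dJ(\tau_* u).
$$
Since $\tau$ is a submersion, $\tau_* u$ exhausts $T_{[w]}M$, so $\om(V, \cdot) = dJ$, i.e.\ $V = \mcX^J$. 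The same argument with $g_3$ yields the action of $H$. Both flows are manifestly $2\pi$-periodic, hence genuine $\mbS^1$-actions, and since they rotate the independent coordinates $z_1$ and $z_3$ they are effective, recovering by hand the effectiveness already guaranteed by Delzant's construction.

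The main obstacle is not the flow computation, which is the standard harmonic-oscillator rotation, but the bookkeeping needed to pass rigorously between $\C^8$ and the quotient $M$: one must confirm that the upstairs flow is tangent to $\Lti^{-1}(0)$ and $N$-equivariant so that it descends, and then use $\tau^*\om = \ka^*\om_{st}$ to guarantee the descended flow is Hamiltonian for $\om$ itself. An alternative that sidesteps the diagram chase is to argue directly in the charts $\psi_\nu$ of Section \ref{Parametrisation}: in a chart in which $z_1$ is a free coordinate one has $\om_\nu = \om_{st}$ by \eqref{omIsStandard} and $J = \frac{1}{2}(x_1^2 + y_1^2)$, so the flow is immediately $z_1 \mapsto e^{-is}z_1$; crucially, since $|z_1|^2$ is conserved along this flow, the dependent coordinates $x_3, \ldots, x_8$ determined by \eqref{sixVarEq} are unchanged, confirming that the chart expression agrees with the claimed global formula.
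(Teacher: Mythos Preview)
Your proof is correct but emphasizes the opposite half of the theorem from the paper's own proof. The paper takes the momentum-map property and effectiveness from Delzant's construction just as you do, but then spends its effort on an \emph{explicit} verification that $F(M)=\De$: using $|z_1|^2=2J$, $|z_3|^2=2H$, and the manifold equations (A)--(F), it derives the eight linear inequalities $0\le J\le 3$, $0\le H\le 3$, $H\ge 1-J$, $H\le 2+J$, $H\ge J-2$, $H\le 5-J$ that cut out the octagon. The paper's proof does not separately verify the explicit $\mbS^1$-action formulas; it simply states them in the theorem.

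You make the complementary choice: you take $F(M)=\De$ for granted from Delzant and instead supply a careful argument for the circle actions, lifting to $\C^8$, checking tangency to $\Lti^{-1}(0)$ and $N$-equivariance, and using $\tau^*\om=\ka^*\om_{st}$ to confirm the descended flow is Hamiltonian for $\om$. This is a clean and rigorous treatment of the part the paper leaves implicit, and your chart-based alternative is exactly in the spirit of how the paper computes elsewhere (cf.\ the flow $\mcX^{J\circ\psi_1^{-1}}=(y_1,-x_1,0,0)$ in the proof of Proposition~\ref{HtFt}). One trivial slip: the Delzant recipe in Section~\ref{subsection Delzant} has six numbered steps, not seven; you mean item~(6). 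Also, by invoking Delzant alone for $F(M)=\De$ you forgo the concrete sanity check the paper's computation provides, which is useful later since the manifold equations are the workhorse of the entire paper.
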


\begin{proof}
Constructed via Delzant's construction, $F$ is the momentum map of an effective Hamiltonian 2-torus action. Note that $\{J,H\}=0$ is actually additionally proven, explicitly in local coordinates, in the proof of \refHtFt\ when showing Poisson commutativity of the integrals of the (future) semitoric system.

Let us now verify $F(M)= \De$. Recall the manifold equations (A) -- (F) from \eqref{manifold eqn} and note that equations (A) and (C) imply
$$ 0 \leq J \leq 3 \quad \mbox{and} \quad 0 \leq H \leq 3\quad \mbox{and} \quad \abs{ z_5}^2 = 6 - 2J  $$
which we will use in the following. We conclude
\begin{align*}
& (B) \ \Rightarrow\ \vert z_2 \vert^2 + \vert z_7 \vert^2 = 4+2J  
\ \Rightarrow \  \vert z_7 \vert^2 \leq 4+2J 
\ \overset{(C)}{\Rightarrow} \  \vert z_3 \vert^2 \geq 2-2J  
\ \Rightarrow\ H \geq 1-J ,
\\
& (F) \ \Rightarrow\ \vert z_7 \vert^2 = \vert z_8 \vert^2 + 2 - 2J \geq 2-2J 
\ \overset{(C)}{\Rightarrow} \ \vert z_3 \vert^2 \leq 4+2J 
\ \Rightarrow\ H \leq 2+J ,
\\
& (D) \ \Rightarrow\ \vert z_7 \vert^2 = -\vert z_4 \vert^2 + 10 - 2J \leq 10-2J 
 \ \overset{(C)}{\Rightarrow} \ \vert z_3 \vert^2 \geq -4+2J 
 \ \Rightarrow\ H \geq -2+J ,
\\
& (E) \ \Rightarrow\ \vert z_7 \vert^2 = \vert z_6 \vert^2 -4 + 2J \geq -4+2J 
\ \overset{(C)}{\Rightarrow} \ \vert z_3 \vert^2 \leq 10-2J 
\ \Rightarrow\ H \leq 5-J .
\end{align*}
The equations $0 \leq J \leq 3 $ and $ 0 \leq H \leq 3$ describe the square $[0,3] \times [0,3]$ and the equations 
$$H \geq 1-J, \quad H \leq 2+J, \quad  H \geq -2+J,\quad  H \leq 5-J$$ 
describe the edges after chopping off corners of the square $[0,3] \times [0,3]$ in Figure \ref{fig_octagon}. Thus, indeed, $(J, H)(M) = F(M) =  \De$.
\end{proof}

The Atiyah-Guillemin-Sternberg convexity theorem tell us that the {\em images} of the elliptic-elliptic fixed points of $F=(J, H): (M, \om) \to \R^2$ are precisely the vertices of the octagon $\De$, but it does not say {\em where} in the four dimensional manifold $(M, \om)$ the {\em preimages} of these vertices lie.

\begin{figure}[h]
\centering
\input{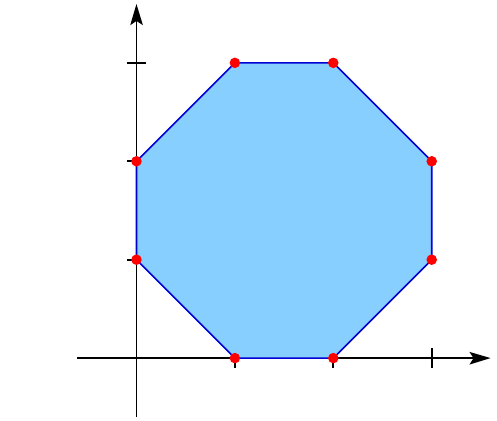_t}
\caption{The momentum polytope $F(M)=\De$ with the images of the fixed points.
}
\label{fig_octagon_corners}
\end{figure}

\begin{proposition}
\label{coordEEPoints}
$F = (J,H)$ has precisely the following eight fixed points:
\begin{align*}
P^{min}  &:=  [0, \ 0,\ \sqrt{2},\ \sqrt{6}, \ \sqrt{6}, \ 2\sqrt{2},\ 2, \ \sqrt{2}] \qquad \mbox{is the fixed point with } z_1 = z_2 = 0, \\
 B \quad & :=  [\sqrt{2},\ 0,\ 0, \ \sqrt{2}, \ 2, \ 2\sqrt{2}, \ \sqrt{6}, \  \sqrt{6}]  \qquad \mbox{is the fixed point with } z_2 = z_3 = 0 , \\
 D \quad  & := [2, \ \sqrt{2}, \ 0,\  0, \ \sqrt{2}, \ \sqrt{6}, \ \sqrt{6}, \ 2\sqrt{2}] \qquad \mbox{is the fixed point with }  z_3 = z_4 = 0 , \\
Q^{min} & := [\sqrt{6}, \ \sqrt{6},\ \sqrt{2},\ 0, \ 0,\ \sqrt{2}, \ 2,  \ 2\sqrt{2}]\qquad \mbox{is the fixed point with } z_4 = z_5 = 0, \\
 Q^{max} & := [\sqrt{6}, \ 2\sqrt{2}, \ 2, \ \sqrt{2}, \ 0, \ 0, \ \sqrt{2}, \ \sqrt{6}] \qquad \mbox{is the fixed point with } z_5 = z_6 = 0 , \\
 C \quad \ & := [2, \ 2\sqrt{2}, \ \sqrt{6}, \ \sqrt{6}, \ \sqrt{2}, \ 0, \ 0, \ \sqrt{2}] \qquad \mbox{is the fixed point with } z_6 = z_7 = 0 , \\
 A \quad \ & := [\sqrt{2}, \ \sqrt{6}, \ \sqrt{6}, \ 2\sqrt{2}, \ 2, \ \sqrt{2}, \ 0, \ 0]\qquad \mbox{is the fixed point with } z_7 = z_8 = 0 , \\
 P^{max} & := [0, \ \sqrt{2}, \ 2, \ 2\sqrt{2}, \ \sqrt{6}, \ \sqrt{6}, \ \sqrt{2}, \ 0]\qquad \mbox{is the fixed point with } z_8 = z_1 = 0 .
\end{align*}
They are mapped under $F$ to the vertices of the octagon as displayed in Figure \ref{fig_octagon_corners}.
\end{proposition}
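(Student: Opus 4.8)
The plan is to identify the fixed points as the common critical points of $J$ and $H$ and to locate them chart by chart using the atlas $\{(U_\nu, \psi_\nu)\}_{\nu=1}^8$ from Section~\ref{Parametrisation}. A point $[z] \in M$ has rank zero exactly when $\mcX^J([z]) = \mcX^H([z]) = 0$, and since $\om$ is nondegenerate this is equivalent to $dJ([z]) = dH([z]) = 0$. Working in $(U_\nu, \psi_\nu)$, where $\om_\nu = \om_{st}$ by \eqref{omIsStandard} and its analogues, the search for fixed points thus reduces to an ordinary critical-point problem for $(J,H)$ in the coordinates $(x_\nu, y_\nu, x_{\nu+1}, y_{\nu+1})$.

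First I would express $J = \frac12\abs{z_1}^2$ and $H = \frac12\abs{z_3}^2$ in each chart. In $U_\nu$ only $z_\nu$ and $z_{\nu+1}$ may vanish, and the manifold equations (A)--(F) from \eqref{manifold eqn} determine the remaining $\abs{z_k}^2$ as affine functions of $\rho_\nu := \abs{z_\nu}^2 = x_\nu^2 + y_\nu^2$ and $\rho_{\nu+1} := \abs{z_{\nu+1}}^2 = x_{\nu+1}^2 + y_{\nu+1}^2$ with integer coefficients (exactly as in \eqref{sixVarEq} for $\nu = 1$). In particular $\abs{z_1}^2 = a_0 + a_1\rho_\nu + a_2\rho_{\nu+1}$ and $\abs{z_3}^2 = b_0 + b_1\rho_\nu + b_2\rho_{\nu+1}$, so that, using $d\rho_k = 2(x_k\,dx_k + y_k\,dy_k)$, both $dJ$ and $dH$ are combinations of $d\rho_\nu$ and $d\rho_{\nu+1}$ governed by the integer matrix $\left(\begin{smallmatrix} a_1 & a_2 \\ b_1 & b_2\end{smallmatrix}\right)$. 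Its columns are precisely the primitive edge directions of $\De$ at the corresponding vertex, so by Delzantness they span $\Z^2$ and the matrix has determinant $\pm 1$; equivalently one checks this $\pm 1$ directly. Invertibility then forces $d\rho_\nu = d\rho_{\nu+1} = 0$, i.e.\ $x_\nu = y_\nu = x_{\nu+1} = y_{\nu+1} = 0$, so each chart $U_\nu$ contains exactly one fixed point, the one with $z_\nu = z_{\nu+1} = 0$.

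Since the $U_\nu$ cover $M$ and the eight resulting points carry pairwise distinct pairs of vanishing coordinates, $F$ has precisely these eight fixed points; the count of eight also follows independently from the Atiyah--Guillemin--Sternberg convexity theorem together with $F(M) = \De$ from \refthOctagon. To read off the explicit representatives, for each $\nu$ I would set $z_\nu = z_{\nu+1} = 0$ and back-substitute into (A)--(F), solving successively for the remaining $\abs{z_k}^2$; by \refparamU\ these are all strictly positive, so after using the $N$-action to choose positive real representatives one obtains exactly the listed coordinates (e.g.\ $z_1 = z_2 = 0$ forces $\abs{z_5}^2 = 6$, $\abs{z_7}^2 = 4$, $\abs{z_3}^2 = 2$, and so on, giving $P^{min}$). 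Finally, evaluating $F = (\frac12\abs{z_1}^2, \frac12\abs{z_3}^2)$ at each point reproduces the eight vertices of $\De$ shown in Figure~\ref{fig_octagon_corners}.

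The main obstacle is organisational rather than conceptual: one must invert the manifold equations separately in all eight charts to bring $J$ and $H$ into the radial form above, and then run the eight gradient computations. These are repetitive but differ in detail --- in $U_1, U_8$ (resp.\ $U_2, U_3$) the integral $J$ (resp.\ $H$) is already one of the free coordinates and the computation is immediate, whereas in the four middle charts $U_4, \dots, U_7$ both $\abs{z_1}^2$ and $\abs{z_3}^2$ require eliminating two or three of the intermediate $\abs{z_k}^2$. The single point worth isolating is the invertibility of $\left(\begin{smallmatrix} a_1 & a_2 \\ b_1 & b_2\end{smallmatrix}\right)$, which is where the Delzantness of $\De$ enters and which rules out any spurious critical point with $z_\nu \neq 0$ or $z_{\nu+1} \neq 0$.
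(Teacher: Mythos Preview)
Your approach is correct but takes a genuinely different route from the paper's. The paper argues via the group actions: it determines the fixed-point locus of the $\mbS^1$-action generated by $J$ (points with $z_1 = 0$, or $z_5 = 0$, or one of four specific pairs of consecutive coordinates vanishing, the latter obtained by absorbing the $J$-rotation into the $N$-action via a suitable choice of $(t_1,\dots,t_6)$), then does the same for $H$, and intersects the two loci to conclude that the common fixed points are exactly those with two consecutive $z_k$ equal to zero.

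Your chart-by-chart critical-point computation is more systematic and self-contained: it avoids the somewhat ad hoc case analysis of how the two $\mbS^1$-actions interact with the $N$-action, and your observation that the coefficient matrix $\left(\begin{smallmatrix} a_1 & a_2 \\ b_1 & b_2 \end{smallmatrix}\right)$ is invertible precisely by Delzantness of $\De$ is a clean conceptual point that the paper does not make explicit. On the other hand, the paper's action-theoretic description yields more: it identifies the full fixed-point sets of $J$ and of $H$ separately (not just their intersection), and this information --- in particular that the $J$-fixed locus consists of $\{z_1=0\}$, $\{z_5=0\}$, and the four points $A,B,C,D$ --- is reused verbatim in the later analysis of the deformed systems $F_t$ (see the proof of \refrankOpoints) and of the rank-one points. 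So your argument buys economy here at the cost of having to redo that classification later.
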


\begin{proof}
We first look for the fixed points of $J$ and $H$ separately since fixed points of $F=(J,H)$ are precisely the points that are fixed points for both $J$ and $H$.

{\it Fixed points of $J$:}
The point $[z]=[z_1, \dots, z_8]\in M$ will be fixed by the action of $J$ if the following holds true:

{\it Case 1:} 
$z_1 = 0$ in the first coordinate yields a fixed point of the $J$-action.

{\it Case 2:} 
$z_5 = 0$ also gives rise to a fixed point, since we can take $(t_1, \ldots, t_6) = (-t, 0, \ldots, 0)$ in the $N$-action, which only gives another representation for the same point in $M$. This means we can actually see the $J$-action as a special case of the $N$-action.

{\it Case 3:} 
Seeing the $J$-action as an $N$-action with $(t_1, \ldots, t_6) = (-t, 0, \ldots, 0)$ means that $t$ rotates both $z_1$ and $z_5$. There are several possibilities to compensate the rotation of $z_5$. Let us for example set the parameter $t_2 = t$ (so that $z_5$ is kept invariant). Then $t_2$ also affects $z_2$ and $z_7$, so a fixed point should satisfy $z_2 = 0$ and either $z_7 = 0$ or one of the $z_3, z_4, z_6, z_8$ is zero, where we choose a third parameter (for example $t_3 = -t$) in the $N$-action. We know from \refparamU\ that only two subsequent entries can be zero, so the only remaining possibility is $ z_2 = 0 $ and $ z_3 = 0$.

{\it Case 4:} The same idea works when we compensate the rotation of $z_5$ by choosing another parameter in the $N$-action: picking $t_4 = -t $ leads to $ z_4 = 0$ and $ z_3 = 0$. Moreover, setting $ t_5 = -t$ gives rise to $ z_6 = 0$ and $ z_7 = 0$. Last, considering $ t_6 = t$ implies $ z_8 = 0$ and $z_7 = 0$.

{\it Fixed points of $H$:}
Similar to the previous case, a point $[z]=[z_1, \dots, z_8]$ will be fixed by the $H$-action in the following cases:

{\it Case 1:} $z_3 = 0$.

{\it Case 2:} $z_7 = 0$.

{\it Case 3:} $z_2 = 0 \mbox{ and } z_1 = 0; \ \ z_4 = 0 \mbox{ and }  z_5 = 0; \ \ z_6 = 0 \mbox{ and } z_5 = 0; \ \ z_8 = 0 \mbox{ and } z_1 = 0.$

{\it Conclusion: } When considering these conditions together, we see that the fixed points are precisely those points with two subsequent entries equal to zero. If we consider such a fixed point with $z_\nu = 0$ and $z_{\nu+1} = 0$, then this point lies in $U_\nu \subseteq M$ and can be parametrised by $(x_\nu, y_\nu, x_{\nu+1}, y_{\nu + 1}) = (0,0,0,0)$. All other entries can be written as positive real numbers and are found by means of the manifold equations. 
\end{proof}


\section{{\bf A semitoric family with four focus-focus points and two double pinched tori}}
\label{section semitoric}

Within this section, let $(M, \om, F=(J, H))$ be the toric manifold constructed in Section \ref{section toric} that satisfies $F(M)= \De$ where $\De $ is the octagon from Figure \ref{fig_octagon}.

The aim of this section is to replace the second integral $H$ by a parameter depending family $H_t$ such that $(M, \omega, F_t = (J, H_t))$ is a semitoric family with fixed $\mbS^1$-action having four transition points $A,B, C, D \in M$ which map under $F_0=(J,H_0)$ to the points $(1,3), \; (1,0), \; (2,3), \; (2,0)$ as sketched in Figure \ref{fig_octagon_corners}.


\subsection{Geometric interpretation of $M^{red, j}$}

Following Karshon \cite{karshon}, we call a value $j$ of $J$ \emph{extremal} if $j$ is a global minimum or maximum of $J$, and \emph{interior} otherwise. A fixed point of $J$ in the fibre $J^{-1}(j)$ is {\em extremal} if $j$ is extremal. Analogously we define {\em interior} fixed points. The following statement is a compilation of Lemma 2.12 in Le Floch $\&$ Palmer \cite{leFlochPalmer} and Proposition 3.4 in Hohloch $\&$ Sabatini $\&$ Sepe \cite{hss}.

\begin{lemma}
\label{mjred}
Let $(M, \omega, F=(J,H))$ be a semitoric system. 
\begin{enumerate}[label=\arabic*)]
 \item 
 Let $j\in J(M)$ be an interior value. If $j$ is regular for $J$, then $M^{red,j}$ is diffeomorphic to a 2-sphere. If $j$ is singular for $J$ then $M^{red,j}$ is homeomorphic to a 2-sphere (but not diffeomorphic). 
 \item
 Let $j\in J(M)$ be an extremal value of $J$.
 If $j$ corresponds to a vertical edge of the momentum polytope of $F=(J,H)$ then $M^{red,j}$ is diffeomorphic to a 2-sphere. Otherwise $M^{red,j}$ is a point.
\end{enumerate}
\end{lemma}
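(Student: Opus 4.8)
The plan is to read off the topology of $M^{red,j}$ from the structure of the Hamiltonian $\mbS^1$-action generated by $J$, for which $J$ is a Morse--Bott momentum map on the compact connected manifold $M$. Two general inputs drive the argument. First, by the Atiyah--Guillemin--Sternberg convexity/connectedness theorem \cite{atiyah, guilleminSternberg}, every level set $J^{-1}(j)$ is connected, so each $M^{red,j}$ is connected, and moreover the interior critical points of $J$ are necessarily saddles (a local extremum at an interior value would violate connectedness of nearby level sets). Second, the critical points of $J$ are exactly the fixed points of the $\mbS^1$-action, whose local behaviour is dictated by \refthmEliasson. Since the statement is a compilation of \cite[Lemma 2.12]{leFlochPalmer} and \cite[Proposition 3.4]{hss}, I would organise the proof around the three mechanisms those references supply, treating the regular interior case, the singular interior case, and the extremal case separately.

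For an \emph{interior regular} value $j$, I would first invoke \refmarsdenWeinstein\ (in the form \refredHam) to present $M^{red,j}=J^{-1}(j)/\mbS^1$ as a closed, connected, and \emph{orientable} (being symplectic) surface, and then identify its genus via the reduced Hamiltonian $H^{red, j}$. By \refredRankOne, the critical points of $H^{red, j}$ are precisely the images of the rank one singular points of $F$ lying in $J^{-1}(j)$; since the system is semitoric, these are nondegenerate with no hyperbolic block, hence elliptic-regular, so each critical point of $H^{red, j}$ is a nondegenerate local minimum or maximum (Morse index $0$ or $2$, no index-$1$ saddles). As $M^{red,j}$ is compact there are only finitely many such points, so $H^{red, j}$ is a Morse function. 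A Morse function on a closed orientable surface with no index-$1$ critical points forces Euler characteristic $\chi = c_0 + c_2 > 0$, hence genus $0$; therefore $M^{red,j}\cong S^2$.

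For an \emph{interior singular} value $j$, the fibre $J^{-1}(j)$ contains at least one fixed point of the $\mbS^1$-action, which by the connectedness theorem must be a saddle of $J$; being nondegenerate with no hyperbolic block, it is elliptic-elliptic or focus-focus. In either case the action fails to be free on $J^{-1}(j)$, so I would pass to singular reduction in the sense of Sjamaar--Lerman \cite{sjamaarLerman}: substituting the relevant local normal form of \refthmEliasson\ into the quotient shows that $M^{red,j}$ acquires an isolated \emph{cone point} at the image of each such fixed point while remaining a topological surface, and comparison with the neighbouring regular reduced spheres identifies it as homeomorphic to $S^2$. Since a cone point is not a smooth point, $M^{red,j}$ is homeomorphic but not diffeomorphic to $S^2$; this is \cite[Proposition 3.4]{hss}. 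For an \emph{extremal} value $j$, the fibre $J^{-1}(j)$ is the fixed-point set of the $\mbS^1$-action at the extreme of $J$: a vertical edge of $F(M)$ at that value signals that $H$ varies over the extremal fibre, i.e. a fixed \emph{surface} $\Sigma$, on which $H$ restricts to a Morse function whose critical points are the elliptic-elliptic fixed points of $F$ (index $0$ or $2$), forcing $\Sigma\cong S^2 = M^{red,j}$; the absence of such an edge means the extremal fibre is a single fixed point (a vertex of the polytope), so $M^{red,j}$ is a point.

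The main obstacle is the singular interior case: establishing that the Sjamaar--Lerman quotient at a saddle value is globally homeomorphic to $S^2$ yet carries a genuine conical, non-smooth singularity. This requires combining the local normal form with the global connectedness/tracking argument against the adjacent regular reduced spheres, and is precisely where I would lean on \cite[Proposition 3.4]{hss}. A secondary point of care is ruling out finite isotropy on regular interior level sets, so that $M^{red,j}$ is an honest manifold rather than merely an orbifold; this is supplied by \cite[Lemma 2.12]{leFlochPalmer}.
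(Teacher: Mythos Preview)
The paper does not supply its own proof of this lemma: it is stated as a direct compilation of \cite[Lemma 2.12]{leFlochPalmer} and \cite[Proposition 3.4]{hss}, and is immediately applied to the octagon system without further argument. Your proposal therefore goes beyond what the paper does, but it is in the right spirit: you correctly identify the same two references, and the mechanisms you outline (Morse theory on the reduced space via $H^{red,j}$ for the regular interior case, Sjamaar--Lerman singular reduction plus the local normal form for the singular interior case, and the fixed-surface dichotomy for the extremal case) are exactly how those cited results are established. In particular, your argument that $H^{red,j}$ has no index-$1$ critical points because semitoric systems admit no hyperbolic-regular points, forcing genus zero, is the standard route and matches the reasoning in \cite{hss}.

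One small point: in your genus computation you write $\chi=c_0+c_2>0$ and conclude $g=0$; this is correct since $\chi=2-2g$ is even and bounded above by $2$, so positivity forces $\chi=2$, but it is worth making explicit that this simultaneously pins down $c_0=c_2=1$, i.e.\ exactly one maximum and one minimum of $H^{red,j}$, which is consistent with the paper's later geometric discussion. Your caveat about ruling out finite isotropy on regular levels is well placed and indeed handled in \cite{leFlochPalmer}.
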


A look at Figure \ref{fig_octagon} shows that $j \in \{0,3\}$ are the extremal values of $J$ and $j \in \{ 1, 2\}$ singular interior values. All $j \in [0,3] \setminus \{0, 1, 2, 3\}$ are regular interior values. We conclude

\begin{corollary}
Let $(M, \om, F=(J, H))$ be the toric system from \refthOctagon. Then
$$
M^{red, j} \stackrel{diffeo}{\simeq} \mbS^2  \mbox{ for } j \in [0,3] \setminus \{1,2\} \quad \mbox{and} \quad  M^{red, j} \stackrel{homeo}{\simeq} \mbS^2 \mbox{ for } j \in \{1, 2\}.
$$
\end{corollary}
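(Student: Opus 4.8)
The plan is to apply \refmjred\ value-by-value, once we know it is applicable and once the values $j \in [0,3]$ have been sorted into the three classes the lemma distinguishes. First I would note that the toric system of \refthOctagon\ is in particular semitoric: $J$ is proper since $M$ is compact, the $\mbS^1$-action $[z] \mapsto [e^{-it}z_1, z_2, \ldots, z_8]$ is effective, and a toric system carries only nondegenerate elliptic-elliptic and elliptic-regular singularities, none of which has a hyperbolic block. Thus \refmjred\ is at our disposal.

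Next I would pin down the range and the critical values of $J$. Since $J([z]) = \frac{1}{2}|z_1|^2$ and manifold equation (A) forces $0 \leq |z_1|^2 \leq 6$, we get $J(M) = [0,3]$, so $j = 0$ and $j = 3$ are exactly the extremal values and each $j \in (0,3)$ is interior. To separate regular from singular interior values I would evaluate $J$ on the eight fixed points of \refcoordEEPoints, equivalently read off the $J$-coordinates of the eight vertices of $\De$: they project onto $\{0,1,2,3\}$, with the interior vertices $A, B$ lying over $j = 1$ and $C, D$ over $j = 2$. Hence the only interior singular values are $j \in \{1,2\}$ and every $j \in (0,3) \setminus \{1,2\}$ is regular interior, matching the categorisation recorded just before the corollary.

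With the classes in hand I would apply the two parts of \refmjred. For regular interior $j \in (0,3) \setminus \{1,2\}$, part 1) gives $M^{red,j} \simeq \mbS^2$ diffeomorphically; for the singular interior values $j \in \{1,2\}$, part 1) gives a homeomorphism to $\mbS^2$ that fails to be a diffeomorphism. For the extremal values I would verify that each corresponds to a vertical edge of $\De$: the fibre over $j = 0$ is cut out by $z_1 = 0$ and projects to the left edge from $(0,1)$ to $(0,2)$, while the fibre over $j = 3$ corresponds to $z_5 = 0$, hence $|z_1|^2 = 6$ by (A), and projects to the right edge from $(3,1)$ to $(3,2)$; both are vertical. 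So part 2) gives $M^{red,0} \simeq \mbS^2 \simeq M^{red,3}$ diffeomorphically. Collecting the diffeomorphic cases ($j \in [0,3] \setminus \{1,2\}$) and the two merely homeomorphic cases ($j \in \{1,2\}$) yields the claim.

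The one step deserving genuine attention is the extremal case: one must confirm that the extremal fibres sit over the vertical left and right edges rather than over single vertices, since this is precisely what puts us in the 2-sphere branch of part 2) of \refmjred\ instead of the point branch. This is clear from the octagon in Figure \ref{fig_octagon}, whose left and right sides are the vertical segments $\{0\} \times [1,2]$ and $\{3\} \times [1,2]$.
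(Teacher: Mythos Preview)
Your proposal is correct and follows essentially the same route as the paper: the paper's argument is the single sentence preceding the corollary, which simply reads off from Figure~\ref{fig_octagon} that $j\in\{0,3\}$ are extremal, $j\in\{1,2\}$ are singular interior, and all remaining $j$ are regular interior, then invokes \refmjred. Your write-up is a more careful expansion of exactly this, with the useful addition of explicitly checking that the toric system is semitoric (so that \refmjred\ applies) and that the extremal fibres sit over vertical edges (so that the sphere branch of part 2) is the relevant one).
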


The situation is displayed in Figure \ref{Fig_Mj_red_reeks}. Let us explain the geometric intuition of the shape of the reduced spaces and positions of the singular points of $H^{red, j}$. Later in Section \ref{subsection param}, we will give a parametrisation of $M^{red, j}$.

Recall that, according to \refredRankOne, nondegenerate elliptic-regular points of $F=(J, H)$ on the $4$-dimensional manifold $(M, \om)$ that are regular for $J$ correspond to nondegenerate elliptic points of $H^{red,j}$ on the $2$-dimensional space $M^{red, j}$. Hence there are, for $j \in\ ]0,3[ \ \setminus \{1, 2\}$, precisely two elliptic points of $H^{red, j}$ on $M^{red, j}$. They are located at the `north and south pole' of $M^{red, j} \stackrel{diffeo}{\simeq} \mbS^2$ since, as a maximum and minimum of $H^{red, j}$, the north and south pole are elliptic points of $H^{red, j}$. But there are not more than two elliptic points possible for $H^{red, j}$ since there are only two elliptic-regular points in $F^{-1}(j)$.

Now consider the interior singular values $j \in\{1, 2\}$. The proof of Lemma 2.12 in Le Floch $\&$ Palmer \cite{leFlochPalmer} (see also the overview in Alonso \cite[pp. 26-28]{alonso} on singular reduction) shows that an elliptic-elliptic or focus-focus point on $M$ causes $M^{red, j}$ to have a singularity (a `non-smooth peak') such that $M^{red, j}$ is only homeomorphic to a 2-sphere but not diffeomorphic. Since there are two elliptic-elliptic points in the level set of $j \in \{1, 2\}$ of $J$, the reduced space $M^{red, j}$ has two `non-smooth peaks' as displayed in Figure \ref{Fig_Mj_red_reeks}.

\begin{figure}[h]
\raggedright 
\begin{subfigure}{0.33\textwidth}
\centering
\includegraphics[width=.9\linewidth]{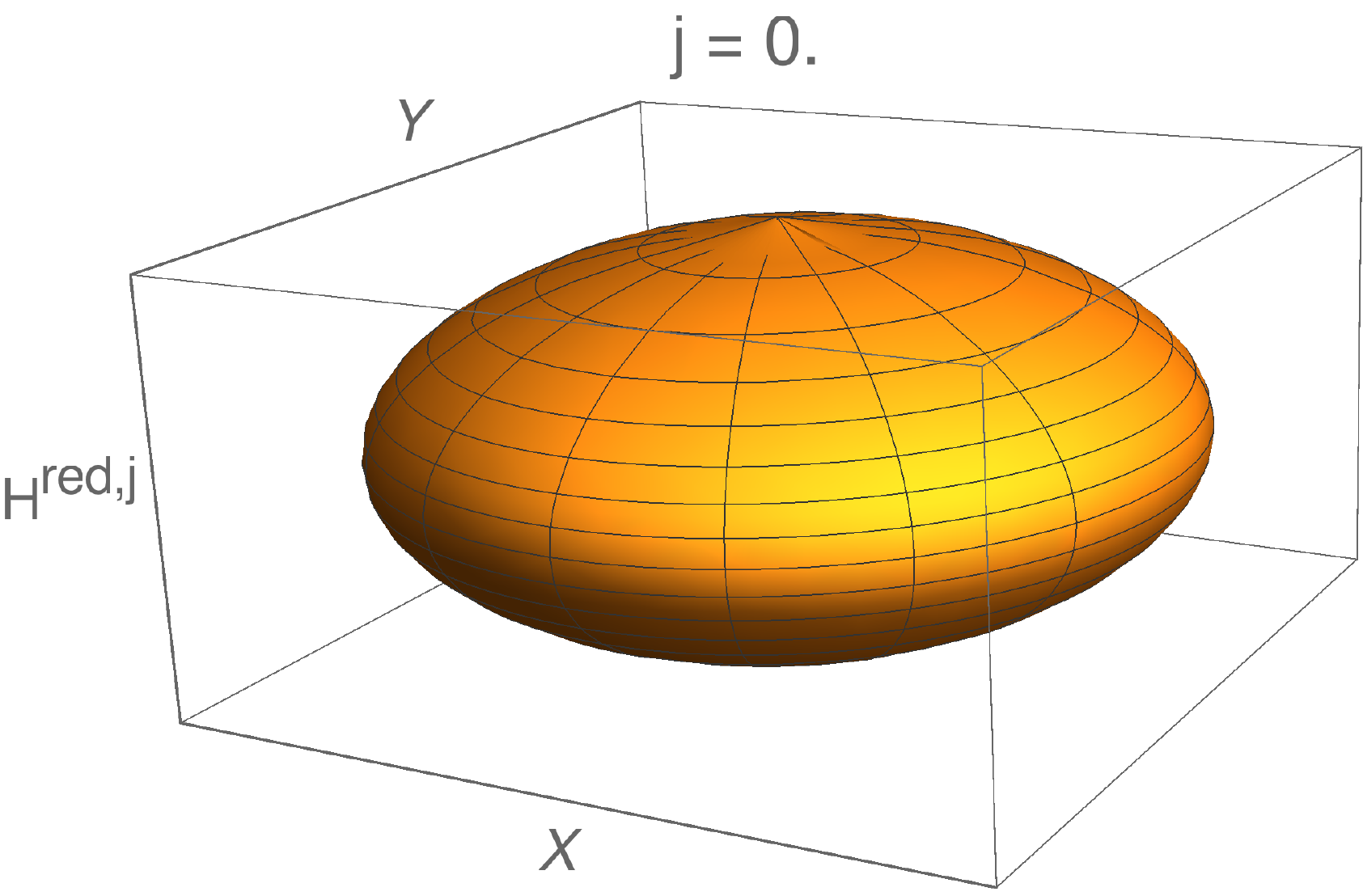}
\end{subfigure}%
\begin{subfigure}{0.33\textwidth}
\centering
\includegraphics[width=.9\linewidth]{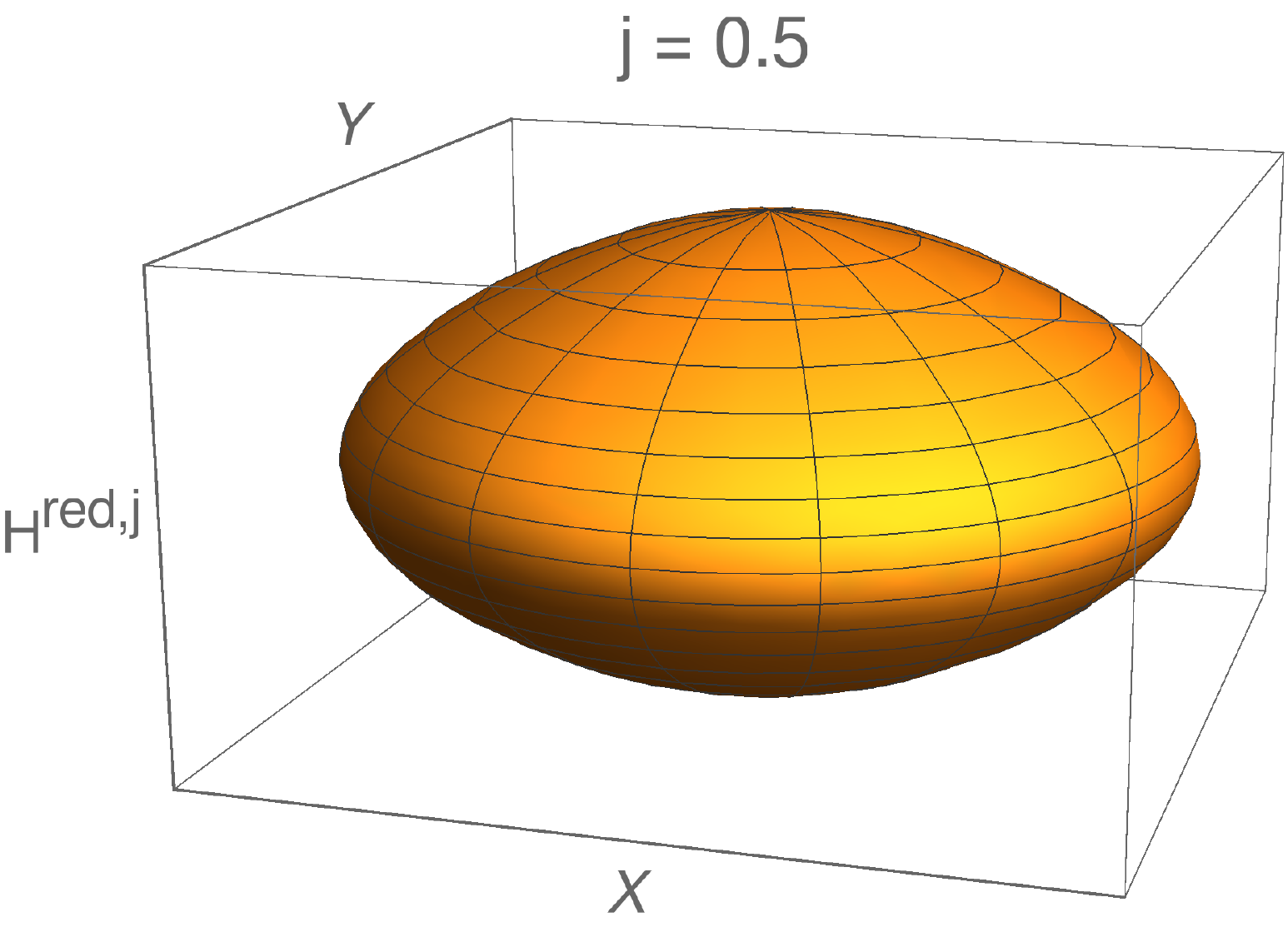}
\end{subfigure}

\vspace{5mm}
\begin{subfigure}{0.33\textwidth}
\centering
\includegraphics[width=.9\linewidth]{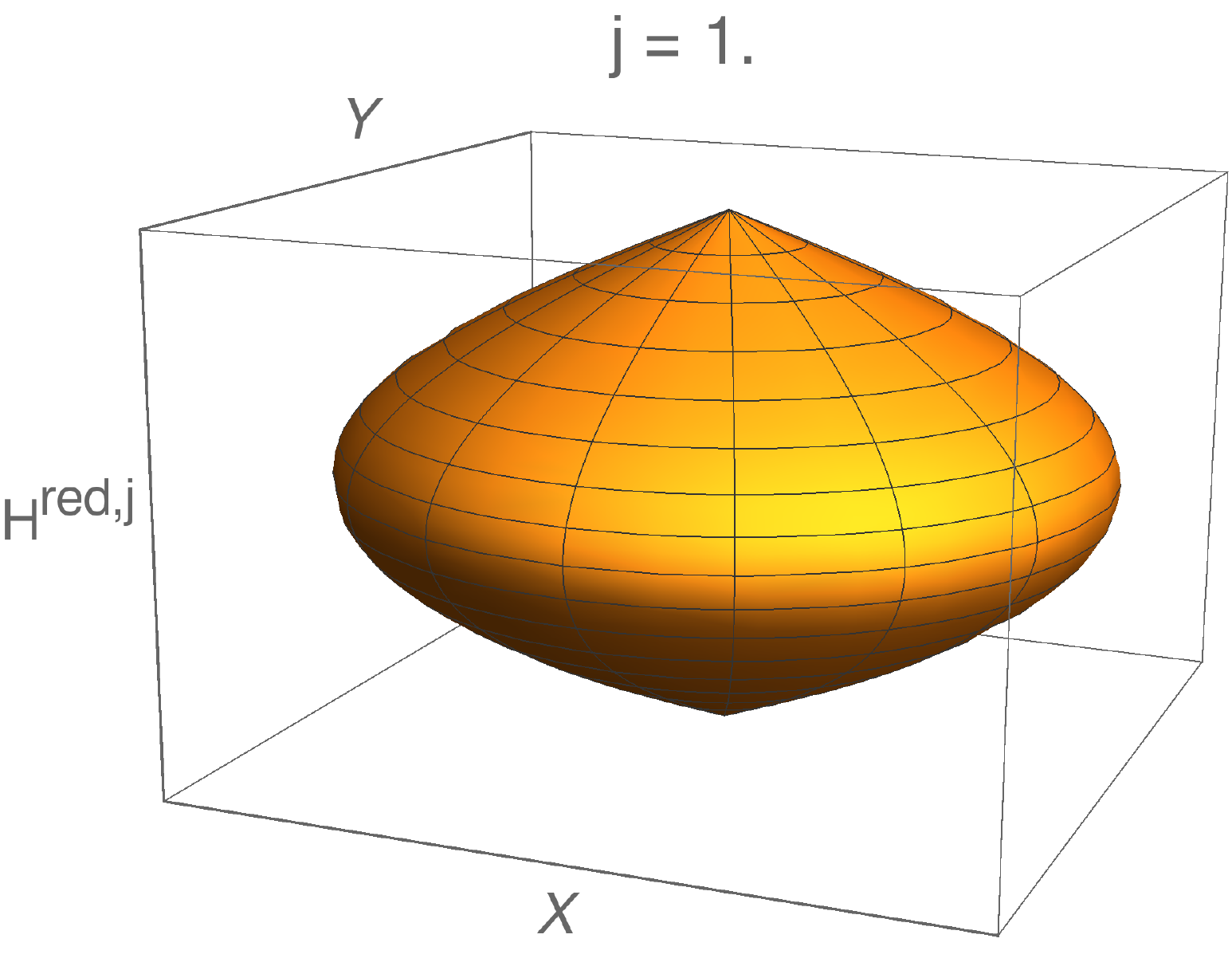}
\end{subfigure}%
\begin{subfigure}{0.33\textwidth}
\centering
\includegraphics[width=.9\linewidth]{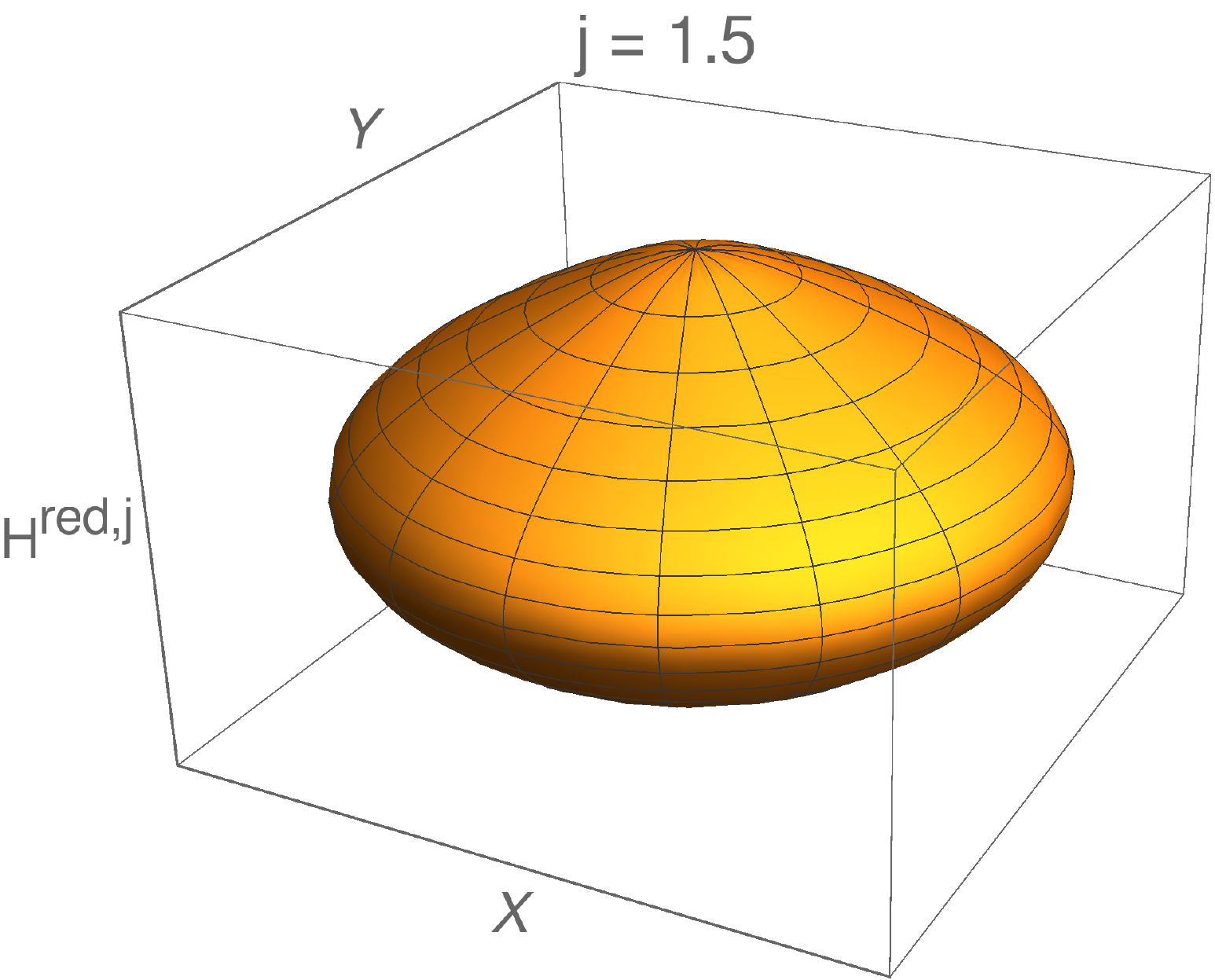}
\end{subfigure}%
\begin{subfigure}{0.33\textwidth}
\centering
\includegraphics[width=.9\linewidth]{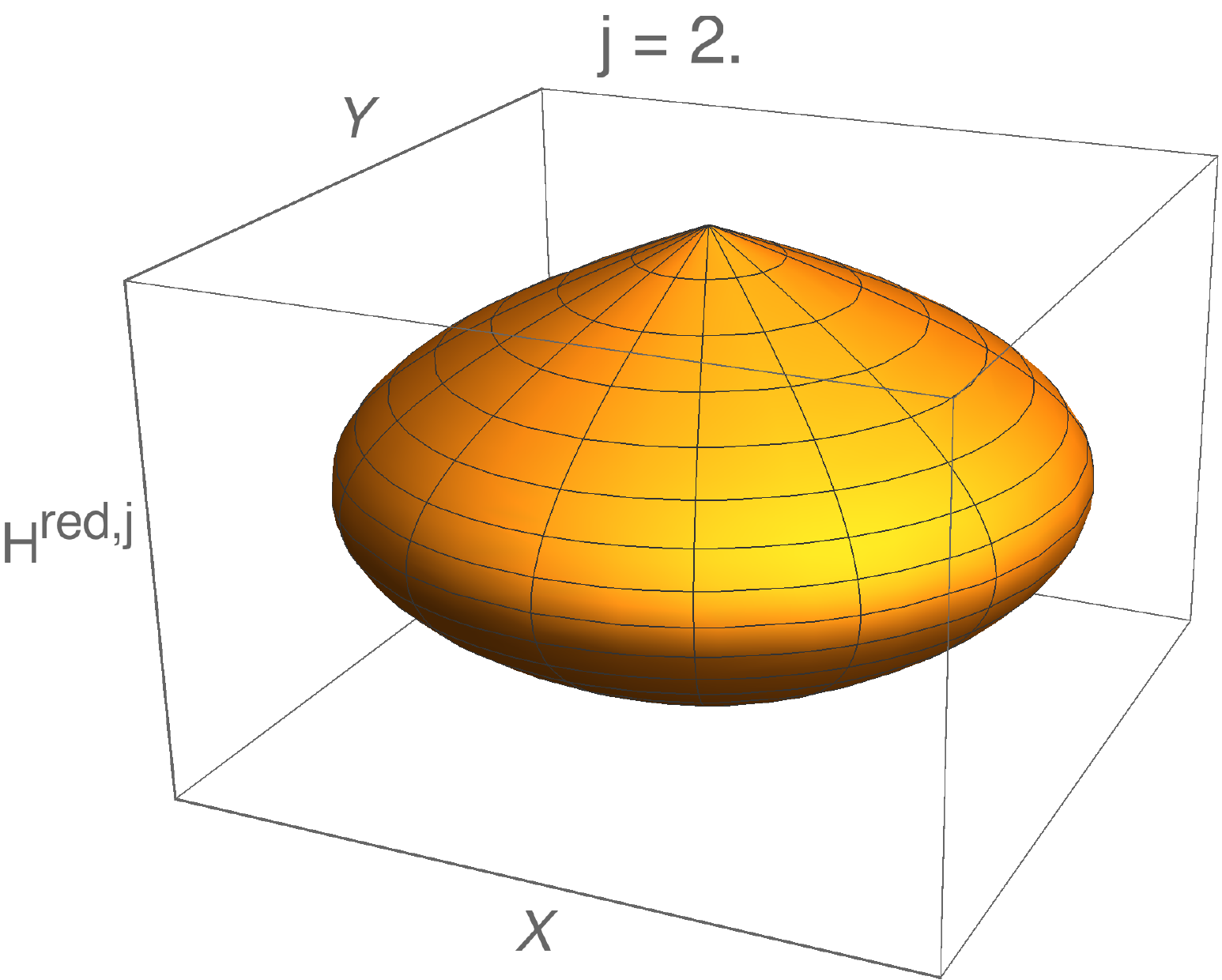}
\end{subfigure}

\vspace{5mm}
\begin{subfigure}{0.33\textwidth}
\centering
\includegraphics[width=.9\linewidth]{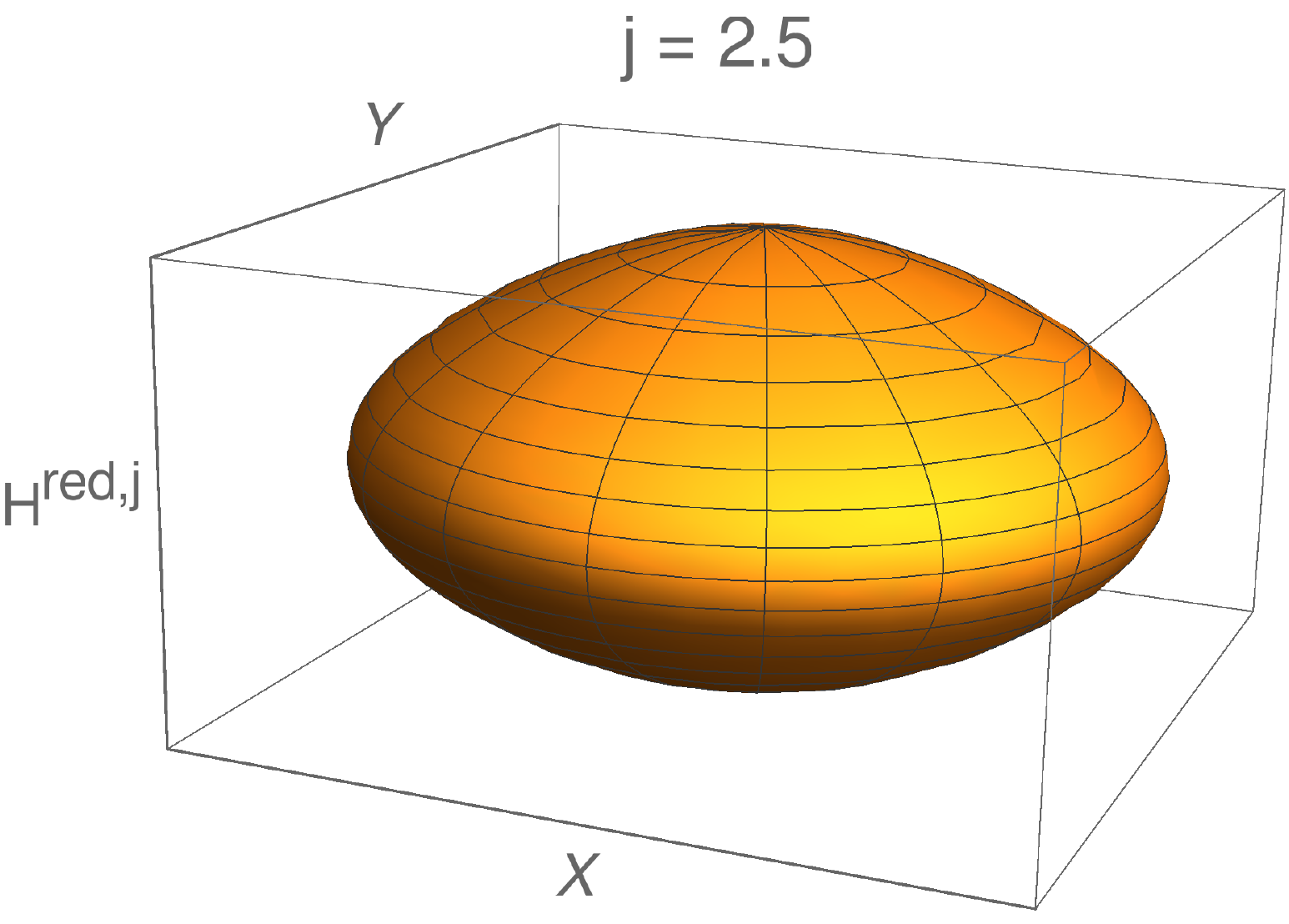}
\end{subfigure}%
\begin{subfigure}{0.33\textwidth}
\centering
\includegraphics[width=.9\linewidth]{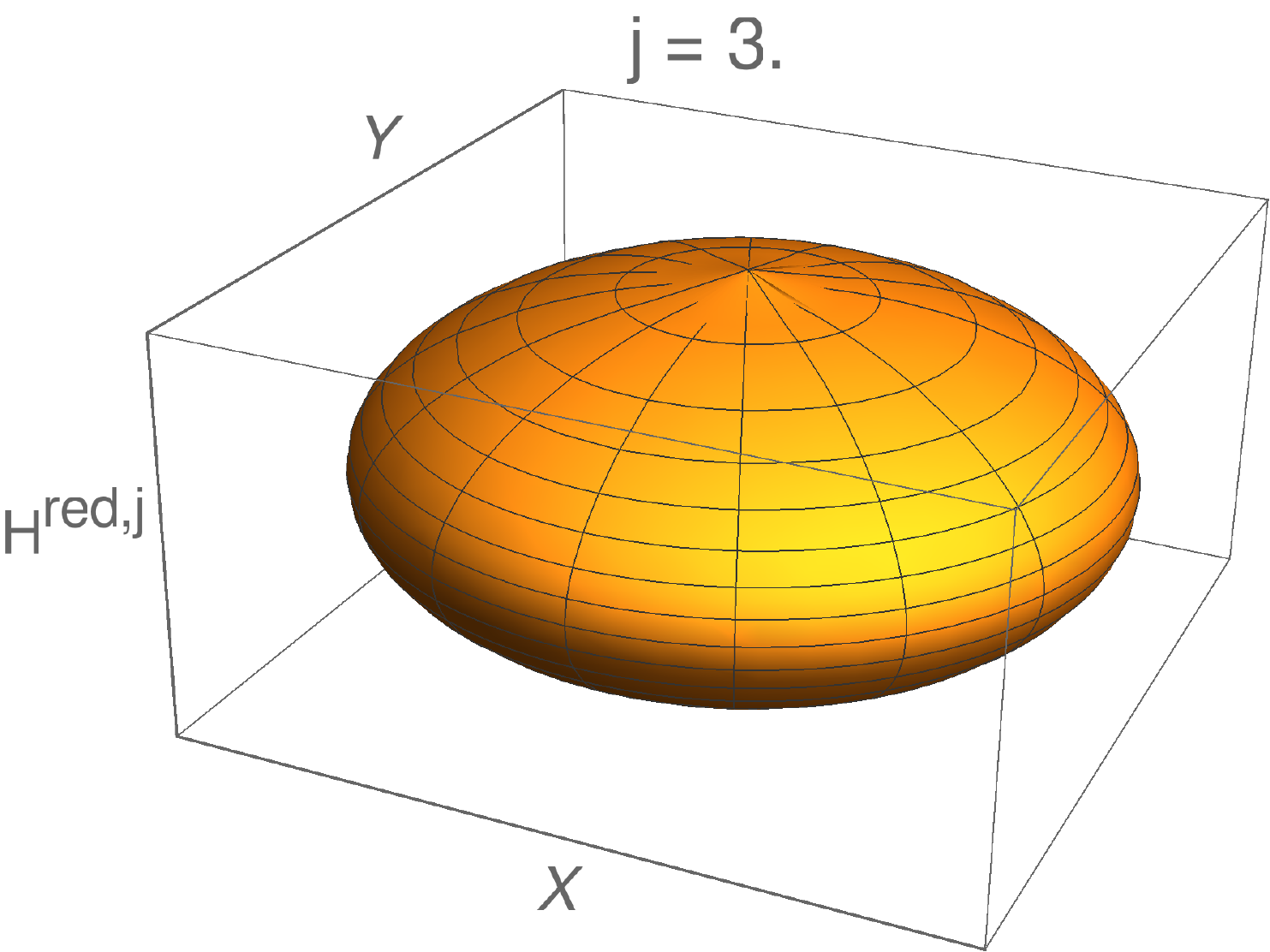}
\end{subfigure}

\caption{$M^{red, j}$ plotted for $j \in \{0,\ 0.5,\ 1, \ 1.5,\ 2,\ 2.5,\ 3 \}$ with {\it Mathematica}. For $j = 1$ and $j = 2$, the reduced space $M^{red, j}$ contains two singular points, i.e., it is homeomorphic to a 2-sphere with two `peaks', otherwise $M^{red, j}$ is diffeomorphic to a 2-sphere.
}
\label{Fig_Mj_red_reeks}
\end{figure}

Now consider the extremal values $j \in\{0, 3\}$ where, according to \refmjred, the reduced space is diffeomorphic to a 2-sphere. Thus there are two (elliptic) points $[p_{max}], [p_{min}] \in M^{red, j}$ where $H^{red, j}$ attains its maximum and minimum, i.e., $dH^{red, j}$ vanishes in $ [p_{max}]$ and $ [p_{min}]$. Since $j$ is extremal, we have $dJ(p) = 0$ for all $p \in J^{-1}(j)$. Altogether we conclude that $[p_{max}]$ and $[p_{min}] $ are mapped to elliptic-elliptic points of $F=(J,H)$ which are precisely the upper and lower vertex of the vertical edges over $j=0$ and $j=3$ in the octagon $\De$. The other points in $M^{red, j}$ correspond to elliptic-regular rank one points of $M$ and are mapped to the vertical segments of the momentum polytope between the vertices.


\subsection{Parametrisation of $M^{red, j}$}

\label{subsection param}

After these geometric considerations, we explain now how to obtain the coordinates used for displaying the reduced spaces $M^{red, j}$ in Figure \ref{Fig_Mj_red_reeks}. The idea is to `foliate' the $2$-dimensional space $M^{red, j}$ by the level sets of $H^{red, j}$ and consider $H^{red, j}$ as a height function of $M^{red, j}$. Then the possible values $h_j \in H^{red, j}(M^{red, j})$ lead to one coordinate. The other coordinate comes from parametrising the (usually $1$-dimensional) level sets of  $H^{red, j}$. Herefore we need a map that is welldefined on the reduced spaces $M^{red, j}$ and whose level sets are transverse to the ones of $H^{red, j}$.

\begin{lemma}
\label{XY}
Denote by $\Re$ and $\Im$ the real and imaginary parts of a complex valued function and consider $Z: \C^8 \to \C$ given by $Z(z_1, \ldots, z_8) :=\overline{z_2} \: \overline{z_3} \:\overline{z_4}\: z_6 \: z_7 \: z_8$. Then the functions
\begin{align*}
X: \C^8 \to \R,\quad X: = \Re (Z) \qquad \mbox{and} \qquad Y: \C^8 \to \R,\quad Y := \Im(Z)
\end{align*}
are $N$-invariant and thus descend as welldefined functions to $M$. Moreover, $X$ and $Y$ are also $J$-invariant and therefore descend to the reduced spaces $M^{red, j}$ for $j \in J(M)$.
\end{lemma}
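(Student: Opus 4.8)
The plan is to verify $N$-invariance of the complex function $Z$ itself by a single phase computation, to deduce $N$-invariance of $X$ and $Y$ as its real and imaginary parts, and then to obtain $J$-invariance essentially for free from the observation that $Z$ does not involve $z_1$.

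First I would apply the $N$-action from \eqref{Action of N} to the six factors of $Z$. Conjugation reverses phases, so $\overline{z_2}\mapsto e^{-it_2}\overline{z_2}$, $\overline{z_3}\mapsto e^{-it_3}\overline{z_3}$, and $\overline{z_4}\mapsto e^{-it_4}\overline{z_4}$, while $z_6\mapsto e^{it_5}z_6$, $z_7\mapsto e^{i(t_2+t_3+t_4-t_5-t_6)}z_7$, and $z_8\mapsto e^{it_6}z_8$. Multiplying these, the total phase acquired by $Z$ is
\[
-t_2-t_3-t_4+t_5+(t_2+t_3+t_4-t_5-t_6)+t_6=0,
\]
so that $Z$ is left unchanged by every element of $N$. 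This telescoping of exponents is the one genuine computation in the lemma; the indices in the definition of $Z$ were evidently chosen precisely so that the phases coming from the $z_5$- and $z_7$-slots of the $N$-action exactly cancel those coming from $z_2,z_3,z_4,z_6,z_8$.

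Since $X=\Re(Z)$ and $Y=\Im(Z)$ are $\R$-linear combinations of $Z$ and $\overline{Z}$, the $N$-invariance of $Z$ immediately yields $N$-invariance of $X$ and $Y$. As $N$ acts freely and properly on $\widetilde{L}^{-1}(0)$ (Section \ref{subsection_Delzant_construction}), any $N$-invariant smooth function on $\C^8$ restricts to $\widetilde{L}^{-1}(0)$ and descends to a well-defined smooth function on $M=\widetilde{L}^{-1}(0)/N$, which I would apply to $X$ and $Y$. For the last claim I would simply note that $Z$ depends only on $z_2,z_3,z_4,z_6,z_7,z_8$ and not on $z_1$, whereas the $\mbS^1$-action generated by $J$ rotates only the $z_1$-slot by \refthOctagon. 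Hence $Z$, and therefore $X$ and $Y$, are constant along the $J$-orbits, and by \refredHam\ they descend to well-defined functions on the reduced spaces $M^{red,j}=J^{-1}(j)/\mbS^1$. I expect no serious obstacle: the entire content is the phase cancellation above together with the absence of $z_1$ in $Z$, so the difficulty is essentially bookkeeping rather than conceptual.
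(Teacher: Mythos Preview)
Your proof is correct and follows essentially the same approach as the paper: a direct phase computation showing that the exponents contributed by the $N$-action on the six factors of $Z$ cancel, followed by the observation that the $J$-action only rotates $z_1$, which is absent from $Z$. The paper's argument is nearly identical, differing only in presentation.
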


\begin{proof} 
Recall the action of $N$ from \eqref{Action of N}. Let $(t_1, \ldots, t_6) \in \T^6 \simeq N$ and $(z_1, \ldots, z_8) \in \C^8$ and calculate
\begin{align*}
& Z(e^{it_1}z_1, e^{it_2}z_2, e^{it_3}z_3, e^{it_4}z_4, e^{i(t_1+t_2-t_4-t_5+t_6)}z_5, e^{it_5}z_6, e^{i(t_2+t_3+t_4-t_5-t_6)}z_7, e^{it_6}z_8) \\
&\quad = e^{i(-t_2-t_3-t_4+t_5+t_2+t_3+t_4-t_5-t_6+t_6)} (\overline{z_2} \: \overline{z_3} \:\overline{z_4}\: z_6 \: z_7 \: z_8) = \overline{z_2} \: \overline{z_3} \:\overline{z_4}\: z_6 \: z_7 \: z_8 = Z(z_1, \ldots, z_8)
\end{align*}
i.e., $Z$ is $N$-invariant. Now recall the $\mbS^1$-action induced by $J$ from \refthOctagon, let $t \in \mbS^1$ and $(z_1, \ldots, z_8) \in \C^8$, and compute
\begin{align*}
Z(e^{-it}z_1, z_2, \ldots, z_8) = \overline{z_2} \: \overline{z_3} \:\overline{z_4}\: z_6 \: z_7 \: z_8 = Z(z_1, \ldots, z_8)
\end{align*}
which shows the $J$-invariance of $Z$. Hence also $X=\Re(X)$ and $Y = \Im(Z)$ are $N$- and $J$-invariant.
\end{proof}

By means of the implicit function theorem, we would like to use $Z=(X, Y)$ to obtain coordinates on $M^{red, j}$ of the form $\left(u, H^{red, j}(u)\right)$ or $\left(u^{-1}(H^{red, j}), H^{red, j}\right)$. Thus we need to relate $Z=(X, Y)$ to $H$ and $J$.

\begin{lemma}
\label{XJH}
 Let $F=(J,H): (M, \om) \to \R^2$ be the toric system from \refthOctagon\ and $X$ and $Y$ the functions defined in \refXY. Then
 \begin{align*}
  X^2 + Y^2 = 2^6 H (H+J-1)(H-J+2)(-H-J+5)(3-H)(2-H+J).
 \end{align*}
\end{lemma}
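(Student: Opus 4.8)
The plan is to reduce the identity to the six manifold equations (A)--(F) from \eqref{manifold eqn}, exploiting that $X^2+Y^2$ is simply the squared modulus of the monomial $Z$. Since $X=\Re(Z)$ and $Y=\Im(Z)$ by \refXY, one has
$$
X^2+Y^2 \ = \ |Z|^2 \ = \ Z\,\overline{Z} \ = \ |z_2|^2\,|z_3|^2\,|z_4|^2\,|z_6|^2\,|z_7|^2\,|z_8|^2,
$$
because $Z=\overline{z_2}\,\overline{z_3}\,\overline{z_4}\,z_6\,z_7\,z_8$ is a product of six coordinates (and conjugates), so multiplying by $\overline{Z}$ pairs each factor with its conjugate. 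This reduces the whole problem to writing the six moduli $|z_2|^2,|z_3|^2,|z_4|^2,|z_6|^2,|z_7|^2,|z_8|^2$ as functions of $J$ and $H$.

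Next I would use $J=\tfrac12|z_1|^2$ and $H=\tfrac12|z_3|^2$ from \refthOctagon, that is $|z_1|^2=2J$ and $|z_3|^2=2H$, and feed these into the manifold equations, which then solve in a cascade: equation (A) gives $|z_5|^2=6-2J$ and (C) gives $|z_7|^2=6-2H=2(3-H)$; then (B), (D), (E), (F) each determine one further modulus, yielding $|z_2|^2=2(H+J-1)$, $|z_4|^2=2(H-J+2)$, $|z_6|^2=2(-H-J+5)$, and $|z_8|^2=2(2-H+J)$. Substituting these six expressions into the product above and extracting a factor of $2$ from each of the six linear factors produces exactly the claimed right-hand side, with overall prefactor $2^6$.

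There is no genuine obstacle here beyond careful bookkeeping; the two ideas doing all the work are that $Z$ is a monomial, so $|Z|^2$ factors into the individual $|z_k|^2$, and that the six equations (A)--(F) are precisely enough to express all six involved moduli as affine functions of $J$ and $H$ once $|z_1|^2$ and $|z_3|^2$ are eliminated. As a consistency check one may note that each linear factor is nonnegative on $M$, being a modulus squared divided by $2$, which recovers the inequalities $H\ge 1-J$, $H\le 2+J$, $H\ge -2+J$, $H\le 5-J$ cutting out $\De$ in the proof of \refthOctagon.
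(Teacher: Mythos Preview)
Your proof is correct and follows essentially the same approach as the paper: both compute $X^2+Y^2=|Z|^2=\prod_{k\in\{2,3,4,6,7,8\}}|z_k|^2$, then use $|z_1|^2=2J$, $|z_3|^2=2H$ together with the manifold equations (A)--(F) to express each of the six moduli as $2$ times a linear factor in $J$ and $H$, and multiply. Your added consistency check relating the nonnegativity of the factors to the edge inequalities of $\De$ is a nice touch not present in the paper's version.
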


\begin{proof}
Use the definition of $J$ and $H$ to find $\abs{ z_1}^2 = 2J$ and $\abs{ z_3}^2 = 2H$ and use the manifold equations to get furthermore
\begin{align*}
\vert z_2 \vert^2 & =-2 + 2H + 2J , \quad & \vert z_5 \vert^2 & = 6-2J ,    \quad  & \vert z_7 \vert^2 & = 6-2H ,  \\
\vert z_4 \vert^2 & = 4 + 2H - 2J ,   \quad & \vert z_6 \vert^2 & = 10 - 2H - 2J,    \quad & \vert z_8 \vert^2 & = 4 - 2H + 2J. 
\end{align*}
We now calculate 
 \begin{align*}
  X^2 + Y^2 & = \abs{Z}^2 =  \vert \overline{z_2} \: \overline{z_3} \:\overline{z_4}\: z_6 \: z_7 \: z_8 \vert^2 
= \vert z_2 \vert^2 \vert z_3 \vert^2 \vert z_4 \vert^2 \vert z_6 \vert^2 \vert z_7 \vert^2 \vert z_8 \vert^2 \\
& =  2^6 H (H+J-1)(H-J+2)(-H-J+5)(3-H)(2-H+J).
 \end{align*}
\end{proof}

Thus $Z=(X, Y)$ is related to $J$ and $H$ via a rotation invariant formula. Therefore we may set $Y=0$, solve for $X$ and obtain $M^{red, j}$ parametrised as a surface of revolution in $\R^3$ with coordinates $(X, Y, H^{red, j})$.

\begin{corollary}
\label{paramMred}
On the section $Y=0$, consider $X$ from \refXY\ by means of \refXJH\ as function 
$$
X(J, H) = 8 \sqrt{ H (H+J-1)(H-J+2)(-H-J+5)(3-H)(2-H+J)}.
$$
Then, for fixed $j \in J(M)$, the reduced space $M^{red, j}$ can be parametrised by
 \begin{align*}
 & \R\slash 2 \pi \Z \ \times\  H^{red, j}(M^{red, j}) \ \subseteq\ \R^2 \to \R^3, \\
 & (t, h_j) \mapsto \left( X(j, h_j) \cos(t), \  X(j, h_j) \sin(t) , \ h_j \right)
 \end{align*}
 where 
 $X(j, h_j) = 8 \sqrt{ h_j (h_j + j -1) (h_j -j + 2) (-h_j -j +5)(3-h_j)(2 - h_j +j)}$.
\end{corollary}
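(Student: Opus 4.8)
The plan is to show that the map $\Phi := (X, Y, H^{red,j}) : M^{red,j} \to \R^3$ is a homeomorphism onto the surface of revolution described by the asserted parametrisation; its components are well-defined on $M^{red,j}$ by \refXY\ and \refredHam. First I would record that $M^{red,j} = J^{-1}(j)/\mbS^1$ is compact, being the quotient of the compact set $J^{-1}(j)$, and by \refmjred\ is (homeomorphic to) a $2$-sphere. The target surface $S_j$ will likewise be a topological sphere, so it suffices to prove that $\Phi$ is a continuous bijection onto $S_j$ and then upgrade it to a homeomorphism via the standard compact-to-Hausdorff argument. The parametrisation in the statement is then just the composite of the explicit map onto $S_j$ with $\Phi^{-1}$.

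That the image of $\Phi$ lands on $S_j$ follows immediately from \refXJH: at any point with $H^{red,j} = h_j$ one has $X^2 + Y^2 = X(j,h_j)^2$ with $X(j,h_j)$ the stated square root, so the image lies in $\{(X(j,h)\cos t,\, X(j,h)\sin t,\, h)\}$. Here I would also pin down the range of $H^{red,j}$ and the profile curve: the six linear factors of the polynomial in \refXJH\ are, up to a factor of $2$, exactly the squared moduli $\vert z_k\vert^2$ for $k\in\{2,3,4,6,7,8\}$ computed in that proof. Hence $X(j,h)$ vanishes precisely where one of these coordinates vanishes, which by \refparamU\ happens only at the two extreme values of $H$ on the fibre, and $X(j,h)>0$ strictly in between. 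This identifies the interval $[h_-,h_+]$ of heights, the two endpoints being the poles of $S_j$.

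The crux is injectivity. Fixing $J=j$ and $H^{red,j}=h_j$ determines all moduli $\vert z_k\vert$, so two points of $M^{red,j}$ in the same level set differ only in their phases, taken modulo the combined action of $N\simeq\T^6$ from \eqref{Action of N} and the $\mbS^1$-action of $J$ from \refthOctagon. The key computation is that on the phase torus this residual $\T^7$-action has $\arg(Z)$ as, up to scale, its unique invariant: writing the action on the phase vector $(\arg z_1,\dots,\arg z_8)$ through its $8\times 7$ weight matrix, the space of invariant characters is the kernel of the transpose, which is one-dimensional and spanned exactly by the weight $(0,-1,-1,-1,0,1,1,1)$ defining $\arg(Z)=-\arg z_2-\arg z_3-\arg z_4+\arg z_6+\arg z_7+\arg z_8$. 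Thus at interior heights (where all relevant $z_k\neq 0$) the fibre over $(j,h_j)$ is a single latitude circle bijective with $\arg(Z)\in\R/2\pi\Z$, which $\Phi$ carries onto the latitude circle of $S_j$; at the two poles $Z=0$ and the fibre collapses to one point. Surjectivity onto $S_j$ is then the statement that every value $\arg(Z)=t$ is attained, which holds because the $\T^7$-action sweeps $\arg(Z)$ through all of $\R/2\pi\Z$ at interior heights.

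I expect the main obstacle to be this injectivity step, namely verifying that $\arg(Z)$ generates the full ring of invariants of the residual torus action on the phases, a weight-lattice rank computation, together with the careful bookkeeping at the two poles where some $z_k$ vanishes and $\arg(Z)$ degenerates. A secondary nuisance, entirely analogous to the case analysis in the proof of \refthOctagon, is checking for each fixed $j$ which of the six factors is actually active at the two endpoints, so that the interval $[h_-,h_+]$ and the corresponding poles are correctly matched to the vanishing of $X(j,h)$.
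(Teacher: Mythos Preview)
Your proposal is considerably more detailed than the paper's own treatment. In the paper, \refparamMred\ is stated as an immediate corollary of \refXJH\ with essentially no proof: the authors simply observe that the relation $X^2+Y^2 = 2^6 H(H+J-1)\cdots$ is rotation-invariant in $(X,Y)$, set $Y=0$ to solve for $X$, and declare that $M^{red,j}$ is therefore a surface of revolution in the coordinates $(X,Y,H^{red,j})$. No injectivity or surjectivity argument is given. Your approach via the invariant theory of the residual $\T^7$-action on the phase torus is the right way to make this rigorous, and the rank computation you outline (showing that the weight $(0,-1,-1,-1,0,1,1,1)$ of $\arg(Z)$ spans the kernel of the transposed weight matrix) does establish that $\arg(Z)$ is the unique phase invariant, hence that the level sets of $(J,H)$ in $M^{red,j}$ are circles parametrised by $\arg(Z)$.

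There is one slip in your surjectivity sentence: you write that ``the $\T^7$-action sweeps $\arg(Z)$ through all of $\R/2\pi\Z$,'' but $\arg(Z)$ is precisely the $\T^7$-\emph{invariant}, so the $\T^7$-action fixes it. What you want to say is that, at an interior height where all relevant $z_k\neq 0$, the set of points in $\Lti^{-1}(0)\cap J^{-1}(j)$ with the prescribed moduli is a full $\T^8$ of phases; quotienting by the $\T^7$-subaction leaves a residual circle on which $\arg(Z)$ takes every value in $\R/2\pi\Z$. With this correction the argument goes through, and your compact-to-Hausdorff step then upgrades the continuous bijection $\Phi$ to a homeomorphism onto $S_j$.
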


The function $h_j \mapsto \pm X(j, h_j)$ is plotted for various values of $j$ in Figure \ref{Figure_Mjred_2D}. Rotating the image of $h_j \mapsto X(j, h_j)$ around the horizontal axis gives the space $M^{red, j}$ that is plotted in Figure \ref{Fig_Mj_red_reeks} in the (rotated) coordinate system $(X, Y, H^{red, j})$.

\begin{figure}[h]
\raggedright 
\begin{subfigure}{0.33\textwidth}
\centering
\includegraphics[width=.9\linewidth]{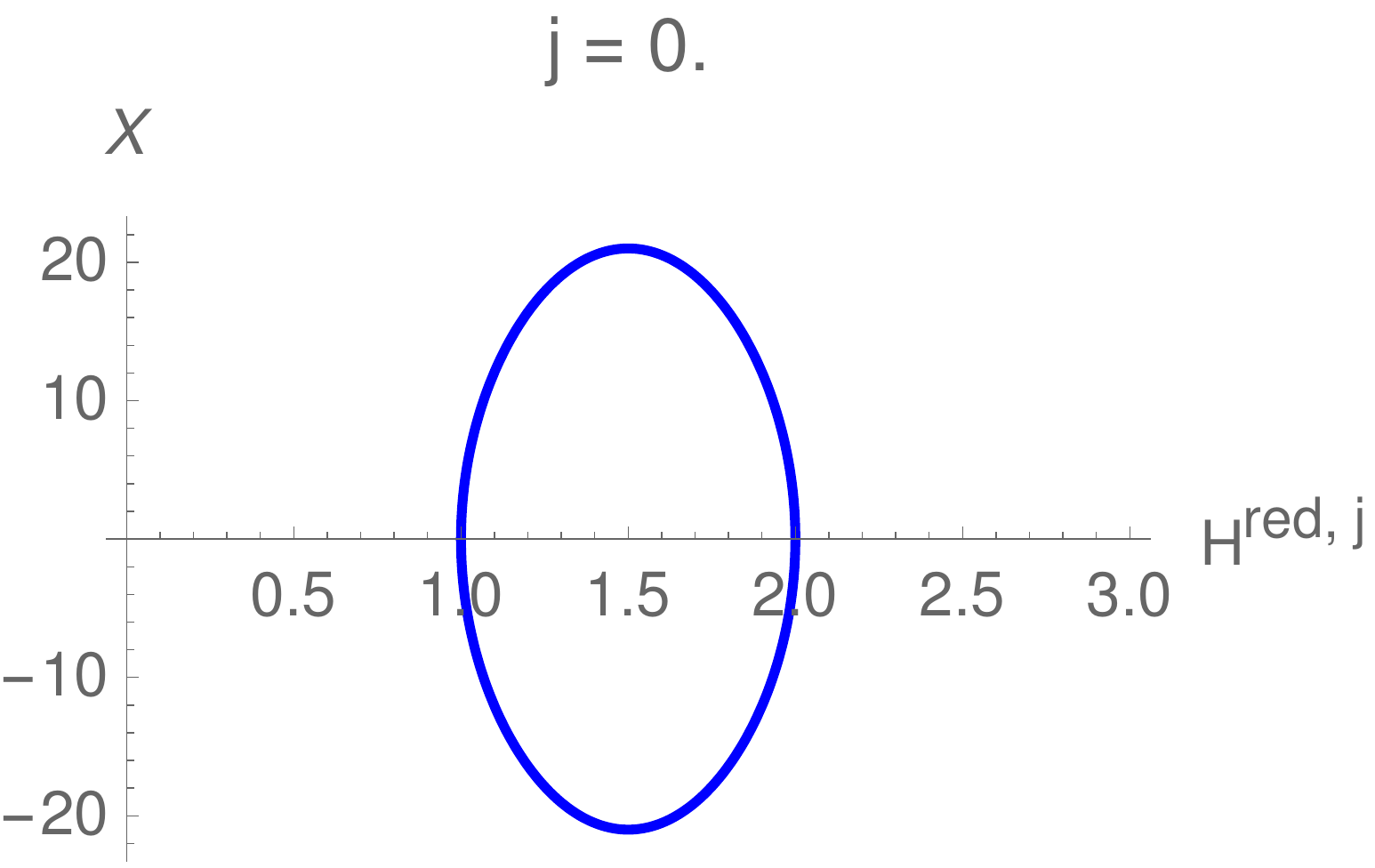}
\end{subfigure}%
\begin{subfigure}{0.33\textwidth}
\centering
\includegraphics[width=.9\linewidth]{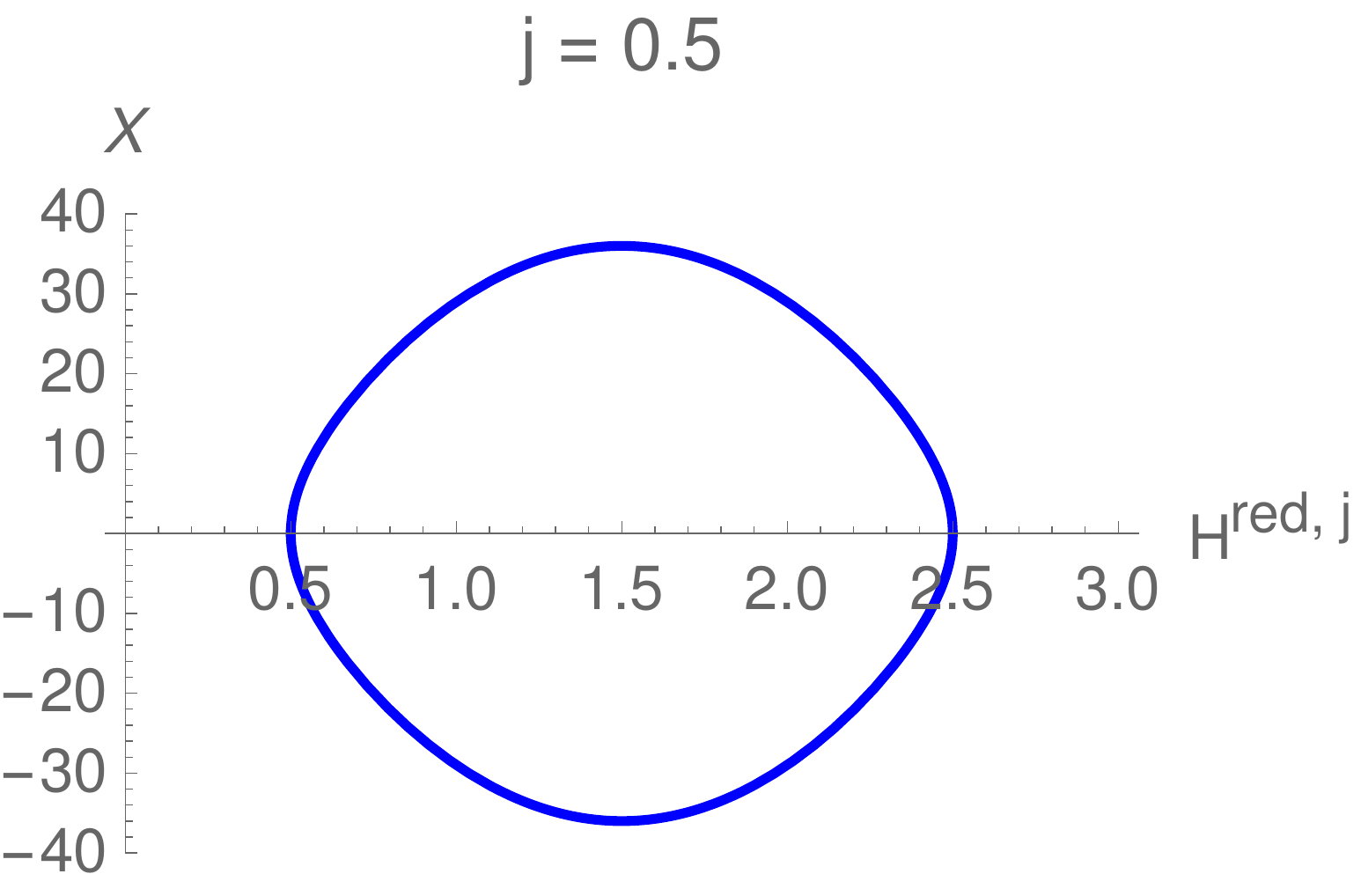}
\end{subfigure}

\vspace{5mm}
\begin{subfigure}{0.33\textwidth}
\centering
\includegraphics[width=.9\linewidth]{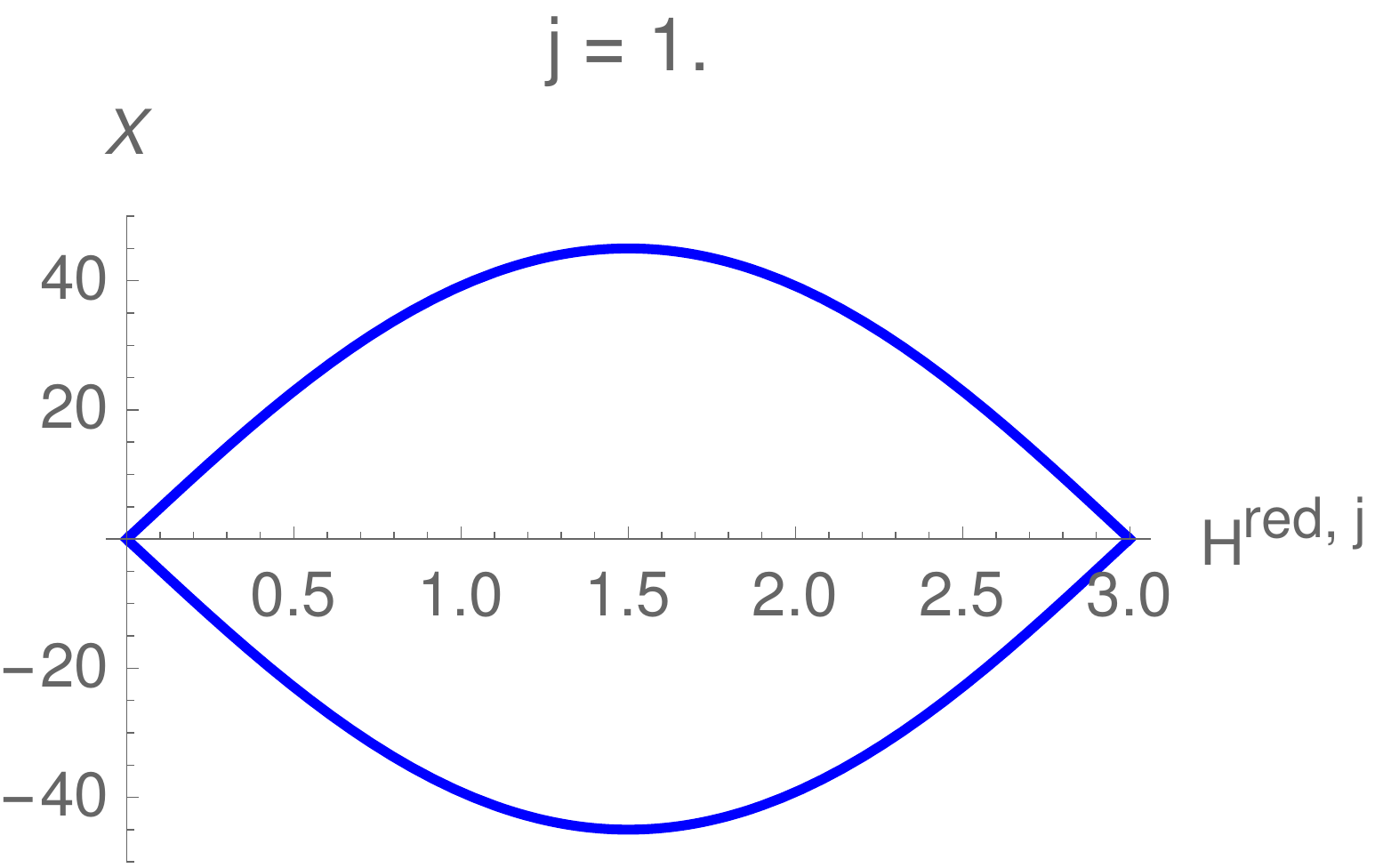}
\end{subfigure}%
\begin{subfigure}{0.33\textwidth}
\centering
\includegraphics[width=.9\linewidth]{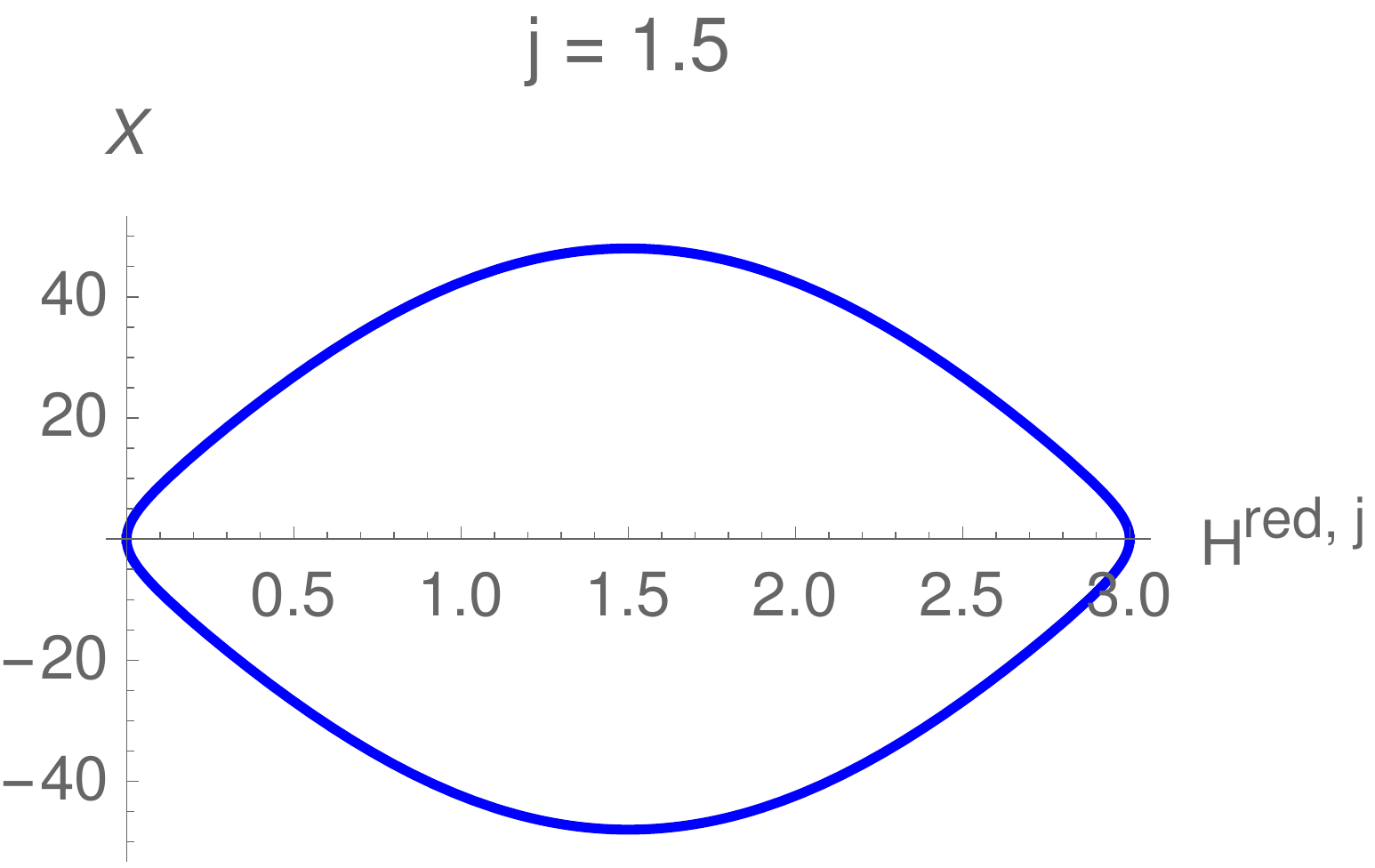}
\end{subfigure}%
\begin{subfigure}{0.33\textwidth}
\centering
\includegraphics[width=.9\linewidth]{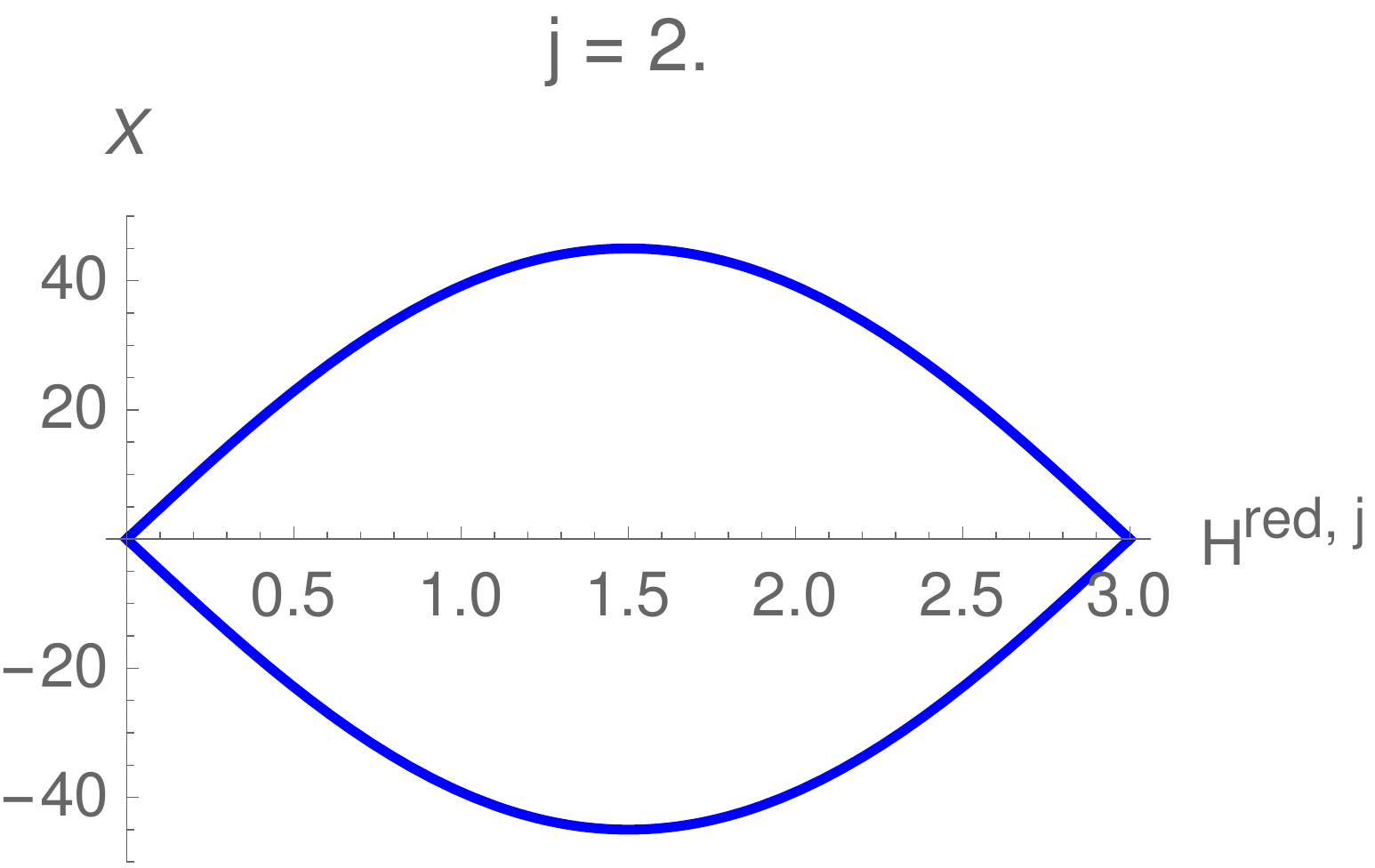}
\end{subfigure}

\vspace{5mm}
\begin{subfigure}{0.33\textwidth}
\centering
\includegraphics[width=.9\linewidth]{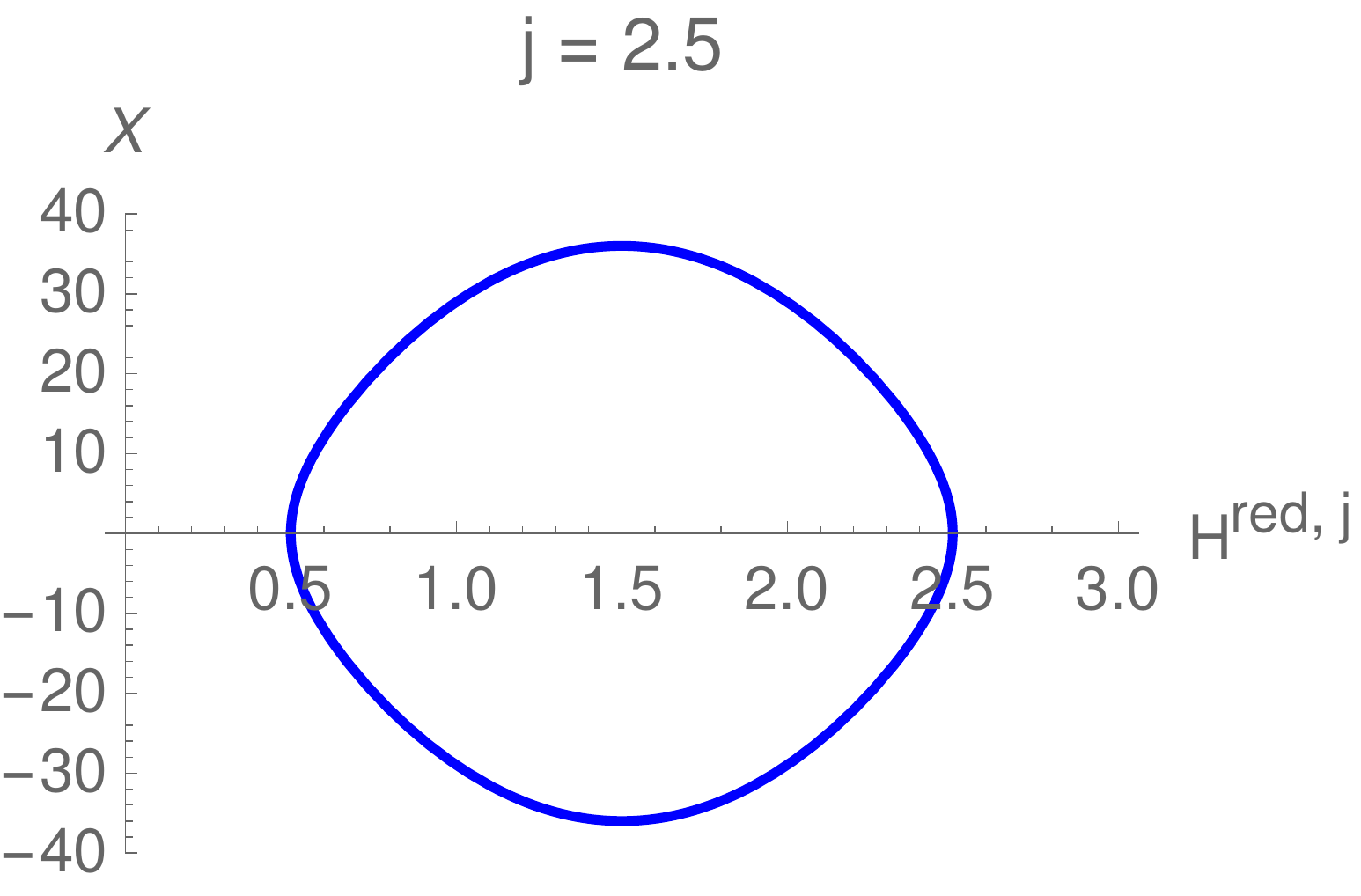}
\end{subfigure}%
\begin{subfigure}{0.33\textwidth}
\centering
\includegraphics[width=.9\linewidth]{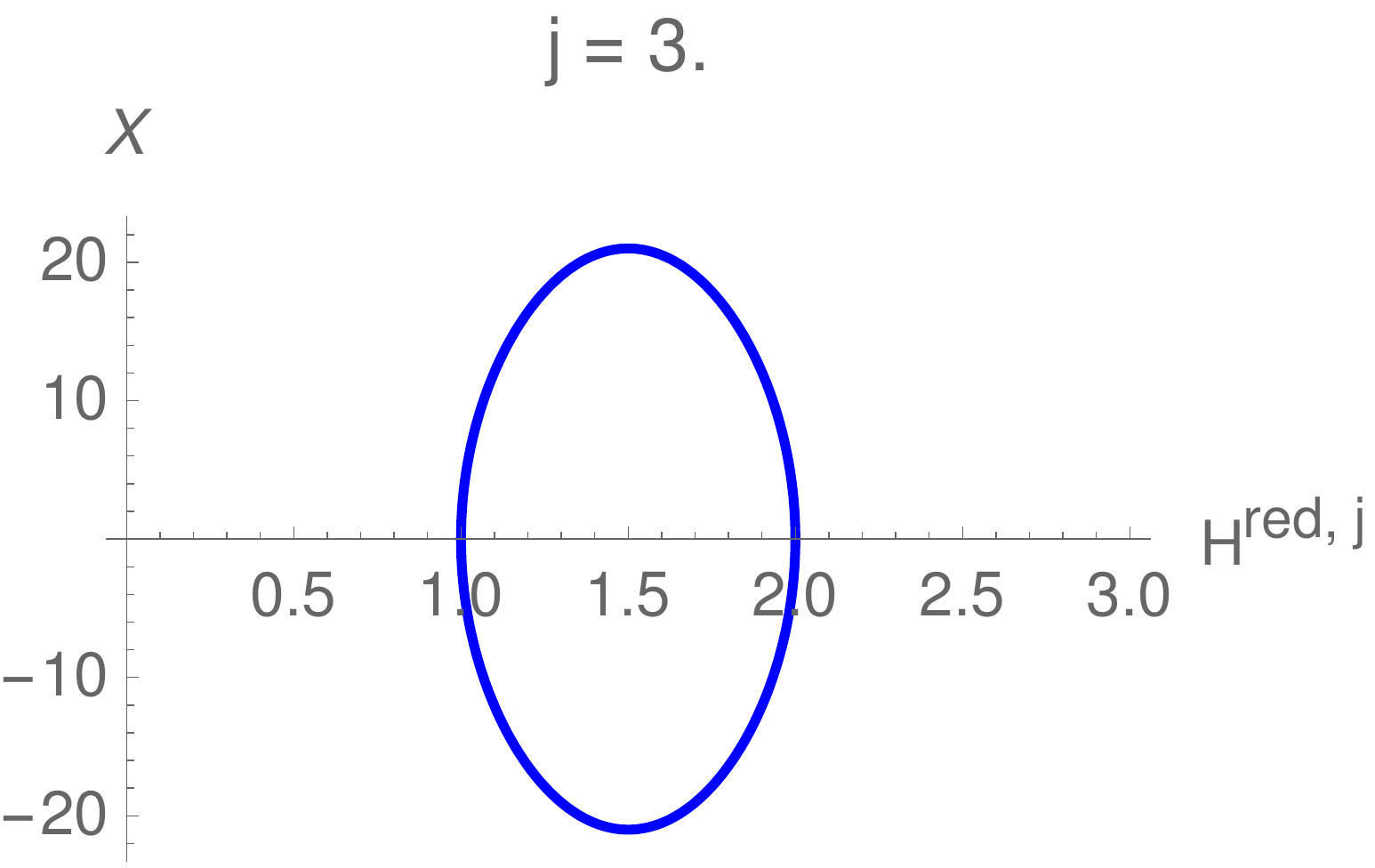}
\end{subfigure}

\caption{The section $Y = 0$ of the reduced space $M^{red, j}$ plotted with {\em Mathematica} for seven steps between $j=0$ and $j=3$, parametrised via $h_j \mapsto \pm X(j, h_j)$.
}
\label{Figure_Mjred_2D}
\end{figure}


\subsection{The family $\mathbf{ F_t = (J, H_t)}$ of integrable systems}

We know from \refthOctagon\ that $(M, \om, F=(J,H))$ is toric. The idea is now to obtain a family $H_t$ by interpolation between $H$ and $X$, i.e., we consider a new `height function' that changes from $H$ to $X$ when varying the parameter.

\begin{proposition}
\label{HtFt}
 Let $t \in \R$ and $\ga \in \R^{\neq 0}$ and define on the symplectic manifold $(M, \om)$ constructed in Section \ref{subsection_Delzant_construction} the map
 $$
 H_t: (M, \om) \to \R, \quad H_t: = (1-2t)H + t\gamma X.
 $$
 Then $ F_t:=(J, H_t) : (M, \om) \to \R^2$
 is a completely integrable system.
\end{proposition}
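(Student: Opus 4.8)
The plan is to verify the two defining properties of a completely integrable system for $F_t = (J, H_t)$: Poisson commutativity $\{J, H_t\} = 0$, and linear independence of $\mcX^{J}$ and $\mcX^{H_t}$ at almost every point of $M$.

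For the first property I would use bilinearity of the Poisson bracket to write
\[
\{J, H_t\} = (1-2t)\,\{J, H\} + t\ga\,\{J, X\}.
\]
The term $\{J, H\}$ vanishes because $F = (J,H)$ is the toric momentum map of \refthOctagon, and $\{J, X\}$ vanishes because $X$ is $J$-invariant by \refXY: indeed, in the paper's convention $\{J, X\} = dX(\mcX^{J})$ is exactly the derivative of $X$ along the flow of $\mcX^{J}$, so $J$-invariance is equivalent to Poisson-commuting with $J$. Alternatively — and this is presumably what the fully explicit proof carries out, since the paper announces that $\{J,H\}=0$ is reproven here in coordinates — one verifies $\{J,H\}=0$ and $\{J,X\}=0$ directly in the charts $(U_\nu, \psi_\nu)$, where $\om_\nu = \om_{st}$ so that the bracket is the standard one, by inserting the coordinate expressions for $J$, $H$, and $X$ (recalling that in a given chart only some $z_k$ are independent, the rest being the square-root functions coming from the manifold equations). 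This is routine but chart-dependent bookkeeping.

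For the second property I would first note that, by nondegeneracy of $\om$, the vectors $\mcX^{J}(p)$ and $\mcX^{H_t}(p)$ are linearly dependent precisely when $dJ(p)$ and $dH_t(p)$ are, so it suffices to show $dJ \wedge dH_t \not\equiv 0$ on $M$. For this it is enough to exhibit one fibre $J^{-1}(j)$ on which $H_t$ is non-constant. Taking an interior regular value $j$, the $J$-invariant functions $H$ and $X$ descend to the reduced sphere $M^{red, j}$ (see \refredHam), and by \refXJH\ together with the surface-of-revolution description in \refparamMred, the reduced functions $H^{red, j}$ and $X^{red, j}$ are functionally independent. Hence $H_t^{red, j} = (1-2t)H^{red, j} + t\ga X^{red, j}$ is non-constant for every $t$: if $1-2t \neq 0$ it cannot be constant without $H^{red, j}$ being an affine function of $X^{red, j}$, contradicting functional independence, while at $t = \frac12$ it reduces to $\frac{\ga}{2}\,X^{red, j}$, which is non-constant since $\ga \neq 0$. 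Thus $H_t$ is not a function of $J$ alone and $dJ \wedge dH_t \not\equiv 0$. Since $J$ and $H_t$ are real-analytic on the dense, full-measure open set where the chart square-root expressions are strictly positive, and $M$ is connected, the singular set — the common zero locus of the $2 \times 2$ minors of the Jacobian of $F_t$ — is a proper real-analytic subset and therefore has measure zero, so almost every point is regular.

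I expect the Poisson-commutativity step to be conceptually immediate (if tedious chart-by-chart), so the genuine content lies in the independence step. The one point requiring care is uniformity in $t$: at $t = \frac12$ the $H$-contribution to $H_t$ disappears, and the argument must fall back on $X$ being non-constant on the fibres of $J$. This is precisely the functional independence of $X$ and $H$ on the reduced spaces — the property for which $Z = (X,Y)$ was introduced in \refXY\ — so identifying and using that independence is the key to closing the argument cleanly.
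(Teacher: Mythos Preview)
Your proposal is correct, and for Poisson commutativity it matches the paper's approach: the paper gives the same abstract argument via bilinearity plus $\{J,H\}=0$ from \refthOctagon, and then carries out the explicit chart computation in $(U_1,\psi_1)$ that you anticipate. Your justification of $\{J,X\}=0$ via the $J$-invariance of $X$ from \refXY\ is in fact cleaner than the paper's abstract line, which says ``$X$ can be seen as function $X(J,H)$'' --- that identity only holds on the slice $Y=0$ (cf.\ \refparamMred), not globally, so the paper's abstract reasoning is loose and it is really the coordinate computation that does the work there.

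For the independence step your route genuinely differs from the paper's. The paper stays in the chart $(U_1,\psi_1)$: it writes $\mcX^{J\circ\psi_1^{-1}}=(y_1,-x_1,0,0)^T$ and observes that $\mcX^{J}$ and $\mcX^{H_t}$ are linearly dependent only when $\partial_{x_2}(X\circ\psi_1^{-1})=\partial_{y_2}(X\circ\psi_1^{-1})=0$, which forces $y_2=0$ together with one further equation on $x_2$, hence a set of dimension at most two inside the four-dimensional chart. Your argument instead passes to the reduced sphere $M^{red,j}$, uses the functional independence of $H^{red,j}$ and $X^{red,j}$ (visible from \refparamMred, since $H^{red,j}=h_j$ while $X^{red,j}=X(j,h_j)\cos t$) to conclude $H_t^{red,j}$ is non-constant for every $t$, and then invokes real-analyticity to get a measure-zero singular set. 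Both are valid; your approach is more conceptual and handles all $t$ uniformly without any chart algebra, while the paper's explicit computation has the side benefit of producing the coordinate formulas \eqref{HinCoord} and \eqref{XinCoord} for $H\circ\psi_1^{-1}$ and $X\circ\psi_1^{-1}$, which are reused verbatim in the later analysis of fixed points (Section~\ref{section posRankZero}) and their types (Section~\ref{section typeRankZero}).
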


\begin{proof}
There are two ways to show that $J$ and $H_t$ Poisson commute w.r.t.\ the Poisson bracket $\{ \cdot, \cdot \}$ induced by $\om$. Let us first argue abstractly:

Since $F=(J,H) : (M, \om) \to\R^2$ is a toric momentum map according to \refthOctagon, we have in particular $\{J, H\}=0$. Moreover, $X$ can be seen as function $X(J,H)$ depending solely on the Poisson commuting functions $J$ and $H$ and some constants. Thus $\{X(J,H), J\} = 0 = \{ X(J,H), H\}$ such that linearity of the Poisson bracket implies 
$$
0 = (1-2t) \{J, H\} + t \ga \{J, X\} = \{J,  (1-2t)H + t\gamma X\} = \{J, H_t\}.
$$
Later on in this paper, we will need (parts of) the explicit calculations in local coordinates of $\{J, H_t\}=0$, so we add here here the proof in local coordinates. We only consider the chart $(U_1, \psi_1)$ since the cases $2 \leq \nu \leq 8$ go analogously. We find
$$
(J \circ \psi_1^{-1})(x_1, y_1, x_2, y_2) = \frac{1}{2} \left(x_1^2 + y_1^2 \right) 
\quad \mbox{and} \quad 
d(J \circ \psi_1^{-1}) = (x_1,y_2, 0, 0)
$$
and, using $x_3 = \sqrt{2 - \abs{z_1}^2 + \abs{z_2}^2} $ from \eqref{sixVarEq} and $y_3 =0$, we get
\begin{align}
\label{HinCoord}
(H \circ \psi_1^{-1})(x_1, y_1, x_2, y_2) & = \frac{1}{2} \left(x_3^2 + y_3^2 \right) = \frac{1}{2} \left( 2- x_1^2 - y_1^2 + x_2^2 + y_2^2 \right),  \\ \notag
d(H \circ \psi_1^{-1}) & = (-x_1, -y_1, x_2, y_2).
\end{align}
Recall that the symplectic form $\om_1$ on $\psi_1(U_1)=V_1$ is the standard symplectic form $\om_{st}$ on $\R^4$. We calculate
\begin{align*}
& \mcX^{J \circ \psi_1^{-1}}(x_1, y_1, x_2, y_2) = (y_1, -x_1, 0, 0)^T , \\
& \mcX^{H \circ \psi_1^{-1}} (x_1, y_1, x_2, y_2)= (-y_1, x_1, y_2, -x_2)^T, \\
& \{J \circ \psi_1^{-1} , H\circ \psi_1^{-1} \} = - \om_1 \left(\mcX^{J \circ \psi_1^{-1}},\mcX^{H \circ \psi_1^{-1}} \right) = - d(J \circ \psi_1^{-1})\mcX^{H \circ \psi_1^{-1}} = 
\bigl(x_1, \ y_1, \ 0, \ 0 \bigr) 
\begin{pmatrix}
-y_1 \\ x_1 \\ y_2 \\ -x_1
\end{pmatrix} = 0.
\end{align*}
Thus $J$ and $H$ Poisson commute. Moreover, using the definition of $\psi^{-1}_1$, we find
\begin{align*}
Z\circ \psi^{-1}_1 (x_1, y_1, x_2, y_2) &  = Z(x_1, y_1, x_2, y_2, x_3, 0, x_4, 0, x_5, 0, x_6, 0, x_7, 0, x_8, 0) \\
& = ( x_2 - i y_2) x_3 x_4 x_6 x_7 x_8 = x_2 x_3 x_4 x_6 x_7 x_8 + i x_3 x_4 x_6 x_7 x_8 y_2 
\end{align*}
and by using the relations defining $x_3, \dots, x_8$ we get
\begin{align}
\label{XinCoord}
& (X \circ \psi^{-1}_1)(x_1, y_1, x_2, y_2) = x_2 x_3 x_4 x_6 x_7 x_8  \\ \notag
& \quad = x_2 \sqrt{ \left(2-(x_1^2 + y_1^2) + (x_2^2 + y_2 ^2)\right) \left( 6- 2( x_1^2 + y_1^2 ) + ( x_2^2 + y_2 ^2 )\right) \left( 8 - ( x_2^2 + y_2 ^2) \right) } \\ \notag
& \quad \quad  \quad \sqrt{\left( 4 + (x_1^2 + y_1^2  ) - ( x_2^2 + y_2 ^2 ) \right) \left(2 + 2 ( x_1^2 + y_1^2 ) -( x_2^2 + y_2 ^2 )\right)} 
\end{align}
which can be seen, in the first two variables, as a function depending on $\xi_1=x_1^2$ and $\eta_1=y_1^2$ and that is symmetric in $\xi_1$ and $\eta_1$. Thus the partial derivatives of $X \circ \psi^{-1}_1$ in $(x_1, y_1, x_2, y_2)$ w.r.t.\ $x_1$ and $y_1$ coincide except for the factor $x_1$ resp.\ $y_1$, i.e., we obtain
\begin{align*}
 d(X \circ \psi^{-1}_1) & = \bigl(x_1 f,\  y_1 f,\  g,\  h \bigr)
\end{align*}
for suitable functions $f$, $g$, $h: V_1 \to \R$ with coordinates $(x_1, y_1, x_2, y_2)$. This yields at the point $(x_1, y_1, x_2, y_2)$
\begin{equation*}
 \mcX^{X \circ \psi^{-1}_1} = \bigl(y_1 f,\ - x_1 f,\  h,\  -g \bigr)^T
\end{equation*}
and thus
\begin{equation*}
 \{ J \circ \psi^{-1}_1, X \circ \psi^{-1}_1\} = - d(J \circ \psi^{-1}_1) \left(\mcX^{X \circ \psi^{-1}_1} \right) 
 = \bigl( x_1, \ y_1, \ 0, \ 0 \bigr)  \begin{pmatrix} y_1 f \\  - x_1 f \\  h \\  -g \end{pmatrix} =0.
\end{equation*}
Since $H_t=(1-2t) H + t \ga X$, linearity of the Poisson bracket yields $\{J \circ \psi^{-1}_1,H_t \circ \psi^{-1}_1\} = 0$ for all $t$ and all $\ga$.

Now we show that $\mcX^{J}$ and $\mcX^{H_t}$ are almost everywhere linearly independent. For $t=0$, we have $H_0=H$ and thus the claim follows from $F=(J, H)$ being an integrable system. For $t \neq 0$, consider $\mcX^{H_t \circ \psi^{-1}_1} = (1-2t) \mcX^{H \circ \psi^{-1}_1} + t \ga \mcX^{X \circ \psi^{-1}_1}$ and note that $\mcX^{J \circ \psi^{-1}_1}$ and $\mcX^{X \circ \psi^{-1}_1}$ are linearly dependent if and only if $\del_{x_2}(X \circ \psi^{-1}_1) = 0 = \del_{y_2}(X \circ \psi^{-1}_1)$.
This is only possible if $y_2 = 0$ and $x_2$ satisfies an equation, so these $(x_1, y_1, x_2, y_2)$ live in a two-dimensional subset and hence, the linear independence holds almost everywhere. 
\end{proof}


\subsection{Intuition on focus-focus points of the family $\mathbf{F_t=(J, H_t)}$}

On the reduced space, the family $H_t$ has the following geometric meaning: For $t=0$, we have $H^{red, j}_0 =H^{red, j}$ which is our `vertical' height coordinate. For $t \in\ ]0, \frac{1}{2}[$, the `height' $H^{red, j}_t$ is measured `tilted more and more towards the horizontal' $X$-coordinate which is reached (up to scaling by $\frac{\ga}{2}$) at $t=\frac{1}{2}$ with $H^{red, j}_{\frac{1}{2}} = (\frac{\ga}{2} X)^{red, j}$.
Then, the singular points of the manifold are interior points of this function and will be singular points of focus-focus type. This idea is visualised in Figure \ref{Figure_Mjred_3D} for the level $J = 1=j$.

\begin{figure}[h]
\centering
\includegraphics[scale=0.4]{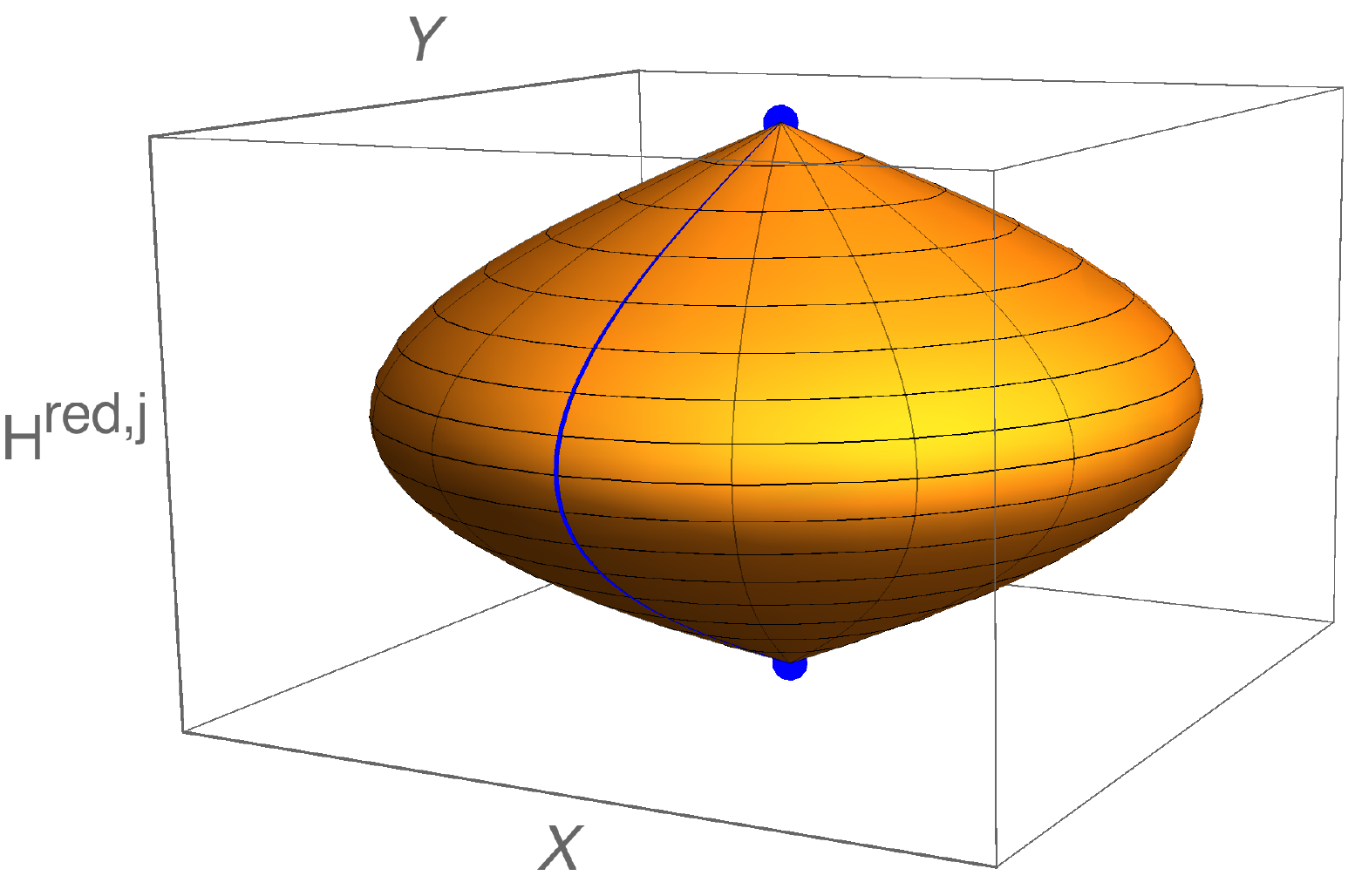}
\caption{The reduced space $M^{red, j}$ for the singular value $j = 1$. At $t=0$, we have $H_0^{red,1} =H^{red, 1}$ and the singular levels are just the two singular elliptic points. At $t= \frac{1}{2}$, we have $H_{\frac{1}{2}}^{red,1} = (\frac{\ga}{2}X)^{red, 1}$ and the singular level is the blue curve connecting the two singular focus-focus points. This plot is done with {\em Mathematica}.}
\label{Figure_Mjred_3D}
\end{figure}


\subsection{Intuition on the rank 1 points of the family $\mathbf{ F_t=(J,H_t)}$}

At the interior levels $j \in \: ]0,3[ \ \setminus \{1,2\}$, the reduced space $M^{red, j}$ is diffeomorphic to a $2$-sphere. Thus, no matter how far $H_t^{j,red}$ is `tilted' from $H$ towards $X$, the maximum and minimum of $H_t^{red, j}$ are each reached at a unique point on $M^{red, j}$. 

By \refredRankOne, these points correspond to the rank one points of $(M, \om, F_t)$ which will turn out to be nondegenerate of elliptic-regular type and map to the top and bottom edge points in the momentum polytope $F_t(M)$. As $H_t^{red, j}$ `tilts' from $H^{red, j}=H $ to $X$ as $t$ increases, also the rank one points on $M$ may change.

For the levels $j \in \{0,3\}$, the minimum resp.\ maximum of $H_t^{red, j}$ is also attained at a unique point on $M^{red, j}$. Both correspond to rank zero points of elliptic-elliptic type on $M$.

When $t$ changes from $0$ to $1$, other rank one points will become the rank zero points, but, according to \refzeroToOne, none of these points ever becomes regular.

For the levels $j  \in \{1, 2\} $, the reduced manifold has two singular points, which are the maximum and minimum of $H_t^{red, j}$ as long as $t < t^-$ and $t^+ < t$ for certain `degenerate times' $0< t^- < \frac{1}{2} < t^+ <1$. When $t$ is in between the degenerate times, two other points on the reduced space will become the extremal values of $H_t^{red, j}$. These points correspond to nondegenerate elliptic-regular rank one points on $M$ which are mapped to the upper and lower boundary of the octagon. The fixed points are then focus-focus points and mapped to the interior.


\subsection{Main results}

We showed in \refthOctagon\ that $(M, \om, F=(J, H))$ is a toric system with $F(M)=\De$. The family $F_t=(J, H_t)$ was shown to be an integrable system in \refHtFt. But $(M, \om, F_t)$ has even nicer properties:

\begin{theorem} 
\label{mainTheorem}  
Let $(M, \om, F=(J, H))$ be the toric system from \refthOctagon\ and let $\bigl(M, \om, F_t =(J, \ (1-2t) H + t\ga X) \bigr)$ be the family of integrable systems from \refHtFt\ and let $ 0 < \gamma < \frac{1}{48}$.
Then $(M, \omega, F_t)_{0 \leq t \leq 1}$ is a \emph{semitoric transition family with fixed $\mbS^1$-action} having four transition points and two transition times
$$ 0 \; < \; t^- := \frac{1}{2(1+24\gamma)} \; < \ \frac{1}{2} \ < \; t^+ := \frac{1}{2(1-24\gamma)} \; < \; 1. $$
\end{theorem}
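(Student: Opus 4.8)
The plan is to verify, line by line, the three defining properties of a semitoric transition family. Properness of $J$ is automatic since $M$ is compact, that $J$ generates an effective Hamiltonian $\mbS^1$-action is already recorded in \refthOctagon, and by \refHtFt\ each $F_t$ is integrable. It therefore remains to (i) locate the fixed points and identify $A,B,C,D$ as the transition points, (ii) determine their type as a function of $t$ and read off $t^\pm$, and (iii) show that for $t\notin\{t^-,t^+\}$ every singular point is nondegenerate with no hyperbolic block, that the only degeneracies at $t^\pm$ are $A,B,C,D$, and that the maximum/minimum condition holds.

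First I would pin down the fixed points. The four points $A,B,C,D$ of \refcoordEEPoints\ lie over the interior singular values $j\in\{1,2\}$ of $J$, hence not on a fixed surface of $J$, so by \refzeroToOne\ they remain rank-zero fixed points for every $t\in[0,1]$; the remaining four fixed points lie on the fixed surfaces over the extremal values $j\in\{0,3\}$ and are free to migrate along the two vertical edges of the polytope as $t$ varies. The heart of the argument is the type analysis at $A,B,C,D$. Working in the chart $U_\nu$ around, say, $B$ (the others being symmetric), where $\om=\om_{st}$, one uses the manifold equations to write $J$ and $H$ as quadratic functions of the chart coordinates and observes that $X$ is quadratic there as well, with leading coefficient the product $\rho_0$ of the four nonvanishing moduli, here $\rho_0=\sqrt{|z_4|^2|z_6|^2|z_7|^2|z_8|^2}=24$. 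Setting $a:=24t\ga$ and $b:=1-2t$, a direct computation of the Hessians gives the characteristic polynomial of $\om^{-1}d^2H_t$ as
\begin{equation*}
\lambda^4+(b^2-2a^2)\lambda^2+a^4,
\end{equation*}
whose discriminant as a quadratic in $\lambda^2$ equals $b^2(b^2-4a^2)$.

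By \refnondeg\ and \refpropEigValues\ the point is nondegenerate precisely when this element has four distinct eigenvalues, and its type is elliptic-elliptic when $b^2>4a^2$ (two distinct imaginary pairs) and focus-focus when $b^2<4a^2$ (a complex quadruple $\pm\alpha\pm i\beta$). The degenerate locus $b^2=4a^2$ factors as $\bigl((1-2t)-48t\ga\bigr)\bigl((1-2t)+48t\ga\bigr)=0$, whose roots are exactly $t^-=\tfrac{1}{2(1+24\ga)}$ and $t^+=\tfrac{1}{2(1-24\ga)}$; checking the sign of $b^2-4a^2$ at $t=\tfrac12$ and $t=1$ then yields elliptic-elliptic on $(0,t^-)\cup(t^+,1)$, focus-focus on $(t^-,t^+)$, and degeneracy at $t^\pm$, while the hypothesis $0<\ga<\tfrac1{48}$ is precisely what forces $0<t^-<\tfrac12<t^+<1$ and restores $b^2-4a^2>0$ at $t=1$. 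This realises the Hamiltonian-Hopf bifurcation of \refdegPoint.

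It then remains to control all other singular points. For the four migrating fixed points an analogous Hessian computation (or the observation that on the extremal fibres, which are $2$-spheres by \refmjred, $H_t$ restricts to a function with a single nondegenerate maximum and minimum) shows they stay elliptic-elliptic for all $t$. The rank-one points I would treat through reduction: by \refredRankOne\ they correspond to the critical points of the reduced Hamiltonian $H_t^{red,j}$ on $M^{red,j}$, and using the surface-of-revolution parametrisation of \refparamMred\ one shows that $H_t^{red,j}$ has only its global maximum and minimum as critical points, both elliptic, so every rank-one point is elliptic-regular and no hyperbolic block ever appears. Assembling these facts gives that $F_t$ is semitoric for $t\notin\{t^\pm\}$ and that the only degenerate points at $t^\pm$ are $A,B,C,D$; the final sign condition holds because the coefficient $1-2t$ of $H$ changes sign between $t=0$ and $t=1$, interchanging the maximum and minimum of $H_t$ on the fibres through the transition points. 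I expect the main obstacle to be this last, global, part: proving across all $t$ that the reduced Hamiltonians acquire no extra critical points and never turn hyperbolic — with due care at the non-smooth reduced spaces over $j\in\{1,2\}$ — rather than the clean finite eigenvalue computation at the fixed points.
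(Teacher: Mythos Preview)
Your proposal is correct and follows essentially the same route as the paper: assemble the result from \refrankOpoints, \refAeeffee, \refeeFixedPoints, \refellRegVertical, and \refellRegHorizontal, with the characteristic-polynomial computation at $A,B,C,D$ matching the paper's exactly (your $\lambda^4+(b^2-2a^2)\lambda^2+a^4$ is precisely the paper's $\chi$ in the variables $a=24t\gamma$, $b=1-2t$).

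Two small technical points where the paper is more careful. First, at $t=\tfrac12$ one has $b=0$, and then $\om^{-1}d^2H_t$ has \emph{repeated} eigenvalues $\pm a$, so the single matrix you examine does not have four distinct eigenvalues and \refnondeg\ does not apply directly; the paper fixes this by taking a linear combination with $\om^{-1}d^2J$ to obtain distinct eigenvalues $\pm1\pm i$. Your discriminant test $b^2-4a^2<0$ still flags the right answer, but nondegeneracy at $t=\tfrac12$ needs this extra step. Second, for rank-one points the paper splits into the cases $dJ(p)\neq0$ (handled by reduction, as you propose) and $dJ(p)=0$ (the extremal levels $j\in\{0,3\}$), and treats the latter by a direct chart computation of $\om^{-1}d^2J$ on $L_p^\perp/L_p$ rather than via the reduced picture; the reduced-space argument you sketch does not immediately apply there since $j$ is not a regular value of $J$ in the sense needed for \refredRankOne.
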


The proof is spread over the following sections and summarised in Section \ref{section pinched}.
The image $F_t(M)$ of the momentum map is plotted for $\ga=\frac{1}{60}$ in Figure \ref{Fig_maintheorem}: The images of four elliptic-elliptic fixed points `pass into the interior' of the momentum polytope at $t=t^-$, becoming focus-focus points. At $t=\frac{1}{2}$, the focus-focus points form two pairs where each pair is mapped to the same value in the momentum polytope, i.e., the points of a pair lie in the same fibre. At $t=t^+$, the images of the focus-focus points become again boundary points of the momentum polytope, i.e., they switch back to being elliptic-elliptic. We compute now the coordinates in $M$ of the eight fixed points of $F_t=(J, H_t)$.

\begin{proposition} 
\label{rankOpoints}
For all $t \in [0,1]$, the system $F_t=(J, H_t)$ has precisely eight fixed points. They are given by four points not depending on $t$, namely
\begin{align*}
A &= [\sqrt{2},\ \sqrt{6},\ \sqrt{6}, \ 2\sqrt{2},\ 2,\ \sqrt{2},\ 0,\ 0] , & C &= [2,\ 2\sqrt{2},\ \sqrt{6},\  \sqrt{6},\ \sqrt{2},\ 0,\ 0,\ \sqrt{2}], \\
B &= [\sqrt{2},\ 0,\ 0,\ \sqrt{2},\ 2,\ 2\sqrt{2},\ \sqrt{6},\ \sqrt{6}] , & D &= [2,\ \sqrt{2},\ 0,\ 0,\ \sqrt{2},\ \sqrt{6}, \ \sqrt{6},\ 2\sqrt{2}]
\end{align*}
as in \refcoordEEPoints, and four points that change with $t$ as follows:
\begin{eqnarray*}
P_t^{min} &=& \left[0, \ u_-(t),\sqrt{2+u_-^2(t)}, \sqrt{6+u_-^2(t)}, \sqrt{6}, \sqrt{8 - u_-^2(t)}, \sqrt{4 - u_-^2(t)}, \sqrt{2 - u_-^2(t)}\right], \\
P_t^{max} &=& \left[0, \ u_+(t),\sqrt{2+u_+^2(t)}, \sqrt{6+u_+^2(t)}, \sqrt{6}, \sqrt{8 - u_+^2(t)}, \sqrt{4 - u_+^2(t)}, \sqrt{2 - u_+^2(t)}\right] ,\\
Q_t^{min} &=& \left[\sqrt{6},\sqrt{8-v_+^2(t)}, \sqrt{4-v_+^2(t)}, \sqrt{2-v_+^2(t)}, \ 0, \  v_+(t), \sqrt{2+v_+^2(t)}, \sqrt{6+v_+^2(t)}\right], \\
Q_t^{max} &=& \left[\sqrt{6},\sqrt{8-v_-^2(t)}, \sqrt{4-v_-^2(t)}, \sqrt{2-v_-^2(t)},\ 0, \   v_-(t), \sqrt{2+v_-^2(t)}, \sqrt{6+v_-^2(t)}\right],
\end{eqnarray*}
where $ u_\pm: [0,1] \to[-\sqrt{2}, \sqrt{2}]$ are the unique smooth solutions of the equation
\begin{equation} 
\label{Rank0eqnLem}
(1-2t)u \sqrt{\mff\bigl(u^2\bigr)} + \gamma t \mff\bigl(u^2\bigr) + \gamma t u^2 \mff'\bigl(u^2\bigr) = 0
\end{equation}
where
$$ \mff(\ze):= (2+ \ze) (6+ \ze) (8- \ze) ( 4- \ze) (2-\ze). $$
$u_\pm$ has initial values $u_-(0)= 0$ and $u_+(0) = \sqrt{2}$. 
Moreover, $ v_\pm: [0,1] \to [-\sqrt{2}, \sqrt{2}]$ are the unique smooth solutions of
\begin{equation}
\label{Rank0eqn2}
-(1-2t)v \sqrt{\mff\bigl(v^2\bigr)} + \gamma t \mff\bigl(v^2\bigr) + \gamma t v^2 \mff'\bigl(v^2\bigr) =0.
\end{equation}
$v_\pm $ has initial values $v_-(0)= - \sqrt{2}$ and $v_+(0) = 0$.
For $t=0$, we recover $P^{min}_0=P^{min}$, $P^{max}_0=P^{max}$, $Q^{min}_0=Q^{min}$, $Q^{max}_0=Q^{max}$ from \refcoordEEPoints.
\end{proposition}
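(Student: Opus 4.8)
The plan is to separate the eight fixed points into the four that stay put and the four that slide along the two extremal fibres of $J$, treating the two families differently. Each of $A,B,C,D$ is a rank zero point of $F_0=(J,H)$ by \refcoordEEPoints, so $dJ=dH=0$ there; since $J$ does not depend on $t$, we keep $dJ=0$ for all $t$. Moreover each of these four points has two consecutive of the coordinates $z_2,z_3,z_4,z_6,z_7,z_8$ appearing in $Z=\overline{z_2}\,\overline{z_3}\,\overline{z_4}\,z_6\,z_7\,z_8$ equal to zero (for instance $z_7=z_8=0$ at $A$), so $Z$, and hence $X=\Re Z$, vanishes to second order, whence $dX=0$ there as well. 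Therefore $dH_t=(1-2t)\,dH+t\gamma\,dX=0$, so $A,B,C,D$ remain rank zero points of $F_t$ with unchanged coordinates, consistently with \refzeroToOne. Since an interior fixed point of $F_t$ forces $dJ=0$, and the only isolated fixed points of the $J$-action over interior values are $A,B,C,D$ (the other $\mbS^1$-fixed points lying on the extremal fibres over $j\in\{0,3\}$), every $t$-dependent fixed point must lie in $J^{-1}(0)$ or $J^{-1}(3)$.

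Next I would treat $J^{-1}(0)$ (the points $P_t$; the fibre $J^{-1}(3)$ carrying $Q_t$ is symmetric). Here $z_1=0$ and $dJ\equiv0$, and by \refmjred\ this fibre is a $2$-sphere. As $H_t$ is $\mbS^1$-invariant and the normal weight of the $\mbS^1$-action along this fixed surface is nonzero, an invariant covector automatically vanishes in the normal direction; hence a point of $J^{-1}(0)$ is rank zero for $F_t$ exactly when it is a critical point of $H_t|_{J^{-1}(0)}$. By \refparamU\ only $z_2$ may vanish besides $z_1$, so $(U_1,\psi_1)$ covers this sphere up to one point; restricting \eqref{HinCoord} and \eqref{XinCoord} to $x_1=y_1=0$ and using that $z_3,\dots,z_8$ are real and positive gives
\[
H_t|_{J^{-1}(0)} = (1-2t)\Bigl(1 + \tfrac12(x_2^2+y_2^2)\Bigr) + t\gamma\, x_2\sqrt{\mff(x_2^2+y_2^2)} .
\]
Computing $\partial_{y_2}H_t$ shows that for $0<t\le 1$ every critical point has $y_2=0$, since the complementary factor cannot vanish together with $\partial_{x_2}H_t$ (that would force $\mff=0$); the equation $\partial_{x_2}H_t=0$ on $y_2=0$ is then exactly \eqref{Rank0eqnLem} with $u=x_2$. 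The same computation in $(U_5,\psi_5)$, with $z_5=0$ and $H|_{J^{-1}(3)}=2-\tfrac12|z_6|^2$, yields \eqref{Rank0eqn2} with $v=x_6$, the sign change reflecting that the centre of $J^{-1}(3)$ is a maximum rather than a minimum of $H$.

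The crux is then the global analysis of the scalar equations \eqref{Rank0eqnLem}--\eqref{Rank0eqn2} on $[-\sqrt2,\sqrt2]$. At $t=0$ equation \eqref{Rank0eqnLem} becomes $u\sqrt{\mff(u^2)}=0$, whose admissible roots are $u=0$ (the point $P^{min}$) and $u=\sqrt2$ (where $z_8=0$, the point $P^{max}$), recovering the points of \refcoordEEPoints; likewise \eqref{Rank0eqn2} gives $v=0$ and $v=-\sqrt2$. Away from $u=\pm\sqrt2$ the defining function is smooth, so the implicit function theorem continues each root to a smooth branch $u_\pm(t)$, respectively $v_\pm(t)$, and I expect the smallness of $\gamma$ (equivalently $t^+<1$) to be precisely what keeps the two branches distinct, non-merging and inside $[-\sqrt2,\sqrt2]$ for all $t\in[0,1]$, so that each extremal fibre carries exactly two critical points throughout. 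Together with $A,B,C,D$ this gives precisely eight fixed points for every $t$, and at $t=0$ the branch values reproduce \refcoordEEPoints.

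The hard part will be this last step. The difficulty is twofold: $\sqrt{\mff(u^2)}$ is only Lipschitz, not smooth, at the endpoints $u=\pm\sqrt2$ from which two of the branches emanate at $t=0$, so near those endpoints the implicit function theorem must be supplemented by a direct sign and monotonicity analysis of \eqref{Rank0eqnLem}; and one must verify that the branches neither leave the admissible interval $[-\sqrt2,\sqrt2]$ nor collide for $t\in[0,1]$, which is exactly where the quantitative hypothesis on $\gamma$ enters.
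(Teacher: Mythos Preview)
Your approach is essentially the same as the paper's: reduce to the $J$-fixed locus, dispose of $A,B,C,D$ via the double zero in $Z$, and on each extremal sphere work in the chart $(U_1,\psi_1)$ resp.\ $(U_5,\psi_5)$ to reduce $dH_t=0$ to $y_2=0$ and the scalar equation \eqref{Rank0eqnLem} resp.\ \eqref{Rank0eqn2}. Two small remarks: first, the paper does not use the smallness of $\gamma$ in this proposition at all --- the hypothesis $0<\gamma<\tfrac{1}{48}$ only enters later in \refAeeffee\ to locate $t^\pm$ --- so your expectation that it governs the branch count is misplaced; second, the ``hard part'' you correctly flag (global existence/uniqueness of the two branches $u_\pm$, including the Lipschitz-only behaviour at $u=\pm\sqrt{2}$) is handled in the paper not by analysis but by inspection of computer plots of the zero set of $\mcF(t,u)$, so your caution there is well founded rather than an omission relative to the paper. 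One point to tighten: the chart $U_1$ misses exactly the point of $J^{-1}(0)$ with $z_8=0$ (i.e.\ $P^{max}$), and you should say explicitly why it is not a fixed point of $F_t$ for $t>0$ --- the paper does this by observing $\partial_{x_8}X\neq 0$ there, whence $dH_t=(1-2t)\cdot 0+t\gamma\,dX\neq 0$.
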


This will be proven in Section \ref{section posRankZero}. 
Let us now shed some light on the situation at $t=\frac{1}{2}$.

\begin{proposition}
\label{doublePinchParam}
 At $t=\frac{1}{2}$, the focus-focus points $A$ and $B$ lie both in $F_{\frac{1}{2}}^{-1}(1,0)$ and the focus-focus points $C$ and $D$ lie both in $F_{\frac{1}{2}}^{-1}(2,0)$ and both fibres have the form of a double pinched torus as displayed in Figure \ref{Fig_double_pinched}.
Exemplarily, we compute the fibre $ F_{\frac{1}{2}}^{-1}\left(1,0\right)$ as
$$ \left\{ \left. \left[\sqrt{2}, \sqrt{6 - \vert z_8 \vert^2}, \sqrt{6 - \vert z_8 \vert^2}, \sqrt{8 - \vert z_8 \vert^2},\ 2, \sqrt{2 + \vert z_8 \vert^2}, \ \pm i \overline{z_8}, \ z_8 \right] \in M \ \right| \vert z_8 \vert \in \left[0, \sqrt{6}\right] \right\}. $$
Using polar coordinates, this yields for $ F_{\frac{1}{2}}^{-1}\left(1,0\right)$ the parametrisation
$$
\left\{ \left. \left[\sqrt{2}, \sqrt{6 - r^2}, \sqrt{6 - r^2}, \sqrt{8 - r^2}, 2, \sqrt{2 + r^2}, \pm r i e^{-i\theta}, r e^{i\theta}\right] \in M \ \right| \theta \in [0, 2\pi[, \ r \in \left[0, \sqrt{6} \right]\right\}
$$
\end{proposition}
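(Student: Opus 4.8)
The plan is to specialise to $t=\frac{1}{2}$, where $H_{\frac{1}{2}}=(1-2\cdot\frac{1}{2})H+\frac{1}{2}\ga X=\frac{\ga}{2}X$, so that the fibres of $F_{\frac{1}{2}}=(J,\frac{\ga}{2}X)$ are precisely the common level sets of $J$ and $X$; in particular
$$
F_{\frac{1}{2}}^{-1}(1,0)=\bigl\{\,[z]\in M \ \bigm| \ J([z])=1,\ X([z])=0\,\bigr\}.
$$
First I would confirm that $A$ and $B$ lie in this set: both satisfy $\frac{1}{2}\vert z_1\vert^2=1$, and since $Z=\overline{z_2}\,\overline{z_3}\,\overline{z_4}\,z_6z_7z_8$ contains the factor $z_7$, which vanishes at $A$, and the factors $\overline{z_2},\overline{z_3}$, which vanish at $B$, one gets $Z(A)=Z(B)=0$ and hence $X(A)=X(B)=0$. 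The analogous computation places $C$ and $D$ in $F_{\frac{1}{2}}^{-1}(2,0)$. By \refmainTheorem, at $t=\frac{1}{2}\in\,]t^-,t^+[$ the points $A,B,C,D$ are exactly the focus-focus points of $F_t$, so these are the only two focus-focus fibres and each contains precisely two focus-focus points.

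Next I would parametrise the fibre. Imposing $J=1$ forces $\vert z_1\vert^2=2$ and $\vert z_5\vert^2=4$; inserting this into the manifold equations (A)--(F), exactly as in the proof of \refXJH, expresses every remaining modulus as a function of the single free parameter $H\in[0,3]$. Writing $r^2:=6-2H=\vert z_7\vert^2=\vert z_8\vert^2$ turns these into $\vert z_2\vert^2=\vert z_3\vert^2=6-r^2$, $\vert z_4\vert^2=8-r^2$ and $\vert z_6\vert^2=2+r^2$, with $r\in[0,\sqrt6]$. On the interior $r\in\,]0,\sqrt6[$, Lemma \refparamU guarantees $z_1,\dots,z_6\neq0$, so I would use the $N$-action to normalise $z_1,\dots,z_6$ to nonnegative reals. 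Then $Z=(z_2z_3z_4z_6)\,z_7z_8$ has a strictly positive real prefactor, so $X=\Re(Z)=0$ is equivalent to $z_7z_8\in i\R$; together with $\vert z_7\vert=\vert z_8\vert=r$ this forces $z_7=\pm i\,\overline{z_8}$, and passing to polar coordinates $z_8=re^{i\theta}$ yields the stated parametrisation with $z_7=\pm rie^{-i\theta}$.

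Finally I would analyse the two endpoints to establish the double-pinched-torus shape. At $r=0$ one has $z_7=z_8=0$, so for every $\theta$ and both signs the representative reduces to the single point $A$. At $r=\sqrt6$ one has $z_2=z_3=0$, and here the vanishing of two consecutive coordinates frees up extra gauge directions in $N$ (notably the parameter rotating only $z_7$, together with the interplay of the parameters acting on $z_5,z_8$); I would exhibit explicit elements of $N$ identifying every $(\theta,\pm)$-point with $B$. Hence each choice of sign sweeps out a topological sphere with poles $A$ and $B$, and the two spheres are glued along both poles, so the fibre is a torus pinched at the two focus-focus points $A,B$, i.e.\ a double pinched torus. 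The fibre over $(2,0)$ is handled identically, with $(A,B)$ replaced by $(C,D)$.

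The step I expect to be the main obstacle is the endpoint degeneration at $r=\sqrt6$: one must check carefully that the residual $N$-freedom created by the two vanishing coordinates (cf.\ \refparamU) collapses the whole $\theta$-circle and both sheets onto the single fixed point $B$, since it is precisely this collapse --- together with the one at $A$ for $r=0$ --- that produces the two pinch points and so justifies the name ``double pinched torus''. By contrast, the gauge normalisation and the reduction of $X=0$ to $z_7=\pm i\,\overline{z_8}$ are routine once the gauge with $z_1,\dots,z_6$ real has been chosen.
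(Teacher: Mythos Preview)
Your proposal is correct and follows essentially the same route as the paper: gauge-fixing $z_1,\dots,z_6$ to positive reals is exactly the paper's choice of the chart $(U_7,\psi_7)$, and from there both you and the paper reduce $X=0$ to $\Re(z_7z_8)=0$, hence $z_7=\pm i\,\overline{z_8}$. The only noteworthy difference is that for the shape of the fibre the paper simply invokes Zung's structural result that a focus-focus fibre with $n$ focus-focus points is an $n$-times pinched torus, whereas you establish the double-pinched-torus topology by hand via the endpoint collapses at $r=0$ and $r=\sqrt6$; your argument is more self-contained, while the paper's is shorter and immediately covers the fibre over $(2,0)$ as well without a second computation.
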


This statement will be proven in Section \ref{section pinched}.


\section{{\bf The whereabouts of the fixed points of $\mathbf{F_t=(J, H_t)}$}} 


\label{section posRankZero}

In this section, we will determine the explicit coordinates of the eight fixed points of $F_t=(J, H_t)$ and hereby prove \refrankOpoints.

We only have to consider the case $t>0$ since $t=0$ is already treated in \refcoordEEPoints.
For $t>0$, the family $F_t=(J, H_t)$ will turn out to be of toric type or semitoric --- apart from the two transition times where the four transition points pass through a degeneracy.

\begin{proof}[Proof of \refrankOpoints]
$p=[p_1, \dots, p_8] \in M$ can only be a fixed point of $F_t = (J, H_t)$, if it is a fixed point of $J$. Thus, according to \refcoordEEPoints, $p$ lies in $ \{A, B, C, D\}$ or it satisfies $p_1 = 0$ or $p_5 = 0$.

{\bf Case $\mathbf{p \in \{A, B, C, D\}}$:}
If $p$ is one of these fixed points, $p$ is also a fixed point of $H$ according to \refcoordEEPoints. Write $z_k = x_k + i y_k $ and recall $X(z_1, \dots, z_8) = \mfR(\overline{z_2}\ \overline{z_3}\ \overline{z_4} \ z_6 \ z_7 \ z_8)  $. Let $\zhat_k$ stand for the omission of $z_k$ resp.\ $\overline{z_k}$ and define $\sign(\zhat_k)=1$ if $\zhat_k=z_k$ and $\sign(\zhat_k)=-1$ if $\zhat_k=\overline{z_k}$. We calculate
\begin{align*}
\del_{x_k}X(z_1, \dots, z_8) 
& =  \del_{x_k}\bigl(x_k \ \mfR(\overline{z_2}\ \cdots \zhat_k \cdots \ z_8) -\sign(\zhat_k) \ y_k \ \mfI(\overline{z_2}\ \cdots \zhat_k \cdots \ z_8)\bigr) \\
& =  \mfR(\overline{z_2}\ \cdots \zhat_k \cdots \ z_8)
\end{align*}
and 
\begin{align*}
\del_{y_k}X(z_1, \dots, z_8) & = -\sign(\zhat_k) \ \mfI(\overline{z_2}\ \cdots \zhat_k \cdots \ z_8).
\end{align*}
Note that each of the points $A,B,C,D$ has two vanishing coordinate entries, thus we find for $p \in \{A, B, C, D\}$
$$ 
dX(p) = \left(\partial_{x_1} X(p), \partial_{y_1} X(p), \partial_{x_2} X(p), \ldots, \partial_{y_8} X (p) \right) = (0, 0, 0,  \ldots, 0).
$$
Thus $p$ is also a fixed point of $X$. Since $H_t = (1-2t)H + t\gamma X$ is a linear combination, $p$ is also a fixed point of $H_t$.

{\bf Case: $\mathbf p$ has ${\mathbf p_1 = 0}$:}
First we show that any fixed point $p$ with $p_1=0$ has to lie in the set $U_1 \subseteq M$. We argue by contradiction: $p_1=0$ would also allow $p \in U_8$, i.e., $p_8=0$. By definition of $U_8$, we have $z_2, \dots, z_7 \neq 0$ and we may choose a representative with nonvanishing real part, thus $\del_{x_8}X(p) =Re(\overline{p_2} \: \overline{p_3} \:\overline{p_4}\: p_6 \: p_7) \neq 0$ and thus the derivative of $X$ does not vanish in $p$. Picking in particular $p \in U_8$ to be the fixed point 
$P^{max}= [0, \ \sqrt{2}, \ 2, \ 2\sqrt{2}, \ \sqrt{6}, \ \sqrt{6}, \ \sqrt{2}, \ 0]$ of the toric system $F_0=(J, H)$, we find 
$$dH_t(p) = (1-2t) dH(p) + t\gamma dX(p) = 0 + t\gamma d X(p) \neq 0$$
for $t \neq 0$. This shows that any fixed point $p$ of $F_t$ with $p_1=0$ has to lie in $U_1$. 

Let us now work in the coordinate chart $(U_1, \psi_1)$. By assumption, we have $0= z_1= x_1+iy_1$ and we now determine the coordinates $z_2 = x_2 +i y_2$ of any possible fixed point $p=\psi_1(0, 0, x_2, y_2) \in U_1$. We already calculated $H\circ \psi_1^{-1}$ in \eqref{HinCoord} and, plugging in $(0, 0, x_2, y_2)$, yields
$$
(H \circ \psi_1^{-1})(0, 0, x_2, y_2) = \frac{1}{2} \left( 2 + x_2^2 + y_2^2 \right) =  \frac{1}{2} \left( 2 + \abs{z_2}^2 \right).
$$
We also calculated $X \circ \psi_1^{-1}$ in \eqref{XinCoord} implying
\begin{align*}
 & (X \circ \psi^{-1}_1)(0, 0, x_2, y_2)  \\
 & \quad  = x_2 \sqrt{ \left(2 + (x_2^2 + y_2 ^2)\right) \left( 6 + ( x_2^2 + y_2 ^2 )\right) \left( 8 - ( x_2^2 + y_2 ^2) \right) \left( 4  - ( x_2^2 + y_2 ^2 ) \right) \left(2  -( x_2^2 + y_2 ^2 )\right)}  \\
 & \quad = x_2 \sqrt{(2 + \vert z_2 \vert^2)(6 + \vert z_2 \vert^2)(8 - \vert z_2 \vert^2)(4 - \vert z_2 \vert^2)(2 - \vert z_2 \vert^2)}.
\end{align*}
Now set 
\begin{align*}
  & \mfg: \R^2 \to \R, &&  \mfg(x_2, y_2):= x_2^2 + y_2^2, \\
  & \mff: \R \to \R, &&  \mff(\ze):= (2+ \ze) (6+ \ze) (8- \ze) ( 4- \ze) (2-\ze).
\end{align*}
Then we get for $H_t = (1-2t)H +t \ga X$ in local coordinates
\begin{align*}
& (H_t \circ \psi^{-1}_1)(0, 0, x_2, y_2) \\
& \quad =  \frac{1-2t}{2} \left( 2 + \abs{z_2}^2 \right)+ \gamma t x_2 \sqrt{(2 + \vert z_2 \vert^2)(6 + \vert z_2 \vert^2)(8 - \vert z_2 \vert^2)(4 - \vert z_2 \vert^2)(2 - \vert z_2 \vert^2)} \\
& \quad =  \frac{1-2t}{2} \left( 2 + \mfg(x_2, y_2) \right) + \gamma t x_2 \sqrt{\mff(\mfg(x_2, y_2))}.
\end{align*}
Now we look for critical points, i.e., $(x_2, y_2)$ such that $d (H_t \circ \psi^{-1}_1)\vert_{(0, 0, x_2, y_2)} = 0$. We compute
\begin{eqnarray}
\label{dH_x2}
\partial_{x_2} (H_t\circ \psi^{-1}_1) = 0  &\Leftrightarrow& (1-2t)x_2 + \gamma t \sqrt{\mff(\mfg(x_2, y_2))} + \gamma t x_2^2 \frac{ \mff'(\mfg(x_2, y_2)) }{ \sqrt{\mff(\mfg(x_2, y_2))}} = 0
\end{eqnarray}
and
\begin{eqnarray}
\label{dH_y2}
\partial_{y_2} (H_t\circ \psi^{-1}_1)  = 0  &\Leftrightarrow& (1-2t)y_2 + \gamma t x_2 y_2 \frac{ \mff'(\mfg(x_2, y_2))  }{\sqrt{\mff(\mfg(x_2, y_2))}} = 0 .
\end{eqnarray}
First we show that \eqref{dH_x2} and \eqref{dH_y2} force $y_2 = 0$ if $t>0$: We argue by contradiction and assume that $y_2 \neq 0$. Then dividing the right hand side of \eqref{dH_y2} by $y_2$ yields
$$  \gamma t x_2 \frac{ \mff'(\mfg(x_2, y_2))  }{\sqrt{\mff(\mfg(x_2, y_2))}} = 2t-1 .$$
We substitute this in the right hand side of \eqref{dH_x2} and get
\begin{equation*}
0 = (1-2t)x_2 + \gamma t \sqrt{\mff(\mfg(x_2, y_2))} + x_2(2t-1) = \gamma t \sqrt{\mff(\mfg(x_2, y_2))}.
\end{equation*}
But this cannot be true since $\sqrt{\mff(\mfg(x_2, y_2))} = \frac{1}{x_2} (X \circ \psi_1^{-1})(0, 0, x_2, y_2)\stackrel{\eqref{XinCoord}}{=} x_3 \: x_4 \: x_6 \: x_7 \: x_8 \neq 0$ in $U_1$ and $\gamma \neq 0$ and $t > 0$. 

Thus we conclude $y_2 = 0$ which implies $\abs{z_2}^2 = x_2^2= \mfg(x_2, y_2)$ and $p= \psi^{-1}_1(0, 0, x_2, 0)$. Thus we still need to determine the possible values of $x_2$ to determine $p$ completely. This we approach by inserting $y_2=0$ on the right hand side of \eqref{dH_x2} and by multiplying it with $\sqrt{\mff(\mfg(x_2, 0))}=\sqrt{\mff\bigl(x_2^2\bigr)}$ which yields
\begin{equation}
 \label{charEqu}
(1-2t) \ x_2 \sqrt{\mff\bigl(x_2^2\bigr)} + \gamma\ t \ \mff\bigl(x_2^2\bigr) + \gamma\ t \ x_2^2 \ \mff'\bigl(x_2^2\bigr) = 0.
\end{equation}
Note that equation \eqref{charEqu} is equation \eqref{Rank0eqnLem} in \refrankOpoints. Using the manifold equations \eqref{manifold eqn}, we get a restriction for $\abs{x_2}$: Since $ z_1 = 0$ we get $\abs{z_5}^2 = 6 $ and furthermore $ \vert z_2 \vert^2 + \vert z_7 \vert^2 = 4$ and $ \vert z_7 \vert^2 = 2 + \vert z_6 \vert^2 \geq 2 $ and thus
$$ \abs{x_2}^2 = \vert z_2 \vert ^2 \leq 2. $$
Renaming $x_2$ as $u$ to drop the index and simplify notation, we are thus looking for (smooth) solutions $u: [0,1] \to [- \sqrt{2}, \sqrt{2}]$ solving $\mcF(t, u(t))=0$ where
$$
\mcF: [0,1] \times [- \sqrt{2}, \sqrt{2}] \to \R, \quad \mcF(t, u):= (1-2t)u \sqrt{\mff\bigl(u^2\bigr)} + \gamma t \mff\bigl(u^2\bigr) + \gamma t u^2 \mff'\bigl(u^2\bigr).
$$
A look at the formulas of $\mff$ and $\mcF$ shows immediately that $\mcF(0, \pm \sqrt{2})=0=\mcF(0,0)$. This are the only zeros for $t=0$ as the plot of the graph of $\mcF$ in Figure \ref{Fig_Rank0eqn1} shows. Moreover, the zero $(0, - \sqrt{2})$ is isolated within $[0,1] \times  [- \sqrt{2}, \sqrt{2}]$ whereas $(0,0)$ and $(0,\sqrt{2})$ are the limit for $t \to 0$ of two unique smooth curves $t \mapsto (t, u_-(t))$ and $ t \mapsto (t, u_+(t))$ that satisfy $\mcF(t, u_\pm(t))=0$ and $u_-(t) \in\ ]- \sqrt{2}, 0]$ and $u_+(t) \in\ ] 0, \sqrt{2}]$ for $t \in [0,1]$ as displayed in Figure \ref{Fig_Rank0eqn2}.

\begin{figure}[h!]
\centering
\begin{subfigure}{0.33\textwidth}
\centering
\includegraphics[width=1.3\linewidth]{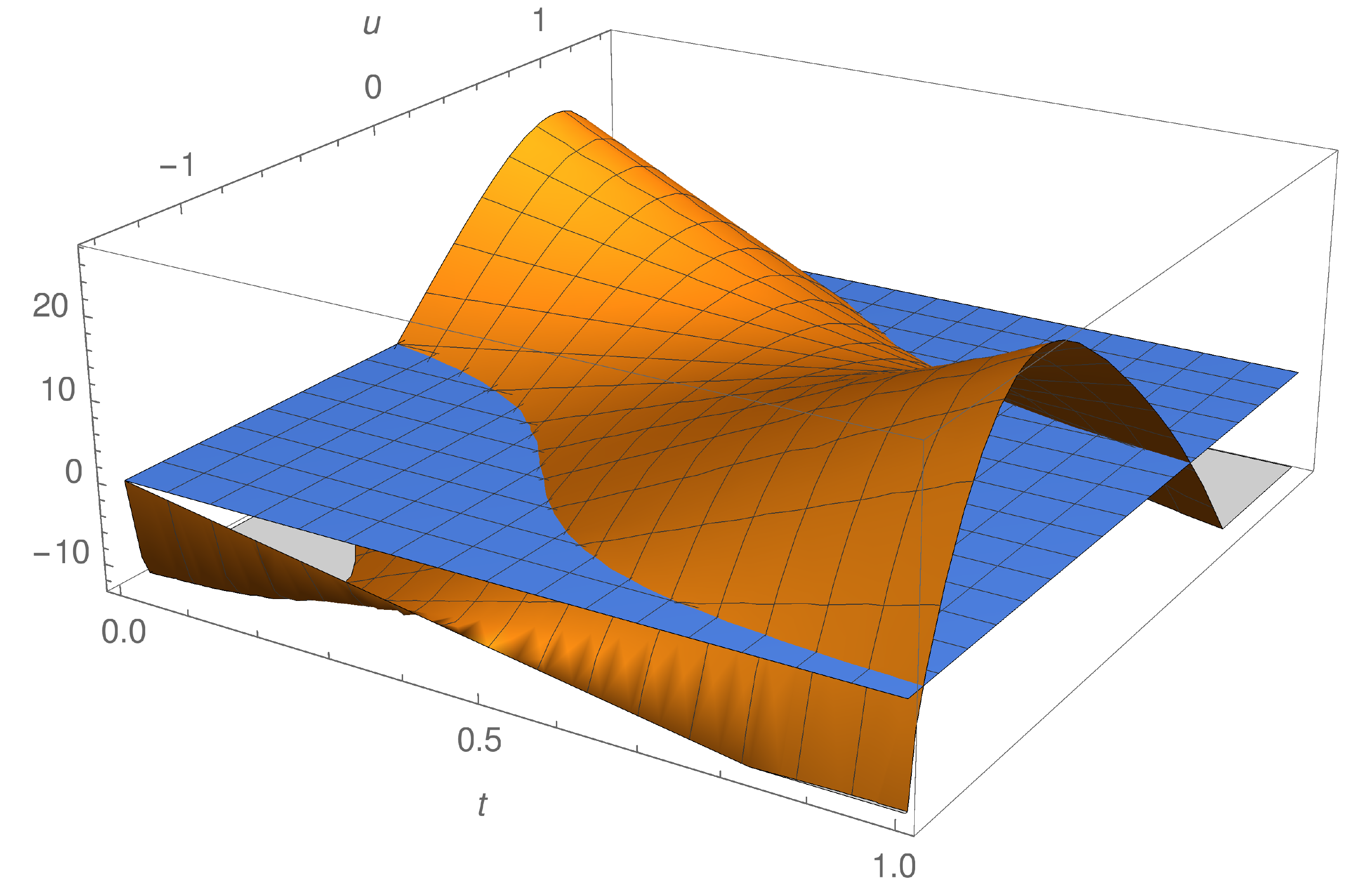}
\end{subfigure}%
\hspace{20mm}
\begin{subfigure}{0.33\textwidth}
\centering
\includegraphics[width=1.0\linewidth]{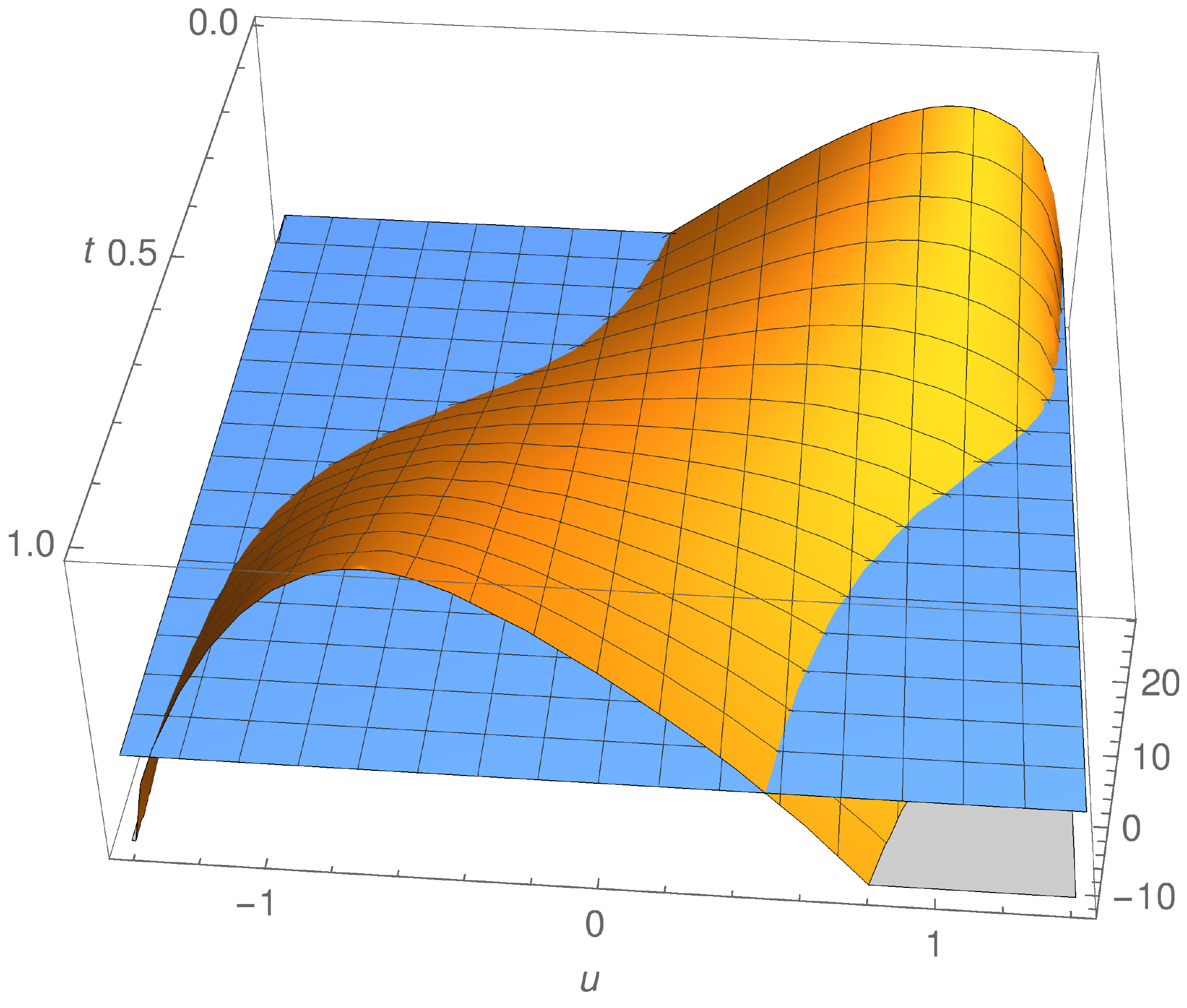}
\end{subfigure}%

\caption{The graph {\em (orange)} of $(t, u) \mapsto \mcF(t,u)$  intersected with a horizontal plane through zero {\em (blue)} seen from two different angles. Both plots are done with {\em Mathematica} for $\gamma = \frac{1}{50}$.}
\label{Fig_Rank0eqn1}
\end{figure}

\noindent
Letting $\ga>0$ tend to zero leads to a slimmer and slimmer `waist' of the graph of $\mcF$ and a steeper and steeper slope of $u_\pm$ as displayed in Figures \ref{Fig_Rank0eqn3} and \ref{Fig_Rank0eqn4}.

\begin{figure}[h!]
\centering
\begin{subfigure}{0.33\textwidth}
\centering
\includegraphics[width=.9\linewidth]{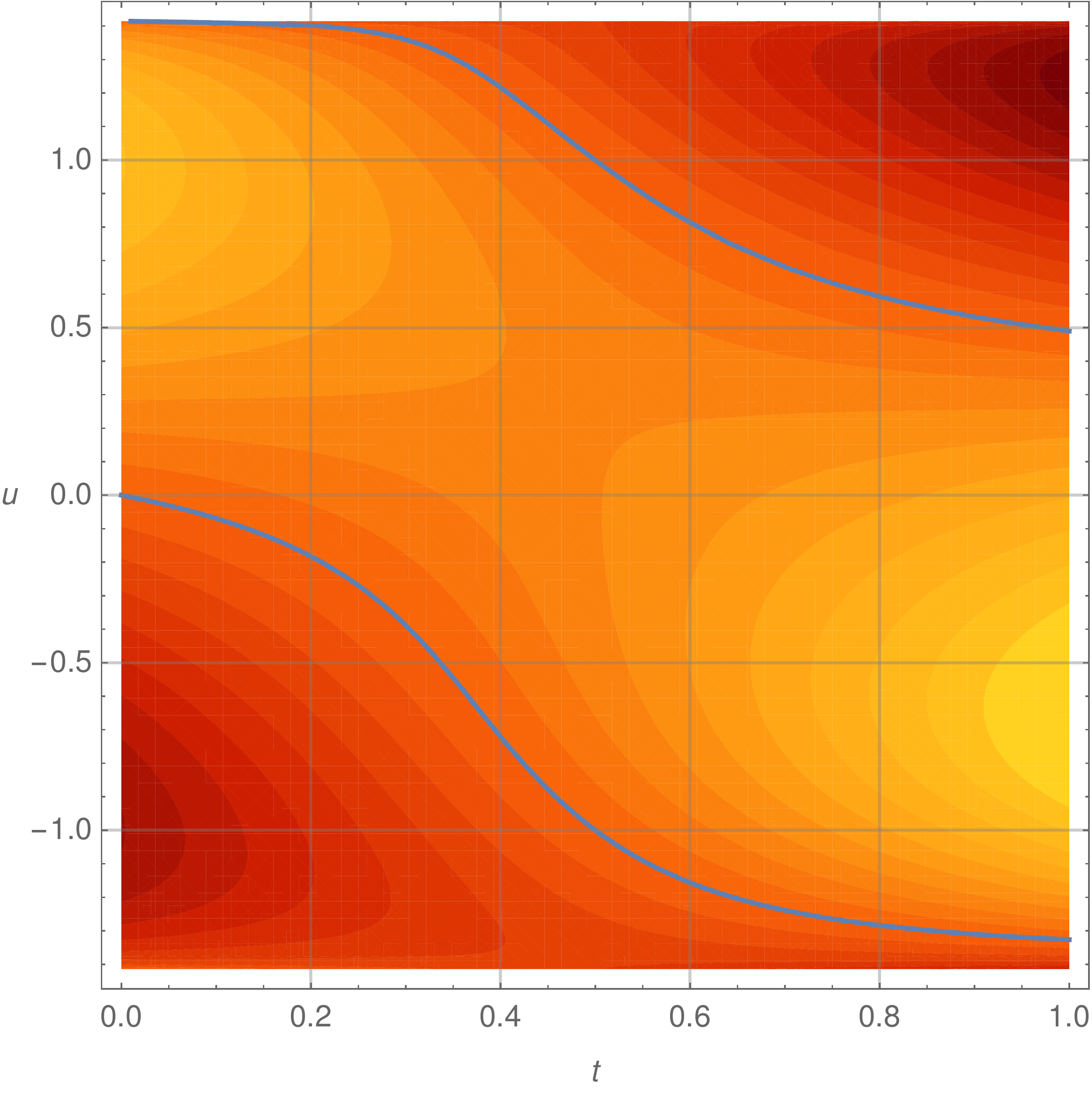}
\end{subfigure}%
\begin{subfigure}{0.33\textwidth}
\centering
\includegraphics[width=.9\linewidth]{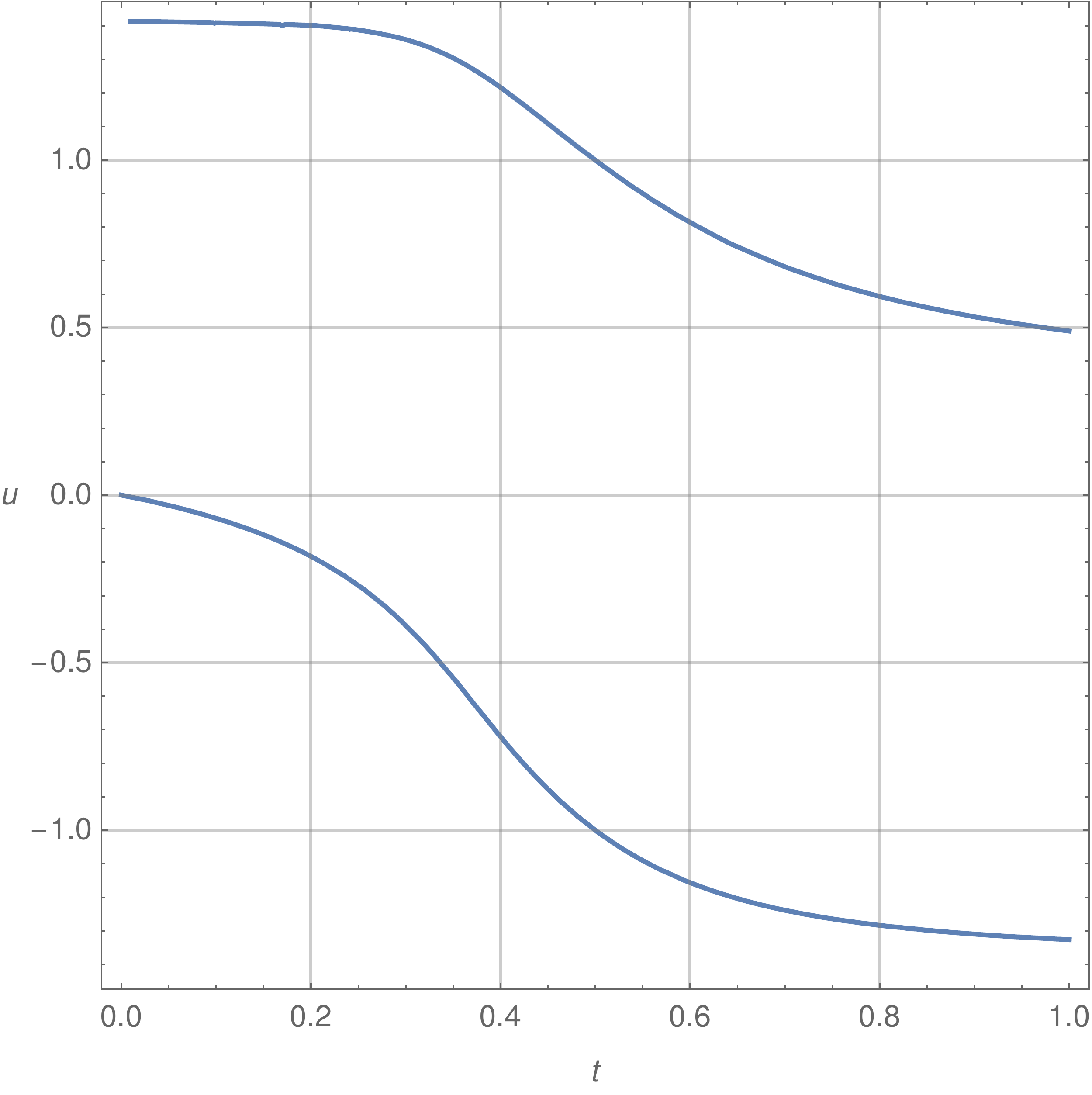}
\end{subfigure}

\caption{{\em Left:} Plot of the level sets of $\mcF(t, u)$ with the two solutions {\em (blue)} satisfying $\mcF(t, u_\pm(t))=0$. {\em Right:} The positive solution $u_+$ and the negative solution $u_-$ of $\mcF(t, u)=0$. Both plots are done with {\em Mathematica} for $\gamma = \frac{1}{50}$.}
\label{Fig_Rank0eqn2}
\end{figure}

\begin{figure}[h!]
\centering
\begin{subfigure}{0.33\textwidth}
\centering
\includegraphics[width=1.2\linewidth]{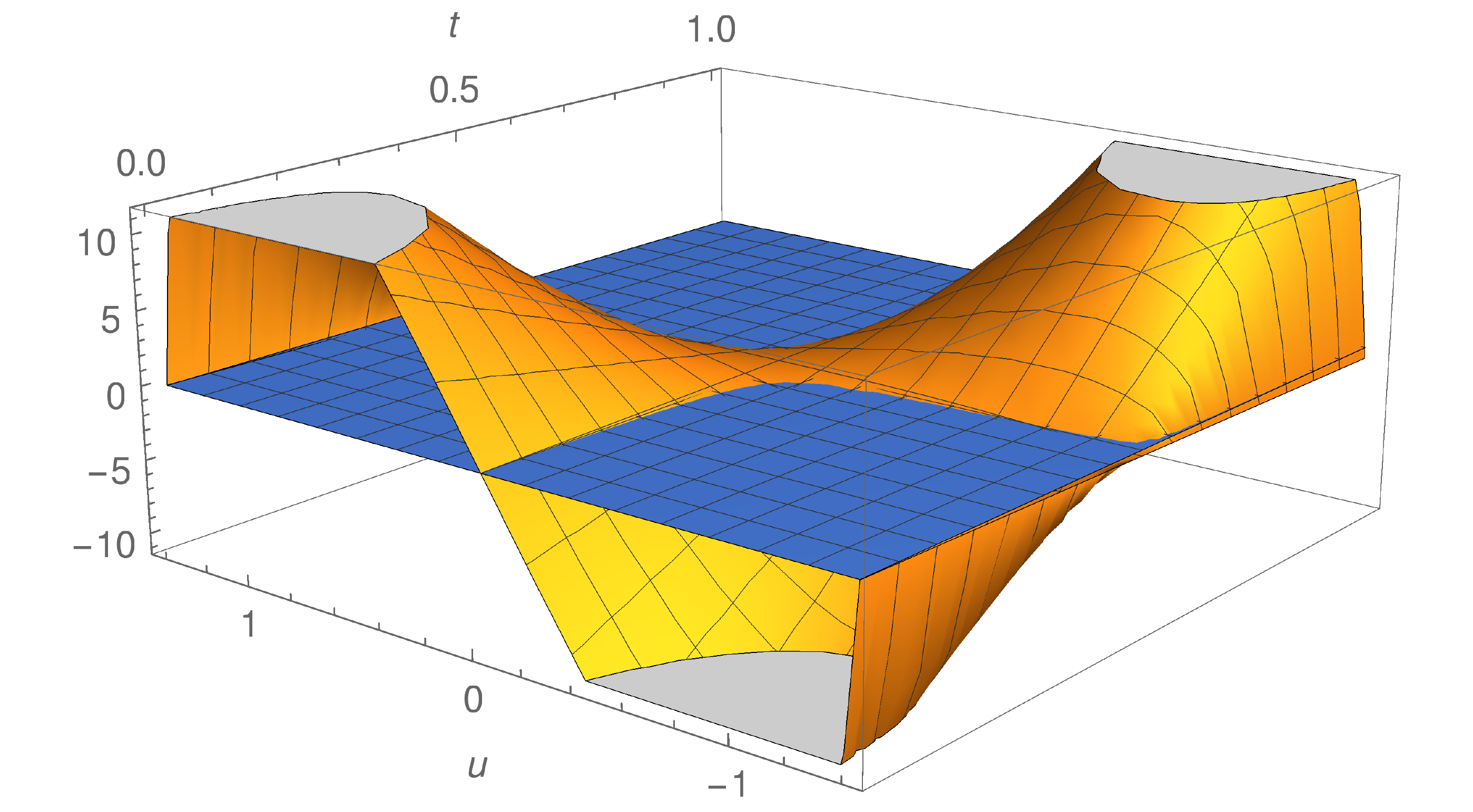}
\end{subfigure}%
\hspace{7mm}
\begin{subfigure}{0.33\textwidth}
\centering
\includegraphics[width=.9\linewidth]{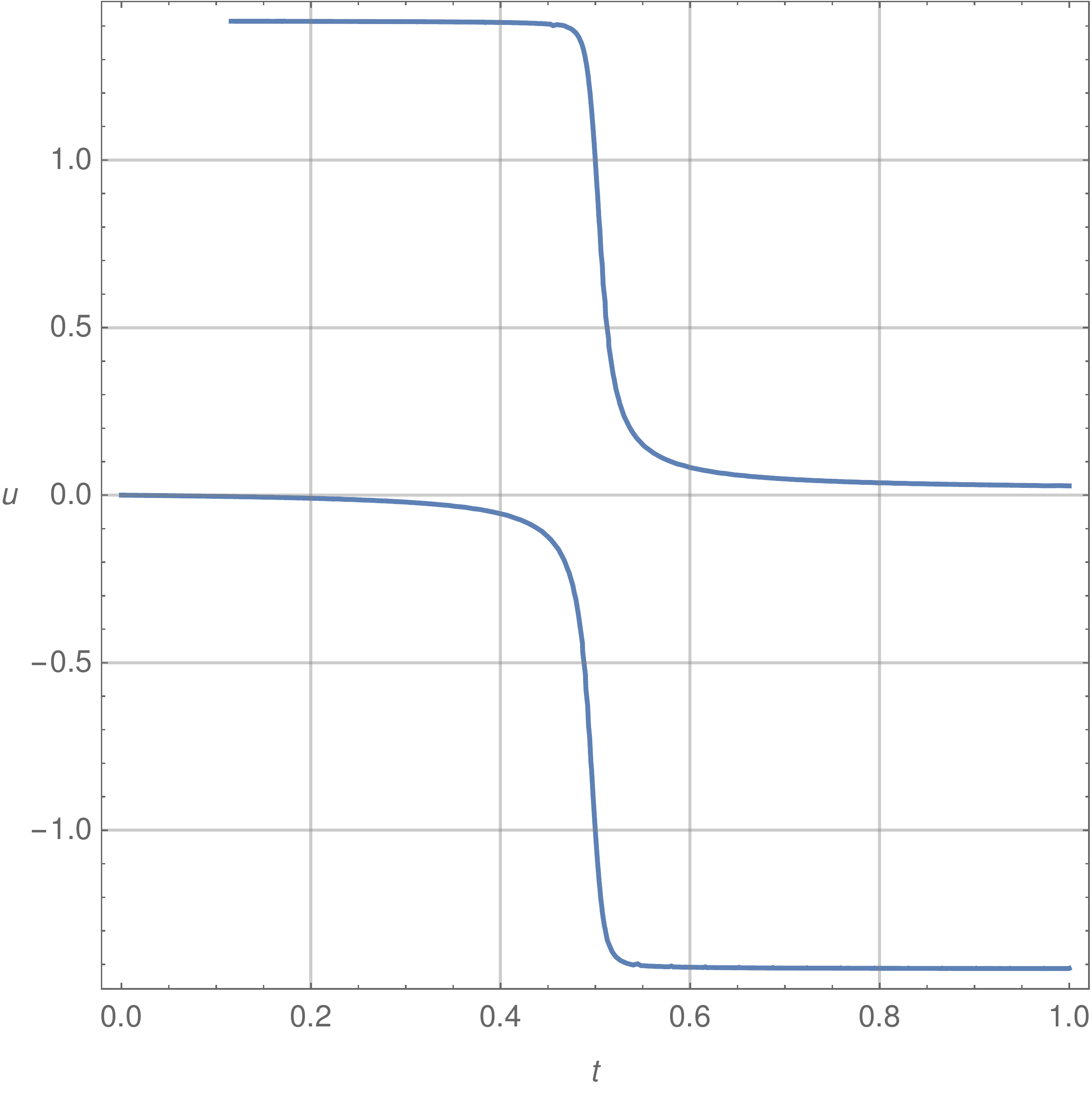}
\end{subfigure}

\caption{{\em Left:} The graph {\em (orange)} of $(t, u) \mapsto \mcF(t,u)$  intersected with a horizontal plane through zero {\em (blue)} seen from two different angles. {\em Right:} The positive solution $u_+$ and the negative solution $u_-$ of $\mcF(t, u)=0$. Both plots are done with {\em Mathematica} for $\gamma = \frac{1}{1000}$.}
\label{Fig_Rank0eqn3}
\end{figure}

\begin{figure}[h!]
\centering
\begin{subfigure}{0.33\textwidth}
\centering
\includegraphics[width=1.1\linewidth]{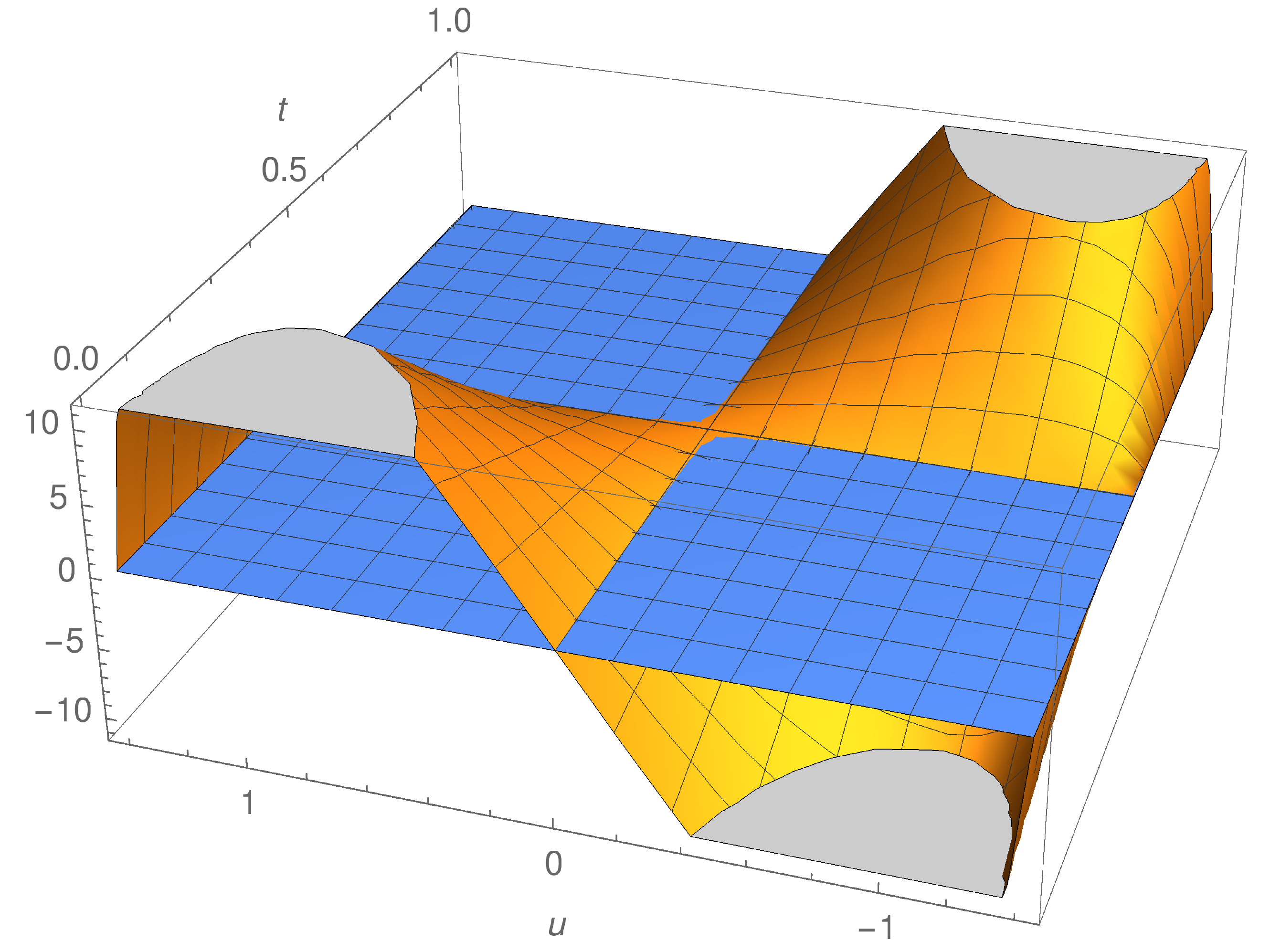}
\end{subfigure}%
\hspace{7mm}
\begin{subfigure}{0.33\textwidth}
\centering
\includegraphics[width=.9\linewidth]{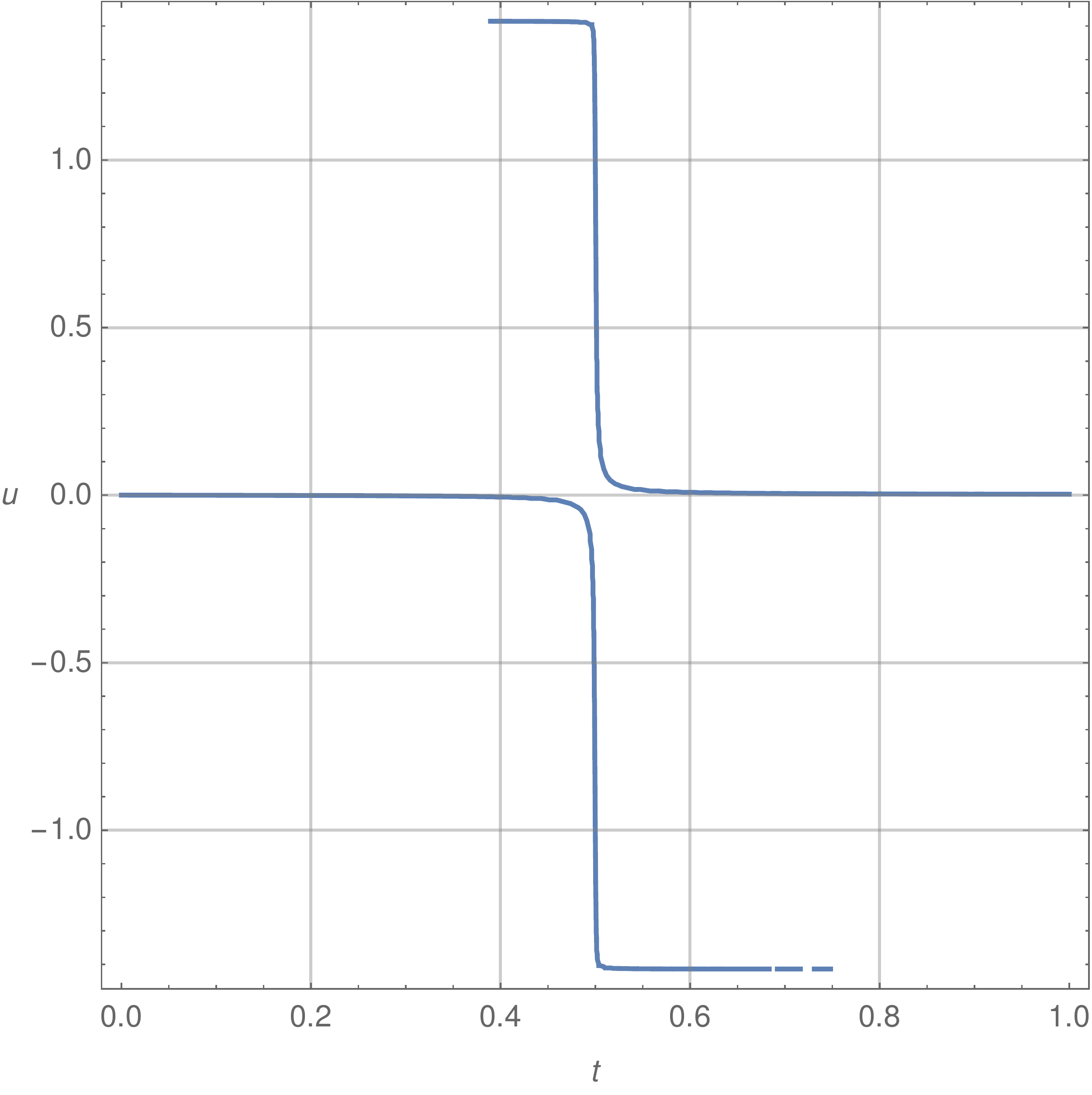}
\end{subfigure}

\caption{{\em Left:} The graph {\em (orange)} of $(t, u) \mapsto \mcF(t,u)$  intersected with a horizontal plane through zero {\em (blue)} seen from two different angles. {\em Right:} The positive solution $u_+$ and the negative solution $u_-$ of $\mcF(t, u)=0$. Both plots are done with {\em Mathematica} for $\gamma = \frac{1}{10 000}$.}
\label{Fig_Rank0eqn4}
\end{figure}

\noindent
Substituting the variable $x_2$ in \eqref{charEqu} with $u_{\pm}$, we obtain the fixed points 
\begin{align*}
 P^{min}_t  :=\psi_1^{-1}(0,0,u_-(t), 0) \quad \mbox{and} \quad  P^{max}_t & :=\psi_1^{-1}(0,0,u_+(t), 0).
\end{align*}
Using the definition of $\psi_1^{-1}$, i.e., the formulas in \eqref{sixVarEq}, we obtain the coordinates of $P^{min}_t$ and $P^{max}_t $ as given in \refrankOpoints. Letting $t \to 0$, we recover $P^{max}=P^{max}_0$ and $P^{min}=P^{min}_0$ of the toric system, as computed in \refcoordEEPoints.

{\bf Case: $\mathbf p$ has $\mathbf{p_5 = 0}$:}
We proceed analogously to the case $p_1 = 0$, i.e., first, we show $p$ to live in $U_5$ and then we work in the chart
$$\psi_5^{-1}(x_5, y_5, x_6, y_6)= [x_1, 0, x_2, 0, x_3, 0, x_4, 0, x_5, y_5, x_6, y_6, x_7, 0, x_8, 0]$$
where $x_1, x_2, x_3, x_4, x_7, x_8$ are determined by the manifold equations from \eqref{manifold eqn} as
\begin{align*}
x_1 &= \sqrt{6 - \vert z_5 \vert^2}, & x_3 &= \sqrt{4 + \vert z_5 \vert^2 - \vert z_6 \vert^2}, & x_7 &= \sqrt{2 - \vert z_5 \vert^2 + \vert z_6 \vert^2}, \\
x_2 &= \sqrt{8 - \vert z_6 \vert^2},  & x_4 &= \sqrt{2 + 2\vert z_5 \vert^2 - \vert z_6 \vert^2},  &  x_8 &= \sqrt{6 - 2\vert z_5 \vert^2 + \vert z_6 \vert^2}
\end{align*}
where $\vert z_5 \vert^2 = x_5^2 + y_5^2$ and $\vert z_6 \vert^2 = x_6^2 + y_6^2$.
We find $ p = \psi_5^{-1}(0,0,x_6,y_6) $ and
$$ H_t(p) = \frac{1-2t}{2}(4 - \vert z_6 \vert^2) + \gamma t x_6 \sqrt{(8 - \vert z_6 \vert^2)(4 - \vert z_6 \vert^2)(2 - \vert z_6 \vert^2)(2 + \vert z_6 \vert^2)(6 + \vert z_6 \vert^2)} .$$
The system of equations describing $d( H_t \circ \psi_5^{-1})(0,0,x_6, y_6) = 0$ leads to $y_6 = 0$ and forces $x_6$ to satisfy for $t>0$
\begin{equation}
 \label{charEqu2}
- (1-2t) \ x_6 \sqrt{\mff\bigl(x_6^2\bigr)} + \gamma\ t \ \mff\bigl(x_6^2\bigr) + \gamma\ t \ x_6^2 \ \mff'\bigl(x_6^2\bigr) = 0.
\end{equation}
$z_5 = 0$ and the manifold equations \eqref{manifold eqn} imply $ \vert z_6 \vert^2 = \vert z_7 \vert^2 - 2$ and $ \vert z_7 \vert^2 = 4 - \vert z_4 \vert^2 \leq 4 $ which leads to $ \vert z_6 \vert^2 = x_6^2 \leq 2$.

\begin{figure}[h!]
\centering
\begin{subfigure}{0.33\textwidth}
\centering
\includegraphics[width=1.3\linewidth]{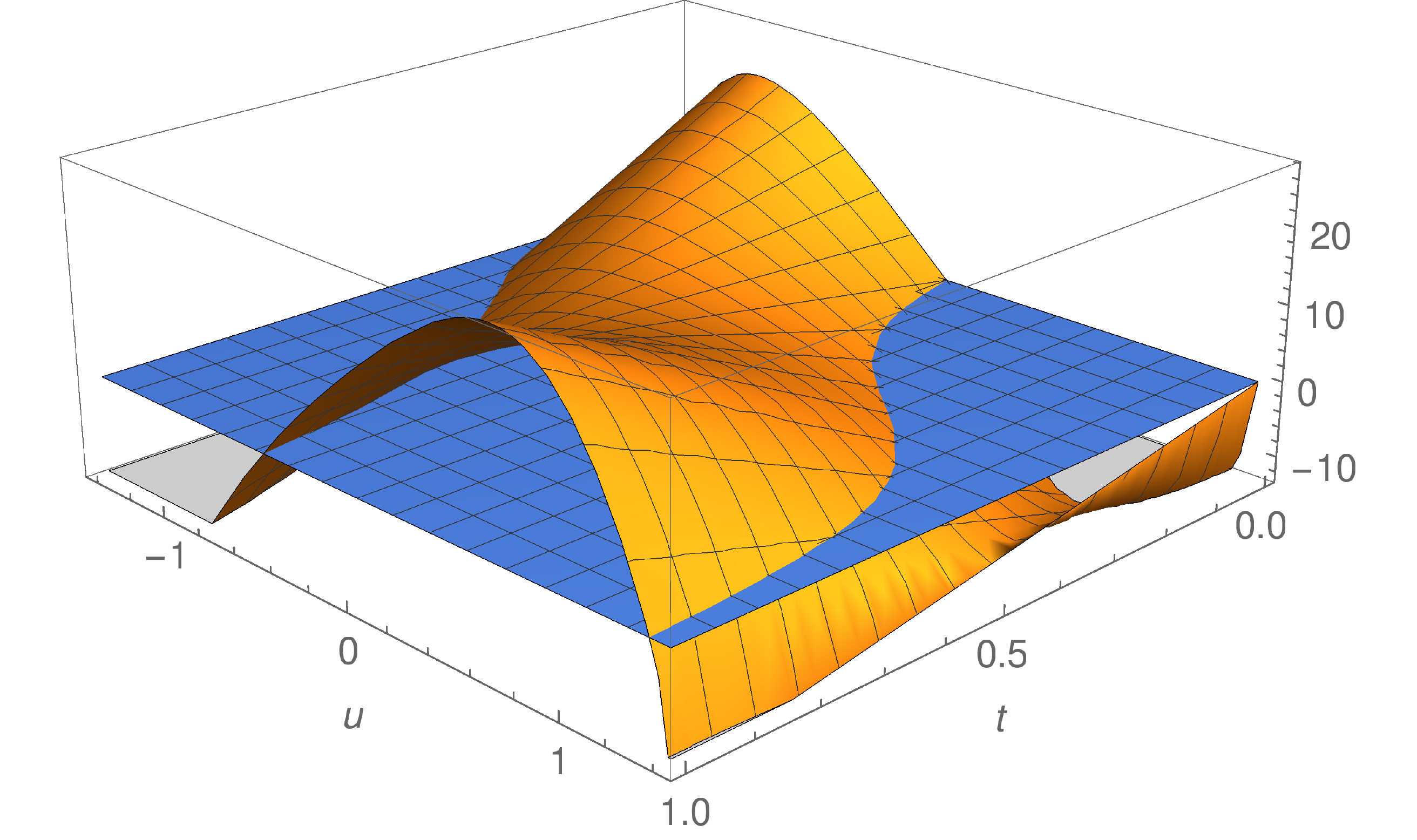}
\end{subfigure}%
\hspace{20mm}
\begin{subfigure}{0.33\textwidth}
\centering
\includegraphics[width=.9\linewidth]{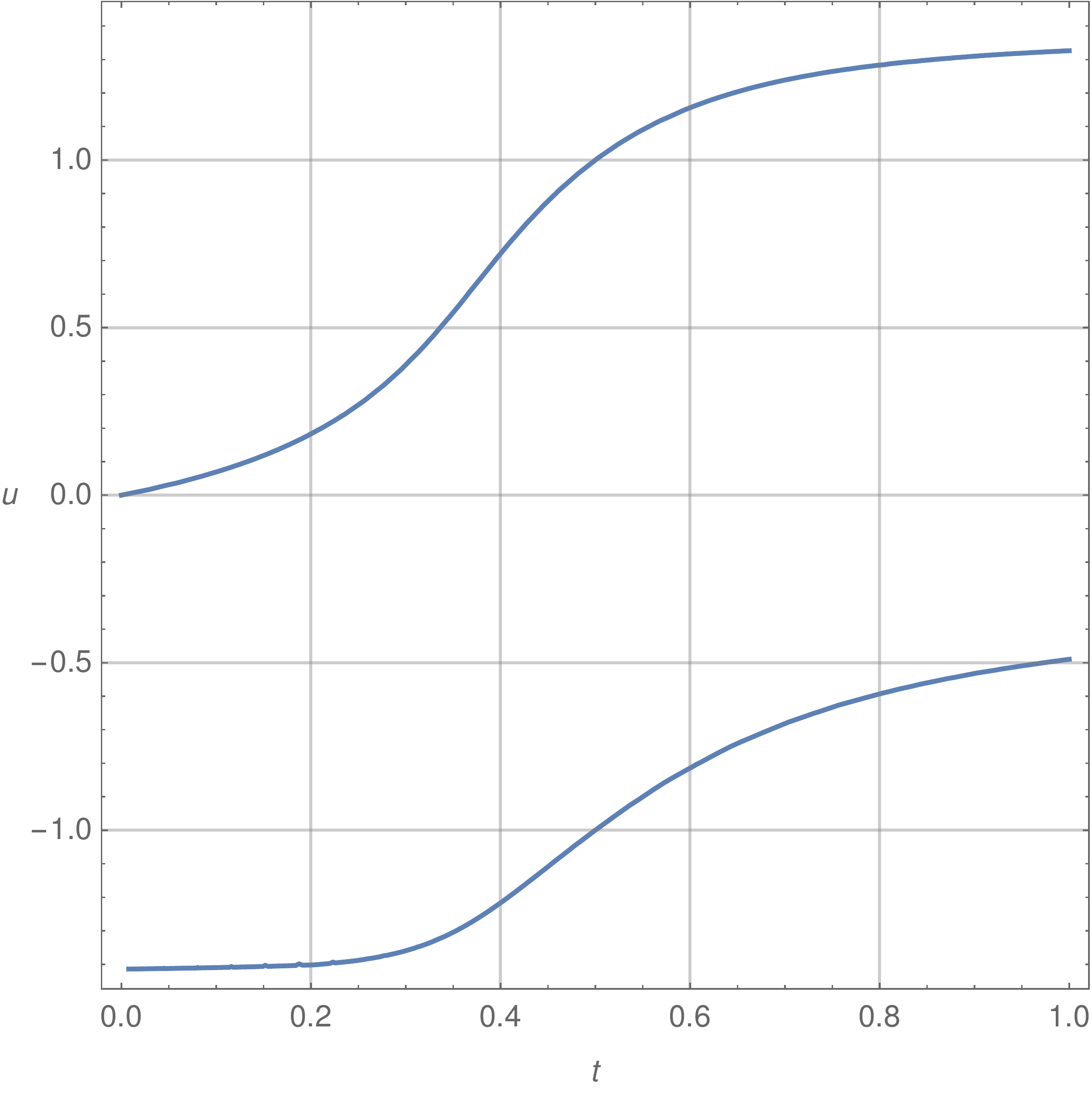}
\end{subfigure}

\caption{{\em Left:} The graph {\em (orange)} of $(t, v) \mapsto \widetilde{\mcF}(t,v)$ intersected with a horizontal plane through zero {\em (blue)}. {\em Right:} The positive solution $v_+$ and the negative solution $v_-$ of $\widetilde{\mcF}(t, v)=0$. Both plots are done with {\em Mathematica} for $\gamma = \frac{1}{50}$.}
\label{Fig_Rank0eqn5}
\end{figure}

Renaming $x_6$ as $v$, we are thus looking for smooth solutions $v: [0,1] \to [- \sqrt{2}, \sqrt{2}]$ solving $\widetilde{\mcF}(t, v(t))=0$ where
$$
\widetilde{\mcF}: [0,1] \times [- \sqrt{2}, \sqrt{2}] \to \R, \quad \widetilde{\mcF}(t, v):= -(1-2t)v \sqrt{\mff\bigl(v^2\bigr)} + \gamma t \mff\bigl(v^2\bigr) + \gamma t v^2 \mff'\bigl(v^2\bigr).
$$
The situation is plotted in Figure \ref{Fig_Rank0eqn5}: similar as above, we obtain two smooth solutions $v_-:[0,1] \to [- \sqrt{2}, 0[$ and $v_+:[0,1] \to [0, \sqrt{2}[$ satisfying $\widetilde{\mcF}(t, v_\pm(t))=0$ and $v_-(0)=- \sqrt{2}$ and $v_+(0) = 0$. This in turn gives us the fixed points 
\begin{align*}
 Q^{min}_t  :=\psi_5^{-1}(0,0,v_-(t), 0) \quad \mbox{and} \quad  Q^{max}_t & :=\psi_5^{-1}(0,0,v_+(t), 0).
\end{align*}
Using the definition of $\psi_5^{-1}$, we obtain the coordinates of the $Q^{min}_t$ and $Q^{max}_t $ given in \refrankOpoints. Moreover, for $t \to 0$, we recover $Q^{max}=Q^{max}_0$ and $Q^{min}=Q^{min}_0$ of the toric system, see \refcoordEEPoints.
\end{proof}


\section{{\bf The types of the fixed points of $\mathbf{F_t=(J, H_t)}$}}

\label{section typeRankZero}

In this section, we will show that $P_t^{min}, P_t^{max}, Q_t^{min}, Q_t^{max}$ are elliptic-elliptic singular points for all $t \in [0,1]$ and that $A, B, C, D$ are turning from elliptic-elliptic to focus-focus at time $t^-$ and back to elliptic-elliptic at $t^+$ passing through a degeneracy at $t^\pm$.

\begin{proposition} 
\label{Aeeffee}
Let $ 0 < \gamma < \frac{1}{48}$ and set
$$ t^- := \frac{1}{2(1+24\gamma)} \quad \mbox{and} \quad t^+ := \frac{1}{2(1-24\gamma)}. $$
Then the rank zero points $A$, $B$, $C$, $D$ are of elliptic-elliptic type for $0 \leq t  < t^-$ as well as for $t^+ < t \leq 1$ and of focus-focus type for $t^- < t < t^+$.

Moreover, for $t = t^\pm$, the points $A$, $B$, $C$, $D$ are degenerate (see \refdegPoint).
\end{proposition}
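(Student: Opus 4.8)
The plan is to verify the nondegeneracy criterion of \refnondeg\ at each of the four fixed points (which are genuine rank zero points for all $t$ by \refrankOpoints) and then read off the type from the eigenvalue trichotomy of \refpropEigValues. Each of $A,B,C,D$ has exactly two consecutive vanishing entries, so it lies in a single chart $U_\nu$ --- namely $A\in U_7$, $B\in U_2$, $C\in U_6$, $D\in U_3$ --- in which, by \eqref{omIsStandard}, the symplectic form is the standard $\om_{st}$ and the point sits at the origin. I would treat $A$ in full and note that $B$, $C$, $D$ follow by the analogous computation in their charts (or via the symmetries of the octagon construction). In $U_7$ one uses the manifold equations (A)--(F) to solve for $|z_1|^2,\dots,|z_6|^2$ as affine functions of $|z_7|^2$ and $|z_8|^2$, giving locally $J=\tfrac12(2-|z_7|^2+|z_8|^2)$, $H=\tfrac12(6-|z_7|^2)$, and $X=x_2x_3x_4x_6\,(x_7x_8-y_7y_8)$, where the prefactor $x_2x_3x_4x_6$ is an even function of $(x_7,y_7,x_8,y_8)$ equal to $24$ at the origin.

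The key computation is the Hessian at $A$ in coordinates $(x_7,y_7,x_8,y_8)$. I expect $d^2J=\mathrm{diag}(-1,-1,1,1)$ and $d^2H=\mathrm{diag}(-1,-1,0,0)$, while the even prefactor of $X$ contributes only at fourth order, so $d^2X(A)=24\,W$, where $W$ is the Hessian of $x_7x_8-y_7y_8$, i.e. the matrix pairing $x_7$ with $x_8$ by $+1$ and $y_7$ with $y_8$ by $-1$. Writing $s:=1-2t$ and $c:=24\gamma t$, we get $d^2H_t=(1-2t)d^2H+t\gamma\,d^2X$ with off-diagonal entries $\pm c$; in particular $d^2J$ and $d^2H_t$ are linearly independent for every $t\in[0,1]$, which is the first half of \refnondeg.

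For the second half I would compute the characteristic polynomial of $\om_{st}^{-1}d^2H_t$ and find $\lambda^4+(s^2-2c^2)\lambda^2+c^4$, so that $\lambda^2=\nu_\pm:=\tfrac12\big((2c^2-s^2)\pm|s|\sqrt{s^2-4c^2}\big)$. The case $t=0$ is the toric one, where $A,B,C,D$ are elliptic-elliptic by \refcoordEEPoints, so I may assume $t>0$, whence $c>0$ and $\nu_+\nu_-=c^4>0$ rules out a vanishing eigenvalue. The sign of the discriminant $s^2-4c^2$ then controls the type, and it factors as $s^2-4c^2=\big(1-2t(1+24\gamma)\big)\big(1-2t(1-24\gamma)\big)$, whose only roots in $[0,1]$ are $t^-$ and $t^+$. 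A sign table gives: for $0<t<t^-$ and $t^+<t\le1$ both $\nu_\pm$ are real and negative (sum $2c^2-s^2<0$, product $c^4>0$), so the four eigenvalues are $\pm i\alpha,\pm i\beta$ with $\alpha\neq\beta$, hence elliptic-elliptic by \refpropEigValues; for $t^-<t<t^+$ the pair $\nu_\pm$ is genuinely complex conjugate, so the eigenvalues take the form $\pm\alpha\pm i\beta$, hence focus-focus. In both regimes the four eigenvalues are distinct, so \refnondeg\ confirms nondegeneracy.

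Finally, at $t=t^\pm$ I would establish degeneracy directly. Reordering coordinates to $(x_7,x_8,y_7,y_8)$ puts $\om_{st}^{-1}(a\,d^2J+b\,d^2H_t)$ into anti-block-diagonal form $\left(\begin{smallmatrix}0&B\\C&0\end{smallmatrix}\right)$, whose squared eigenvalues are those of the $2\times2$ matrix $BC$; the discriminant of the characteristic polynomial of $BC$ works out to $(2a+bs)^2\,b^2\,(s^2-4c^2)$. When $s^2=4c^2$, i.e. precisely at $t=t^\pm$, this vanishes for \emph{every} choice of $(a,b)$, so every linear combination has a repeated eigenvalue pair and none can have four distinct eigenvalues; by \refnondeg\ the point is then degenerate. (Alternatively, once the elliptic-elliptic $\leftrightarrow$ focus-focus transition is in hand, degeneracy at the transition times is immediate from \refdegPoint.) I expect the main obstacle to be the honest local computation of $d^2X$ --- verifying the value of the prefactor and that its variation does not affect the Hessian --- together with the uniform argument at $t^\pm$ that rules out all linear combinations at once, rather than merely the distinguished one $\om_{st}^{-1}d^2H_t$.
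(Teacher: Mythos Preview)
Your approach is essentially the same as the paper's: compute the Hessians in the appropriate chart, form $\om_{st}^{-1}d^2H_t$, and analyse the biquadratic characteristic polynomial via the sign of its discriminant. Your notation $s=1-2t$, $c=24\gamma t$ matches the paper's after unwinding, and your factorisation $s^2-4c^2=\bigl(1-2t(1+24\gamma)\bigr)\bigl(1-2t(1-24\gamma)\bigr)$ is exactly the paper's discriminant analysis.

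There is, however, a genuine gap at $t=\tfrac12$. The full discriminant of the quadratic in $\lambda^2$ is $s^2(s^2-4c^2)$, not $s^2-4c^2$; your own formula $\nu_\pm=\tfrac12\bigl((2c^2-s^2)\pm |s|\sqrt{s^2-4c^2}\bigr)$ already shows this. At $t=\tfrac12$ you have $s=0$, so $\nu_+=\nu_-=c^2>0$ are \emph{real and coincident}, and the four eigenvalues of $\om_{st}^{-1}d^2H_{1/2}$ collapse to $\pm c$, each with multiplicity two. Hence the claim that ``for $t^-<t<t^+$ the pair $\nu_\pm$ is genuinely complex conjugate'' fails precisely at $t=\tfrac12$, and $\om_{st}^{-1}d^2H_{1/2}$ alone does not witness nondegeneracy. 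The paper handles this by passing to a genuine linear combination with $d^2J$ and exhibiting four distinct eigenvalues $\pm1\pm i$. You can repair this with your own block computation: your discriminant $(2a+bs)^2b^2(s^2-4c^2)$ at $t=\tfrac12$ becomes $-16a^2b^2c^2$, which is strictly negative for any $a\neq 0$, $b\neq 0$; thus $BC$ has non-real conjugate eigenvalues, and the linear combination $a\,d^2J+b\,d^2H_{1/2}$ has four distinct eigenvalues of focus-focus type. So your general formula already contains the fix --- you just need to invoke it for $t=\tfrac12$ (or more uniformly, for all $t\in(t^-,t^+)$ choose $(a,b)$ with $2a+bs\neq0$ and $b\neq0$).

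Your direct degeneracy argument at $t=t^\pm$ is a nice strengthening over the paper, which simply appeals to \refdegPoint. By showing that the discriminant $(2a+bs)^2b^2(s^2-4c^2)$ vanishes identically in $(a,b)$ when $s^2=4c^2$, you rule out \emph{every} linear combination at once, establishing degeneracy from \refnondeg\ without the transition-family machinery.
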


\begin{proof}
{\it Step 1: Type of the point ${A}$:} 
Since the point $A$ has coordinates $A_7 = 0 = A_8$, we work with the chart $(U_7, \psi_7)$ where
$$
\psi_7^{-1} : \R^4 \to U_7, \quad \psi^{-1}_7(x_7, y_7, x_8, y_8) = [x_1, 0, x_2, 0, \ldots, x_7, y_7, x_8, y_8]
$$
with $\vert z_7 \vert^2 = x_7^2 + y_7^2$ and $\vert z_8 \vert^2 = x_8^2 + y_8^2$ and
\begin{align*}
x_1 &= \sqrt{2 - \vert z_7 \vert^2 + \vert z_8 \vert^2}, &&&x_3 &= \sqrt{6 - \vert z_7 \vert^2}, &&& x_5 &= \sqrt{4 + \vert z_7 \vert^2 - \vert z_8 \vert^2}, \\
x_2 &= \sqrt{6 - 2\vert z_7 \vert^2 + \vert z_8 \vert^2} , &&& x_4 &= \sqrt{8 - \vert z_8 \vert^2}, &&& x_6 &= \sqrt{2 + 2\vert z_7 \vert^2 - \vert z_8 \vert^2}.
\end{align*}
 We have $A = \psi_7^{-1}(0,0,0,0)$ in this chart and we find
\begin{align}
\notag
 & (H_t \circ \psi^{-1}_7)(x_7, y_7, x_8, y_8)   = \frac{(1-2t)}{2}(6 - \vert z_7 \vert^2)  \\ \notag
& \qquad \quad + t\gamma (x_7x_8 - y_7y_8) \sqrt{(6 - 2\vert z_7 \vert^2 + \vert z_8 \vert^2) (6 - \vert z_7 \vert^2)(8-\vert z_8 \vert^2)(2 + 2\vert z_7 \vert^2 -\vert z_8 \vert^2)} \\ 
\label{klm1}
& =: \mcK(x_7, y_7, x_8, y_8) + \mcL(x_7, y_7, x_8, y_8) \sqrt{\mcM(x_7, y_7, x_8, y_8)}.
\end{align}
To determine the Hessian $d^2(H_t \circ \psi^{-1}_7)(0,0,0,0)$, we calculate 
\begin{align*}
 d^2 \mcK(x_7, y_7, x_8, y_8) =
 \begin{pmatrix}
  -(1-2t) & 0 & 0 &0 \\
  0 & -(1-2t) & 0 &0  \\
  0 & 0& 0 & 0 \\
  0 & 0& 0 & 0
 \end{pmatrix}
\end{align*}
and, for $\si, \tau \in \{x_7, y_7, x_8, y_8\}$, we get
\begin{align}
\notag
 \del_\si(\mcL \sqrt{\mcM}) & = (\del_\si \mcL) \sqrt{\mcM} + \mcL \ \frac{1}{2 \sqrt{\mcM}} \ \del_\si \mcM, \\
 \label{klm2}
 \del^2_{\si \tau} (\mcL \sqrt{\mcM}) & = (\del^2_{\si \tau} \mcL) \sqrt{\mcM} + (\del_\si \mcL) \ \frac{1}{2 \sqrt{\mcM}} \ \del_\tau \mcM + (\del_\tau \mcL ) \ \frac{1}{2 \sqrt{\mcM}} \ \del_\si \mcM  \\
 \notag
 & \quad + \mcL \ \del_\tau\left(\frac{1}{2 \sqrt{\mcM}} \right)\ \del_\si \mcM + \mcL \ \frac{1}{2 \sqrt{\mcM}} \ \del^2_{\si\tau} \mcM.
\end{align}
We calculate
\begin{align*}
d\mcL(x_7, y_7, x_8, y_8) = ( t \ga x_8, - t \ga y_8, t \ga x_7, -t \ga y_7 ), 
\quad 
  d^2 \mcL(x_7, y_7, x_8, y_8) =
 \begin{pmatrix}
  0 & 0 & t\ga &0 \\
  0 & 0 & 0 & - t\ga  \\
  t \ga & 0& 0 & 0 \\
  0 & t \ga & 0 & 0
 \end{pmatrix}.
\end{align*}
Now note that $\del_\si \mcL$ and $\del_\si \mcM$ and $\del^2_{\si, \tau} \mcM$ contain at least one of the variables $x_7, y_7, x_8, y_8$ as a factor and that $\mcL(0,0,0,0)=0$ and $\sqrt{\mcM(0,0,0,0)}=24$. This leads to 
\begin{align*}
 d^2(\mcL \sqrt{\mcM})(0,0,0,0)= 
 \begin{pmatrix}
  0 & 0 & 24t\ga &0 \\
  0 & 0 & 0 & - 24t\ga  \\
  24 t \ga & 0& 0 & 0 \\
  0 & - 24 t \ga & 0 & 0
 \end{pmatrix}
\end{align*}
and therefore we get at $\psi_7(A) = (0,0,0,0)$
$$ d^2(H_t \circ \psi_7^{-1})(0,0,0,0) = \begin{pmatrix}
2t-1 & 0 & 24t\gamma & 0 \\
0 & 2t-1 & 0 & -24t\gamma \\
24t\gamma & 0 & 0 & 0 \\
0 & -24t\gamma & 0 & 0
\end{pmatrix}. $$
According to \eqref{omIsStandard}, in the chart $(U_7, \psi_7)$, the symplectic form is denoted by $\om_7$ and shown to be the standard symplectic form in \eqref{omIsStandard}. We get
\begin{eqnarray*}
(\om_7)^{-1}_{(0,0,0,0,)} d^2 ( H_t \circ \psi_7^{-1}) {(0,0,0,0)} = \begin{pmatrix}
0 & 2t-1 & 0 & -24t\gamma \\
1-2t & 0 & -24t\gamma & 0 \\
0 & -24t\gamma & 0 & 0 \\
-24t\gamma & 0 & 0 & 0
\end{pmatrix}. \\
\end{eqnarray*}
{\it First, consider the case ${t \neq \frac{1}{2}}$:} Set $c := 24\gamma$ and compute the characteristic polynomial $\chi$ of $\om_7^{-1} d^2(H_t \circ \psi^{-1}_7)$ in $(0, 0, 0, 0)$ as
$$ \chi(\xi) = \xi^4 + (1 - 4t + 2(2-c^2)t^2)\xi^2 + c^4t^4. $$
Now look at the polynomial $\widetilde{\chi}$ defined by requiring $\widetilde{\chi}(\xi) = \chi(\xi^2)$, i.e., 
$$ \widetilde{\chi}(\xi) = \xi^2 + (1 - 4t + 2(2-c^2)t^2)\xi + c^4t^4. $$
$\widetilde{\chi}$ has discriminant $\de$ given by
\begin{eqnarray*}
\de := -(2t-1)^2(4(c^2-1)t^2 + 4t-1) 
= -4(c^2-1)(2t-1)^2 \left( t-\frac{1}{2(1-c)} \right) \left( t-\frac{1}{2(1+c)} \right).
\end{eqnarray*}
Setting
$$ t^-: = \frac{1}{2(1+c)}= \frac{1}{2(1+24\gamma)} \qquad \mbox{ and } \qquad t^+ :=\frac{1}{2(1-c)} = \frac{1}{2(1-24\gamma)} $$
yields
$$ \de = 4(1-c^2)(2t-1)^2(t-t^+)(t-t^-). $$
The assumption $0 < \gamma < \frac{1}{48}$ implies that $0 < t^- < \frac{1}{2} < t^+ < 1$ and in particular that $c < \frac{1}{2}$ and hence $1-c^2 > 0$. Moreover, since $t \neq \frac{1}{2}$, the factor $(2t-1)^2$ is strictly positive.
Thus the sign of the discriminant $\de$ depends on the position of $t$ relative to $t^-$ and $t^+$, leaving us with two cases:

{\it Consider the case ${ t^- < t < t^+}$:} Here we have $\de < 0$, so that $\widetilde{\chi}$ has two complex conjugate roots $a + ib$ and $a - ib$ with nonzero imaginary part $b$. Hence, the four zeros of $\chi$ are of the form $ \pm \alpha \pm i\beta$, i.e., $A$ is nondegenerate and of focus-focus type according to the list after \refnondeg.

{\it Consider the case ${0 < t < t^-}$ or ${t^+ < t \leq 1}$:} Here we have $\de > 0$ so that $\widetilde{\chi}$ has two real roots
$$ \lambda_\pm := \frac{-(1-4t+2(2-c^2)t^2) \pm \sqrt{\de}}{2}. $$
We have
$$ \bti: = 2(2-c^2)t^2 - 4t + 1 = 2(2-c^2) \left(t-\frac{1}{2-c\sqrt{2}} \right) \left(t-\frac{1}{2+c\sqrt{2}} \right) $$
which is positive since
$$ t^- < \frac{1}{2+c\sqrt{2}} < \frac{1}{2-c\sqrt{2}} < t^+ $$
and $t < t^-$ or $t^+ < t$. Hence we find $\lambda_- < 0$. Moreover, $ \bti > \sqrt{\de}$ since $b^2 - \Delta = 4c^4t^4 > 0$. This implies $\lambda_+ < 0$ as well. Therefore, the zeros of $\chi$ are of the form $\pm i\alpha, \pm i\beta$ and $A$ is nondegenerate of elliptic-elliptic type according to the list after \refnondeg.

{\it Second, consider the case ${ t = \frac{1}{2}}$:} Here we find $\de = 0$, so the eigenvalues are not distinct and we cannot conclude the type of $A$ in the same way as above. We now start with
$$ (\om_7^{-1})_{(0,0,0,0)} d^2(H_t \circ \psi^{-1}_7){(0,0,0,0)}
= \begin{pmatrix}
0 & 0 & 0 & -12\gamma \\
0 & 0 & -12\gamma & 0 \\
0 & -12\gamma & 0 & 0 \\
-12\gamma & 0 & 0 & 0
\end{pmatrix} $$
and consider in addition $(J \circ \psi_7^{-1})(x_7, y_7 x_8, y_8) = \frac{1}{2}(2 - x_7^2 - y_7^2 + x_8^2 + y_8^2)$. We find
$$ d^2(J \circ \psi_7^{-1}){(0,0,0,0)} = \begin{pmatrix}
-1 & 0 & 0 & 0 \\
0 & -1 & 0 & 0 \\
0 & 0 & 1 & 0 \\
0 & 0 & 0 & 1
\end{pmatrix}
$$
and thus
$$
(\om_7^{-1})_{(0,0,0,0)} d^2(J \circ \psi^{-1}_7){(0,0,0,0)} =  \begin{pmatrix}
0 & -1 & 0 & 0 \\
1 & 0 & 0 & 0 \\
0 & 0 & 0 & 1 \\
0 & 0 & -1 & 0
\end{pmatrix} .$$
The linear combination
$$ (\om_7)^{-1}_{(0,0,0,0)} d^2( J  \circ \psi^{-1}_7) (0,0,0,0) - \frac{1}{12\gamma}  (\om_7)^{-1}_{(0,0,0,0)} (d^2H_t \circ \psi^{-1}_7)(0,0,0,0) = \begin{pmatrix}
0 & -1 & 0 & 1 \\
1 & 0 & 1 & 0 \\
0 & 1 & 0 & 1 \\
1 & 0 & -1 & 0
\end{pmatrix} $$
has four distinct eigenvalues $\pm 1 \pm i$, so $A=\psi^{-1}_7(0,0,0,0)$ is nondegenerate of focus-focus type.

{\it Proof for the point ${B}$:} 
Note that $B = [\sqrt{2}, \ 0, \ 0, \ \sqrt{2}, \ 2, \ 2\sqrt{2}, \ \sqrt{6}, \ \sqrt{6}]$ lies in the chart $(U_2, \psi_2)$ given by
$$
\psi_2^{-1} : \R^4 \to U_2, \quad \psi_2^{-1}(x_2, y_2, x_3, y_3) = [x_1, 0, x_2, y_2, x_3, y_3, x_4, 0, \ldots, x_8, 0]
$$
with $\vert z_2 \vert^2 = x_2^2 + y_2^2$ and $\vert z_3 \vert^2 = x_3^2 + y_3^2$ and
\begin{align*}
x_1 &= \sqrt{2 + \vert z_2 \vert^2 - \vert z_3 \vert^2}, &&& x_5 &= \sqrt{4 - \vert z_2 \vert^2 + \vert z_3 \vert^2}, &&& x_7 &= \sqrt{6 - \vert z_3 \vert^2}, \\
x_4 &= \sqrt{2 - \vert z_2 \vert^2 + 2\vert z_3 \vert^2}, &&& x_6 &= \sqrt{8 - \vert z_2 \vert^2} , &&& x_8 &= \sqrt{6 + \vert z_2 \vert^2 - 2\vert z_3 \vert^2} .
\end{align*}
and is given by $B=\psi_2^{-1}(0,0,0,0)$. Proceeding as above for the fixed point $A$, we find 
$$ 
(\om_2^{-1})_{(0,0,0,0)} d^2(H_t \circ \psi^{-1}_2){(0,0,0,0)}
= 
\begin{pmatrix}
0 & 0 & 0 & -24\gamma t \\
0 & 0 & -24\gamma t & 0 \\
0 & -24\gamma t & 0 & 1-2t \\
-24 \gamma t & 0 & 2t-1 & 0
\end{pmatrix}. 
$$
This matrix has the same characteristic polynomial as the corresponding matrix in the calculations for the point $A$ above. Hence, all calculations and implications are identical to the ones made for the point $A$.

{\it Proof for the points ${C}$ and ${D}$:} Analogous to the ones for $A$ and $B$.
\end{proof}

We now study the type of the remaining four fixed points.

\begin{proposition}
\label{eeFixedPoints}
The points $P_t^{min}, P_t^{max}, Q_t^{min}$ and $Q_t^{max}$ are elliptic-elliptic for all $t \in [0, 1]$.
\end{proposition}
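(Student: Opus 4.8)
The plan is to determine the types via \refnondeg and the list of normal forms following it, working in the charts where the fixed points live, and to leverage the fact that a nondegenerate fixed point cannot change type without first degenerating. By \refrankOpoints, the points $P_t^{min}=\psi_1^{-1}(0,0,u_-(t),0)$ and $P_t^{max}=\psi_1^{-1}(0,0,u_+(t),0)$ lie in the chart $(U_1,\psi_1)$ (for $P_t^{max}$ at least for $t>0$, since $u_+(0)=\sqrt2$ sits on the boundary of $U_1$), while $Q_t^{min},Q_t^{max}$ lie in $(U_5,\psi_5)$; by \eqref{omIsStandard} the symplectic form in these charts is $\om_{st}$. At $t=0$ the system is toric by \refthOctagon, so all eight fixed points are elliptic-elliptic (\refcoordEEPoints). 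Since the type of a nondegenerate fixed point is a discrete invariant that is locally constant along the family $F_t$ --- an elliptic-elliptic point can only become focus-focus or elliptic-hyperbolic by having eigenvalues collide on, or cross through, the imaginary axis, i.e.\ by passing through a degeneracy, exactly as happens for $A,B,C,D$ at $t^\pm$ in \refAeeffee --- it suffices to prove that $P_t^{min},P_t^{max},Q_t^{min},Q_t^{max}$ are \emph{nondegenerate for every} $t\in[0,1]$.

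I would treat $P_t^{min},P_t^{max}$; the case of $Q_t^{min},Q_t^{max}$ in $(U_5,\psi_5)$ is identical with \eqref{Rank0eqn2} in place of \eqref{Rank0eqnLem}. The key structural observation is that $H_t\circ\psi_1^{-1}$ depends on $(x_1,y_1)$ only through $\abs{z_1}^2=x_1^2+y_1^2$ and that these fixed points have $x_1=y_1=0$ and $y_2=0$. Hence every mixed second derivative involving $x_1$ or $y_1$ vanishes there, the two pure second derivatives in $x_1,y_1$ coincide, and the $\del_{x_2}\del_{y_2}$-entry vanishes because of $y_2=0$; thus
\begin{equation*}
d^2(H_t\circ\psi_1^{-1}) = \operatorname{diag}(a,a,b,d), \qquad d^2(J\circ\psi_1^{-1}) = \operatorname{diag}(1,1,0,0),
\end{equation*}
with $b=\del_{x_2}^2(H_t\circ\psi_1^{-1})$ and $d=\del_{y_2}^2(H_t\circ\psi_1^{-1})$ at the point. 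Consequently $\om_{st}^{-1}d^2(H_t\circ\psi_1^{-1})$ is block diagonal, with eigenvalues $\pm i a$ from the $(x_1,y_1)$-block and the two roots of $\xi^2+bd$ from the $(x_2,y_2)$-block. The matrices $d^2 J$ and $d^2 H_t$ are visibly linearly independent, and a combination $\mu\,d^2 J+\lambda\,d^2 H_t$ can be arranged to have four distinct eigenvalues precisely when $bd\neq0$; so by \refnondeg the fixed point is nondegenerate if and only if $b\neq0$ and $d\neq0$.

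Both nonvanishings are within reach. For $d$: abbreviating $u=u_\pm(t)$ and $\mff=\mff(u^2)$, a direct computation gives $d=(1-2t)+\gamma t\,u\,\mff'(u^2)/\sqrt{\mff}$, and substituting the relation $\gamma t\,u\,\mff'(u^2)/\sqrt{\mff}=-(1-2t)-\gamma t\sqrt{\mff}/u$ obtained by dividing \eqref{Rank0eqnLem} by $u\sqrt{\mff}$ collapses this to $d=-\gamma t\sqrt{\mff(u^2)}/u$, which is nonzero for $t\in\ ]0,1]$ because $\sqrt{\mff(u^2)}=x_3x_4x_6x_7x_8>0$ in $U_1$ and $u=u_\pm(t)\neq0$ (indeed $\mcF(t,0)=\gamma t\,\mff(0)\neq0$). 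For $b$: writing $\Phi(x_2):=(H_t\circ\psi_1^{-1})(0,0,x_2,0)$ one has $b=\Phi''(u)$, and a short calculation yields the clean factorisation $\mcF(t,x_2)=\sqrt{\mff(x_2^2)}\,\Phi'(x_2)$; differentiating and using $\Phi'(u)=0$ gives $\del_u\mcF(t,u)=\sqrt{\mff(u^2)}\,\Phi''(u)=\sqrt{\mff}\,b$. Since $u_\pm$ are the smooth solution branches of $\mcF(t,\cdot)=0$ produced in \refrankOpoints, the implicit function theorem forces $\del_u\mcF\neq0$ along them, whence $b\neq0$.

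The main obstacle is precisely this last point: I must be certain that $\del_u\mcF$ does not vanish along $u_\pm$ for any $t\in[0,1]$ (equivalently, that the branches never develop a vertical tangent). My plan is to read this off the construction in \refrankOpoints; should that construction leave exceptional times uncovered, I would fall back on a direct sign analysis of $b$, using the factorisation above together with the explicit quintic $\mff(\ze)=(2+\ze)(6+\ze)(8-\ze)(4-\ze)(2-\ze)$, which is strictly positive on $\ze=u^2\in[0,2]$, to show that $b$ and $d$ have equal sign (so that, geometrically, $P_t^{max}$ stays a maximum and $P_t^{min}$ a minimum of the reduced height function on $M^{red,0}$). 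Granting $b,d\neq0$ for $t\in\ ]0,1]$, these fixed points are nondegenerate there, and they are elliptic-elliptic, in particular nondegenerate, at $t=0$ by \refcoordEEPoints; being nondegenerate for all $t\in[0,1]$ they are of constant type, hence elliptic-elliptic throughout. The same argument in $(U_5,\psi_5)$ disposes of $Q_t^{min}$ and $Q_t^{max}$.
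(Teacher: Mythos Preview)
Your structural analysis is correct and matches the paper: in the chart $(U_1,\psi_1)$ the Hessian of $H_t$ at $(0,0,u,0)$ is diagonal, $d^2(H_t\circ\psi_1^{-1})=\operatorname{diag}(a,a,b,d)$, and the eigenvalues of $\om_{st}^{-1}d^2H_t$ are $\pm ia$ and the roots of $\xi^2+bd$. Your computation of $d$ (the paper's $c(u)$) via \eqref{Rank0eqnLem}, giving $d=-\gamma t\sqrt{\mff(u^2)}/u\neq0$, is exactly the paper's manipulation. Your factorisation $\mcF(t,x_2)=\sqrt{\mff(x_2^2)}\,\Phi'(x_2)$ is a clean observation and yields correctly $\del_u\mcF(t,u)=\sqrt{\mff(u^2)}\,b$ at critical points.

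The gap is in your main argument for $b\neq0$. You write: ``the implicit function theorem forces $\del_u\mcF\neq0$ along them.'' This is circular. The implicit function theorem runs the other way: $\del_u\mcF\neq0$ at a zero \emph{implies} a locally smooth branch, not conversely. A smooth solution curve can perfectly well pass through a point where $\del_u\mcF=0$ (e.g.\ $\mcF(t,u)=(u-t)^2$ with the smooth branch $u=t$). The construction of $u_\pm$ in \refrankOpoints\ is itself based on reading off Figures~\ref{Fig_Rank0eqn1}--\ref{Fig_Rank0eqn2} and does not independently certify $\del_u\mcF\neq0$ for all $t\in[0,1]$; invoking it to prove $b\neq0$ begs the question.

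Your fallback --- a direct sign analysis of $b$ --- is precisely what the paper carries out. The paper rewrites $b(u)$, again using \eqref{Rank0eqnLem}, as $b(u)=\dfrac{\gamma t\,\mcP(u)}{u\,\mcM(u)^{3/2}}$ for an explicit degree-$20$ polynomial $\mcP$, and then verifies $\mcP(u)<0$ on $[-\sqrt{2},\sqrt{2}]$ by locating its extrema (the critical points are at $u=0,\pm1,\pm\sqrt{2}$ among others, with values $-589824$ and $-880236$). This gives $\operatorname{sign}(b)=\operatorname{sign}(d)=-\operatorname{sign}(u)$, hence $bd>0$ directly, so the second eigenvalue pair is purely imaginary and the point is elliptic-elliptic for every $t>0$ without any appeal to continuity of type. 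Your continuity-of-type argument is a legitimate alternative framing once $b,d\neq0$ is established, but you still owe the direct verification of $b\neq0$; the paper's polynomial $\mcP$ is how that debt is paid.
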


\begin{proof}
We showed in \refrankOpoints\ that we recover via $P_0^{min}=P^{min}$, $P_0^{max}=P^{max}$, $Q_0^{min}=Q^{min}$, $Q_0^{max}=Q^{max}$ the fixed points of the toric system $(M, \om, F=(J, H))$ which are all elliptic-elliptic. Thus it is sufficient to prove \refeeFixedPoints\ for $t>0$.

{\it First, consider ${P_t^{min}}$ and ${P_t^{max}}$:} 
As in the proof of \refrankOpoints, we work in the chart $(U_1, \psi_1)$. Here we have $\psi_1(P_t^{min}) = (0,0,u_-(t),0)$ and $\psi_1(P_t^{max}) = (0,0,u_+(t),0)$ where $t \mapsto u_{\pm}(t) \in [-\sqrt{2}, \sqrt{2}]$ are the two real solutions of \eqref{Rank0eqnLem}.
In the chart $(U_1, \psi_1)$ with identification $ x_1^2 + y_1^2=\abs{z_1}^2 $ and $ x_2^2 + y_2 ^2= \abs{z_2}^2 $, we get 
\begin{align*}
 & (H_t \circ \psi^{-1}_1)(x_1, y_1, x_2, y_2)  = \frac{1-2t}{2}(2-\vert z_1 \vert^2 + \vert z_2 \vert^2) \\
 & \quad + \gamma t x_2 \sqrt{(2 - \vert z_1 \vert^2 + \vert z_2 \vert^2)(6 - 2\vert z_1 \vert^2 + \vert z_2 \vert^2)(8 - \vert z_2 \vert^2)(4 + \vert z_1 \vert^2 - \vert z_2 \vert^2)(2 + 2\vert z_1 \vert^2 - \vert z_2 \vert^2)}
 \end{align*}
 and set analogously to \eqref{klm1}
\begin{align*} 
  =: \mcK(x_1, y_1, x_2, y_2) + \mcL(x_1, y_1, x_2, y_2) \sqrt{\mcM(x_1, y_1, x_2, y_2)}.
\end{align*}
 We need to calculate the Hessian of $H_t\circ \psi^{-1}_1$ in the point $(0,0,u,0)$ for $u:=u_\pm(t)$. 
All following calculations only care about the variables $x_1, y_1, x_2, y_2$ of the chart while holding true for all $t \in [0,1]$. Thus, for sake of readability, we drop the parameter $t$ in some of the expressions.
We find 
$$
d^2 \mcK(x_1, y_1, x_2, y_2) = 
\begin{pmatrix}
 2t-1 & 0  & 0 & 0 \\
 0 & 2t-1 & 0  & 0 \\
 0 & 0 & (1-2t) & 0 \\
 0 & 0 & 0 & (1-2t)
\end{pmatrix}
$$
and 
$$
\mcL(x_1, y_1, x_2, y_2) = \ga t x_2, \quad d\mcL(x_1, y_1, x_2, y_2) = (0, 0, \ga t, 0), \quad d^2 \mcL = \mathbf 0. 
$$
The formula for $\del^2_{\si \tau} (\mcL \sqrt{\mcM})$ is the same as in \eqref{klm2}. For $\si, \tau \in \{x_1, y_1, x_2, y_2\}$, note that $\del^2_{\si \tau} \mcL =0$ and that $\del_\si \mcL=0$ for $\si \neq x_2$ and $\del_{x_2} \mcL = \ga t$. We have 
\begin{align*}
 \mcM(u):=\mcM(0,0,u,0) & = (2+u^2)(6+u^2)(8-u^2)(4-u^2)(2-u^2) \\
 & = - u^{10}+ 6 u^8+ 44 u^6 - 216 u^4- 160 u^2 + 768
\end{align*}
and the first partial derivatives of $\mcM$ are of the form 
$$
\del_\si \mcM (x_1, y_1, x_2, y_2) = \si \ Rest(x_1^2+ y^2_1, x^2_2 + y_2^2)
$$
for $\si \in \{x_1, y_1, x_2, y_2\}$ such that $\del_\si \mcM(0,0,u,0)=0$ for $\si \in \{x_1, y_1, y_2\}$. We compute for $\si=x_2$
$$
\del_{x_2} \mcM(0,0,u,0) = - 10 u^9+ 48 u^7+ 264 u^5- 864 u^3-320 u = \frac{d}{du} \mcM(u)=:\mcM'(u).
$$
The mixed second partial derivatives are of the form
$$
\del_{\si \tau} \mcM (x_1, y_1, x_2, y_2) = \si \ \tau \ Rest(x_1^2+ y^2_1, x^2_2 + y_2^2)
$$
for all $\si, \tau \in \{x_1, y_1, x_2, y_2\}$ with $\si \neq \tau$ so that $\del_{\si \tau} \mcM(0,0,u,0)=0$ for all $\si, \tau \in \{x_1, y_1, x_2, y_2\}$ with $\si \neq \tau$. Thus a look at \eqref{klm2} shows that
$$
\del^2_{\si \tau} \left(\mcL \sqrt{\mcM} \ \right)(0,0,u,0)=0 \qquad \forall\ \si, \tau \in\{x_1, y_1, x_2, y_2\} \ \mbox{ with }  \si \neq \tau.
$$
To get the remaining second derivatives, we first compute
\begin{align*}
 \del^2_{x_1 x_1} \mcM (0,0,u,0) & = 12 u^8 - 84 u^6  - 256 u^4 + 1200 u^2 +  640 \\
 & = \del^2_{y_1 y_1} \mcM (0,0,u,0), \\
\del^2_{x_2 x_2} \mcM (0,0,u,0) & =- 90 u^8+ 336 u^6+ 1320 u^4- 2592 u^2-320, \\
\del^2_{y_2 y_2} \mcM (0,0,u,0) & =- 10 u^8 + 48 u^6+ 264 u^4- 864 u^2 -320  
\end{align*}
and thus
\begin{align*}
\del^2_{x_1 x_1} \left(\mcL \sqrt{\mcM} \ \right)(0,0,u,0) & = \frac{\ga \ t\  u \ (6 u^8 -42 u^6 - 128 u^4  + 600 u^2 + 320)}{\sqrt{\mcM(u)}} \\
& = \del^2_{y_1 y_1} \left(\mcL \sqrt{\mcM} \ \right)(0,0,u,0), \\
\del^2_{y_2 y_2} \left(\mcL \sqrt{\mcM} \ \right)(0,0,u,0) & = \frac{\ga \ t\  u \  (- 5 u^8 + 24 u^6 + 132 u^4 - 432 u^2 -160) }{\sqrt{\mcM(u)}} .
\end{align*}
Setting 
\begin{align}
\label{gFunction}
g(u)& := - 15 u^{18} + 153 u^{16}+ 652 u^{14}- 9216 u^{12} + 3984 u^{10}  + 126288 u^8 - 159552 u^6 \\ \notag
& \qquad\ - 510336 u^4+ 803840 u^2 + 184320
\end{align}
we obtain
\begin{align*}
\del^2_{x_2 x_2} \left(\mcL \sqrt{\mcM} \ \right)(0,0,u,0) & = 
\frac{ 2\ \ga\ t \ u \ g(u)}{\sqrt{\mcM^3(u)}}.
\end{align*}
Setting 
\begin{align*}
 a(u)& := 2t-1 + \frac{\ga \ t\  u \ (6 u^8 -42 u^6 - 128 u^4  + 600 u^2 + 320)}{\sqrt{\mcM(u)}}, \\
 b(u) & := (1-2t) +\frac{ 2\ \ga\ t \ u \ g(u)}{\sqrt{\mcM^3(u)}}, \\
 c(u) & := (1-2t) +\frac{\ga \ t\  u \  (- 5 u^8 + 24 u^6 + 132 u^4 - 432 u^2 -160) }{\sqrt{\mcM(u)}} ,
\end{align*}
we obtain
$$ 
d^2(H_t \circ \psi_1^{-1})|_{(0,0,u,0)} 
= \begin{pmatrix}
a(u) & 0 & 0 & 0 \\
0 & a(u) & 0 & 0 \\
0 & 0 & b(u) & 0 \\
0 & 0 & 0 & c(u)
\end{pmatrix} .
$$
We calculate the type of the singular points by finding the eigenvalues of the matrix
$$ (\om_1^{-1})|_{(0,0,u,0)} d^2(H_t \circ \psi_1^{-1})|_{(0,0,u,0)} = \begin{pmatrix}
0 & a(u) & 0 & 0 \\
-a(u) & 0 & 0 & 0 \\
0 & 0 & 0 & c(u) \\
0 & 0 & -b(u) & 0
\end{pmatrix}. $$
The characteristic polynomial is given by
$$ \chi(\xi) = \bigl(\xi^2 + a^2(u)\bigr)\bigl(b(u)c(u)+\xi^2 \bigr) .$$
Hence the eigenvalues are of the form
$$ \lambda^\pm = \pm i a(u) \quad \mbox{ and } \quad \mu^\pm = \pm i \sqrt{b(u)c(u)}. $$
Now we show that $ \lambda^\pm$ and $\mu^\pm$ are purely imaginary.
Let us first investigate $\lam^\pm$. 
Since $u \in\ ]-\sqrt{2}, \sqrt{2} \ [$ for $t>0$, we have $\mcM(u)>0$ so that $\sqrt{\mcM(u)}$ and $a(u)$ are real numbers.
Since $u=u_\pm(t)$ is also real valued, $\lambda^\pm=\lambda_t^\pm$ is purely imaginary for all $t \in\ ]0, 1]$.
Now we investigate $\mu^\pm$. First, recall the function $\mff$ from \eqref{Rank0eqnLem} and note that $\mff(u^2)=\mcM(u)$ and $2u \mff'(u^2)= \mcM'(u)$. Thus we can rewrite
$$ c(u) = 1 - 2t + \frac{\gamma t u \mcM'(u)}{2u\sqrt{\mcM(u)}} \stackrel{\eqref{Rank0eqnLem}}{=}-\frac{\gamma t \mcM(u)}{u\sqrt{\mcM(u)}} = -\frac{\gamma t \sqrt{\mcM(u)}}{u}. $$
The nominator $\gamma t \sqrt{\mcM(u)} $ is a strict positive number for $t > 0$ and hence, we conclude that $c(u)$ and $u$ have opposite signs. Moreover,
\begin{align*}
b(u) &= 1 - 2t + \frac{2t\gamma u \  g(u)}{\sqrt{\mcM^3(u)}} = 1 - 2t + \frac{\ga t \mcM'(u)}{2 \sqrt{\mcM(u)}} - \frac{\ga t \mcM'(u)}{2 \sqrt{\mcM(u)}} + \frac{2t\gamma u \  g(u)}{\sqrt{\mcM^3(u)}} \\
& \stackrel{\eqref{Rank0eqnLem}}{=} - \frac{\ga t \sqrt{M}}{u} - \frac{\ga t \mcM'(u)}{2 \sqrt{M}} + \frac{ 2 \ga t u \ g(u)}{\sqrt{\mcM^3(u)}} 
= \frac{\ga t}{u \sqrt{\mcM^3(u)}} \left( - \mcM^2(u) -\frac{\mcM(u) \mcM'(u)}{2} + 2 u^2 \ g(u) \right) \\
& =:  \frac{\ga t \ \mcP(u)}{u \sqrt{\mcM^3(u)}}.
\end{align*}
Explicitly, we have
\begin{align*}
 \mcP(u)& = -589824 - 995328 u^4 + 678912 u^6 + 193728 u^8 - 173952 u^{10} -  6624 u^{12} + 14112 u^{14} \\
 &\qquad  - 1044 u^{16} - 240 u^{18} + 24 u^{20}, \\
 \mcP'(u)& = 96 (-2 + u) (-1 + u) u^3 (1 + u) (2 + u) (-8 + u^2) (-2 + u^2) (2 + 
   u^2) (6 + u^2) \\
   & \qquad    (-54 - 10 u^2 + 5 u^4).
\end{align*}
$\mcP$ is strictly negative over $[-\sqrt{2}, \sqrt{2}]$ with maximum $\mcP(0)=\mcP(\pm \sqrt{2}) = -589 824<0$ and minimum $\mcP(\pm 1)=-880236<0$, see Figure \ref{PolynomP}.

\begin{figure}[h]
\centering
\begin{subfigure}{0.33\textwidth}
\centering
\includegraphics[width=1.1\linewidth]{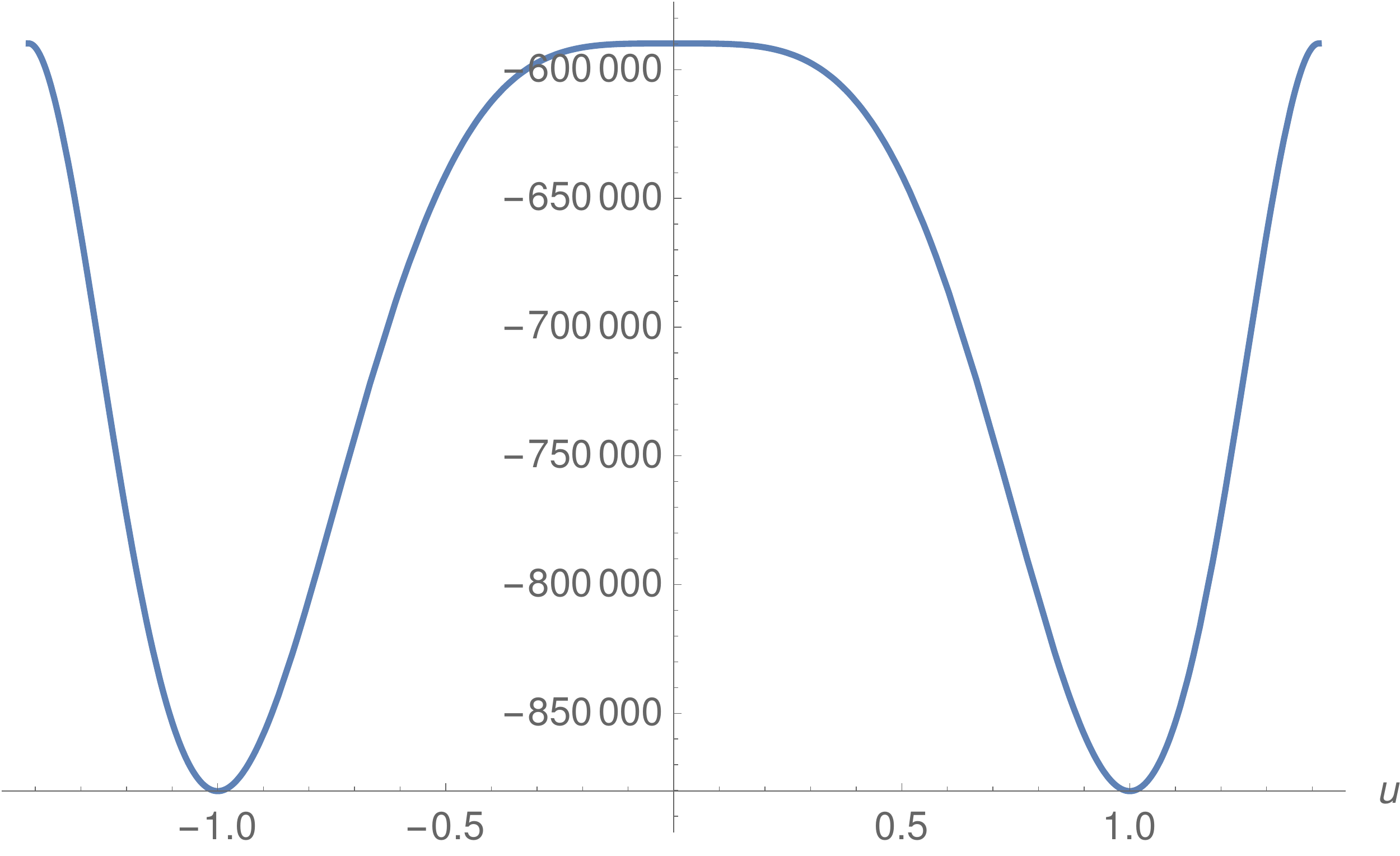}
\end{subfigure}%
\hspace{10mm}
\begin{subfigure}{0.33\textwidth}
\centering
\includegraphics[width=1.1\linewidth]{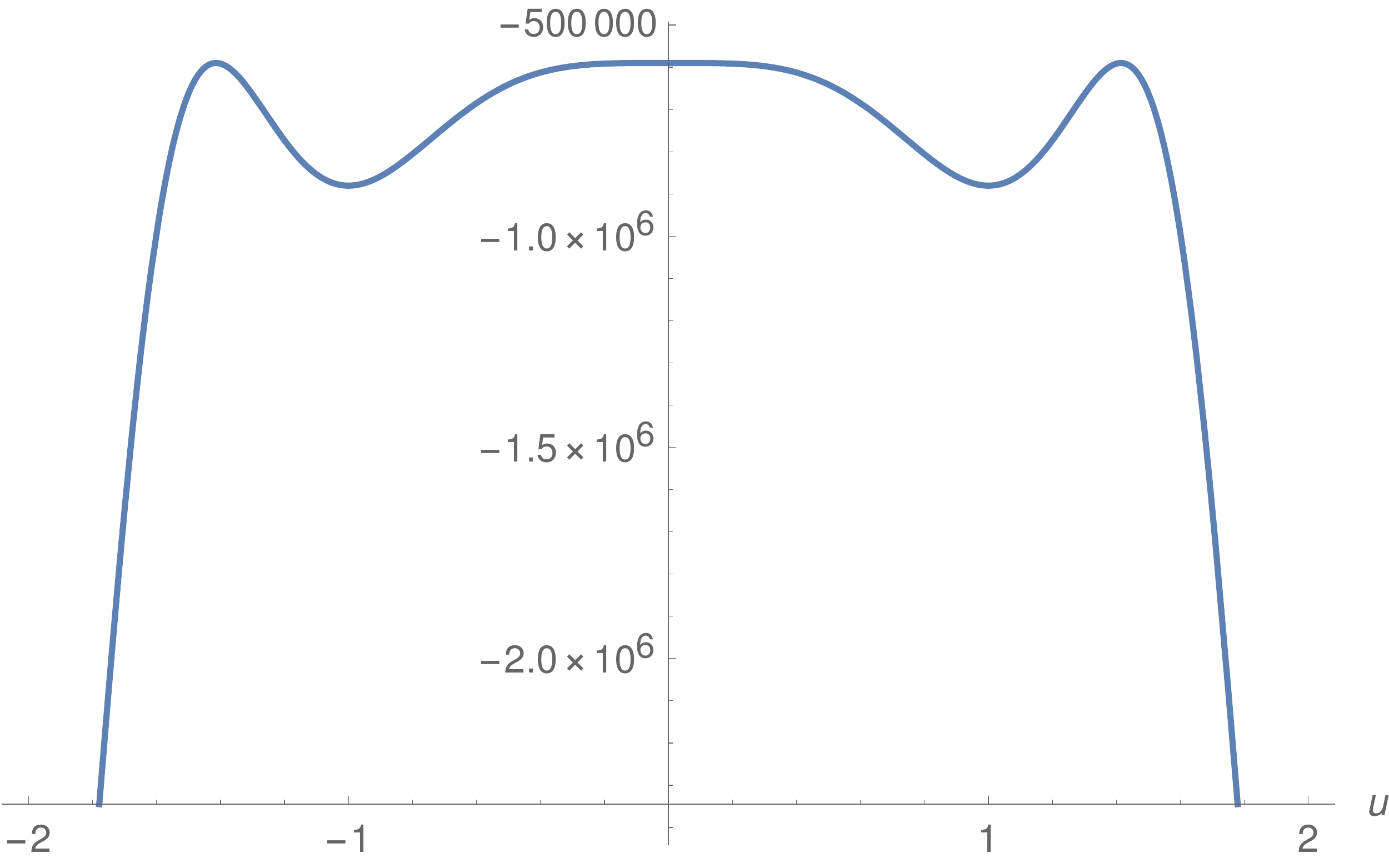}
\end{subfigure}%

\caption{{\em Left:} $\mcP$ plotted from $-\sqrt{2}$ to $\sqrt{2}$, displaying the maxima at $u=0, \pm \sqrt{2}$ and the minima at $u=\pm 1$. {\em Right:} $\mcP$ plotted over an interval strictly larger than $[-\sqrt{2}, \sqrt{2}]$. Already slightly outside $[-\sqrt{2}, \sqrt{2}]$, the function $\mcP$ descends rapidly, becoming `even negativer'. Both plots are done with {\em Mathematica}.}
\label{PolynomP}
\end{figure}

\noindent
Hence $b(u)$ and $u$ have opposite signs.
Altogether we conclude $b(u)c(u) > 0$ and thus $\mu^\pm=\mu_t^\pm$ is purely imaginary for all $t \in [0,1]$.

Finally, a look at the list after \refrankOpoints\ shows that $P_t^{min}$ and $P_t^{max}$ are nondegenerate points of elliptic-elliptic type for all $t \in [0, 1]$.

{\it Second, consider ${Q_t^{min}}$ and ${Q_t^{max}}$:} Here we work in the chart $(U_5, \psi_5)$. Keeping $x_5^2 + y_5^2 = \abs{z_5}^2$ and $x_6^2 + y_6^2 = \abs{z_6}^2$ in mind, we calculate
\begin{align*}
  & (H_t \circ \psi^{-1}_5)(x_5, y_5, x_6, y_6) = \frac{1-2t}{2}(4 + \vert z_5 \vert^2 - \vert z_6 \vert^2) \\
  & \quad + \gamma t x_6 \sqrt{(2 - \vert z_5 \vert^2 + \vert z_6 \vert^2)(6 - 2\vert z_5 \vert^2 + \vert z_6 \vert^2)(8 - \vert z_6 \vert^2)(4 + \vert z_5 \vert^2 - \vert z_6 \vert^2)(2 + 2\vert z_5 \vert^2 - \vert z_6 \vert^2)}
\end{align*}
and the Hessian of $(H_t \circ \psi^{-1}_5)$ in the point $(0,0,u,0) \in \psi_5(U_5)$ with $u:=u_\pm(t)$ is given by
$$ d^2( H_t \circ \psi^{-1}_5)|_{(0,0,u,0)}
= 
\begin{pmatrix}
\ati(u) & 0 & 0 & 0 \\
0 & \ati(u) & 0 & 0 \\
0 & 0 & \bti(u) & 0 \\
0 & 0 & 0 & \cti(u)
\end{pmatrix} $$
where
\begin{eqnarray*}
\ati(u) &:=& 1 - 2t + \frac{2 \gamma t u (160 + 300 u^2 - 64 u^4 - 21 u^6 + 3 u^8)}{\sqrt{768 - 160 u^2 - 216 u^4 + 44 u^6 + 6 u^8 - u^{10}}} \\
\bti(u) &:=& -1 + 2t + \frac{2 \gamma t u \ g(u)}{(768 - 160 u^2 - 216 u^4 + 44 u^6 + 6 u^8 - u^{10})^{3/2}}
\end{eqnarray*}
with $g(u)$ the same function as in \eqref{gFunction} and 
\begin{eqnarray*}
\cti(u) &:=& -1 + 2t + \frac{\gamma t u (-160 - 432 u^2 + 132 u^4 + 24 u^6 - 5 u^8)}{\sqrt{768 - 160 u^2 - 216 u^4 + 44 u^6 + 6 u^8 - u^{10}}}.
\end{eqnarray*}
Note that $\ati(u)$ coincides with $a(u)$ except for the minus sign at the term $(1-2t)$ right at the beginning. The same holds true for $\bti(u)$ and $b(u)$ and $\cti(u)$ and $c(u)$. Thus we obtain almost the same matrix up to these terms. Now use \eqref{Rank0eqn2} instead of \eqref{Rank0eqnLem}, where this opposite sign also appears. Similar calculations as above show that $Q_t^{min}$ and $Q_t^{max}$ are also nondegenerate singular points of elliptic-elliptic type.
\end{proof}


\section{{\bf The singular points of rank one and their types}}

\label{section rankOneType}

For $t=0$, the system $F_0=(J, H_0)=(J, H)$ is toric (see \refthOctagon) so that its rank one points are elliptic-regular and mapped to the edges of the momentum polytope $F_0(M)=\De$. In what follows, we will study the case $t > 0$: we will first investigate the case $dJ(p) \neq 0$ and then the case $dJ(p) = 0$.


\subsection{Case $\mathbf{dJ(p) \neq 0}$}

Geometrically this means that, at such a rank one point $p$, either $dH_t(p)=0$ or $dH_t(p)$ and $dJ(p)$ are linearly dependent.

\begin{lemma} 
\label{entriesRkOne}
Let $t \in\ ]0,1]$ and let $p = [p_1, p_2, \ldots, p_8]$ be a rank one point of $F_t = (J, H_t)$ with $dJ(p) \neq 0$.
Then $p_2, p_3, p_4, p_6, p_7, p_8$ are all non-zero.
\end{lemma}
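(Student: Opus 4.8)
The plan is to combine the constraint coming from the hypothesis $dJ(p)\neq 0$ with the multilinear structure of $X$ to exclude each possible vanishing coordinate. First I would translate $dJ(p)\neq 0$ into information about the entries of $p$. Since $\mcX^J(p)=0$ exactly when $dJ(p)=0$, the condition says that $p$ is not fixed by the $\mbS^1$-action of $J$. Working in the charts $(U_1,\psi_1)$ and $(U_5,\psi_5)$ one checks that $dJ$ vanishes identically on the fixed surfaces $\{z_1=0\}$ and $\{z_5=0\}$, so $dJ(p)\neq 0$ forces $p_1\neq 0$ and $p_5\neq 0$; moreover $A,B,C,D$ are fixed points of $F_t$ by \refrankOpoints\ and hence satisfy $dJ=0$, so $p\notin\{A,B,C,D\}$. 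Recalling from \refcoordEEPoints\ that $A,B,C,D$ are precisely the configurations $(p_7,p_8)=0$, $(p_2,p_3)=0$, $(p_6,p_7)=0$, $(p_3,p_4)=0$, and invoking \refparamU\ (at most two entries vanish, and only consecutive ones), the only possibility left to exclude is that exactly one of $p_2,p_3,p_4,p_6,p_7,p_8$ vanishes.

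So suppose, for contradiction, that $p_k=0$ for exactly one $k\in\{2,3,4,6,7,8\}$ while all other entries are nonzero; then $p\in U_k$ and I would work in the chart $(U_k,\psi_k)$ with free coordinates $(x_k,y_k,x_{k+1},y_{k+1})$. The key is a transversality between the directions supporting $dJ,dH$ and the direction supporting $dX$. Because $J=\tfrac12\abs{z_1}^2$ and $H=\tfrac12\abs{z_3}^2$, and because the manifold equations \eqref{manifold eqn} express all remaining $\abs{z_j}^2$ as functions of $\abs{z_k}^2$ and $\abs{z_{k+1}}^2$, both $J\circ\psi_k^{-1}$ and $H\circ\psi_k^{-1}$ depend only on $\abs{z_k}^2=x_k^2+y_k^2$ and $\abs{z_{k+1}}^2=x_{k+1}^2+y_{k+1}^2$; hence their $(x_k,y_k)$-gradients are proportional to $(x_k,y_k)$ and vanish at $p$. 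By contrast, the monomial $Z=\overline{z_2}\,\overline{z_3}\,\overline{z_4}\,z_6\,z_7\,z_8$ contains $z_k$ (resp.\ $\overline{z_k}$) to the first power only; writing $Z=z_k^{\pm}\,C$ with $C$ the product of the remaining factors, every term of $\del_{x_k}X$ and $\del_{y_k}X$ that differentiates $C$ carries a factor $z_k$ and so drops out at $p$, leaving $(\del_{x_k}X,\del_{y_k}X)(p)=(\mfR(C_0),\mp\mfI(C_0))$ with $C_0:=C(p)$. Since at $p$ only $z_k$ vanishes, $C_0$ is a nonzero product of the other (nonvanishing) factors, so this $(x_k,y_k)$-component of $dX(p)$ is nonzero.

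Putting these together, $dH_t(p)=(1-2t)\,dH(p)+t\ga\,dX(p)$ has $(x_k,y_k)$-component equal to $t\ga\,(\mfR(C_0),\mp\mfI(C_0))\neq 0$ for $t\in\,]0,1]$ and $\ga\neq 0$, whereas $dJ(p)$ has vanishing $(x_k,y_k)$-component. Therefore no scalar $\lambda$ can satisfy $dH_t(p)=\lambda\,dJ(p)$, so $dJ(p)$ and $dH_t(p)$ are linearly independent and $\rk\,dF_t(p)=2$, contradicting that $p$ has rank one. This yields $p_k\neq 0$, and running the same argument for each $k\in\{2,3,4,6,7,8\}$ proves the claim. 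I expect the only genuinely delicate point to be the bookkeeping of the first paragraph: one must notice that $dJ(p)\neq 0$ rules out not merely the two fixed surfaces $\{z_1=0\}$ and $\{z_5=0\}$ but also the isolated fixed points $A,B,C,D$ (which have $z_1,z_5\neq 0$ yet $dJ=0$), since it is exactly these that would otherwise allow two consecutive entries among $z_2,\dots,z_8$ to vanish; the chart computation in the second paragraph is then routine given the first-degree dependence of $X$ on each factor.
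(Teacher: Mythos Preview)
Your proof is correct. It differs from the paper's argument in presentation rather than in essence: the paper sets up the rank-one condition as a Lagrange multiplier problem in the ambient $\C^8$ (with the six manifold equations as constraints) and reads off from the equation $t\gamma\,p_3 p_4 \overline{p_6}\,\overline{p_7}\,\overline{p_8}=\si_2\overline{p_2}$ that $p_2=0$ forces another factor among $p_3,p_4,p_6,p_7,p_8$ to vanish, which via \refparamU\ lands on one of $A,B,C,D$ and contradicts rank one. You instead first use \refparamU\ and the exclusion of $A,B,C,D$ to reduce to the case of a single vanishing coordinate, and then work intrinsically in the chart $(U_k,\psi_k)$ to show that the $(x_k,y_k)$-component of $dX(p)$ survives while those of $dJ(p)$ and $dH(p)$ do not, so $dF_t(p)$ has rank two. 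Both arguments exploit the same structural fact---that $Z$ is of degree one in each of $z_2,z_3,z_4,z_6,z_7,z_8$---and yours has the minor advantage of avoiding the Lagrange multiplier bookkeeping.
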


\begin{proof}
Let $p$ be a rank one point of $F_t=(J, H_t)$. Then there exists $(\nu, \mu) \in \mathbb{R}^2 \setminus\{(0,0)\}$ with $\nu\ dH_t(p) + \mu\ dJ(p) = 0$. We conclude that $\nu \neq 0$ since $\nu=0$ would force also $\mu=0$ because of $dJ(p) \neq 0$, but $(\mu, \nu)=(0,0)$ is excluded. Thus we may divide by $\nu$ and rewrite the condition to be a rank one point as $ dH_t(p) - \lambda dJ(p) = 0 $ for some $\lambda \in \mathbb{R}$, i.e., we look for the extrema of the function $H_t - \lambda J: M \to \R$. Now summarise the six manifold equations of $M$ in \eqref{manifold eqn} in a function $K = (K_1, \ldots, K_6): \mathbb{C}^8 \to \R^6$.
Thus we may see $p$ as critical point of $H_t - \lambda J$ in $\C^8$ having to satisfy $K(p)=0$.

This is an optimisation problem with constraints which can be solved by using Lagrange multipliers: Thus a rank one singular point is a point $p = [p_1, \ldots, p_8] \in \mathbb{C}^8$ for which there are $\lambda, \si_1, \ldots, \si_6 \in \mathbb{R}$ such that $p$ satisfies
\begin{equation}
\label{LagMultCond}
 \left\{ 
 \begin{aligned}
 & \ \nabla H_t(p) = \lambda \nabla J(p) + \si_1\nabla K_1(p) + \ldots + \si_6 \nabla K_6(p) \\
& \ \vert p_1 \vert^2 + \vert p_5 \vert^2 = 6 \\
& \ \vert p_2 \vert^2 + \vert p_5 \vert^2 + \vert p_7 \vert^2 = 10 \\
& \ \vert p_3 \vert^2 + \vert p_7 \vert^2 = 6 \\
& \ \vert p_4 \vert^2 - \vert p_5 \vert^2 + \vert p_7 \vert^2 = 4 \\
& \ \vert p_5 \vert^2 - \vert p_6 \vert^2 + \vert p_7 \vert^2 = 2 \\
& \ \vert p_5 \vert^2 - \vert p_7 \vert^2 + \vert p_8 \vert^2 = 4  
 \end{aligned}
 \right.
\end{equation}
Write the gradient in $\mathbb{C}^8$ as $(\partial_{z_1}, \partial_{\overline{z_1}}, \ldots, \partial_{ z_8}, \partial_{ \overline{z_8}})$ and rewrite $H_t$ and $J$ as
$$H_t(z_1, \dots, z_8) = \frac{1-2t}{2} z_3 \: \overline{z_3} + t\ \gamma\ \mfR(\overline{z_2} \: \overline{z_3} \: \overline{z_4} \: z_6 \: z_7 \: z_8) \quad \mbox{and} \quad J(z_1, \dots, z_8) = \frac{1}{2} z_1 \: \overline{z_1}$$ 
Then the first equation of \eqref{LagMultCond} yields w.r.t.\ derivatives $\partial_{ z_1}, \dots, \partial_{z_8}$
\begin{eqnarray*}
0 &=& \lambda \overline{p_1} + \si_1 \overline{p_1} \\
t \gamma p_3 \: p_4 \: \overline{p_6} \: \overline{p_7} \: \overline{p_8} &=& \si_2 \overline{p_2} \\
(1-2t)\overline{p_3} + t \gamma p_2 \: p_4 \: \overline{p_6} \: \overline{p_7} \: \overline{p_8} &=& \si_3 \overline{p_3} \\
t \gamma p_2 \: p_3 \: \overline{p_6} \: \overline{p_7} \: \overline{p_8} &=& \si_4 \overline{p_4} \\
0 &=& (\si_1 + \si_2 - \si_4 - \si_5 + \si_6)\overline{p_5} \\
t \gamma \overline{p_2} \: \overline{p_3} \: \overline{p_4} \: p_7 \: p_8 &=& \si_5 \overline{p_6} \\
t \gamma \overline{p_2} \: \overline{p_3} \: \overline{p_4} \: p_6 \: p_8 &=& (\si_2 + \si_3 + \si_4 - \si_5 - \si_6) \overline{p_7} \\
t \gamma \overline{p_2} \: \overline{p_3} \: \overline{p_4} \: p_6 \: p_7 &=& \si_6 \overline{p_8}
\end{eqnarray*}
and we get the same equations with all $p_i$ conjugate for the derivatives $\partial_{ \overline{z_1}}, \dots, \partial_{\overline{z_8}}$.

Suppose now that $p_2 = 0$. Since $t \neq 0$ and $\gamma > 0$, it follows from the second equation that at least one of $p_3, p_4, p_6$, $p_7$, $p_8$ must be zero. A similar conclusion is true if we assume one of $p_3$, $p_4$, $p_6$, $p_7$, $p_8$ to vanish. \refparamU\ implies that never more than two subsequent coordinates $p_k$ for $k \in \{2,3,4,6,7,8\}$ are zero. But in this case, we get one of the fixed points $A, B, C$, $D$ which are rank zero points instead of rank one points with $dJ(p) = 0$.
Thus a singular point $p$ of rank one with $dJ(p) \neq 0$ has entries $p_2, p_3, p_4, p_6, p_7, p_8 \neq 0$.
\end{proof}

Recall from the proof of \refcoordEEPoints\ that the set of points with $dJ=0$ consists of the fixed points $A,B,C,D$ together with all points $z$ satisfying $z_1 = 0$ or $z_5 = 0$. Thus, if $dJ(z) \neq 0$ for a point $z \in M$, we have $z_1, z_5 \neq 0$. Together with \refentriesRkOne, this means that the coordinate entries of a singular point of rank one never vanish.

W.l.o.g.\ let us work in the chart $(U_1, \psi_1)$. Given $z \in U_1$ with $dJ(z) \neq 0$ (and thus no entries equal to zero), write it as $z= \psi^{-1}_1(x_1, y_1, x_2, y_2)=[x_1, y_1, x_2, y_2, x_3, 0, \dots, x_8, 0]$ and rotate $z_1=x_1+i y_1$ with the $\mbS^1$-action of $J$ until $y_1=0$. Abbreviate $J(z)=:j$ and note that we have in this situation $x_1=\sqrt{2j}$. Now consider the image of $z$ under the quotient map to the reduced space $M^{red, j}$. Note that the possible choices of $z_2=x_2 + i y_2$ in the chart $z=\psi^{-1}_1(x_1, y_1, x_2, y_2)$ now describe $M^{red, j}$ completely. Since $z_2 \neq 0$, we may write $z_2=  \rho e^{i\theta}$ in polar coordinates. We have $\theta \in [0, 2\pi[$, but the actual values of $\rho>0$ are implicitly determined by the manifold equations \eqref{manifold eqn} which read in the new coordinates
\begin{equation}
\label{rhojcoord}
\left\{ \qquad 
\begin{aligned}
x_1 &= \sqrt{2j} , &&& x_5 &= \sqrt{6 - 2j}, \\
x_2 &= \rho \cos (\theta) , &&& x_6 &= \sqrt{8 - \rho^2}, \\
y_2 &= \rho \sin (\theta) , &&& x_7 &= \sqrt{4 + 2j - \rho^2}, \\
x_3 &= \sqrt{2 - 2j + \rho^2}, &&& x_8 &= \sqrt{2 + 4j - \rho^2}, \\
x_4 &= \sqrt{6 - 4j + \rho^2}, &&& y_1  & =  y_3 = y_4 = y_5 = y_6 = y_7 = y_8 =0.
\end{aligned}
\right. 
\end{equation}
\begin{figure}[h]
\centering
\includegraphics[scale=.2]{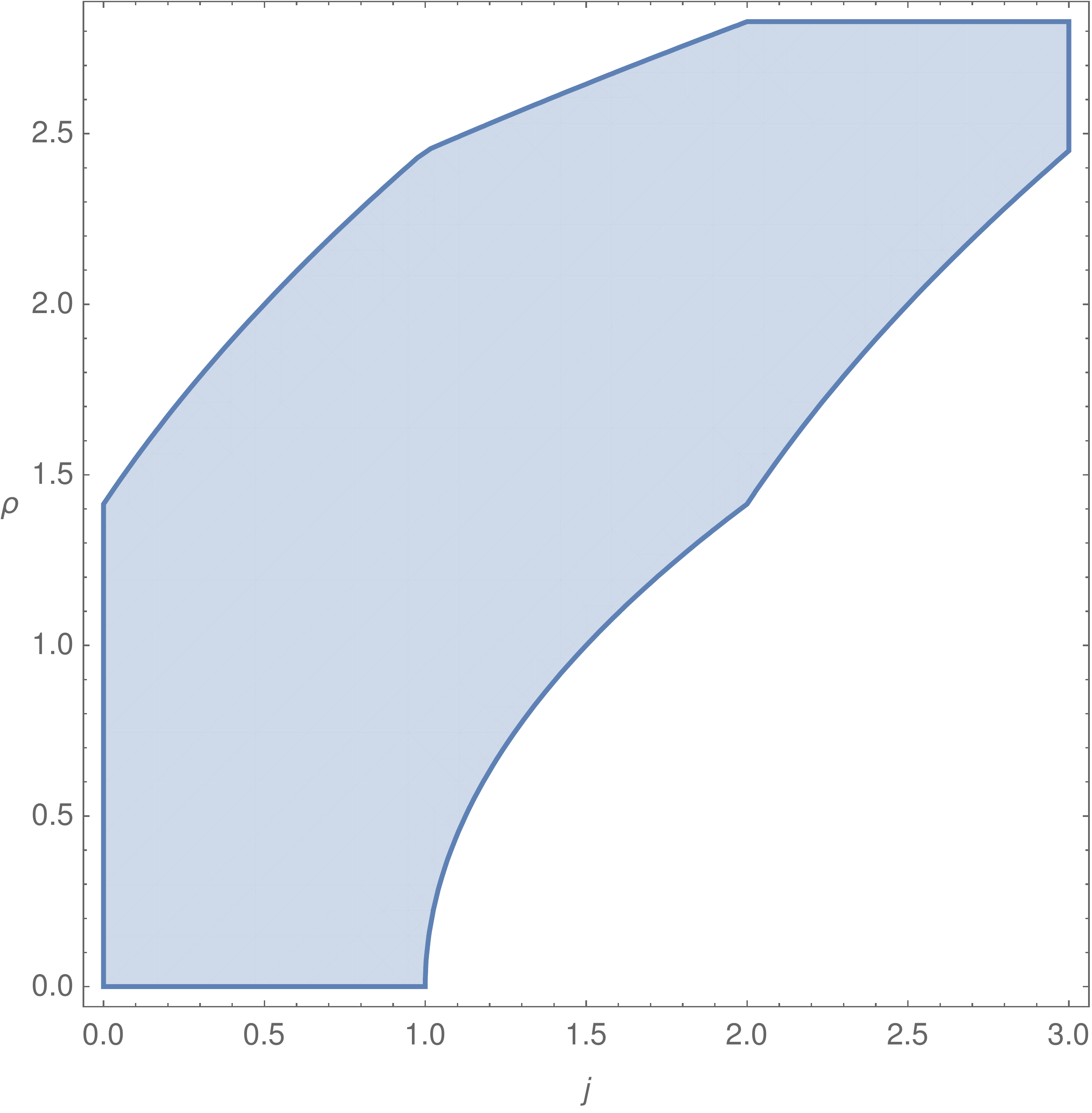}
\caption{The domain of definition of $\rho$ depending on the value of $j$ plotted with {\em Mathematica} between the maximally admissible bounds $0 \leq j \leq 3$ and $0\leq \rho \leq \sqrt{8}$ for the constraints given in \eqref{rhojcoord}.}
\label{DomainRhoJ}
\end{figure}
Thus we have always in particular $0 < \rho< \sqrt{8}$ and, by definition of $J$, also $0 \leq j \leq 3$. The possible values for $\rho$ depend on the value of $j$ and are plotted in Figure \ref{DomainRhoJ}. The function $H_t^{red, j} $ on $M^{red, j} $ becomes in these coordinates  
\begin{eqnarray*}
H_t^{j, red} ( \rho, \theta) = \frac{1-2t}{2}(2 - 2j + \rho^2) + \gamma t \rho \cos (\theta) \sqrt{g(\rho, j)}
\end{eqnarray*}
where
$$ 
g(\rho,j) := (2 - 2j + \rho^2)(6 - 4j + \rho^2)(8 - \rho^2)(4 + 2j - \rho^2)(2 + 4j - \rho^2) = (x_3 x_4 x_6 x_7 x_8)^2. 
$$
This change of coordinates is of use for 

\begin{proposition}
\label{ellRegVertical}
For $t \in\ ]0,1]$, the rank one points of $F_t = (J, H_t)$ on $M \setminus \{ J^{-1}(0) \cup\ J^{-1}(3) \}$ are nondegenerate of elliptic-regular type.
\end{proposition}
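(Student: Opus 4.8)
The plan is to reduce the statement to the study of critical points of the reduced Hamiltonian on the two-dimensional reduced space and then to invoke \refredRankOne. By \refentriesRkOne together with the remark following it, every rank one point $p$ of $F_t$ with $dJ(p)\neq 0$ has all eight coordinates nonzero; in particular $p$ lies in the chart $(U_1,\psi_1)$, and after rotating $z_1$ into the reals by the $\mbS^1$-action of $J$ it is described by the coordinates $(\rho,\theta)$ from \eqref{rhojcoord} with $j:=J(p)\in\ ]0,3[$ and $\rho>0$. For $j\in\ ]0,3[\ \setminus\{1,2\}$ the value $j$ is regular for $J$ and the induced $\mbS^1$-action is free and proper on $J^{-1}(j)$, so \refredRankOne applies directly: $p$ is a nondegenerate rank one point of elliptic-regular type precisely when $[p]$ is a nondegenerate local extremum (an elliptic critical point) of $H_t^{j,red}$. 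It therefore suffices to show that every interior critical point of
$$
H_t^{j,red}(\rho,\theta)=\frac{1-2t}{2}(2-2j+\rho^2)+\gamma t\,\rho\cos(\theta)\sqrt{g(\rho,j)}
$$
is a nondegenerate local maximum or minimum, never a saddle.

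First I would locate the critical points. Since $\partial_\theta H_t^{j,red}=-\gamma t\,\rho\sin(\theta)\sqrt{g(\rho,j)}$ and $t>0$, $\gamma>0$, $\rho>0$, and $g(\rho,j)>0$ in the interior, the equation $\partial_\theta H_t^{j,red}=0$ forces $\sin\theta=0$, i.e.\ $\theta\in\{0,\pi\}$. Substituting $\cos\theta=\pm 1$ turns $\partial_\rho H_t^{j,red}=0$ into a single scalar equation in $\rho$, which is the rank one analogue of \eqref{Rank0eqnLem}. Geometrically this reflects that $M^{red,j}$ is a surface of revolution about the $H$-axis (cf.\ \refparamMred) and that $H_t$ is assembled from the axial height $H$ and the radial function $X$, so that its gradient lies in the meridian plane $Y=0$.

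Next I would compute the Hessian $d^2 H_t^{j,red}$ at such a point. Because the mixed derivative $\partial_\rho\partial_\theta H_t^{j,red}$ is again proportional to $\sin\theta$, it vanishes at $\theta\in\{0,\pi\}$, so the Hessian is diagonal in $(\rho,\theta)$. Its angular entry is $\partial_\theta^2 H_t^{j,red}=-\gamma t\,\rho\cos(\theta)\sqrt{g(\rho,j)}$, which is strictly negative for $\theta=0$ and strictly positive for $\theta=\pi$. The decisive step is to show that the radial entry $\partial_\rho^2 H_t^{j,red}$ carries the \emph{same} sign as the angular one at each critical point, so that the Hessian is definite (an extremum, hence elliptic) rather than indefinite (a saddle, hence hyperbolic). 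As in the proof of \refeeFixedPoints, I would use the critical-point equation to eliminate the linear term $(1-2t)$ and rewrite $\partial_\rho^2 H_t^{j,red}$ as $\gamma t$ times a rational function of $(\rho,j)$ whose numerator is a polynomial; the claim then reduces to a definite-sign statement for this polynomial over the admissible region plotted in Figure \ref{DomainRhoJ}. Establishing this sign — the genuine computational heart of the argument, entirely parallel to the analysis of $\mcP$ in \refeeFixedPoints — is the main obstacle.

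Finally, for the singular interior values $j\in\{1,2\}$, where \refredRankOne does not apply directly since $j$ is not regular, I would observe that away from the fixed points $A,B,C,D$ the expression for $H_t^{j,red}$, its critical points, and the preceding sign analysis are all smooth and uniform in $j$. The conclusion then extends either by continuity of the eigenvalues $\pm i\alpha$ of the reduced Hessian as $j\to\{1,2\}$, or by verifying nondegeneracy of elliptic-regular type directly via \refnondegRankOne on the quotient $L_p^\perp/L_p$.
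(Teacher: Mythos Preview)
Your proposal is correct and follows essentially the same route as the paper: locate critical points via $\sin\theta=0$, observe the Hessian is diagonal at $\theta\in\{0,\pi\}$, and use the critical-point equation to eliminate $(1-2t)$ so that the sign of $\partial_\rho^2 H_t^{j,red}$ reduces to the sign of an explicit polynomial (the paper's $f(\rho,j)$, verified negative via a plot). You are in fact more careful than the paper in flagging the singular levels $j\in\{1,2\}$, which the paper does not address separately; your suggested fix (continuity, or working directly on $L_p^\perp/L_p$ away from $A,B,C,D$) is the right way to close that gap.
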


\begin{proof}
By assumption, $dJ$ does not vanish in the rank one points under consideration. According to \refredRankOne, the point $(\rho,\theta)$ is a rank one singular point of $F_t$ if $dH_t^{red,j}(\rho, \theta) = 0$, i.e., the following equations hold true:
\begin{eqnarray}
\partial_\theta H_t^{red,j} &=& -\gamma t \rho \sin (\theta) \sqrt{g(\rho, j)} = 0 \label{Rank1eqn1}, \\
\partial_\rho H_t^{red,j} &=& (1-2t)\rho + \gamma t \cos (\theta) \left( \sqrt{g(\rho,j)} + \frac{\rho\ \del_\rho g(\rho,j)}{2 \sqrt{g(\rho, j)}} \right) = 0 \label{Rank1eqn2}.
\end{eqnarray}
First consider \eqref{Rank1eqn1}. We have $\gamma, t, \rho > 0$ and, furthermore, by \refentriesRkOne, $\sqrt{g(\rho, j)} = x_3 x_4 x_6 x_7 x_8 \neq 0$. Thus the first equation holds true for $ \sin \theta = 0$, i.e., $\theta \in \{ 0,  \pi\} \subset [0, 2\pi[$.
Now abbreviate
$$ h(\rho, j) : = \sqrt{g(\rho, j)} + \frac{\rho\ \del_\rho g(\rho, j)}{2 \sqrt{g(\rho, j)}} $$
and calculate the entries of the Hessian $d^2 H_t^{j, red}$ of $H_t^{j, red}$ as
\begin{eqnarray*}
\partial^2_{\rho \theta} H_t^{red,j} &=& -\gamma t \sin (\theta) h(\rho, j), \\
\partial^2_{\theta \theta} H_t^{red,j} &=& -\gamma t \cos (\theta) \sqrt{g(\rho, j)}, \\
\partial^2_{\rho \rho} H_t^{red,j} &=& 1 - 2t + \gamma t \cos (\theta) \; \del_\rho h(\rho, j).
\end{eqnarray*}
For $\theta \in \{0, \pi\}$, we find $\partial^2_{\rho \theta} H_t^{red,j}=0$ and $\partial^2_{\theta \theta} H_t^{red,j} < 0 $ for $ \theta = 0$ and $\partial^2_{\theta \theta} H_t^{red,j} > 0 $ for $ \theta = \pi$.
To determine the sign of $\partial^2_{\rho \rho} H_t^{red,j}$, we rewrite equation \eqref{Rank1eqn2} as
$$ 1 - 2t = \frac{- \gamma t \cos (\theta) \bigl(2g(\rho, j) + \rho\ \del_\rho g(\rho, j)\bigr)}{2 \rho \sqrt{g(\rho, j)}} $$
and calculate
$$ \del_\rho h(\rho, j) = \frac{2g(\rho, j) \ \bigl(\rho\ \del^2_{\rho \rho} g(\rho, j) + 2 \del_\rho g(\rho, j) \bigr) - \rho \ \del_\rho g(\rho, j)^2}{4 g(\rho, j)^{\frac{3}{2}}}. $$
Combining those results yields
$$ \partial^2_{\rho \rho} H_t^{red,j} = \frac{\gamma t \cos(\theta) f(\rho, j)}{4 \rho g(\rho, j)^{\frac{3}{2}}} $$
with
$$ f(\rho, j) := 2\rho^2 g(\rho, j) \ \del^2_{\rho \rho} g(\rho, j) + 2\rho\ g(\rho, j) \ \del_\rho g(\rho, j) - \rho^2 \ \bigl(\del_\rho g(\rho, j)\bigr)^2  - 4g(\rho, j)^2. $$
As Figure \ref{FctDomainRhoJ} shows, we have $f(\rho,j) < 0$ for all admissible $(j, \rho)$ in the interior of the domain plotted in Figure \ref{DomainRhoJ}.
\begin{figure}[h]
\centering
\begin{subfigure}{0.33\textwidth}
\centering
\includegraphics[width=.9\linewidth]{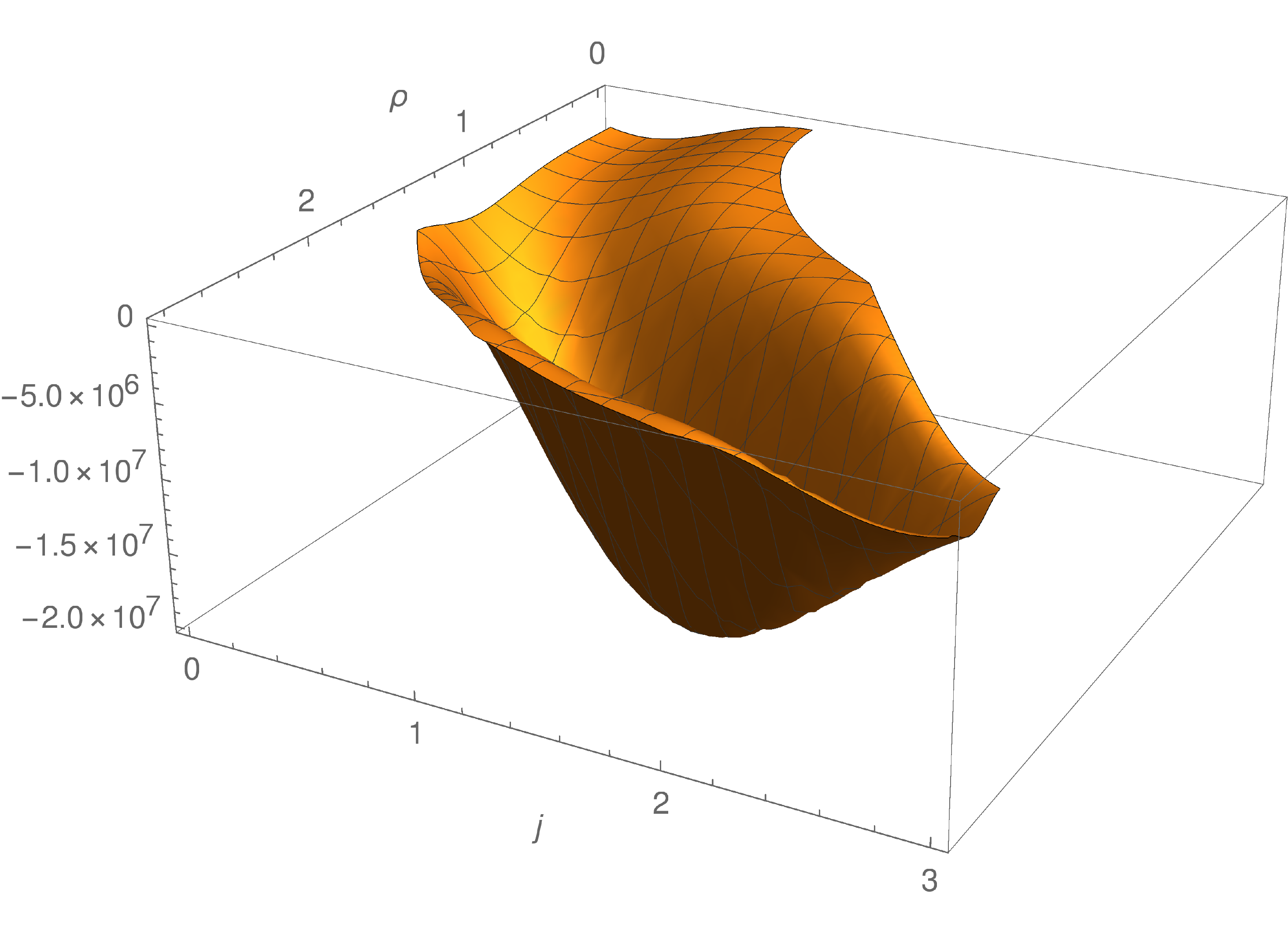}
\end{subfigure}%
\hspace{-12mm}
\begin{subfigure}{0.33\textwidth}
\centering
\includegraphics[width=1.3
\linewidth]{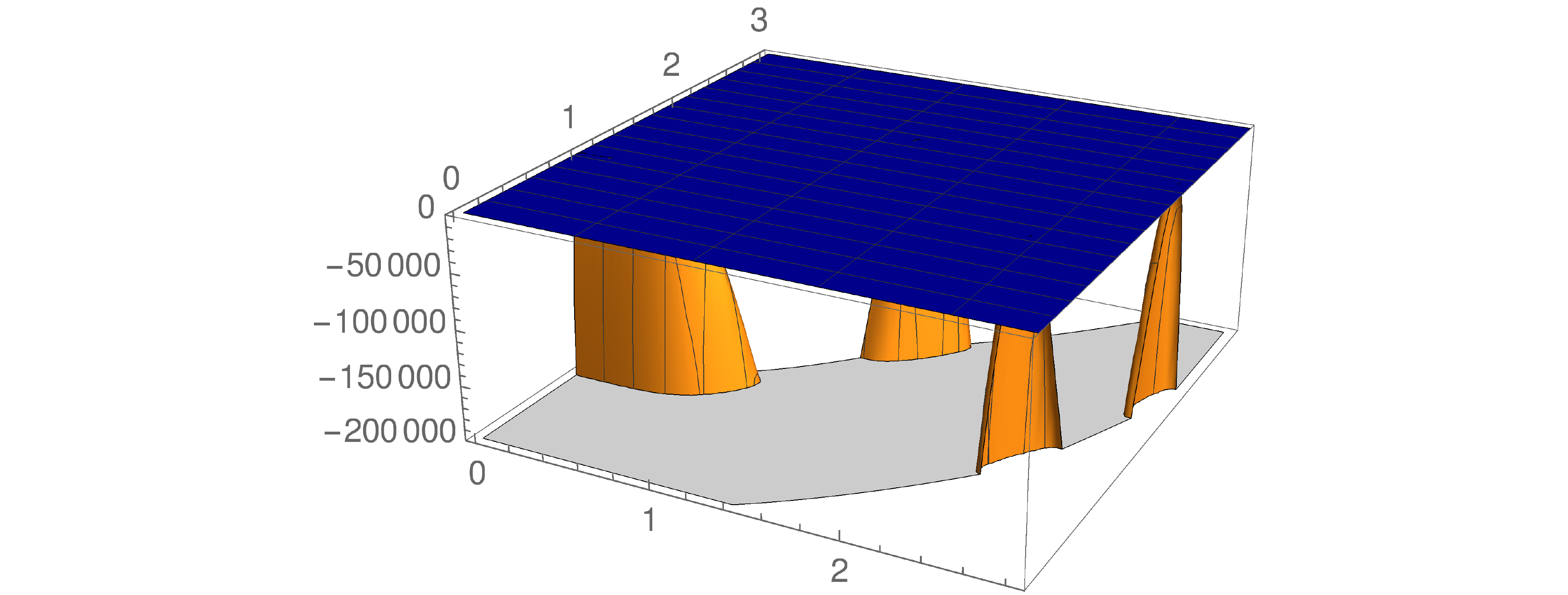}
\end{subfigure}%
\begin{subfigure}{0.33\textwidth}
\centering
\includegraphics[width=1.3
\linewidth]{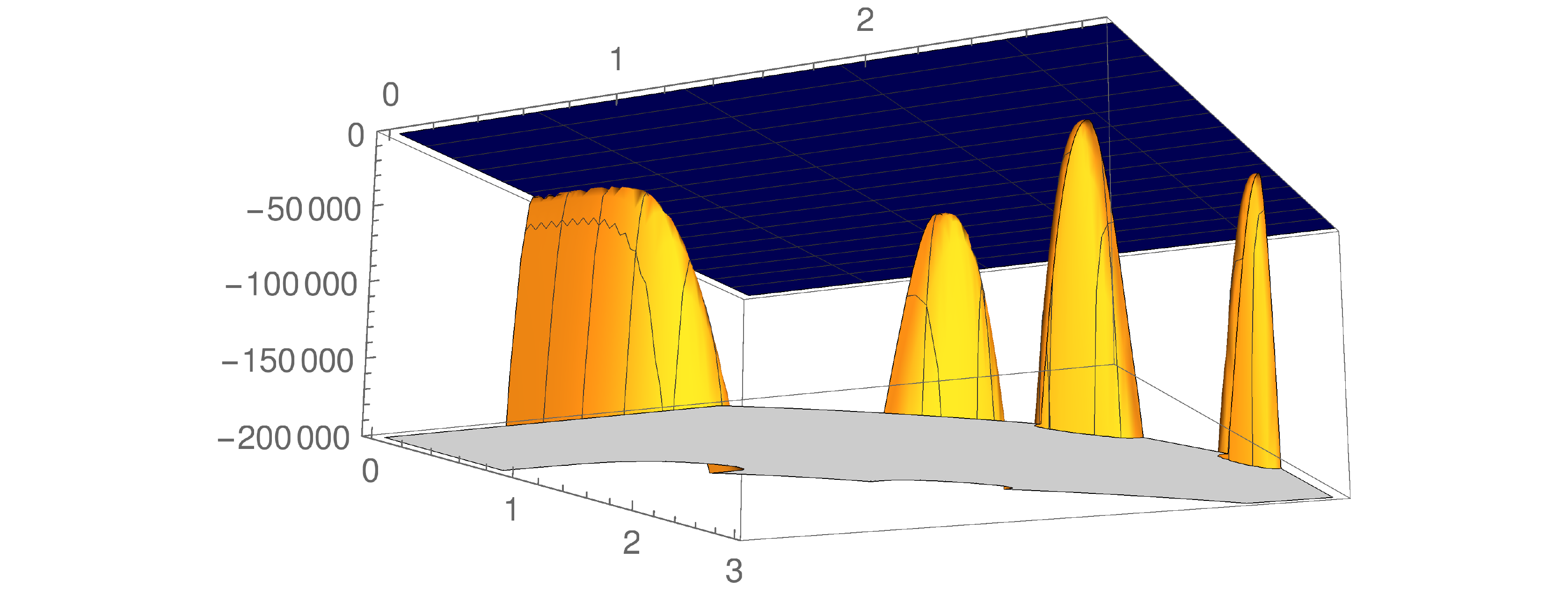}
\end{subfigure}

\caption{{\em Left:} Plot of $f(\rho, j)$ over the region of admissible $(\rho, j)$ from Figure \ref{DomainRhoJ}. The graph is approaching the horizontal plane of height zero from below. {\em Center and Right:} The horizontal plane of height zero plotted in {\em blue} and $f(\rho, j)$ plotted in {\em orange} for the range between $-200000$ and $0$. The `orange peaks' rise at the boundary of the admissible region. On the boundary itself, the function may vanish (e.g.\ $f(0,1)=0$). All three plots are realised with {\em Mathematica}.
}
\label{FctDomainRhoJ}
\end{figure}
Hence we conclude $\partial^2_{\rho \rho} H_t^{red,j} <0 $ for $\theta = 0$ and $\partial^2_{\rho \rho} H_t^{red,j} >0 $ for $\theta = \pi$.

Putting all results together, we see that the Hessian $d^2H_t^{j, red}(\rho, \theta)$ has strictly positive determinant in both cases. Therefore by \refredRankOne, the point $(\rho, \theta)$ is a nondegenerate rank one point of $F_t$ of elliptic-regular type.
\end{proof}


\subsection{Case $\mathbf{dJ(p) = 0}$}

Geometrically this means that, at such a rank one point $p$, we have $dH_t(p)\neq 0$, i.e., we are dealing with points that are fixed under the flow of $J$, but are non-fixed by the flow of $H_t$. Recall from the proof of \refcoordEEPoints\ that the set of points with $dJ=0$ consists of the fixed points $A,B,C,D$ together with all points $z$ satisfying $z_1 = 0$ or $z_5 = 0$.

\begin{proposition}
\label{ellRegHorizontal}
For $t \in \ ]0,1]$, the rank one points of $F_t = (J, H_t)$ in $J^{-1}(0) \cup J^{-1}(3)$ are nondegenerate of elliptic-regular type .
\end{proposition}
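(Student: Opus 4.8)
The plan is to argue directly from the rank-one nondegeneracy criterion of \refnondegRankOne, rather than from \refredRankOne: the latter requires $j$ to be regular for $J$, whereas here $j \in \{0,3\}$ are the extremal (hence singular) values. First I would note that $J^{-1}(0) = \{z_1 = 0\}$ and $J^{-1}(3) = \{z_5 = 0\}$ are exactly the fibres on which $J$ attains its minimum resp.\ maximum, so $dJ$ vanishes identically there and every point is singular of rank at most one. Such a point $p$ has rank exactly one precisely when $dH_t(p) \neq 0$, i.e.\ away from the fixed points $P_t^{min}, P_t^{max}$ (resp.\ $Q_t^{min}, Q_t^{max}$) of \refrankOpoints. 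At such a $p$ the defining relation $\mu\, dJ(p) + \lambda\, dH_t(p) = 0$ forces $\lambda = 0$ and $\mu \neq 0$, so that both the nondegeneracy and the type are governed solely by $\mu\, d^2 J(p)$ on $L_p^\perp / L_p$.

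The structural fact I would exploit is that, in the chart $(U_1, \psi_1)$, both $H \circ \psi_1^{-1}$ and $X \circ \psi_1^{-1}$ depend on $(x_1, y_1)$ only through $\abs{z_1}^2 = x_1^2 + y_1^2$ (see \eqref{HinCoord} and \eqref{XinCoord}); hence $\partial_{x_1} H_t = \partial_{y_1} H_t = 0$ on all of $\{z_1 = 0\}$. Writing $p = \psi_1^{-1}(0,0,x_2,y_2)$ and recalling $\mcX^{H_t} = (\partial_{y_1} H_t,\, -\partial_{x_1} H_t,\, \partial_{y_2} H_t,\, -\partial_{x_2} H_t)$, this gives $\mcX^J(p) = 0$ and shows that $L_p = \Span\{\mcX^{H_t}(p)\}$ lies in the $(x_2,y_2)$-plane. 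Therefore $L_p^\perp = \ker dH_t(p) = \Span\{e_1, e_2, \mcX^{H_t}(p)\}$, and the projection identifies $L_p^\perp / L_p$ with the $(x_1,y_1)$-plane $\Span\{e_1, e_2\}$. Since, by \refparamU, a point of $J^{-1}(0)$ can have at most the two consecutive entries $z_1, z_2$ or $z_8, z_1$ vanishing, the charts $U_1$ and $U_8$ cover $J^{-1}(0)$, and the argument in $U_8$ is completely symmetric.

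It then remains to compute $d^2 J$, which is trivial. From $J \circ \psi_1^{-1} = \tfrac{1}{2}(x_1^2 + y_1^2)$ we obtain the constant Hessian $d^2 J = \mathrm{diag}(1,1,0,0)$. Its restriction to the representatives $\Span\{e_1, e_2\}$ of $L_p^\perp / L_p$ is the identity, hence invertible, so $p$ is nondegenerate by \refnondegRankOne. Moreover, with $\omega_1 = \omega_{st}$ from \eqref{omIsStandard}, the operator $\omega_1^{-1} d^2 J$ sends $e_1 \mapsto -e_2$, $e_2 \mapsto e_1$ and annihilates the $(x_2,y_2)$-plane; it thus preserves both $L_p^\perp$ and $L_p$, descends to $L_p^\perp / L_p$ as a rotation, and has eigenvalues $\pm i$ there. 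By the type list following \refnondegRankOne, $p$ is elliptic-regular. For $J^{-1}(3) = \{z_5 = 0\}$ I would repeat the computation verbatim in $(U_5, \psi_5)$, where $J \circ \psi_5^{-1} = \tfrac{1}{2}(6 - x_5^2 - y_5^2)$ yields $d^2 J = \mathrm{diag}(-1,-1,0,0)$; the restriction $-\mathrm{Id}_2$ is again invertible and $\omega_5^{-1} d^2 J$ again has eigenvalues $\pm i$ on the quotient, giving elliptic-regular points there too.

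The main obstacle is entirely conceptual and lives in the first two paragraphs: one must justify that $dJ$ vanishes on the whole extremal fibre (forcing $\lambda = 0$, so the type is read from $d^2 J$ and not from $d^2 H_t$), and that $d^2 J$ descends to $L_p^\perp / L_p$ and is correctly represented on the $(x_1,y_1)$-plane. Once this reduction is secured, the apparent difficulty of analysing the complicated function $H_t$ at these levels disappears, because $H_t$ enters only through the one-dimensional orbit direction $L_p$ that is quotiented out, leaving the constant, diagonal form $d^2 J$ to do all the work.
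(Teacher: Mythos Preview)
Your proposal is correct and follows essentially the same route as the paper: use $dJ \equiv 0$ on the extremal fibres to reduce the nondegeneracy and type check to $\omega^{-1} d^2 J$ on $L_p^\perp/L_p$, then compute this in the charts $(U_1,\psi_1)$ and $(U_5,\psi_5)$ to find eigenvalues $\pm i$. The paper is slightly more explicit about the boundary cases (it identifies the unique point of $J^{-1}(0)$ lying in $U_8 \setminus U_1$ and computes there separately, and similarly for $U_4$ at $j=3$), but your observation that $U_1 \cup U_8$ covers $J^{-1}(0)$ together with the symmetry remark amounts to the same thing.
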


\begin{proof}
{\it Consider first $J^{-1}(0)$:} Here a rank one point $p$ satisfies $0=p_1$ and hence $p \in U_1$ or $p \in U_8$. Let us first assume $p \in U_1$ and write $p = \psi_1^{-1}(0,0,x_2, y_2)$. According to the discussion around \refnondegRankOne, we need to consider the Hessian of $J$ to determine nondegeneracy and type of rank one points. In the chart $(U_1, \psi_1)$ evaluated in $(0,0,x_2, y_2)$, we have
$$ d^2(J \circ \psi_1^{-1}){(0, 0, x_2, y_2)}= \begin{pmatrix}
1 & 0 & 0 & 0 \\
0 & 1 & 0 & 0 \\
0 & 0 & 0 & 0 \\
0 & 0 & 0 & 0
\end{pmatrix} 
\quad \mbox{ and } \quad 
(\om_1^{-1})_{(0, 0, x_2, y_2)} = 
\begin{pmatrix}
0 & 1 & 0 & 0 \\
-1 & 0 & 0 & 0 \\
0 & 0 & 0 & 1 \\
0 & 0 & -1 & 0
\end{pmatrix} $$
and thus
$$  (\om_1^{-1})_{(0, 0, x_2, y_2)}d^2(J \circ \psi_1^{-1}){(0, 0, x_2, y_2)} = \begin{pmatrix}
0 & 1 & 0 & 0 \\
-1 & 0 & 0 & 0 \\
0 & 0 & 0 & 0 \\
0 & 0 & 0 & 0
\end{pmatrix}. $$
Now we determine the space $L_p \subseteq T_p M$ given by the tangent line of the orbit through $p$. Since $dJ(p) = 0$, we have $ L_p = \mbox{Span}\left\lbrace \mathcal{X}^{H_t}(p) \right\rbrace$. 
Similar to in the proof of \refHtFt\ we obtain $ d(H_t \circ \psi_1^{-1})(0,0,x_2, y_2) = (0, 0, f_1, f_2) $ for some functions $f_1$ and $f_2$ depending on $x_2, y_2$. We calculate the Hamiltonian vector field
\begin{align*}
\left(\mathcal{X}^{H_t \circ \psi_1^{-1}}\right)^T = -d (H_t \circ \psi_1^{-1}) \om_1^{-1} = (0,0,f_2, -f_1).
\end{align*}
Therefore, we get
$$ L_p
= \mbox{Span}\left\lbrace \; \begin{pmatrix}
0 \\ 0 \\ f_2 \\ -f_1
\end{pmatrix} \; \right\rbrace 
\quad \mbox{and} \quad 
L_p^{\bot} = \mbox{Span}\left\lbrace \; \begin{pmatrix}
1 \\ 0 \\ 0 \\ 0
\end{pmatrix} \; , \; \begin{pmatrix}
0 \\ 1 \\ 0 \\ 0
\end{pmatrix} \; , \; \begin{pmatrix}
0 \\ 0 \\ f_2 \\ -f_1
\end{pmatrix} \; \right\rbrace 
$$
so that
$$
L^{\bot}_p/L_p \simeq \mbox{Span}\left\lbrace \; \begin{pmatrix}
1 \\ 0 \\ 0 \\ 0
\end{pmatrix} \;, \; \begin{pmatrix}
0 \\ 1 \\ 0 \\ 0
\end{pmatrix} \; \right\rbrace. $$
Therefore $\om_1^{-1}d^2(J \circ \psi_1^{-1})$ descends to $L_p^{\bot}/L_p$ as 
$\left( \begin{smallmatrix}
0 & 1 \\ -1 & 0
\end{smallmatrix} \right) $
which has purely imaginary eigenvalues $\pm i$. Thus, by \refnondegRankOne\ and the discussion afterward, the rank one points are nondegenerate of elliptic-regular type.

The only rank one points in $J^{-1}(0)$ that lie in $U_8$ but not in $U_1$ are points $z$ with $z_1=0=z_8$. When we insert this into the manifold equations \eqref{manifold eqn}, we get immediately $\abs{z_1}^2 =0= \abs{z_8}^2$ and then find successively $\abs{z_5}^2=6$, $\abs{z_7}^2=2$, $\abs{z_3}^2=4$, $\abs{z_4}^2 = 4$, $\abs{z_6}^2 = 6$ and eventually $\abs{z_2}^2 = 2$. This determines a unique point $p:=[ 0, \ \sqrt{2},\ 2,\ \sqrt{2},\ \sqrt{6}, \ \sqrt{6},\ \sqrt{2},\ 0] \in U_8$. Similar calculations as above now in the chart $(U_8, \psi_8)$ for $p=\psi_8^{-1}(0,0,0,0)$ lead to
$$ 
(\om_8^{-1})_{(0,0,0,0)} d^2(J\circ \psi_8^{-1})(0,0,0,0) = 
\begin{pmatrix}
0 & 0 & 0 & 0 \\
0 & 0 & 0 & 0 \\
0 & 0 & 0 & 1 \\
0 & 0 & -1 & 0
\end{pmatrix}
$$
and $\mathcal{X}^{H_t \circ \psi_8^{-1}}(0,0,0,0) = (f_2, -f_1, 0, 0)^T $ for some functions $f_1$ and $f_2$. Then $(\om_8^{-1})_{(0,0,0,0)} d^2(J \circ \psi_8^{-1})(0,0,0,0)$ descends to the associated quotient $L_p^\perp \slash L_p$ as 
$
\left(\begin{smallmatrix}
0 & 1 \\ -1 & 0
\end{smallmatrix} \right)
$. 
Thus $p$ also is of elliptic-regular type.

{\it Now consider $J^{-1}(3)$:} Here a rank one point $p$ satisfies $0= p_5$. We work in the chart $(U_5, \psi_5)$ where $p = \psi_5^{-1}(0,0,x_6, y_6)$.
Using the relation $\vert z_1 \vert^2 + \vert z_5 \vert^2 = 6$ we get
$$ 
(J  \circ \psi_5^{-1})(x_5, y_5, x_6, y_6)= \frac{6 - x_5^2 - y_5^2}{2} 
\quad \mbox{and} \quad 
d^2(J \circ\psi_5^{-1})|_{(0,0,x_6, y_6)} = \begin{pmatrix}
-1 & 0 & 0 & 0 \\
0 & -1 & 0 & 0 \\
0 & 0 & 0 & 0 \\
0 & 0 & 0 & 0
\end{pmatrix}
$$
and hence
$$
(\om_5^{-1})_{(0,0,x_6, y_6)}d^2(J \circ\psi_5^{-1}){(0,0,x_6, y_6)} = \begin{pmatrix}
0 & -1 & 0 & 0 \\
1 & 0 & 0 & 0 \\
0 & 0 & 0 & 0 \\
0 & 0 & 0 & 0
\end{pmatrix} $$
Analogously to above, we find $d(H_t \circ \psi_5^{-1}) = (0,0,f_1,f_2)$ where $f_1$, $f_2$ are some suitable functions. Moreover, $\om_5^{-1|_{(0,0,x_6, y_6)}}d^2(J \circ\psi_5^{-1})|_{(0,0,x_6, y_6)} $ descends to the associated $L_p^\perp /L_p$ as 
$ \left(\begin{smallmatrix}
0 & -1 \\ 1 & 0
\end{smallmatrix} \right) $.
This matrix has eigenvalues $\pm i$, so the rank one points in $J^{-1}(3)$ in $U_5$ are also nondegenerate and of elliptic-regular type.

The chart $(U_5, \psi_5)$ contains all possible rank one points in $J^{-1}(3)$ except those with $0=p_4$ for which we have to use the chart $(U_4, \psi_4)$. Analogous calculations as above show that there is only one such point and that it is of elliptic-regular type.
\end{proof}


\section{{\bf The proofs of \refmainTheorem\ and \refdoublePinchParam}} 


\label{section pinched}

\begin{proof}[Proof of \refmainTheorem]
In \refthOctagon, we showed that $(M, \om)$ is a $4$-dimensional compact symplectic manifold and that $F=(J, H)$ is, up to equivariant symplectomorphism, the toric system satisfying $F(M)=\De$ where $\De $ is the octagon from Figure \ref{fig_octagon}. In \refHtFt, we extended $(M, \om, F=(J, H))$ to a family $(M, \om, F_t=(J, H_t))$ of integrable systems with $F_0=F$. Since $J$ does not depend on $t$, its induced $\mbS^1$-action is unchanged under variation of $t$. Moreover, since $M$ is compact, $J:M \to \R$ is proper.

It remains to show that, apart from a finite number of transition times, the system $F_t=(J, H_t)$ is semitoric, i.e., singularities are nondegenerate and have no hyperbolic components and that, at the transition times, certain singular points change from elliptic-elliptic to focus-focus or vice versa.

In Section \ref{section posRankZero}, we deduced the eight fixed points of $F_t=(J, H_t)$ for $t \in\ ]0, 1]$ denoted by $A$, $B$, $C$, $D$, $P^{min}_t$, $P^{max}_t$, $Q^{min}_t$, $Q^{max}_t$ and their coordinates, hereby proving \refrankOpoints. In \refAeeffee, we showed that there are two transition times $0< t^- < \frac{1}{2} <t^+ <1$ where $A$, $B$, $C$, $D$ switch from being nondegenerate and elliptic-elliptic via a degeneracy at $t^-$ to being nondegenerate and focus-focus and then via a degeneracy at $t^+$ to being nondegenerate and elliptic-elliptic. In \refeeFixedPoints, we showed that $P^{min}_t$, $P^{max}_t$, $Q^{min}_t$, $Q^{max}_t$ are nondegenerate and elliptic-elliptic for all $t \in\ ]0,1]$.

\refellRegVertical\ proves that the rank one points of $F_t=(J, H_t)$ on $M \setminus (J^{-1}(0) \cup J^{-1}(3))$ are nondegenerate and of elliptic-regular type for all $t \in\ ]0,1]$. \refellRegHorizontal\ shows that the rank one points in $J^{-1}(0)$ and $J^{-1}(3)$ are nondegenerate and of elliptic-regular type for all $t \in\ ]0,1]$. This finishes the proof of \refmainTheorem.
\end{proof}

It remains to prove \refdoublePinchParam, i.e., that, at $t= \frac{1}{2}$, the focus-focus points $A$ and $B$ lie both in $F_{\frac{1}{2}}^{-1}(1,0)$ and that the focus-focus points $C$ and $D$ lie both in $F_{\frac{1}{2}}^{-1}(2,0)$ and that this gives the fibres the form of a double pinched torus.

\begin{proof}[Proof of \refdoublePinchParam]
Zung \cite{zung1, zung2} showed that a focus-focus fibre containing exactly $n$ focus-focus points consists of a chain of $n$ spheres where each of the spheres intersects transversally two others and that the intersection points are given by the $n$ focus-focus points. Such a fibre is said to have the form of an $n$-pinched torus. In the special case $n = 1$, the fibre is a sphere with one point of self-intersection and is said to be a single pinched torus.

The exact coordinates of the fixed points $A$, $B$, $C$, $D$ are calculated in \refrankOpoints\ and \refAeeffee\ proves that they are of focus-focus type for $t^- < t <t^+$ where $0< t^- < \frac{1}{2} < t^+ < 1$. We find $F_{\frac{1}{2}} = (J, \frac{\gamma}{2}X)$ and calculate $F_{\frac{1}{2}}(A)= (1, 0)=F_{\frac{1}{2}}(B) $ and $F_{\frac{1}{2}}(C)= (2, 0)=F_{\frac{1}{2}}(D) $. Thus, according to Zung \cite{zung1, zung2}, the fibres over $(1,0)$ and $(2, 0)$ have the form of double pinched tori.

We now parametrise the fibre $ F_{\frac{1}{2}}^{-1}\left(1,0\right)$ as follows.
Given any $z = [z_1, \dots, z_8] \in F_{\frac{1}{2}}^{-1}\left(1,0\right) \subset M$, we have $J(z) = 1$ and thus $\abs{z_1}^2 = 2$. Moreover, we find $0=X(z) =  \mfR(\overline{z_2} \: \overline{z_3} \: \overline{z_4} \: z_6 \: z_7 \: z_8) = 0$. Since the last two coordinate entries of $A$ are zero, $A$ lies in the chart $(U_7, \psi_7)$. Thus let us determine the points of $ F_{\frac{1}{2}}^{-1}\left(1,0\right)$ that lie in $U_7$. 
Recall 
$$
\psi_7^{-1}: \R^4 \to U_7, \qquad \psi_7^{-1}(x_7, y_7, x_8, y_8) = [x_1, 0, x_2, 0, \ldots, x_6, 0, x_7, y_7, x_8, y_8]
$$
where
\begin{align*}
x_1 &= \sqrt{2 - \vert z_7 \vert^2 + \vert z_8 \vert^2}, &&& x_3 &= \sqrt{6 - \vert z_7 \vert^2}, &&& x_5 &= \sqrt{4 + \vert z_7 \vert^2 - \vert z_8 \vert^2}, \\
x_2 &= \sqrt{6 - 2\vert z_7 \vert^2 + \vert z_8 \vert^2}, &&& x_4 &= \sqrt{8 - \vert z_8 \vert^2},  &&& x_6 &= \sqrt{2 + 2\vert z_7 \vert^2 - \vert z_8 \vert^2}.
\end{align*}
Thus we may consider $z_1, \ldots, z_6$ as lying in $\mathbb{R}^+$. Consider $z_7 = x_7 + iy_7$ and $z_8 = x_8 + iy_8$. From $\abs{z_1}^2 = 2$ we conclude $z_1 = x_1 = \sqrt{2}$ and thus $\vert z_7 \vert^2 = \vert z_8 \vert^2$.
The equation $\mfR(\overline{z_2} \: \overline{z_3} \: \overline{z_4} \: z_6 \: z_7 \: z_8) = 0$ becomes $z_2 z_3 z_4 z_6 (x_7x_8 - y_7y_8) = 0$ and since $z_1, \ldots, z_6 \neq 0$ in $U_7$, we deduce $x_7x_8 = y_7y_8$.
Keeping the above deductions in mind, we calculate
\begin{equation*}
x_7^2 \vert z_8 \vert^2 = x_7^2 x_8^2 + x_7^2 y_8^2 = y_7^2 y_8^2 + x_7^2 y_8^2  y_8^2 \vert z_7 \vert^2 
= y_8^2 \vert z_8 \vert^2.
\end{equation*}
If $\vert z_8 \vert^2 \neq 0$, this implies $x_7 = \pm y_8$ and thus $y_7 = \pm x_8$, where we need to choose the same sign to satisfy $x_7x_8 = y_7y_8$. Hence, we get two possible values for $z_7$, namely
$$ z_7 = y_8 + ix_8 = i\ \overline{z_8} \quad \mbox{ or } \quad z_7 = -y_8 - ix_8 = -i\ \overline{z_8}. $$
If $\vert z_8 \vert^2 = 0$, then $z_8 = 0$ and by $\abs{z_7}^2 = \abs{z_8}^2$ also $z_7 = 0$. Then $z_7 = \pm i\ \overline{z_8}$ still holds, but does no longer give two different values. 
Moreover, the manifold equations \eqref{manifold eqn} imply $\vert z_8 \vert^2 = \vert z_7 \vert^2 \leq 6$.  
Note that the inequality $\vert z_8 \vert^2 < 6$ is strict for points in $U_7$, since otherwise $z_2 = 0$ or $z_3 = 0$.

It remains to consider the points that do not lie in $U_7$, i.e., at least one of the values $z_1, \ldots, z_6$ is zero. We check the following cases:
\begin{itemize}
\item 
$z_1 = 0$ is not possible due to $J(z) = \frac{1}{2}\abs{z_1}^2 =1$.
\item 
If $z_2 = 0$ we still have $z_1 \neq 0$, but $z$ lies in $U_2$ and thus we have $z_1 = \sqrt{2}$. We use the manifold equations \eqref{manifold eqn} to find the other coordinates of the point and get
$$ z = [\sqrt{2}, 0, 0, \sqrt{2}, 2, 2\sqrt{2}, \sqrt{6}, \sqrt{6}] $$
which are the coordinates of the focus-focus point $B$.
\item 
If $z_3 = 0$, then $\vert z_1 \vert^2 = 2$ and the manifold equations imply $\vert z_5 \vert^2 = 4$, $\vert z_7 \vert^2 = 6$ and eventually $z_2 = 0$, so we recover the focus-focus point $B$ again.
\item 
If $z_4 = 0$ we calculate $\vert z_1 \vert^2 = 2$, $\vert z_5 \vert^2 = 4$, $\vert z_7 \vert^2 = 8$ which leads to the contradiction $\vert z_3 \vert^2 = -2$, so $z_4 \neq 0$ must be true.
\item 
If $z_5 = 0$ then $\vert z_1 \vert^2 = 6$ which contradicts $J(z) =\frac{1}{2}\abs{z_1}^2= 1$, so $z_5 \neq 0$ must be true.
\item 
If $z_6 = 0$ then the manifold equations \eqref{manifold eqn} imply $\vert z_1 \vert^2 = 2$, $\vert z_5 \vert^2 = 4$ which lead to the contradiction $\vert z_7 \vert^2 = -2$, so $z_6 \neq 0$ must be true.
\end{itemize}
Therefore the only point in $F_{\frac{1}{2}}^{-1}\left(1,0\right)$ which does not lie in $U_7$ is the focus-focus point $B$. This point can be included in the previous description of the fibre when we allow $\vert z_8 \vert = \sqrt{6}$. Indeed, both the positive and negative choice of the sign of $z_7$ results in the point $B$.
Altogether we conclude that the fibre $F_{\frac{1}{2}}(1,0)$ is of the form
$$ 
\left\{ \left. \left[\sqrt{2}, \sqrt{6 - \vert z_8 \vert^2}, \sqrt{6 - \vert z_8 \vert^2}, \sqrt{8 - \vert z_8 \vert^2},\ 2, \sqrt{2 + \vert z_8 \vert^2}, \ \pm i \ \overline{z_8}, \ z_8 \right] \in M \ \right| \vert z_8 \vert \in \left[0, \sqrt{6}\right] \right\}. 
$$
Concerning the reformulation in polar coordinates, we remark that $r = 0$ recovers the point $A$ and $r = \sqrt{6}$ gives the point $B$. For all other values of $r$, the fibre splits up in two parts, one for the plus and one for the minus sign in the seventh coordinate. The rotation related to the $\mbS^1$-action induced by $J$ is described by the parameter $\theta$, so indeed, we get a 2-torus pinched at the points $A$ and $B$, as visualised in Figure \ref{Fig_double_pinched}.
\end{proof}


\newpage

{\small
  \noindent
  \\
  Annelies De Meulenaere\\
  University of Antwerp\\
  Department of Mathematics\\
  Middelheimlaan 1\\
  B-2020 Antwerpen, Belgium
  
  \hspace{-7mm}
   \begin{tabular}{ll}
   {\em E\--mail}: & \texttt{demeulenaere.annelies {\em AT} gmail.com} \\
   &  \texttt{Annelies.DeMeulenaere {\em AT} student.uantwerpen.be}
  \end{tabular}

  \vspace{5mm}

{\small
  \noindent
  \\
  Sonja Hohloch\\
  University of Antwerp\\
  Department of Mathematics\\
  Middelheimlaan 1\\
  B-2020 Antwerpen, Belgium\\
  {\em E\--mail}: \ \ \  \texttt{sonja.hohloch {\em AT} uantwerpen.be}


\begin{thebibliography}{30mm}

\bibitem[Al]{alonso} 
Alonso, J.: {\it On the symplectic invariants of semitoric systems}. PhD thesis, University of Antwerp, 2019.

\bibitem[ADH1]{adh1} Alonso, J.; Dullin, H.\ R.; Hohloch, S.: \textit{Taylor series and twisting-index invariants of coupled spin-oscillators}. J. Geom. Phys. 140 (2019), 131--151.

\bibitem[ADH2]{adh2} Alonso, J.; Dullin, H.R.; Hohloch, S.: \textit{Symplectic classification of coupled angular momenta}, Nonlinearity 33 (2020), 417--468.

\bibitem[Ar]{Arnold} Arnold, V.: \textit{Mathematical Methods of Classical Mechanics}, Graduate Texts in Math. 60, Springer-Verlag, New York, 1978.

\bibitem[At]{atiyah} Atiyah, M.F.: \textit{Convexity and commuting Hamiltonians}. Bull. London Math. Soc. 14 (1982), no. 1, 1--15.

\bibitem[BF]{bolsinov-fomenko} Bolsinov, A.V., Fomenko, A.T.: \textit{Integrable Hamiltonian systems}. Chapman $\&$ Hall/CRC, Boca Raton, FL, 2004. Geometry, topology, classification, Translated from the 1999 Russian original.

\bibitem[BGK]{bgk} Bolsinov, A.V., Guglielmi, L., Kudryavtseva E.: \textit{Symplectic invariants for parabolic orbits and cusp singularities of integrable systems with two degrees of freedom}, Philos. Trans. Roy. Soc. A 376 (2018), no. 2131, 20170424, 29 pp. 

\bibitem[CdS]{cannasDaSilva} Cannas da Silva, Ana: \textit{Lectures on symplectic geometry.} Lecture Notes in Mathematics, 1764. Springer-Verlag, Berlin, 2nd edition, 2008. xii+217 pp.

\bibitem[Ch] {chaperon} Chaperon, M.: \textit{Normalisation of the smooth focus-focus: a simple proof.} Acta Math. Vietnam. 38 (2013), no. 1, 3--9.

\bibitem[De]{delzant} Delzant, T.: \textit{Hamiltoniens p\'eriodiques et images convexes de l'application moment}, Bull. Soc. Math. France 116 (1988), 315--339.

\bibitem[Du]{duistermaat} Duistermaat, J.J.: \textit{On global action-angle coordinates}, Comm. Pure Appl. Math. 33 (1980), 687--706.

\bibitem[El1]{eliassonThesis} Eliasson, L.H.: \textit{Hamiltonian Systems with Poisson Commuting Integrals}, PhD thesis, University of Stockholm, 1984.

\bibitem[El2]{eliassonPaper} Eliasson, L.H.: \textit{Normal forms for Hamiltonian systems with Poisson commuting integrals -- elliptic case}. Comment. Math. Helv., 65(1):4--35, 1990.

\bibitem[Gu]{guillemin} Guillemin, V.: \textit{Moment maps and combinatorial invariants of Hamiltonian $T^n$-spaces}, Progress in Mathematics, 122. Birkh\"auser Boston, Inc., Boston, MA, 1994. viii+150 pp.

\bibitem[GuS]{guilleminSternberg} Guillemin, V.; Sternberg, S.: \textit{Convexity properties of the moment mapping}. Invent. Math. 67 (1982), no. 3, 491--513.

\bibitem[HZ]{hoferZehnder} Hofer, H.; Zehnder, E.: \textit{Symplectic invariants and Hamiltonian  dynamics}.  Reprint  of  the  1994  edition.  Modern  Birkh\"auser  Classics. Birkh\"auser Verlag, Basel, 2011. xiv+341 pp.

\bibitem[HP]{hohlochPalmer} Hohloch, S.; Palmer, J.: \textit{A family of compact semitoric systems with two focus-focus singularities}, Journal of Geometric Mechanics 2018, 10(3): 331--357.

\bibitem[HSS]{hss} Hohloch, S.; Sabatini, S.; Sepe, D.: {\it From compact semi-toric systems to Hamiltonian $S^1$-spaces,} Discrete Contin. Dyn. Syst., 35(1):247--281, 2015.

\bibitem[HSSS]{hsss} Hohloch, S.; Sabatini, S.; Sepe, D.; Symington, M.: \textit{Faithful semitoric systems}, SIGMA Symmetry Integrability Geom. Methods Appl. 14 (2018), Paper No. 084, 66 pp.

\bibitem[KPP]{kpp} Kane, D. M.; Palmer, J.; Pelayo, \'A.: \textit{Minimal models of compact symplectic semitoric manifolds.} J. Geom. Phys. 125 (2018), 49--74.

\bibitem[Ka]{karshon} Karshon, Y.: \emph{Periodic Hamiltonian flows on four-dimensional manifolds}, Mem. Amer. Math. Soc., 141(672)s:viii+71, 1999.

\bibitem[Kau]{kaufman} Kaufman, S.: \textit{Delzant-type classification of near-symplectic toric 4-manifolds}, Master thesis, Massachusetts Institute of Technology, Department of Mathematics, 2005.

\bibitem[LFP]{leFlochPelayo} Le Floch, Y.; Pelayo, A.: \textit{Symplectic geometry and spectral properties of classical and quantum coupled angular momenta},  J. Nonlinear Sci. 29 (2019), no. 2, 655--708.

\bibitem[LFP]{leFlochPalmer} Le Floch, Y.; Palmer, J.: \textit{Semitoric families}, preprint arXiv: 1810.06915, 77pp.

\bibitem[Li]{liouville} Liouville, J: \textit{Note sur l'int\'egration des \'equations diff\'erentielles de la dynamique}, J. Math. Pures Appl, vol.20, (1855), 137--138.

\bibitem[MW]{marsdenWeinstein} Marsden, J.; Weinstein, A.: \textit{Reduction of symplectic manifolds with symmetry}, Rep. Math. Phys. 5 (1974), 121--130.

\bibitem[MZ]{mirandaZung} Miranda, E.; Zung, N.T.: \textit{Equivariant normal form for nondegenerate singular orbits of integrable {H}amiltonian systems}, Ann. Sci. \'Ecole Norm. Sup. (4), 37 (2004), 819--839.

\bibitem[PPT]{ppt} Palmer, J.; Pelayo, A.; Tang, X.: \textit{Semitoric systems of non-simple type}, 20p., arXiv:1909.03501

\bibitem[PV1]{pelayoVungocInv} Pelayo, \'A.; V\~{u} Ng\d{o}c, S.: \textit{Semitoric integrable systems on symplectic 4-manifolds}, Invent. Math. 177 (2009), no. 3, 571--597.

\bibitem[PV2]{pelayoVungocActa} Pelayo, \'A.; V\~{u} Ng\d{o}c, S.: \textit{Constructing integrable systems of semitoric type}, Acta Math. 206 (2011), 93--125.

\bibitem[PV3]{pelayoVungocComm} Pelayo, \'A.; V\~{u} Ng\d{o}c, S.: \textit{Hamiltonian dynamics and spectral theory for spin-oscillators}, Comm. Math. Phys., 309(1): 123--154, 2012.

\bibitem[SZ]{sadovskiiZhilinskii} Sadovski\'i, D.A.; Z\^hilinski\'i, B.I.: \textit{Monodromy, diabolic points, and angular momentum coupling}, Phys. Lett. A, 256(4): 235--244, 1999.

\bibitem[SV]{sepeVungoc} Sepe, D.; \vungoc, S.: \textit{Integrable systems, symmetries, and quantization}. Lett. Math. Phys. 108 (2018), no. 3, 499--571.

\bibitem[McS]{mcDuffSalamon} McDuff, D.; Salamon, D.: \textit{Introduction to Symplectic Topology}, Clarendon Press, Oxford, 1998. ix+486 pp.

\bibitem[SL]{sjamaarLerman} Sjamaar, R.; Lerman, E.: {\it Stratified symplectic spaces and reduction.} Annals of Mathematics, 134(2):375--422, 1991.

\bibitem[Sy]{symington} Symington, M.: \textit{Four dimensions from two in symplectic topology}, Topology and geometry of manifolds (Athens, GA, 2001), volume 71 of Proc. Sympos. Pure Math., pages 153--208. Amer. Math. Soc., Providence, RI, 2003.

  
\bibitem[TZ]{tothZelditch} Toth, J.A.; Zelditch, S.: \emph{$L^p$ norms of eigenfunctions in the completely integrable case}, Ann. Henri Poincar\'e, 4(2):343--368, 2003.


\bibitem[Vu1]{vungocTaylorSeries} \vungoc, S.: \textit{On semi-global invariants for focus-focus singularities}. Topology, 42(2): 365--380, 2003.

\bibitem[Vu2]{vungocBook} \vungoc, S.: \emph{Syst\`emes int\'egrables semi-classiques: du local au global},
  vol.~22 of Panoramas et Synth\`eses [Panoramas and Syntheses], Soci\'et\'e Math\'ematique de France, Paris, 2006.
  
\bibitem[Vu3]{vungocPolytope} V\~{u} Ng\d{o}c, S.: \textit{Moment polytopes for symplectic manifolds with monodromy}, Adv. Math., 208(2): 909--934, 2007.

\bibitem[VuW] {vungocWacheux} V\~{u} Ng\d{o}c, S.; Wacheux, C.: \textit{Smooth normal forms for integrable Hamiltonian systems near a focus-focus singularity.} Acta Math. Vietnam. 38 (2013), no. 1, 107--122.
  
\bibitem[Wi]{williamson} Williamson, J.: \textit{On the algebraic problem concerning the normal form of linear dynamical systems}, Amer. J. Math., 58 (1936), 141--163.

\bibitem[Ve] {vey} Vey, J.: \textit{Sur certains syst\`emes dynamiques s\'eparables.} (French) Amer. J. Math.100 (1978), no. 3, 591--614.

\bibitem[Zu1]{zung1} Zung, N.T.: \textit{Symplectic topology of integrable Hamiltonian systems I: Arnold-Liouville with singularities}, Compositio Mathematica, 101 (1996), 179--215.

\bibitem[Zu]{zung2} Zung, N.T.: \textit{A note on focus-focus singularities}, Differential Geometry and its Applications, 7(2) (1997), 123--130.

\end{thebibliography}
\end{document}